\newtheorem{thm}{Theorem} [section]
\theoremstyle{definition}
\theoremstyle{plain}
\newtheorem{prop}[thm]{Proposition}
\newtheorem{lem}[thm]{Lemma}
\newtheorem{conj}[thm]{Conjecture}
\numberwithin{equation}{section}
\theoremstyle{remark}
\newtheorem{rem}{Remark}[section]
\newcommand{\Hom}{\mathrm{Hom}}
\newcommand{\Bl}{\mathcal B}
\newcommand{\C}{\mathbb C}
\newcommand{\D}{D(2|1;\ka)}
\newcommand{\ep}{\epsilon}
\newcommand{\E}{\mathcal E}
\newcommand{\hf}{{\Small \frac12}}
\newcommand{\g}{\mathfrak{g}}
\newcommand{\gl}{\mathfrak{gl}}
\newcommand{\h}{\mathfrak{h}}
\newcommand{\ka}{\zeta}
\newcommand{\la}{\lambda}
\newcommand{\n}{\mathfrak n}
\newcommand{\N}{\mathbb N}
\newcommand{\one}{{\ov 1}}
\newcommand{\osp}{\text{osp}}
\newcommand{\ov}{\overline}
\newcommand{\OO}{\mathcal O}
\newcommand{\Q}{\mathbb Q}
\newcommand{\sll}{\mathfrak{sl}_2}
\newcommand{\Wt}{\text{WT}}
\newcommand{\Z}{\mathbb Z}
\newcommand{\oo}{{\ov 0}}
\newcommand{\hdel}{{h_{2\delta}}}
\newcommand{\hone}{h_{2\ep_1}}
\newcommand{\htwo}{h_{2\ep_2}}
\newcommand{\LL}[1]{L_{{#1}}}
\newcommand{\M}[1]{M_{#1}}
\newcommand{\PP}[1]{P_{{#1}}}
\newcommand{\TT}[1]{T_{{#1}}}
\title[Character formulae in super category $\OO$ for $D(2|1;\ka)$]{Character formulae in category $\OO$ for exceptional Lie superalgebras  $D(2|1;\ka)$ }
\author[S.-J. Cheng]{Shun-Jen Cheng}
\address{Institute of Mathematics, Academia Sinica, Taipei,
Taiwan 10617} \email{chengsj@math.sinica.edu.tw}
\author[W. Wang]{Weiqiang Wang}
\address{Department of Mathematics, University of Virginia, Charlottesville, VA 22904}
\email{ww9c@virginia.edu}
\keywords{exceptional Lie superalgebras, character formulae, tilting modules, Verma flags, composition factors.}
\subjclass[2010]{Primary 17B10}
\begin{document}

\maketitle

\begin{abstract}
We establish character formulae for representations of the one-parameter family of simple Lie superalgebras  $D(2|1;\zeta)$. We provide a complete description of the Verma flag multiplicities of the tilting modules and the projective modules in the BGG category $\mathcal O$ of  $D(2|1;\zeta)$-modules of integral weights, for any complex parameter $\zeta$. The composition factors of all Verma modules in $\mathcal O$ are then obtained.
\end{abstract}

\maketitle

\setcounter{tocdepth}{1}
\tableofcontents

 \section{Introduction}

 \subsection{Background}

Finding irreducible character formulae  is a fundamental problem in representation theory.
For the BGG category $\OO$ of semisimple Lie algebras, Kazhdan-Lusztig theory provides a powerful setting and gives a solution to this problem. For simple Lie superalgebras, new approaches are required as the Weyl groups fail to control the linkage in the category $\OO$ completely. In recent years, character formulae in terms of canonical bases have been developed for BGG category $\OO$ of basic Lie superalgebras of type ABCD; see \cite{Br03, CLW15, CFLW, CKW15, BW13, Bao17} and references therein. There has also been progress on character formulae in the category $\OO$ of the queer Lie superalgebras; see \cite{BrD17} for the most recent update. Some other approach for the irreducible characters in the categories of finite-dimensional modules of basic Lie superalgebras (which are not a BGG parabolic category in general) has also been developed, cf. e.g. \cite{GS10}.

 The basic Lie superalgebras also include three exceptional ones: $\D$, $G(3)$ and $F(3|1)$.
 The existence of the exceptional simple Lie superalgebras, including the one-parameter family $D(2|1;\ka)$,  was first suggested by Freund-Kaplansky \cite{FK}. The finite-dimensional simple modules of exceptional Lie superalgebras were classified by Kac \cite{Kac78}, and they have been studied by various authors; see, for example, \cite{VdJ85, Zou94, Ger00, Ma14, SZ16}.
 However, there has been little progress on the irreducible character formula problem in the BGG category $\OO$ for exceptional Lie superalgebras.

 Recall $\D =\g_\oo \oplus\g_\one$ is a family of simple Lie superalgebras
 of dimension $17$, depending on a  parameter $\ka \in\C\setminus\{0,-1\}$. Its even subalgebra is
 $\g_\oo \cong \sll \oplus \sll \oplus \sll$, and, as an adjoint $\g_{\bar 0}$-module, $\g_\one \cong \C^2\boxtimes \C^2 \boxtimes \C^2$.

 \subsection{The goal}

In this paper, we completely solve the irreducible character formula problem in the BGG category $\OO$ of $\D$-modules of integer weights, for any parameter $\ka$. We provide explicit formulae for Verma flag multiplicities of all tilting modules and projective modules in $\OO$. We then obtain explicit formulae for the composition factors of all Verma modules in $\OO$; this then determines the irreducible characters in $\OO$ as well.

Note $D(2|1;1) \cong \text{osp}(4|2)$, and there is a conceptual approach, due to Bao \cite{Bao17}, to obtain character formula for $\text{osp}(2m|2n)$ via canonical bases arising from quantum symmetric pairs. We also note that the character formulae for  the irreducible {\em finite-dimensional} modules of $\D$ were obtained in \cite{VdJ85} (also cf.~\cite{SZ16}).

 \subsection{The main results and strategy}

The blocks in the BGG category $\OO$ for $\D$ are divided into typical and atypical ones, where a typical block is characterized by having finitely many simple objects.  The characters in typical blocks, which are controlled by the Weyl group $W\cong \Z_2 \times \Z_2 \times \Z_2$, follow from more general work of Gorelik \cite{Gor02} and they can also be obtained directly. We shall focus on the much more challenging atypical blocks, where the results are organized depending on whether the parameter  $\ka \not \in \Q$ or $\ka\in \Q \setminus\{0,-1\}$.

 For $\ka \not \in \Q$, we show that there is exactly one atypical block in $\OO$, i.e., the principal block $\Bl_0$, and we classify the simples in $\Bl_0$. For each $\ka \in \Q\setminus\{0,-1\}$  (which can and will be assumed to be positive without loss of generality), we show that the atypical blocks in $\OO$, denoted by $\Bl_k$, are parametrized by nonnegative integers $k$, where $\Bl_0$ still denotes the principal block. We also classify the simple objects in each block $\Bl_k$. In the category of {\em finite-dimensional} $\D$-modules, the blocks and the simples in each block were classified by Germoni \cite{Ger00}.

The existence of tilting modules in a super category $\OO$ was established by Brundan \cite{Br04}, who in turn superized the idea of Soergel \cite{Soe98} for Kac-Moody algebras. Recall the standard fact that a translation functor sends a tilting module to a direct sum of tilting modules. (A tilting module is understood to be indecomposable in this paper.)

Our construction of tilting modules is inductive in nature, and we construct tilting modules following the anti-dominance order within a $W$-orbit of $8$ (or $4$) weights.  To obtain the character formula for an (arbitrary) tilting module $\TT{\la}$ of highest weight $\la-\rho$ in $\OO$, we proceed in 3 steps as follows.

\begin{enumerate}
\item
Choose a suitable tilting module $\TT{\mu}$ with known Verma flag, which is acted on by a suitable translation functor $\E$.
\item
Determine the Verma flag of the resulting module $\E \TT{\mu}$.

\noindent [Step~(1) is chosen so that a Verma module
$\M{\la}$ of highest weight $\la -\rho$ appears as a submodule in $\E \TT{\mu}$ with multiplicity one and all other Verma modules appearing in the Verma flag of $\E \TT{\mu}$ are of the form $\M{\nu}$ with $\nu\prec \la$ in the Bruhat ordering.]

\item
Verify the module $\E \TT{\mu}$ is indecomposable (with one exception).
\end{enumerate}
It follows from (1)--(3) that $\E \TT{\mu}$ is the desired tilting module $\TT{\la}$.

 Some comments on the 3 steps above are in order. Once Step~ (1) is chosen {\em properly}, it is not difficult to complete Step~ (2).
Almost all translation functors which we shall use arise from tensoring with the $17$-dimensional adjoint module, though there are several exceptions. In the principal block $\Bl_0$ (regardless of $\ka$), there is exactly one singular weight, i.e., the $\rho$-shifted zero weight.
For each block $\Bl_k$ with $k\ge 1$ (for $\ka \in \Q_{>0}$), there are 3 $W$-orbits of singular weights. The character formulae for tilting modules of highest weights near singular weights often require separate treatments as these tilting modules admit irregular Verma flag patterns.

Among other things, Step (3) requires us to know the existence of certain explicit composition factors of Verma modules, so we can apply Soergel duality and BGG reciprocity to show that $\TT{\la}$ has a given Verma module subquotient which shows up in $\E \TT{\mu}$. To that end, we actually establish stronger results by constructing explicitly nonzero homomorphisms between Verma modules. One such homomorphism of independent interest is associated to the non-simple even reflection, and this idea will be pursued elsewhere in greater generality. These Verma module homomorphisms also play a fundamental role in the classification of blocks.

An outcome of our constructions and formulae is that  every tilting module in $\OO$ (with one possible exception) arises from applying a suitable sequence of translation functors to an irreducible Verma module, and the Verma flag multiplicities in an arbitrary tilting module are mostly $1$ and sometimes $2$.
The maximal length of Verma flags of a tilting module is $24$.

Once we know the Verma flag structures of tilting modules, we can determine formulae for the Verma flags for projectives and the composition factors of Verma modules, using Soergel duality \cite{Soe98, Br04} and BGG reciprocity, which then solves the irreducible character problem in $\mathcal O$.

As a consequence of our work we also obtain a classification of projective tilting (=projective injective) modules in $\OO$, for any $\ka$.

 \subsection{Some future work}
This is the first of a series of papers on category $\OO$ for exceptional Lie superalgebras.
 The technique developed in this paper is applicable to other exceptional Lie superalgebras; cf. \cite{CW18} for $G(3)$.
 Our method can also be applied to blocks of non-integral weights or parabolic BGG categories.
 Ultimately, the success of our computational approach relies on the fact that the exceptional Lie superalgebras have small ranks, and  the strategy here should also be applicable to other Lie superalgebras of small ranks to produce explicit character formulae.

In a different direction, we expect that some of the ideas introduced in this paper will lead to a substantial super generalization of the classical theorem of Verma on homomorphisms between Verma modules; cf. Remark~\ref{rem:singular}.

Our work led us to conjecture that the principal blocks of $\D$ for all $\ka$ are equivalent; see Conjecture~ \ref{block0:same}.

While our approach in this paper solves the character formulae problem in $\OO$, it will be very interesting to upgrade it to a super Kazhdan-Lusztig (KL) theory. The super KL theory formulated via $\imath$-canonical bases in \cite{Bao17} in the special case of $\text{osp}(4|2)$ could be helpful in light of Conjecture~ \ref{block0:same}. It will also be interesting to describe the  endomorphism algebra of a projective generator via generators and relations, and this may provide a Koszul $\Z$-grading for $\OO$. One can also try to determine the Jantzen filtration of Verma modules and the Andersen filtration of tilting modules.  

 \subsection{Organization}
The paper is organized as follows.
In Section~\ref{sec:homom}, we review some basic facts of $\D$. We then construct two types of nonzero homomorphisms between Verma modules, one associated to odd reflections and the other associated to the even non-simple reflection.

 In Section~\ref{sec:irrational}, we consider $\D$ with $\ka \not\in \Q$.
We show that the principal block $\Bl_0$ is the only atypical block in $\OO$, and classify the simples in $\Bl_0$.
We apply suitable translation functors to establish the character formulae of tilting modules. The cases for $\TT{\la}$ with $\la$ near the singular weight are treated separately, where one particular tilting module requires extra work.
 Then we determine formulae for the Verma flags for projectives and the composition factors of Verma modules.

 In Section~\ref{sec:rational},  we treat $\D$ with $\ka =p/d \in \Q_{>0}$, with $p,d$ relatively prime positive integers.
We basically repeat the steps as in Section~\ref{sec:irrational}. However, here the details are more involved for the atypical blocks $\Bl_k$, for $k\ge 1$, as there are more singular weights in such a block. The block $\Bl_1$ with $d=1$ is postponed to the next sections because of some extra complications.

   In Sections~\ref{sec:block1} and ~\ref{sec:block:ka=1}, we determine the Verma flags of tilting modules and projective modules in the blocks $\Bl_1$ with $\ka \in \Z_{>0}$, i.e., $d=1$. 
In this case,  some singular weights in $\Bl_1$ come next to each other in the Bruhat ordering, and this makes  the Verma flag structures of tilting modules around singular weights more delicate. The formulations of Verma flag formulae are separated into two cases: $\ka \ge 2$, and $\ka=1$ (i.e., $p=d=1$),
   and they are presented in  Sections~\ref{sec:block1} and ~\ref{sec:block:ka=1}, respectively.
For constructions of some particular tilting modules,
we use translation functors arising from tensoring with a simple $(4p+2)$-dimensional module,
 which reduces to the $6$-dimensional natural module of $\text{osp}(4|2)$ in the case $\ka=1$.

 \vspace{.4cm}
 {\bf Acknowledgment.}
The first author is partially supported by a MoST and an Academia Sinica Investigator grant, while the second author is partially supported by an NSF grant.
We thank University of Virginia and Academia Sinica for hospitality and support since 2014, when this project was initiated.


 %
 %
 \section{Homomorphisms between Verma modules}
 \label{sec:homom}

In this section we work with $\D$ for any $\ka\in\C\setminus\{0,-1\}$. We recall the BGG duality and Soergel duality in category $\OO$. We also review the Harish-Chandra homomorphism for $\D$ following Sergeev.
We then construct homomorphisms between Verma modules associated to even and odd reflections.

\subsection{Preliminaries on $\D$}

Let $\ka\in\C\setminus\{0,-1\}$. The Lie superalgebra $\D$ is the simple Lie superalgebra of basic type associated with the following Cartan matrix \cite[Section 2.5.2]{Kac77}:
\begin{align}\label{Cartan:matrix}
\begin{pmatrix}
  0 & 1 & \ka \\
  -1 & 2 & 0 \\
  -1 & 0 & 2
\end{pmatrix}.
\end{align}
Note that the simple root corresponding to the zero diagonal entry in \eqref{Cartan:matrix} is odd, while the others are even.
There are  isomorphisms of Lie superalgebras with different parameters (cf. \cite[Chapter 1]{CW12})
\begin{equation}
\label{D:iso}
D(2|1;\ka)  \cong
D(2|1; -1-\ka^{-1})  \cong
D(2|1;\ka^{-1}).
\end{equation}

Let $\h$ be a vector space with basis $\{\hdel,\hone,\htwo\}$ and dual basis $\{\delta, \ep_1,\ep_2\}$. We equip the dual $\h^*$ with a bilinear form $(\cdot,\cdot)$ such that $\{\delta, \ep_1,\ep_2\}$ are orthogonal and
\begin{align*}
(\delta, \delta) = -(1+\ka),
\quad
(\ep_1, \ep_1) = 1,
\quad
(\ep_2, \ep_2) = \ka.
\end{align*}
The simple coroots in the Cartan subalgebra $\h$ and the corresponding simple roots in $\h^*$ of $\D$ associated with the Cartan matrix \eqref{Cartan:matrix} are realized respectively as
\begin{align*}
\Pi^\vee&=\{\alpha_0^\vee=\frac{1+\ka}{2}\hdel+\hf\hone+\frac{\ka}{2}\htwo,\alpha_1^\vee=\hone,\alpha_2^\vee=\htwo\},\\
\Pi&=\{\alpha_0=\delta-\ep_1-\ep_2,\alpha_1=2\ep_1,\alpha_2=2\ep_2\}.
\end{align*}
The Dynkin diagram associated to $\Pi$ is depicted as follows:
\begin{center}
\setlength{\unitlength}{0.16in}
\begin{picture}(4,6)
\put(4,1.3){\makebox(0,0)[c]{$\bigcirc$}}
\put(4,4.8){\makebox(0,0)[c]{$\bigcirc$}}
\put(1.5,3){\makebox(0,0)[c]{$\bigotimes$}}
\put(3.6,1.4){\line(-1,1){1.6}}
\put(3.6,4.7){\line(-1,-1){1.6}}
\put(5.2,4.8){\makebox(0,0)[c]{\tiny $2\ep_1$}}
\put(5.2,1.2){\makebox(0,0)[c]{\tiny $2\ep_2$}}
\put(-1,3){\makebox(0,0)[c]{\tiny $\delta-\ep_1-\ep_2$}}
\end{picture}
\end{center}

Let $\Phi$, $\Phi_{\bar 0}$ and $\Phi_{\bar 1}$ stand for the sets of roots, even roots and odd roots, respectively. Let $\Phi^+$ further denote the set of positive roots with respect to $\Pi$ and set $\Phi^+_i=\Phi^+\cap\Phi_i$, for $i=\bar{0},\bar{1}$. We have
\begin{align*}
\Phi^+_{\bar 0}=\{2\delta,2\ep_1,2\ep_2\},\quad\Phi^+_{\bar 1}=\{\delta-\ep_1-\ep_2,\delta+\ep_1-\ep_2,\delta-\ep_1+\ep_2,\delta+\ep_1+\ep_2\}.
\end{align*}
Thus, we have $\g_\oo \cong \sll \oplus \sll \oplus \sll$ and, as a $\g_{\bar 0}$-module, we have $\g_\one \cong \C^2\boxtimes \C^2 \boxtimes \C^2$,
where $\C^2$ is the natural  representation of $\sll$. Let $$X=\Z\delta+\Z\ep_1+\Z\ep_2$$ be the weight lattice of $\g$.

Denote by $\{e_i,f_i,h_i\}$ the Chevalley generators corresponding to $\alpha_i$, $i=0,1,2$. We shall also write $e_{\alpha_i}=e_i$ and $f_{\alpha_i}=f_i$, for $i=0,1,2$, in the sequel. We define the following positive and negative root vectors:
\begin{align*}
&e_{\delta+\ep_1-\ep_2}:=[e_0,e_1],\quad f_{\delta+\ep_1-\ep_2}:=[f_0,f_1],\\
&e_{\delta-\ep_1+\ep_2}:=[e_2,e_0],\quad f_{\delta-\ep_1+\ep_2}:=[f_0,f_2],\\
&e_{\delta+\ep_1+\ep_2}:=[[e_0,e_1],e_2],\quad f_{\delta+\ep_1+\ep_2}:=[[f_1,f_0],f_2],\\
&e_{2\delta}=\left(\frac{1}{1+\ka}\right)^2[[e_0,e_1],[e_0,e_2]],\quad f_{2\delta}=[[f_0,f_1],[f_0,f_2]].
\end{align*}
Then these vectors, together with the nine Chevalley generators, form a basis for $\D$.

The following commutation relations will be useful later on:
\begin{align}\label{eq:comm:rel}
\begin{split}
&[e_{\delta\pm\ep_1\pm\ep_2},f_{\delta\pm\ep_1\pm\ep_2}]=\frac{1+\ka}{2}\hdel\mp\hf\hone\mp\frac{\ka}{2}\htwo,\\
&[e_{2\delta},f_{2\delta}]=\hdel.
\end{split}
\end{align}

\subsection{BGG category $\OO$}

Denote by $\g_\alpha$ the root subspace for $\alpha\in\Phi$. Let $\n_\pm=\bigoplus_{\alpha\in\Phi^\pm}\g_\alpha$ so that we have $\g=\n_-\oplus\h\oplus\n_+$. Let
\[
\rho=\rho_{\bar 0}-\rho_{\bar 1},
\quad \text{ where }\rho_{\bar 0}=\hf\sum_{\alpha\in\Phi^+_{\bar 0}} \alpha,\quad
\rho_{\bar 1} =\hf\sum_{\beta\in\Phi^+_{\bar 1}}\beta.
\]
Then we have
\begin{align*}
\rho =-\delta+\ep_1+\ep_2 \; (=-\alpha_0).
\end{align*}

A weight $\la \in \h^*$ is called {\em atypical} if
\begin{equation}
 \label{eq:atypicality}
(\la+\rho, \alpha) =0, \quad \text{ for some } \alpha \in \Phi_{\bar 1};
\end{equation}
 otherwise it is called {\em typical}.
We often use the isomorphism
\begin{equation}
  \label{eq:lattice}
X \stackrel{\cong}{\longrightarrow}  \Z^3,
\qquad
\la \mapsto f_{\la+\rho},
\end{equation}
where $ f_{\la+\rho} =(x,y,z)$, if $\la+\rho =x\delta +y\ep_1 +z\ep_2$;
sometimes we refer to $(x,y,z)$ as a $\rho$-shifted weight.
A $\rho$-shifted weight $(x,y,z)$ is atypical (which corresponds to an atypical $\la$) if and only if it satisfies one of the $4$ equations
$
(\ka+1) x \pm y \pm \ka z =0,
$
or equivalently, if and only if it satisfies one of the $4$ equations
\begin{equation}
 \label{atypical}
 \ka (x\pm z) + (x \pm y)=0.
\end{equation}

For $\la \in \h^*$ denote by $\M{\la}$ the Verma module of highest weight $\la-\rho$. That is, $M_\la= U(\g) \otimes_{U(\h+\n_+)} \C_{\la-\rho}$, where $\C_{\la-\rho}$ is the $1$-dimensional $U(\h+\n_+)$-module with $\h$ acting with the weight $\la-\rho$ and $\n_+$ acting by $0$.
Denote by $v_\la^+$ the highest weight vector in $\M{\la}$ (of highest weight $\la-\rho$).
The unique irreducible quotient module of $\M{\la}$ will be denoted by $\LL{\la}$.

Denote by $\OO$ the BGG category of $\D$-modules of integral weights
with respect to $\g=\n_-\oplus\h\oplus\n_+$. Then $\M{\la}, \LL{\la}$, for $\la\in X$, are objects in $\OO$.
The category $\OO$ has enough projectives.
Denote by $\PP{\la}$ the projective cover of $\LL{\la}$ in $\OO$, for $\la \in X$.
It is well known that the projective module $\PP{\la}$ admits a Verma flag.
Denote by $(V : \M{\mu})$ the multiplicity of $\M{\mu}$
in a (or any) Verma flag of a module $V$ admitting a Verma flag.
We have the BGG reciprocity:
\begin{equation}  \label{BGG}
(\PP{\la} : \M{\mu}) =[\M{\mu}: \LL{\la}], \qquad \text{ for }\la, \mu \in X.
\end{equation}

We denote by $\TT{\la}$ the tilting module in $\OO$ with highest weight $\la-\rho$, for $\la \in X$.
By Brundan \cite{Br04}, which is a super generalization of Soergel \cite{Soe98}, the tilting module $\TT{\la}$ exists.
By definition, a tilting module admits a Verma flag and a dual Verma flag.
A certain equivalence of categories implies the following Soergel duality (cf. \cite{Soe98, Br04}):
\begin{equation}  \label{tilt=proj}
(\PP{\la} : \M{\mu}) = (\TT{-\la} : \M{-\mu}),  \qquad \text{ for }\la, \mu \in X.
\end{equation}
The duality \eqref{tilt=proj} together with BGG reciprocity \eqref{BGG} implies the tilting character formula:
\begin{equation}  \label{tiltingD}
(\TT{-\la} : \M{-\mu}) =[\M{\mu}: \LL{\la}], \qquad \text{ for }\la, \mu \in X.
\end{equation}

The following simple observation will be used several times later on, and so we formulate it explicitly for the convenience of referring.

\begin{lem}
  \label{tilting:comp}
Let $T_{\la} {\longrightarrow} T_{\mu}$ be an epimorphism of tilting modules in $\OO$. Let $\E$ be an exact functor on $\OO$ such that $\E T_\mu=T_{\mu'}$. Then there exists an epimorphism
$\E  T_{\la} {\longrightarrow} T_{\mu'}$.
\end{lem}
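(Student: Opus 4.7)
The plan is to observe that this lemma follows essentially immediately from the exactness hypothesis on $\E$, with no input from the tilting theory beyond the hypothesis that $\E T_\mu$ is the particular tilting module $T_{\mu'}$. In other words, the whole content is that exact functors preserve surjections.

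Concretely, I would start from the given epimorphism $\varphi\colon T_\la \twoheadrightarrow T_\mu$, form the short exact sequence
\[
0 \longrightarrow \ker\varphi \longrightarrow T_\la \stackrel{\varphi}{\longrightarrow} T_\mu \longrightarrow 0
\]
in $\OO$, and apply the exact functor $\E$ to obtain the exact sequence
\[
0 \longrightarrow \E(\ker\varphi) \longrightarrow \E T_\la \stackrel{\E\varphi}{\longrightarrow} \E T_\mu \longrightarrow 0.
\]
In particular $\E\varphi$ is surjective. Substituting the hypothesis $\E T_\mu = T_{\mu'}$ then gives the desired epimorphism $\E T_\la \twoheadrightarrow T_{\mu'}$.

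There is really no obstacle here: no decomposition of $\E T_\la$ into tilting summands is needed, and one does not require $\E T_\la$ itself to be indecomposable or even tilting. The only mild point worth flagging in the write-up is the standard fact that an exact additive endofunctor on $\OO$ automatically preserves epimorphisms (which is what we used above, via the cokernel characterization of surjectivity). This lemma will later be invoked whenever one has already identified $\E T_\mu$ as a tilting module and wishes to transport a surjection along $\E$; in the applications of Section~\ref{sec:irrational} and Section~\ref{sec:rational} the tilting module $T_{\mu'}$ will typically be extracted as a summand of $\E T_\mu$ in a separate step, so the present lemma is best viewed as the small bookkeeping observation that makes those arguments go through.
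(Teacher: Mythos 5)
Your proof is correct and is precisely the argument intended: the paper records this lemma as a ``simple observation'' without proof, and the content is exactly that an exact functor preserves epimorphisms, so applying $\E$ to $T_\la \twoheadrightarrow T_\mu$ and using $\E T_\mu = T_{\mu'}$ gives the claim. Nothing more is needed, and your remark that no tilting-theoretic input or indecomposability of $\E T_\la$ is required is accurate.
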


\subsection{Central characters of $\g$}


Define a polynomial $P\in\C[\delta,\ep_1,\ep_2]$ by
\begin{align}\label{form:P}
P:=(\delta-\ep_1-\ep_2)(\delta+\ep_1-\ep_2)(\delta-\ep_1+\ep_2)(\delta+\ep_1+\ep_2).
\end{align}
Denote by $\mathcal Z (\g)$ the center of the enveloping algebra $U(\g)$.
According to \cite[\S0.6.6]{Serg}, the image of the Harish-Chandra homomorphism $HC: \mathcal Z (\g) \rightarrow S(\h^*)$ is \begin{align}\label{im:HC}
\text{im} (HC) = \C\Big[ -\frac{\delta^2}{1+\ka}+\ep^2_1+\frac{\ep_2^2}{\ka} \Big] + P\cdot\C[\delta^2,\ep^2_1,\ep^2_2].
\end{align}
Here the element $-\frac{\delta^2}{1+\ka}+\ep^2_1+\frac{\ep_2^2}{\ka}$ is the image of the (suitably normalized) Casimir element under $HC$.
For $\la=a\delta+b\ep_1+c\ep_2 \in \h^*$,  the eigenvalue of the Casimir element on $L_\la$ is given by
\begin{align}\label{action:cas}
&c_\la:=\Big( \la,-\frac{\delta^2}{1+\ka}+\ep^2_1+\frac{\ep_2^2}{\ka} \Big)
=-(1+\ka){a^2}+b^2+\ka{c^2}.
\end{align}

Recall the typical and atypical weights from \eqref{eq:atypicality}. The Weyl group $W$ of the Lie superalgebra $\g$ is understood as the Weyl group of the even subalgebra $\g_{\bar 0}$, i.e.,
\[
W\cong  \Z_2 \times \Z_2 \times \Z_2,
\]
which acts on $\h^*=\C^3$ by sign changes. The following statement is well known, and we give a self-contained proof. (Recall that two weights are linked if the irreducible modules corresponding to these highest weights lie in the same block in $\mathcal O$.)

\begin{lem}\label{lem:central}
Let $\ka\in\C\setminus\{0,-1\}$.
\begin{itemize}
\item[(1)] A typical weight and an atypical weight in $\h^*$ cannot be linked.
\item[(2)] If $f$ and $f'$ are two typical weights with $W f\cap W f'=\emptyset$, then the two sets of weights $\{wf|w\in W\}$ and $\{wg|w\in W\}$ cannot be linked.
\end{itemize}
\end{lem}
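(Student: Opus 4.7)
The strategy is to separate the central characters of the purported linked weights by exhibiting elements of the Harish-Chandra image \eqref{im:HC} on which they disagree. Since linkage of two $\rho$-shifted weights $\la, \mu$ forces $\chi_\la = \chi_\mu$, it suffices in each case to find some $F \in \mathrm{im}(HC)$ with $F(\la) \ne F(\mu)$. The central tool will be the polynomial $P$ of \eqref{form:P}, which belongs to $\mathrm{im}(HC)$ by taking the constant polynomial $1$ in the second summand of \eqref{im:HC}.

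First I would verify the key calculation: if one evaluates polynomial functions on $\h^*$ via the bilinear form $(\cdot,\cdot)$ (the convention implicit in \eqref{action:cas}), then for $\la = a\delta+b\ep_1+c\ep_2$,
$$P(\la) \;=\; \prod_{\pm,\pm}\bigl(-(1+\ka)a \pm b \pm \ka c\bigr),$$
which vanishes precisely when $\la$ is atypical in the sense of \eqref{atypical}. Part~(1) is then immediate: if $\la$ is atypical and $\mu$ is typical, then $P(\la)=0\ne P(\mu)$, so $\chi_\la\neq\chi_\mu$, and hence $\la,\mu$ cannot be linked.

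For part~(2), I assume $f = a\delta+b\ep_1+c\ep_2$ and $f' = a'\delta+b'\ep_1+c'\ep_2$ are typical with $\chi_f = \chi_{f'}$, and I derive $f' \in Wf$, contradicting $Wf\cap Wf'=\emptyset$. Evaluating the four elements $P,\ P\delta^2,\ P\ep_1^2,\ P\ep_2^2 \in \mathrm{im}(HC)$ at $f$ and $f'$ and dividing by the common nonzero value $P(f) = P(f')$, I obtain $a^2 = a'^{2}$, $b^2 = b'^{2}$, $c^2 = c'^{2}$. The coordinates of $f, f'$ in the basis $\{\delta,\ep_1,\ep_2\}$ thus agree up to independent sign changes; this is precisely the action of $W \cong \Z_2\times\Z_2\times\Z_2$ on $\h^*$, so $f' \in Wf$ as required.

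I do not expect any serious obstacle. The only point requiring care is bookkeeping: one must consistently use the bilinear-form evaluation (so that $\delta(\la) = -(1+\ka)a$ rather than $a$, etc.) so that both the atypicality condition \eqref{atypical} matches $P(\la) = 0$ and the scalars $(1+\ka)^2$, $1$, $\ka^2$ produced when reducing $P\delta^2$, $P\ep_1^2$, $P\ep_2^2$ against $P$ remain nonzero, which is guaranteed by the standing assumption $\ka \in \C\setminus\{0,-1\}$.
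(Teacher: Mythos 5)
Your proposal is correct and takes essentially the same route as the paper: part (1) rests on the observation that $P\in\text{im}(HC)$ vanishes exactly at the atypical ($\rho$-shifted) weights, and part (2) separates typical weights in distinct $W$-orbits by elements of $P\cdot\C[\delta^2,\ep_1^2,\ep_2^2]$. Your version merely phrases (2) contrapositively, evaluating $P,\,P\delta^2,\,P\ep_1^2,\,P\ep_2^2$ and dividing by the common nonzero value of $P$, which is the same separation argument as in the paper.
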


\begin{proof}
Let $f$ be typical and $g$ be atypical. Then $(f,P)\not=0$, while $(g,P)=0$. Thus, by \eqref{im:HC}, $f$ and $g$ have different central characters, and hence Part~ (1) follows.

Let $f$ and $f'$ be two typical weights and $W f\cap W f'=\emptyset$. Then either $(f,P)\not=(f',P)$, or else we can find an element $h\in\C[\delta^2,\ep^2_1,\ep^2_2]$ such that $(f,h)\not=(f',h)$. Either way, we can find an element in $P\cdot\C[\delta^2,\ep_1^2,\ep_2^2]$ separating $f$ and $f'$ so that they have different central characters. Part (2) now follows.
\end{proof}

\subsection{Homomorphisms associated to odd reflections}

We determine the singular vectors in a Verma module  associated to odd reflections. This is easy for $\D$ as its positive odd roots have small heights.

\begin{lem}
\label{lem:oddhom}
Assume $\ka\in\C\setminus\{0,-1\}$. Let $\la \in \h^*$ and $\gamma \in \Phi_{\bar 1}^+$ be such that $(\la,\gamma) =0$.
Then there is a nonzero singular vector
in $\M{\la}$ of weight $\la-\rho -\gamma$, and hence
$\Hom \big(\M{\la -\gamma}, \M{\la} \big) \neq 0$.
In particular, we have $[\M{\la}: \LL{\la -\gamma}] > 0$.
\end{lem}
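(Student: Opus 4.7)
The plan is to produce, for each of the four positive odd roots $\gamma\in\Phi^+_{\bar 1}$ of $\D$, an explicit nonzero vector $v_\gamma\in \M{\la}$ of weight $\la-\rho-\gamma$ and to check that $e_{\al_i}v_\gamma=0$ for $i=0,1,2$ under the hypothesis $(\la,\gamma)=0$. Since $\D$ has rank three and every positive odd root has height at most three in $\al_0,\al_1,\al_2$, the relevant weight spaces of $\M{\la}$ are each spanned by only a handful of PBW monomials in $f_0,f_1,f_2$ applied to $v_\la^+$, so $v_\gamma$ can be written down by inspection. Given such a nonzero singular vector, the universal property of Verma modules yields a nonzero (hence injective) homomorphism $\M{\la-\gamma}\to\M{\la}$; composing with the projection onto $\LL{\la-\gamma}$ then forces $[\M{\la}:\LL{\la-\gamma}]>0$.

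When $\gamma=\al_0$ is the simple odd root, the choice $v_{\al_0}:=f_0 v_\la^+$ works: the Chevalley relations give $[e_1,f_0]=[e_2,f_0]=0$, while $e_0 f_0 v_\la^+=(\la-\rho)(h_0)v_\la^+$ is a scalar multiple of $(\la,\al_0)=(\la,\gamma)$, using $\rho=-\al_0$ and $(\al_0,\al_0)=0$, hence vanishes by hypothesis. For a non-simple $\gamma$ the naive choice $f_\gamma v_\la^+$ is not singular, but a short linear combination with one extra PBW monomial cures this. For example, when $\gamma=\delta+\ep_1-\ep_2=\al_0+\al_1$ I would take
\[
v_\gamma := (\la-\rho)(h_0)\, f_\gamma v_\la^+ + f_1 f_0 v_\la^+,
\]
and use $f_\gamma=[f_0,f_1]$ together with \eqref{eq:comm:rel} to verify: $e_0 v_\gamma=0$ identically (the two PBW monomials contribute cancelling multiples of $f_1 v_\la^+$); $e_2 v_\gamma=0$ because $e_2$ commutes with each of $f_0,f_1,f_\gamma$; and $e_1 v_\gamma$ is a scalar multiple of $(\la,\gamma)$, hence vanishes. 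The case $\gamma=\delta-\ep_1+\ep_2$ is analogous with $\al_1$ and $\al_2$ interchanged, and the height-three case $\gamma=\delta+\ep_1+\ep_2$ requires an at most three-term combination of PBW monomials such as $f_\gamma v_\la^+$, $f_2 f_{\delta+\ep_1-\ep_2} v_\la^+$, and $f_2 f_1 f_0 v_\la^+$; the isomorphisms \eqref{D:iso} may be used to shortcut some of these computations by permuting the roles of the three $\sll$-factors.

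Nonvanishing of $v_\gamma$ in $\M{\la}$ is automatic from PBW: even when the leading coefficient $(\la-\rho)(h_0)$ vanishes (which means $\la$ is simultaneously atypical with respect to $\al_0$), the trailing PBW monomial $f_1 f_0 v_\la^+$ is a nonzero basis element of $\M{\la}$. The main obstacle is pinning down the correct normalization in the non-simple cases so that the three singularity conditions $e_i v_\gamma=0$ collapse to the single atypicality condition $(\la,\gamma)=0$; my safeguard is to arrange every coefficient of $v_\gamma$ as a polynomial in $\la$ so that a PBW specialization at any point of the hyperplane $(\la,\gamma)=0$ immediately certifies nonvanishing.
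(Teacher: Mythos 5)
Your proposal follows essentially the same route as the paper's proof: for each of the four positive odd roots one writes an explicit vector in the small weight space of weight $\la-\rho-\gamma$ and checks singularity directly using \eqref{eq:comm:rel}, and your vectors for $\gamma=\al_0$ and $\gamma=\delta+\ep_1-\ep_2$ are correct (the latter agrees with the correctly normalized singular vector, with $e_0$ annihilating it identically and $e_1$ producing a multiple of $(\la,\gamma)$, and its nonvanishing is clear from the PBW coefficient of $f_1f_0v^+_\la$). Two small corrections: in the height-three case the relevant weight space is four-dimensional, spanned by $f_2f_1f_0v^+_\la$, $f_2[f_0,f_1]v^+_\la$, $f_1[f_0,f_2]v^+_\la$ and $[f_1,[f_0,f_2]]v^+_\la$, and the singular vector generically involves all four monomials (the paper's formula has four terms), so your ``at most three-term'' ansatz omitting $f_1[f_0,f_2]v^+_\la$ would not close as written, although the linear-system method you describe produces the correct four-term vector. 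Also, the parenthetical ``hence injective'' is false for Lie superalgebras: for $\gamma=\al_0$ the map sends $v^+_{\la-\gamma}\mapsto f_0v^+_\la$ and kills $f_0v^+_{\la-\gamma}$ because $f_0^2=0$; injectivity is not needed, since the image of a nonzero map is a highest weight module of highest weight $\la-\gamma-\rho$, whose head $\LL{\la-\gamma}$ already yields $[\M{\la}:\LL{\la-\gamma}]>0$.
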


\begin{proof}
Write $\la=a\delta+b\ep_1+c\ep_2$, where $a,b,c\in\Z$. The identity $(\la,\gamma)=0$ puts relations on the integers $a$, $b$, and $c$ depending on $\gamma$. Recall that $v^+_\la$ denotes a highest weight vector in $M_\la$ of highest weight $\la-\rho$.
We shall explicitly write down the formulae for singular vectors case-by-case in (1)--(4) below. Using the identities in \eqref{eq:comm:rel} it is straightforward to verify that these vectors are indeed singular.

(1) If $(\la,\gamma)=0$ and $\gamma=\delta-\ep_1-\ep_2$ is the simple odd root $\alpha_0$, then $f_0 v^+_\la$ is a singular vector of weight $\la-\rho-\gamma$.

(2) Assume that $\gamma=\delta+\ep_1-\ep_2$ and $(\la,\gamma)=0$. Then $$\big(f_0f_1+b[f_0,f_1]\big)v^+_\la$$ is a singular vector of weight $\la-\rho-\gamma$.

(3) Next suppose that $\gamma=\delta-\ep_1+\ep_2$ and $(\la,\gamma)=0$. Then $$\big(f_0f_2+c[f_0,f_2]\big)v^+_\la$$ is a singular vector of weight $\la-\rho-\gamma$.

(4) Finally assume that $\gamma=\delta+\ep_1+\ep_2$ and $(\la,\gamma)=0$. Then
\begin{align*}
\Big(f_0f_1f_2+bf_2[f_0,f_1]+cf_1[f_0,f_2]-\left(b+c+bc\right)[f_1,[f_0,f_2]]\Big)v^+_\la
\end{align*}
is a singular vector of weight $\la-\rho-\gamma$.
\end{proof}

\begin{rem}
One can show that the singular vectors of those given weights in Lemma~\ref{lem:oddhom} are unique up to a scalar multiple, by an elementary albeit somewhat tedious calculation. We do not need the uniqueness result in this paper.
\end{rem}

 \subsection{Homomorphisms associated to even reflections}

For $\gamma\in\Phi^+_{\bar 0}$, denote by $s_\gamma$ the reflection associated with $\gamma$. That is, for $\nu\in\h^*$, we have $s_\gamma\nu=\nu-\langle\nu,h_{\gamma}\rangle\gamma$.
We determine the singular vectors in a Verma module associated to such even reflections, including the most interesting one associated to the even non-simple reflection.
Denote by $\N$ the set of nonnegative integers.

\begin{lem}
\label{lem:evenhom}
For any $\ka\in\C\setminus\{0,-1\}$ let $\la \in \h^*$ and $\gamma \in \Phi_{\bar 0}^+$. Suppose that $\langle\la,h_{\gamma}\rangle =n\in\N$.
Then there is a nonzero singular vector in $\M{\la}$ of weight $\la-\rho-n\gamma$, and hence
$\Hom \big(\M{s_\gamma \la}, \M{\la} \big) \neq 0.$
In particular, we have $[\M{\la}: \LL{s_\gamma \la}] > 0.$
\end{lem}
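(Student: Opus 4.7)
My plan splits the argument according to whether $\gamma \in \Phi_{\bar 0}^+$ is one of the simple even roots $\alpha_1 = 2\ep_1$, $\alpha_2 = 2\ep_2$ or the non-simple root $2\delta$. In the simple case I would write down $f_i^n v_\la^+$ as the singular vector, in the spirit of Lemma~\ref{lem:oddhom}. Since $\alpha_0 - \alpha_i$ and $\alpha_{3-i} - \alpha_i$ are not roots, the Chevalley--Serre relations give $[e_j, f_i] = 0$ for $j \neq i$, so $e_0$ and $e_{3-i}$ annihilate $f_i^n v_\la^+$ trivially. For the $e_i$-action, the standard $\sll$-computation $e_i f_i^k v = k(\mu(h_i) - k + 1) f_i^{k-1} v$, applied with $\mu = \la - \rho$ and $\mu(h_i) = n - 1$ (using $\rho(h_i) = 1$), shows $e_i f_i^n v_\la^+ = 0$.

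The substantive case is $\gamma = 2\delta$. Here the naive candidate $f_{2\delta}^n v_\la^+$ already fails: a short super-Jacobi computation starting from $f_{2\delta} = [[f_0, f_1], [f_0, f_2]]$ gives $[e_0, f_{2\delta}] = -2 f_{\delta+\ep_1+\ep_2}$, and since $f_{2\delta}$ commutes with $f_{\delta+\ep_1+\ep_2}$ (their weight sum is not a root), one computes $e_0 f_{2\delta}^n v_\la^+ = -2n\, f_{\delta+\ep_1+\ep_2} f_{2\delta}^{n-1} v_\la^+ \neq 0$. The plan is therefore to add lower-order corrections built from odd root vectors: construct $X_n \in U(\n_-)_{-2n\delta}$ with leading term $f_{2\delta}^n$ such that $X_n v_\la^+$ is annihilated by all of $e_0, e_1, e_2$. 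For the key case $n = 1$, the weight space $U(\n_-)_{-2\delta}$ admits the five-element PBW basis
\[
\{ f_{2\delta},\ f_0 f_{\delta+\ep_1+\ep_2},\ f_{\delta+\ep_1-\ep_2} f_{\delta-\ep_1+\ep_2},\ f_0 f_{\delta+\ep_1-\ep_2} f_2,\ f_0 f_{\delta-\ep_1+\ep_2} f_1 \}.
\]
Writing $X_1$ as a combination of these vectors with coefficients rational in the components $b, c$ of $\la = \delta + b\ep_1 + c\ep_2$, the three conditions $e_i X_1 v_\la^+ = 0$ become a linear system in the five coefficients, whose solvability I would verify directly. The auxiliary bracket identities needed, such as $[e_1, f_{\delta+\ep_1+\ep_2}] = f_{\delta-\ep_1+\ep_2}$, $[e_1, f_{\delta+\ep_1-\ep_2}] = -f_0$ and their $e_2$-analogues, follow from super-Jacobi in the same way as the identities underlying Lemma~\ref{lem:oddhom}. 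For general $n \geq 2$, an analogous explicit construction in the larger PBW weight space $U(\n_-)_{-2n\delta}$ should produce the desired $X_n$, with corrections that generalize those of $X_1$.

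The main obstacle is the $\gamma = 2\delta$ case, and within it the $e_0$-condition: acting by $e_0$ on the relevant monomials introduces non-trivial $h_0$-eigenvalues such as $\tfrac{b - 1 + \ka(c - 1)}{2}$ on lowered vectors like $f_{\delta+\ep_1+\ep_2} v_\la^+$, and the resulting linear system must remain consistent as $b, c, n$ vary. This is precisely the novelty flagged in the introduction, namely the Verma-module homomorphism attached to the non-simple even reflection, and it seeds the super analog of Verma's theorem that the authors announce to pursue in a more general context. Once the nonzero singular vector $X_n v_\la^+$ is in hand, the universal property of $\M{s_\gamma \la}$ yields the desired nonzero homomorphism $\M{s_\gamma \la} \to \M{\la}$ sending the canonical highest weight vector to $X_n v_\la^+$, and the composition-factor statement $[\M{\la} : \LL{s_\gamma \la}] > 0$ follows immediately.
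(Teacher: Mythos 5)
Your treatment of the simple even roots is fine and coincides with the paper's (the $\sll$-computation with $\langle\la-\rho,h_\gamma\rangle=n-1$, plus the vanishing of the cross brackets). But for the essential case $\gamma=2\delta$ your argument has a genuine gap: you never actually produce the singular vector. For $n=1$ you set up a linear system in the five PBW monomials of weight $-2\delta$ and say you ``would verify'' its solvability; for general $n$ you only assert that ``an analogous explicit construction \ldots should produce the desired $X_n$.'' The consistency of that system for all $n$, all $b,c$, and all $\ka\in\C\setminus\{0,-1\}$ \emph{is} the content of the lemma, and it is not automatic: as $n$ grows the weight space $U(\n_-)_{-2n\delta}$ acquires more monomials, the conditions coming from $e_0,e_1,e_2$ land in several weight spaces of $\M{\la}$, and there is no a priori reason the number of independent conditions stays below the number of unknowns, nor that a solution with leading term $f_{2\delta}^n$ (hence nonzero) exists. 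Deferring exactly this verification means the hard half of the statement is unproved.

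The paper sidesteps the undetermined-coefficients problem with a uniform closed-form construction: it takes the $\g_{\oo}$-singular vector $f_{2\delta}^{n+2}v_\la^+$ (note the shift to $n+2$, coming from $\langle\la-\rho+\rho_{\oo},\hdel\rangle=n+2$) and applies the product of all four odd raising operators, $u=e_{\delta-\ep_1-\ep_2}e_{\delta+\ep_1-\ep_2}e_{\delta+\ep_1+\ep_2}e_{\delta-\ep_1+\ep_2}f_{2\delta}^{n+2}v_\la^+$ as in \eqref{sing:even}. Singularity then follows from short commutation arguments (using $e_{2\ep_i}e_{-2\delta}^{\,n+2}v_\la^+=0$, the vanishing of the relevant brackets, $e_\nu^2=0$, and the symmetry of $u$ under permuting the four odd roots), and nonvanishing is checked once and for all by the explicit expansion \eqref{u:singular}, whose leading coefficient is visibly nonzero for $n\in\N$, $\ka\neq-1$. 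If you want to salvage your route, you would have to carry out the rank computation of your linear system uniformly in $n,b,c,\ka$ (or exhibit the solution explicitly, which essentially reproduces \eqref{u:singular}); as written, the proposal is a plan rather than a proof.
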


\begin{proof}
First, recall that $v^+_\la$ is a nonzero highest weight vector in $M_\la$ of highest weight $\la-\rho$.

In the case when $\gamma$ is a simple root (i.e., $\gamma =2\ep_2$ or $2\ep_3$), we have
$
e_\gamma f_{\gamma}^n v_\la^+=0,
$
and hence $f_{\gamma}^n v_\la^+$ is the desired singular vector in $\M{\la}$. 

Now consider the case $\gamma =2\delta$ so that $\la=n\delta+b\ep_1+c\ep_2$, for some $b,c\in\Z$. Note that we have
\begin{align*}
\langle\la-\rho+\rho_{\bar 0},\hdel\rangle=\langle\la,\hdel\rangle+\langle\rho_{\bar 0}-\rho,\hdel\rangle = n+2,
\end{align*}
and thus $e_{2\delta}f_{2 \delta}^{n+2} v_\la^+=0$.
Now set
\begin{equation}  \label{sing:even}
u =e_{\delta-\ep_1-\ep_2}e_{\delta+\ep_1-\ep_2}e_{\delta+\ep_1+\ep_2}e_{\delta-\ep_1+\ep_2} f_{2 \delta}^{n+2} v_\la^+.
\end{equation}
Clearly $u \in \M{\la}$ has weight ${s_\gamma \la}-\rho$.
The lemma follows once we verify that
\begin{enumerate}
\item
$u \neq 0$;
\item
$u$ is a singular vector in $\M{\la}$.
\end{enumerate}

Note that
\begin{align*}
&[e_{2\ep_1}, e_{\delta-\ep_1-\ep_2}] =e_{\delta+\ep_1-\ep_2}, \quad
[e_{2\ep_1}, e_{\delta-\ep_1+\ep_2}] =e_{\delta+\ep_1+\ep_2},\quad
e_{2\ep_1} e_{-2 \delta}^{n+2} v_\la^+ =0,\\
&[e_{2\ep_1}, e_{\delta+\ep_1-\ep_2}] =[e_{2\ep_1}, e_{\delta+\ep_1+\ep_2}]  =[e_{2\ep_1}, f_{2\delta}] =e^2_\nu=0,\quad\nu=\delta\pm\ep_1\pm\ep_2.
\end{align*}
From this we see that $e_{2\ep_1} u=0$.

Note that for any permutation of four letters $\tau$ permuting the four roots $\delta\pm\ep_1\pm\ep_2$,  the vector $e_{\tau(\delta-\ep_1-\ep_2)}e_{\tau(\delta+\ep_1-\ep_2)}e_{\tau(\delta+\ep_1+\ep_2)}e_{\tau(\delta-\ep_1+\ep_2)} f_{2 \delta}^{n+2} v_\la^+$
is equal to the vector in \eqref{sing:even} up to a sign. Now, an almost identical argument gives us $e_{2\ep_2} u=0$ as well.

Clearly we also have $e_{\delta-\ep_1-\ep_2} u=0$. Hence (2) follows.

Part (1) follows by an expansion of $u$ in the form $U(\n_-) v_\la^+$. Indeed, by a tedious direct computation, the vector in \eqref{sing:even} is a nonzero scalar multiple of the following vector:
\begin{align}
  \label{u:singular}
\begin{split}
u=\bigg((\beta\gamma+\xi)&f_{2\delta}^n
-n(1+\ka)\left(\beta f_{\delta+\ep_1-\ep_2}f_{\delta-\ep_1+\ep_2}f_{2\delta}^{n-1} + \eta f_{\delta+\ep_1+\ep_2}f_0f^{n-1}_{2\delta}\right)\\
&+n(1+\ka)\left(f_1f_0f_{\delta-\ep_1+\ep_2}f_{2\delta}^{n-1} +f_2f_0f_{\delta+\ep_1-\ep_2}f_{2\delta}^{n-1}\right)
\\&-(1+\ka)^2n(n-1)f_{\delta+\ep_1+\ep_2}f_0f_{\delta+\ep_1-\ep_2}f_{\delta-\ep_1+\ep_2}f_{2\delta}^{n-2}\bigg)v^+_\la,
\end{split}
\end{align}
where we have used the following notation:
\begin{align*}
&2\beta= (2+n)(1+\ka)+b+\ka c,\quad
2\gamma=(2-n)(1+\ka)-b+\ka c-2,\\
&2\eta= (4-n)(1+\ka)-b+\ka c,\quad
2\xi=(1+n)(1+\ka)+b+\ka c.
\end{align*}
From this formula we see that the vector $u$ is nonzero, since $n\in\N$ and $\ka\not=-1$.
\end{proof}

\begin{rem}
  \label{rem:singular}
The singular vector formula \eqref{sing:even} is inspired by a similar construction in \cite[\S5]{KW95} in the setting of affine superalgebras.
It can lead to more general homomorphisms between Verma modules for basic Lie superalgebras beyond what has been known (cf. \cite[Chapter~9]{Mu12}, under the restrictive Hypothesis~9.2.4). This will be pursued in greater generality in a separate work.
\end{rem}

 \section{Character formulae in $\OO$, for $\ka \not\in \Q$}
  \label{sec:irrational}

In this section we assume  $\ka \not\in \Q$. In this section we obtain the Verma flag multiplicities of every tilting module and projective module in $\OO$. We also describe the composition factors of every Verma module. This gives a solution to the irreducible character problem in $\mathcal O$.

\subsection{Classification of blocks}

Set
\[
\Wt_0 =\{ (\pm n, \pm n, \pm n) \mid n\in \Z_{>0}\} \cup \{0,0,0\}.
\]
The Bruhat ordering $\preceq$ on the  lattice $\Z^3$  is defined to be the partial ordering which is the transitive closure of the following relations:
\begin{enumerate}
\item
$(-a,b,c) \prec (a,b,c)$ if $a>0$,
\item
$(a,-b,c) \prec (a,b,c)$ if $b>0$,
\item
$(a,b,-c) \prec (a,b,c)$ if $c>0$,
\item
$( n, \sigma n, \tau n) \prec ( n+1, \sigma (n+1), \tau (n+1) )$, for any $n\in\Z$ and $\sigma, \tau =\pm$.
\end{enumerate}
This restricts to a Bruhat ordering on $\Wt_0$.
Via the lattice isomorphism \eqref{eq:lattice}, this defines a Bruhat ordering on the weight lattice $X$. We note that the Bruhat ordering thus defined is consistent with the Harish-Chandra homomorphism; cf. \eqref{im:HC}. Furthermore, by Lemmas \ref{lem:oddhom} and \ref{lem:evenhom}, we have $f\prec g$ if and only if there exists a sequence of weights $f=f_1\prec f_2\prec \cdots\prec f_k=g$ such that $\text{Hom}_\g(M_{f_i},M_{f_{i+1}})\not=0$, for all $i=1,\cdots k-1$.

\begin{prop}
  \label{prop:blocks:irr}
Assume that $\ka\not\in\Q$.
\begin{enumerate}
\item
Each typical block is of the form $\Bl_{n,m,\ell}$,
whose simple objects are the simple modules  $\LL{\pm n, \pm m, \pm\ell}$,
for some fixed $n, m,\ell \in \N$, not all equal.

 \item
There is one unique atypical block (i.e., the principal block) $\Bl_0$, whose simple objects are the simple modules of the form $\LL{a,b,c}$, for  $(a,b,c)\in \Wt_0$.
\end{enumerate}
\end{prop}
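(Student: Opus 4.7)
My plan is to (a) determine the integer atypical weights, (b) separate blocks by central character, and (c) exhibit explicit chains of Verma homomorphisms realizing linkage inside each block. For $\ka\notin\Q$, each of the four atypicality equations $\ka(x\pm z)+(x\pm y)=0$ in \eqref{atypical} splits, by $\Q$-linear independence of $\{1,\ka\}$, into the simultaneous conjunction $x\pm y=0$ and $x\pm z=0$, and running over the four sign choices yields exactly $\Wt_0$. The remaining integer weights are typical, partitioning into $W$-orbits $\{(\pm n,\pm m,\pm\ell)\}$ with $(n,m,\ell)\in\N^3$ not all equal, so the two parts reduce to showing each such $W$-orbit is one block and that $\Wt_0$ is one block.

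For part~(1), Lemma~\ref{lem:central}(1) separates typical from atypical and Lemma~\ref{lem:central}(2) separates distinct typical $W$-orbits from one another, so each typical block sits inside a single $W$-orbit. To promote this to equality I would invoke Lemma~\ref{lem:evenhom} with the three simple even reflections $s_{2\delta},s_{2\ep_1},s_{2\ep_2}$. Since $\langle\la,\hdel\rangle,\langle\la,\hone\rangle,\langle\la,\htwo\rangle$ equal the three coordinates of $\la$, starting from a dominant representative $(|n|,|m|,|\ell|)$ and applying any subset of these reflections in sequence produces a chain of Verma embeddings---at each step the coordinate being flipped is still $\ge 0$ because the previously flipped coordinates are untouched---and these chains collectively sweep out the entire $W$-orbit.

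For part~(2), I would first check that every $\la\in\Wt_0$ shares the central character $\chi_0$: $(\la,\la)=0$ gives $c_\la=0$, and $(\la,\gamma)=0$ for the unique positive odd root $\gamma=\delta+\sigma\ep_1+\tau\ep_2$ determined by the signs of $\la$ gives $P(\la)=0$, so every element in the image \eqref{im:HC} evaluates identically on all of $\Wt_0$. To link $\Wt_0$ into one block, I would combine Lemmas~\ref{lem:oddhom} and~\ref{lem:evenhom}. Applied at $\la=(n,\sigma n,\tau n)$ with this $\gamma$, Lemma~\ref{lem:oddhom} yields $\Hom(\M{\la-\gamma},\M{\la})\neq 0$, where $\la-\gamma=(n-1,\sigma(n-1),\tau(n-1))$ remains on the same diagonal; the isotropy $(\gamma,\gamma)=0$ implies $(\la+\gamma,\gamma)=0$, yielding the opposite-direction hom, so each diagonal is linked for all $n\in\Z$. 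Lemma~\ref{lem:evenhom} then switches between the four diagonals (and the $8$ sign variants at each level), connecting all of $\Wt_0$. The main technical subtlety will be orchestrating the reflections so that the non-negativity hypothesis of Lemma~\ref{lem:evenhom} is satisfied at every stage and all four diagonals are reachable; the isotropy of odd roots, which makes the odd-reflection chains reversible, is the essential feature.
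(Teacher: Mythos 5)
Your proposal is correct and follows essentially the same route as the paper: identify $\Wt_0$ as the full set of atypical integral weights using the irrationality of $\ka$, separate the candidate blocks by central characters via Lemma~\ref{lem:central}, and link the weights within each block by the Verma-module homomorphisms of Lemmas~\ref{lem:oddhom} and \ref{lem:evenhom}; you simply spell out the chains of reflections that the paper leaves implicit. (Two cosmetic points: $s_{2\delta}$ is not a simple reflection, though Lemma~\ref{lem:evenhom} covers it anyway, and in part~(2) the even reflections are not actually needed, since the four diagonals $\{(n,\sigma n,\tau n)\mid n\in\Z\}$ already meet at the origin.)
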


\begin{proof}
We first determine the atypical integral weights.
Recall the atypicality condition from \eqref{atypical} that a $\rho$-shifted weight $f=(x,y,z)$ is atypical if and only if $\ka (x\pm z) + (x \pm y)=0.$
Thanks to $\ka \not\in \Q$, we conclude that $f=(x,y,z)$ is atypical if and only if $|x| =|y|=|z|$.
Hence $\Wt_0$ is precisely the set of atypical weights.

It follows that the weights of the form $(\pm n, \pm m, \pm\ell)$,
for some fixed $n, m,\ell \in \N$ (and up to $8$ possible signs), not all equal, are typical.

By  Lemmas \ref{lem:central}, \ref{lem:oddhom} and \ref{lem:evenhom}  we conclude that the subcategories $\Bl_0$ and $\Bl_{n,m,\ell}$, for $n,m,\ell\in\N$ not all equal, are indecomposable and hence they are blocks. Furthermore, they are clearly pairwise distinct.
\end{proof}

\subsection{Typical blocks}\label{sec:block:typ}

The typical blocks in $\OO$ are very easy to describe completely.
Each typical block $\Bl_{n,m,\ell}$ as  in Proposition~\ref{prop:blocks:irr} has $2^r$ simple modules, where
\begin{equation}
  \label{eq:r}
  r =r(n,m,\ell) \in \{1,2,3\}
\end{equation}
is the number of nonzero integers (counted with multiplicities) among $\{n, m,\ell\}$. For example, we have $r(1,1,0) =2$.

\begin{prop} [Gorelik \cite{Gor02}]
  \label{prop:typical}
Let $n, m,\ell \in \N$, not all equal, and let $r$ be given as in \eqref{eq:r}.
Then the typical block $\Bl_{n,m,\ell}$ is equivalent to the principal block for the Lie algebra which is a direct sum of $r$ copies of $\sll$.
\end{prop}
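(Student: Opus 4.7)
The plan is to invoke Gorelik's equivalence \cite{Gor02} to reduce the statement to a block of $\OO(\g_\oo)$, where $\g_\oo \cong \sll \oplus \sll \oplus \sll$, and then to identify that block explicitly as a tensor product of $\sll$-blocks.

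First I would verify that every weight in $\Bl_{n,m,\ell}$ is \emph{strongly typical}, i.e., that its central character is not shared by any atypical weight. By the description of $\mrm{im}(HC)$ in \eqref{im:HC}, two weights share a central character only if they take equal values on both the (normalized) Casimir element and the polynomial $P$ of \eqref{form:P}. Lemma~\ref{lem:central}(1) already establishes that $P$ separates typical from atypical weights, which gives strong typicality for the whole block. Gorelik's theorem then produces a category equivalence between $\Bl_{n,m,\ell}$ and the block $\Bl^{\g_\oo}$ of $\OO(\g_\oo)$ containing the $\g_\oo$-highest weight whose central character matches $n\delta+m\ep_1+\ell\ep_2$.

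Since $\g_\oo \cong \sll \oplus \sll \oplus \sll$, the category $\OO(\g_\oo)$ factors as a tensor product of three copies of $\OO(\sll)$, and $\Bl^{\g_\oo}$ factors accordingly into three $\sll$-blocks determined by the coordinates $n,m,\ell$. A zero coordinate yields the singular block of $\sll$, which has a single simple object and is equivalent to the category of vector spaces, so it is absorbed by the tensor product; a nonzero coordinate yields the principal (two-simple) block of $\sll$. Removing the trivial singular factors leaves precisely the principal block of the direct sum of $r$ copies of $\sll$, as claimed, and the count $2^r$ of simple objects matches on both sides.

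The main obstacle is the strong typicality verification, but as noted this is immediate from Lemma~\ref{lem:central}(1) combined with \eqref{im:HC}. A self-contained alternative (hinted at by the authors' remark that typical characters ``can also be obtained directly'') would construct the equivalence explicitly via parabolic induction from $\g_\oo+\n_\one^+$: for typical $\la$, all odd pairings $(\la+\rho,\beta)$ are nonzero, so by Lemma~\ref{lem:oddhom} no odd singular vectors arise in the induced Verma modules, and one checks that induction and the corresponding ``$\g_\one^-$-coinvariants'' restriction are mutually inverse equivalences between the relevant blocks of $\OO(\g_\oo)$ and $\OO$.
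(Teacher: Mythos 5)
Your main argument is the same as the paper's: the proposition is quoted from Gorelik's equivalence for strongly typical blocks, which is all the paper does (adding only the remark that typical and strongly typical coincide for $\D$ and that a direct proof via translation functors is possible). Two details in your reduction need repair, though neither is fatal. First, ``strongly typical'' is not the condition that the central character is shared by no atypical weight: in Gorelik's sense it requires $(\la+\rho,\beta)\neq 0$ for \emph{all} odd roots $\beta$, the extra requirement beyond typicality concerning only the non-isotropic odd roots. The correct (and immediate) reason the hypothesis holds here is that every odd root $\delta\pm\ep_1\pm\ep_2$ of $\D$ is isotropic, so typical already means strongly typical; your appeal to Lemma~\ref{lem:central}(1) proves a true but different statement, and that style of argument would not establish strong typicality for a superalgebra with a non-isotropic odd root such as $G(3)$. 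Second, the coordinate-wise identification ``zero coordinate $\leftrightarrow$ singular $\sll$-factor'' is not automatic, because the dictionary between the super central character and its even mate in Gorelik's equivalence is not the identity on $\rho$-shifted labels ($\rho_{\bar 1}$-shifts enter, with $\rho_{\bar 1}=2\delta$). The clean argument is the counting you mention only as a check: the equivalence preserves the number of simples, $\Bl_{n,m,\ell}$ has $2^r$ of them by Proposition~\ref{prop:blocks:irr}, any block of category $\OO$ for $\sll$ with one simple is semisimple and any block with two simples is equivalent to the principal block, so exactly $r$ of the three factors are regular and the block is equivalent to the principal block of $r$ copies of $\sll$.

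Your proposed self-contained alternative, parabolic induction from $\g_\oo+\n_\one^+$, does not make sense for $\D$: this is not a subalgebra, since $[\g_\oo,\n_\one^+]$ contains negative odd root vectors (for instance $[f_{2\delta},e_{\delta+\ep_1+\ep_2}]$ is a nonzero multiple of $f_0$, of weight $-\delta+\ep_1+\ep_2$). As $\D$ is of type II, $\g_\oo$ is not the Levi of any parabolic, so there is no Kac-type induction of this form; this is precisely why the paper's suggested direct route is via translation functors instead.
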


Proposition~\ref{prop:typical} follows from the work of Gorelik \cite{Gor02} on strongly typical blocks of general type II Lie superalgebras, where for $D(2|1;\ka)$ the notions of typical and strongly typical coincide.
In our setting of $\D$, Proposition~\ref{prop:typical} can be proved directly by using translation functors
(this will be an easier version of the application of translation functors below, and so we shall not get into the detail here). Indeed the Verma flags of tilting modules and projective modules in $\Bl_{n,m,\ell}$ can be computed explicitly (which fits well with Proposition~\ref{prop:typical}), and this is all we need to know about typical blocks in this paper.

We recall that there are two tilting modules in  the principal block for $\sll$: the Verma module of anti-dominant highest weight, and the indecomposable module with a Verma flag of length two. Therefore the Verma flag structures of tilting modules in any typical block $\Bl_{n,m,\ell}$ follow from this and Proposition~\ref{prop:typical}. We will take this for granted and will not refer to this simple fact explicitly every time we use it.

\subsection{Tilting modules in the principal block $\Bl_0$}
 \label{subsec:tilting:irr}

The translation functors below from a typical block
to the atypical block $\Bl_0$ are obtained by tensoring with the adjoint module.

Let us describe the simple strategy which we use to obtain the Verma flags of tilting modules below.
The basic fact we use is that a translation functor $\E$ sends a tilting module $T$ to
a direct sum of tilting modules.
When a Verma flag of $\TT{}$ is known,  a Verma flag of $\E\TT{}$ can be read off easily.
The main point is to choose a suitable translation functor $\E$ applying to
a suitable titling module $\TT{}$ so the resulting module $\E\TT{}$ is indecomposable
(and hence it must be a  tilting module).
It turns out that we can always do that (with one exception),
largely thanks to the fact that $\D$ is of low rank $3$.
We shall apply the duality \eqref{tiltingD} repeatedly.

We introduce the following shorthand notations, for $n\ge 1$:
\[
\TT{n}^{\pm\pm\pm}  = \TT{\pm n, \pm n, \pm n}, \quad
\M{n}^{\pm\pm\pm}  = \M{\pm n, \pm n, \pm n}, \quad
\PP{n}^{\pm\pm\pm}  = \PP{\pm n, \pm n, \pm n}, \quad
\LL{n}^{\pm\pm\pm}  = \LL{\pm n, \pm n, \pm n}.
\]
We shall write $\TT{\la} =\sum_{\mu} t_{\la\mu} \M{\mu}$ to denote that
$\TT{\la}$ admits a Verma flag with $(\TT{\la}:\M{\mu})=t_{\la\mu}$.
The Verma flags of tilting modules are arranged in a weight decreasing order.

\begin{thm}
  \label{thm:tilting1}
For $n\ge 1$, we have the following Verma flags for tilting modules in $\Bl_0$:
\begin{align*}
\TT{n}^{---}  =
& \M{n}^{---}   + \M{n+1}^{---},
\\
\TT{n}^{--+}  =
& \M{n}^{--+} +\M{n}^{---}  +\M{n+1}^{--+}   + \M{n+1}^{---},
\\
\TT{n}^{-+-}  =
& \M{n}^{-+-} +\M{n}^{---}  +\M{n+1}^{-+-}   + \M{n+1}^{---},
\\
\TT{n}^{-++}  =
& \M{n}^{-++} +\M{n}^{-+-} + \M{n}^{--+}  +\M{n}^{---}
 +\M{n+1}^{-++}    +\M{n+1}^{-+-}   +\M{n+1}^{--+}   + \M{n+1}^{---}.
\end{align*}
\end{thm}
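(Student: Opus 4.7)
The plan is to apply the three-step strategy from the introduction, building the four tilting modules in order of increasing dominance along the part of the $W$-orbit at level $n$ with negative first coordinate: first $\TT{n}^{---}$, then $\TT{n}^{--+}$ and $\TT{n}^{-+-}$, and finally $\TT{n}^{-++}$. The translation functor will be $\E = \mathrm{pr}_{\Bl_0} \circ (-\otimes \g)$, obtained by tensoring with the $17$-dimensional adjoint $\g$ and projecting to $\Bl_0$.

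For $\TT{n}^{---}$ I would choose as seed $\M{\mu}$ with $\mu = (-(n+2),-n,-n)$, an antidominant weight in the typical block $\Bl_{n+2,n,n}$, so that $\M{\mu}$ is simple and equal to its own tilting module. A direct enumeration of the $17$ weights $\beta$ of $\g$ shows that only $\beta = 2\delta$ and $\beta = \delta-\ep_1-\ep_2$ produce shifts $\mu+\beta$ lying in $\Wt_0$, namely $(-n,-n,-n)$ and $(-(n+1),-(n+1),-(n+1))$; hence $\E\M{\mu}$ has Verma flag $\M{n}^{---} + \M{n+1}^{---}$, each factor of multiplicity one. The same kind of enumeration applied to the Verma flags of the tilting modules constructed in previous stages should produce the stated Verma flags of $\E\TT{\mu}$ in the other three cases.

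The main obstacle is Step~(3), indecomposability. For $\TT{n}^{---}$ the argument runs as follows. Lemma~\ref{lem:oddhom} applied to the odd root $\gamma = \delta+\ep_1+\ep_2$ yields a nonzero Verma homomorphism $\M{(n,n,n)} \to \M{(n+1,n+1,n+1)}$, since the orthogonality $\bigl((n+1,n+1,n+1),\gamma\bigr) = (n+1)\bigl((\delta,\delta)+(\ep_1,\ep_1)+(\ep_2,\ep_2)\bigr) = (n+1)\bigl(-(1+\ka)+1+\ka\bigr) = 0$ holds for every $n$. This yields $[\M{(n+1,n+1,n+1)}:\LL{(n,n,n)}]>0$, which via Soergel duality~\eqref{tiltingD} translates to $(\TT{n}^{---}:\M{n+1}^{---})>0$, so the true $\TT{n}^{---}$ has Verma flag of length at least $2$. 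Since $\E\M{\mu}$ is a direct sum of tilting modules, has Verma flag length exactly $2$, and has $\M{n}^{---}$ as its top, $\TT{n}^{---}$ must occur as a summand, and the length constraint forces $\E\M{\mu}=\TT{n}^{---}$.

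For the subsequent $\TT{n}^{--+}$, $\TT{n}^{-+-}$ and $\TT{n}^{-++}$ the same pattern applies: apply $\E$ to the previously-constructed tilting, enumerate the Verma shifts that survive projection to $\Bl_0$, and then invoke Lemma~\ref{lem:oddhom} with the remaining odd roots $\delta\pm\ep_1\pm\ep_2$ (each orthogonal to suitable weights in $\Wt_0$) together with Soergel duality~\eqref{tiltingD} and BGG reciprocity~\eqref{BGG} to force every emerging Verma factor into the Verma flag of the intended $\TT{\la}$. The initial step for $\TT{n}^{---}$ is the delicate one; the subsequent doublings of the Verma flag length from $2$ to $4$ to $8$ are then routine bookkeeping.
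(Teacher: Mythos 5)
Your construction of $\TT{n}^{---}$ is correct and is exactly the paper's: the same seed $\M{-n-2,-n,-n}$, the same adjoint translation functor, the same appeal to Lemma~\ref{lem:oddhom} with $\gamma=\delta+\ep_1+\ep_2$ combined with \eqref{tiltingD}. The gap lies in the three cases you dismiss as routine bookkeeping. First, the seeds: ``apply $\E$ to the previously-constructed tilting'' cannot mean the atypical tilting modules just built, since tensoring $\TT{n}^{---}$ (highest $\rho$-shifted weight $(-n,-n,-n)$) with the adjoint module can never produce a summand with highest weight $(-n,-n,n)$: the difference $(0,0,2n)$ is not a weight of $\g$ for $n\ge 2$. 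The correct seeds are the \emph{typical} tilting modules $\TT{-n-2,-n,n}$, $\TT{-n-2,n,-n}$, $\TT{-n-2,n,n}$, whose Verma flags (of lengths $2$, $2$, $4$) come from the typical-block description, and this is what the paper uses.

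More seriously, your indecomposability mechanism --- invoking Lemma~\ref{lem:oddhom} with odd roots to ``force every emerging Verma factor into the Verma flag of the intended $\TT{\la}$'' --- fails precisely for the decisive lowest terms. By \eqref{tiltingD}, $(\TT{n}^{--+}:\M{n+1}^{---})=[\M{n+1,n+1,n+1}:\LL{n,n,-n}]$ and $(\TT{n}^{-++}:\M{n+1}^{---})=[\M{n+1,n+1,n+1}:\LL{n,-n,-n}]$; the weight differences $(1,1,2n+1)$ and $(1,2n+1,2n+1)$ are not odd roots, and since the only odd root orthogonal to $(n+1,n+1,n+1)$ is $\delta+\ep_1+\ep_2$, chains of odd-reflection singular vectors inside $\M{n+1,n+1,n+1}$ only reach factors of the form $\LL{m,m,m}$. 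The paper obtains these multiplicities from the composite singular vectors $f_2^n u^+_{n,n,n}$ and $f_1^n f_2^n u^+_{n,n,n}$, i.e.\ Lemma~\ref{lem:oddhom} followed by powers of the even simple $f_i$ (nonzero because the even $f_i$ are not zero divisors); note that nonvanishing of composites of Verma homomorphisms is genuinely an issue in the super setting, as the remark following Lemma~\ref{lem:ML3} shows. Likewise $(\TT{n}^{--+}:\M{n}^{---})=[\M{n,n,n}:\LL{n,n,-n}]$ needs Lemma~\ref{lem:evenhom} (the simple even root $2\ep_2$), not an odd root. Finally, even with these inputs one does not force \emph{every} emerging Verma into $\TT{\la}$; the paper instead rules out the remaining Vermas (e.g.\ $\M{n}^{---}$ in case of $\TT{n}^{--+}$, and the thirteen non-leading Vermas in the $\TT{n}^{-++}$ case) as possible highest terms of direct summands by comparing with the already-established flags of $\TT{n}^{---}$, $\TT{n}^{--+}$, $\TT{n}^{-+-}$ --- this is exactly where your ``order of increasing dominance'' must be used, and your proposal never supplies that step.
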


\begin{proof}
We prove the four formulae case-by-case. We choose to use the traditional indexing convention of modules in the proof.

(1)  Let $\mathcal E_1$ be the translation functor, obtained by first tensoring an object in $\mathcal O$ with the adjoint module and then projecting to $\mathcal B_0$. Applying $\E_1$ to $\TT{-n-2, -n, -n} =\M{-n-2, -n, -n}$, we obtain
$\E_1 \M{-n-2, -n, -n} =\M{-n, -n, -n} +\M{-n-1, -n-1, -n-1}$.
Note $(\TT{-n,-n,-n}: \M{-n-1, -n-1, -n-1}) =[\M{n+1,n+1,n+1}: \LL{n,n,n}] > 0$, by Lemma~\ref{lem:oddhom}.
Hence we must have $\TT{-n,-n,-n} =\E_1 \M{-n-2, -n, -n}$.

(2)
Applying the same translation functor $\E_1$ to $\TT{-n-2, -n, n}=\M{-n-2, -n, n} +\M{-n-2, -n, -n}$ (of typical highest weight), we obtain
\[
\E_1 \TT{-n-2, -n, n}=
\M{-n, -n, n} + \M{-n, -n, -n} +\M{-n-1, -n-1, n+1} + \M{-n-1, -n-1, -n-1}.
\]
Note $(\TT{-n,-n,n}: \M{-n-1, -n-1, -n-1}) =[\M{n+1,n+1,n+1}: \LL{n,n,-n}]> 0$,
since $f_2^n u^+_{n,n,n}$ is a singular vector in $\M{n+1,n+1,n+1}$,
where $u^+_{n,n,n}$ is the singular vector in $\M{n+1,n+1,n+1}$ of $\rho$-shifted weight $(n,n,n)$ in Lemma~\ref{lem:oddhom}.
Also by Lemma~\ref{lem:oddhom} and  \eqref{tiltingD} we obtain
$(\TT{-n,-n,n}: \M{-n-1, -n-1, n+1}) > 0$.
Finally, the remaining Verma $\M{-n, -n, -n}$ cannot be a direct summand of $\E_1 \TT{-n-2, -n, n}$ as it is not tilting by (1).
Hence we have $\TT{-n,-n,n} = \E_1 \M{-n-2, -n, n}$.

(3)
The formula for $\TT{-n, n, -n}$ is obtained by 
an argument parallel to the case (2) above.

(4)
Applying the translation functor $\E_1$ to $\TT{-n-2, n, n}$ (of typical highest weight), we obtain
\begin{align}
 \label{T-nnn}
\E_1 & \TT{-n-2, n, n}
=\M{-n, n, n} +\M{-n, n, -n} + \M{-n, -n, n}  +\M{-n, -n, -n}&\\
 & +\M{-n-1, n+1, n+1}   +\M{-n-1, n+1, -n-1} +\M{-n-1, -n-1, n+1}
 + \M{-n-1, -n-1, -n-1}.
 \notag
\end{align}
Note $(\TT{-n,n,n}: \M{-n-1, -n-1, -n-1}) =[\M{n+1,n+1,n+1}, \LL{n,-n,-n}]> 0$,
since $f_1^nf_2^n u^+_{n,n,n}$ is a singular vector in $\M{n+1,n+1,n+1}$,
where $u^+_{n,n,n}$ is the singular vector in $\M{n+1,n+1,n+1}$ of $\rho$-shifted weight $(n,n,n)$ in Lemma~\ref{lem:oddhom}.

Also $(\TT{-n,n,n}: \M{-n-1, n+1, n+1}) =[\M{n+1,-n-1,-n-1}, \LL{n,-n,-n}]> 0$, by Lemma~\ref{lem:oddhom}.

We claim that other Verma modules in \eqref{T-nnn}  including
$\M{-n, n, -n}, \M{-n, -n, n}, \M{-n, -n, -n},$ $\M{-n-1, n+1, -n-1}$ or $\M{-n-1, -n-1, n+1}$
 cannot appear as a leading term of a direct summand (i.e., a tilting submodule)
 of $\E_1 \TT{-n-2, n, n}$. Otherwise, the corresponding
 tilting character (known by (1)--(3) above) has its lowest term $\M{-n-1, -n-1, -n-1}$,
which  contradicts the fact that $(\TT{-n,n,n}: \M{-n-1, -n-1, -n-1}) > 0$,
 or else has its lowest term which does not appear in \eqref{T-nnn}, which is absurd.

Therefore $\E_1 \TT{-n-2, n, n}$ is indecomposable, and hence, we have $\TT{-n,n,n} = \E_1 \M{-n-2, n, n}$.
\end{proof}

\begin{thm}
  \label{thm:tilting2}
For $n\ge 2$, we have the following Verma flags of tilting modules in $\Bl_0$:
\begin{align}
\TT{n}^{+--}  =
& \M{n}^{+--} +\M{n-1}^{+--}  +\M{n-1}^{---}   + \M{n}^{---},
\notag \\ %
\TT{n}^{+-+}  =
& \M{n}^{+-+} +\M{n}^{+--} + \M{n-1}^{+-+}  +\M{n-1}^{+--}
 +\M{n-1}^{--+}    +\M{n-1}^{---}   +\M{n}^{--+}   + \M{n}^{---},
\notag \\ %
\TT{n}^{++-}  =
& \M{n}^{++-} +\M{n}^{+--} + \M{n-1}^{++-}  +\M{n-1}^{+--}
 +\M{n-1}^{-+-}    +\M{n-1}^{---}   +\M{n}^{-+-}   + \M{n}^{---},
\notag \\ %
\TT{n}^{+++}  =
& \M{n}^{+++} +\M{n}^{++-} +\M{n}^{+-+} +\M{n}^{+--}
+ \M{n-1}^{+++}  +\M{n-1}^{++-}   +\M{n-1}^{+-+}    +\M{n-1}^{+--}
\label{Tnnn}
\\
+& \M{n-1}^{-++}  +\M{n-1}^{-+-}   +\M{n-1}^{--+}    +\M{n-1}^{---}
+\M{n}^{-++}   + \M{n}^{-+-}+\M{n}^{--+}   + \M{n}^{---}.
\notag
\end{align}
\end{thm}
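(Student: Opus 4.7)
The strategy closely parallels the proof of Theorem~\ref{thm:tilting1}: for each of the four cases, we apply the translation functor $\E_1$ (tensoring with the $17$-dimensional adjoint module and then projecting onto $\Bl_0$) to a carefully chosen typical tilting module $\TT{\mu}$, whose Verma flag is furnished by the discussion of typical blocks in Section~\ref{sec:block:typ}. Specifically, I would take $\mu = (n+1,-n+1,-n+1)$ to produce $\TT{n}^{+--}$ (starting from a typical tilting of Verma-flag length $2$); $\mu = (n-1,-n+1,n+1)$ to produce $\TT{n}^{+-+}$ (length $4$); $\mu = (n-1,n+1,-n+1)$ to produce $\TT{n}^{++-}$ (length $4$); and $\mu = (n-1,n+1,n-1)$ to produce $\TT{n}^{+++}$ (length $8$). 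In each case, an elementary enumeration of the weights of the adjoint module shows that for each Verma $\M{\nu}$ in the starting flag, exactly two adjoint weight shifts carry $\nu$ into $\Wt_0$; consequently $\E_1 \TT{\mu}$ admits a Verma flag whose factors are precisely the $4$, $8$, $8$, $16$ atypical weights predicted by the theorem, each with multiplicity one.

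To identify $\E_1 \TT{\mu}$ with the indecomposable tilting $\TT{\la}$ (where $\la$ denotes the Bruhat-top weight of the predicted flag), it suffices to verify that $\TT{\la}$ alone accounts for every Verma in $\E_1 \TT{\mu}$. By Soergel duality~\eqref{tiltingD}, the multiplicity inequality $(\TT{\la}:\M{\nu})>0$ is equivalent to $[\M{-\nu}:\LL{-\la}]>0$, and we establish this by exhibiting a chain of nonzero Verma homomorphisms
\[
\M{-\la} \hookrightarrow \M{\mu_1} \hookrightarrow \cdots \hookrightarrow \M{\mu_r} = \M{-\nu}.
\]
Each step is either an even-reflection inclusion from Lemma~\ref{lem:evenhom} with $\gamma\in\{2\delta,2\ep_1,2\ep_2\}$---which flips a single coordinate sign from negative to positive whenever the corresponding coordinate of the ambient weight is a nonnegative integer---or an odd-reflection inclusion from Lemma~\ref{lem:oddhom} with $\gamma = \delta\pm\ep_1\pm\ep_2$. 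The key observation for the odd case is that the hypothesis $(\la,\gamma)=0$ is automatic whenever $\la$ lies on the isotropic axis $\C\gamma$, which is exactly the locus occupied by $\Wt_0$; the resulting inclusion passes between neighboring shells, for instance $\M{-n,-n,-n} \hookrightarrow \M{-n+1,-n+1,-n+1}$ is obtained from $\la = -(n-1)(1,1,1)$ and $\gamma = \delta+\ep_1+\ep_2$.

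The principal obstacle is the case $\TT{n}^{+++}$, where $15$ separate positivity inequalities of the form $[\M{-\nu}:\LL{-n,-n,-n}]>0$ must be checked; however, the construction is uniform: one applies at most a single odd reflection to descend from shell $n$ to shell $n-1$, followed by at most three even reflections to flip the required coordinate signs, yielding chains of length at most four. The three remaining cases are handled identically with shorter chains, starting respectively from $\M{-n,n,n}$, $\M{-n,n,-n}$, and $\M{-n,-n,n}$. Once these positivities are in hand, the Verma multiplicities in $\TT{\la}$ alone already sum to the total flag length of $\E_1 \TT{\mu}$, so $\TT{\la}$ exhausts $\E_1 \TT{\mu}$: any additional summand would carry a leading Verma whose multiplicity would then exceed the count available on the left-hand side.
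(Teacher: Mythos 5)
Your Steps (1)--(2) are sound: the starting weights you pick are typical, their tilting flags have the lengths you claim, and a direct check of the adjoint weights confirms that each Verma factor contributes exactly two atypical shifts, so $\E_1\TT{\mu}$ has precisely the predicted Verma flag in all four cases (your choice for $\TT{n}^{+--}$ even avoids the separate treatment of $n=2$ that the paper's starting weight $(n-2,-n,-n)$ forces). The gap is in Step (3). You deduce $[\M{-\nu}:\LL{-\la}]>0$ from the existence of a chain of nonzero Verma homomorphisms $\M{-\la}\to\M{\mu_1}\to\cdots\to\M{-\nu}$, writing the arrows as inclusions. For Lie superalgebras nonzero homomorphisms between Verma modules need not be injective, composition-factor positivity does not propagate along such chains, and a composite of nonzero maps can vanish. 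This failure occurs inside the very block you are working in: by the remark following Lemma~\ref{lem:ML3}, the nonzero maps $\M{-1,1,-1}\to\M{-1,1,1}\to\M{1,1,1}\to\M{2,2,2}$ (an even simple reflection, the $2\delta$-reflection, an odd reflection --- exactly the building blocks of your chains) compose to zero, consistent with $[\M{2,2,2}:\LL{-1,1,-1}]=0$ from \eqref{aux001}; the element implementing the $2\delta$-homomorphism is a left zero divisor in $U(\n_-)$. So ``exhibiting a chain'' proves nothing by itself, and the hook arrows beg the question.

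Concretely, several of the fifteen positivities your counting argument requires are not reachable by any single application of Lemma~\ref{lem:oddhom} or \ref{lem:evenhom} and genuinely need a multi-step composite whose nonvanishing you have not justified: for instance $(\TT{n}^{+++}:\M{n-1}^{---})>0$ amounts to $[\M{n-1,n-1,n-1}:\LL{-n,-n,-n}]>0$, and any chain realizing it must use both an odd-reflection element and the $2\delta$-singular vector \eqref{u:singular}, each a potential zero divisor, as a left factor. This is precisely why the paper's proof does not attempt to verify all multiplicities: it establishes only two of them per module --- $(\TT{n,n,n}:\M{-n,-n,-n})>0$ via the explicit singular vector $f_1^nf_2^n u^+_{-n,n,n}$ (nonvanishing holds because left multiplication by $f_1^nf_2^n$ is injective, and singularity is checked directly), and $(\TT{n,n,n}:\M{n-1,n-1,n-1})>0$ via a single odd reflection --- and then rules out every other Verma in the flag as the leading term of a direct summand by comparing with the already-known flags of Theorem~\ref{thm:tilting1} and the earlier cases, whose lowest terms are either $\M{-n,-n,-n}$ or absent from the flag. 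To repair your argument you would either have to replace each chain by an explicit singular-vector construction of this ``safe'' type (which is not available for all fifteen weights), or adopt the paper's exclusion argument; as written, the indecomposability step does not go through.
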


\begin{proof}
We use the traditional indexing convention of modules in the proof.

The formula for $\TT{n, -n, -n}$ is obtained by applying a suitable translation functor to $\TT{n-2, -n, -n}$, where
the case of $n=2$ needs to be separately treated.
The formula for $\TT{n, -n, n}$ is obtained by applying a suitable translation functor to $\TT{n-2, -n, n}$.
The formula for $\TT{n, n, -n}$ is obtained by applying a suitable translation functor to $\TT{n-2, n, -n}$.
The formula for $\TT{n, n, n}$ is obtained by applying a suitable translation functor to $\TT{n-2, n, n}$.

It remains to show that the resulting 4 modules are indecomposable. To do so, we proceed case-by-case in the natural order,
and the argument for a given case uses the preceding formulae.

We only provide a full detail in the last case of $\TT{n, n, n}$, as all cases are similar and
the strategy is basically the same as for Case (4) in the proof of Theorem~\ref{thm:tilting1}.
We shall assume that the previous 3 formulae have been established.
One checks that $\E_4\TT{n-2, n, n}$ (the resulting module in the atypical block by applying a suitable translation functor $\E_4$)
has a Verma flag of length 16 as on the right hand side of \eqref{Tnnn}.
We shall argue that none of the 16 Verma modules except
$\M{n, n, n}$ can be the highest term of a direct summand (i.e., a tilting submodule) of $\E_4\TT{n-2, n, n}$.

First, we have
$(\TT{n,n,n}: \M{-n, -n, -n}) =[\M{n,n,n}: \LL{-n,-n,-n}] > 0$, since $f_1^n f_2^n u^+_{-n,n,n}$ is a singular vector in $\M{n,n,n}$,
where $u_{-n,n,n}^+$ is a singular vector in $\M{n,n,n}$ of $\rho$-shifted weight $(-n,n,n)$ by Lemma~\ref{lem:evenhom}.

Also $(\TT{n,n,n}: \M{n-1, n-1, n-1}) =[\M{1-n,1-n,1-n}: \LL{-n,-n,-n}] > 0$, by Lemma~\ref{lem:oddhom}.

Now we observe that, other than $\M{n, n, n}, \M{n-1, n-1, n-1}$ and $\M{-n, -n, -n}$, the other 13 Verma modules
in \eqref{Tnnn} cannot appear as a leading term of a tilting submodule of $\E_4\TT{n-2, n, n}$,
since the corresponding tilting character (known by the preceding 3 formulae) has its lowest term
$\M{-n, -n, -n}$ or a Verma which does not appear in \eqref{Tnnn}.

Hence $\E_4\TT{n-2, n, n}$ is indecomposable, and we have $\TT{n,n,n}=\E_4\TT{n-2, n, n}$.
\end{proof}

\subsection{Tilting modules with irregular Verma flags}

It remains to determine the Verma flag multiplicites in the 5 remaining tilting modules:
$\TT{0,0,0}$ and $\TT{1, \pm 1, \pm 1}$.

\begin{thm} 
 \label{tilting:extra}
We have the following Verma flags for the tilting modules in $\Bl_0$:
\begin{align*}
\TT{000}  =
& \M{000} +\M{1}^{-++}  +\M{1}^{-+-}   + \M{1}^{--+} + \M{1}^{---},
 \\ %
\TT{1}^{+--}  =
&  \M{1}^{+--}  + \M{000}   + \M{1}^{-+-} + \M{1}^{--+}  + {2\M{1}^{---} + \M{2}^{---}, }
  \\ %
\TT{1}^{+-+}  =
& \M{1}^{+-+} +\M{1}^{+--}  +\M{000}   + \M{1}^{--+} + \M{1}^{---},
 \\ %
\TT{1}^{++-}  =
& \M{1}^{++-} +\M{1}^{+--}  +\M{000}   + \M{1}^{-+-} + \M{1}^{---},
 \\ %
\TT{1}^{+++}  =
& \M{1}^{+++} +\M{1}^{++-}  + \M{1}^{+-+} +\M{1}^{+--}  + 2\M{000}   + \M{1}^{-++} + \M{1}^{-+-}  + \M{1}^{--+} + \M{1}^{---}.
\end{align*}
\end{thm}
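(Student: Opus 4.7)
My plan follows the framework of Theorems \ref{thm:tilting1} and \ref{thm:tilting2}: apply carefully chosen translation functors $\E$ (tensoring with the $17$-dimensional adjoint and projecting to $\Bl_0$) to tilting modules with known Verma flags, read off the resulting Verma flag, and either prove indecomposability or explicitly decompose the result into a sum of tilting modules. All five weights here lie near the unique singular $\rho$-shifted weight $(0,0,0)$, so the irregularities in the stated Verma flags---multiplicities of $2$ for $\M{000}$ and $\M{1}^{---}$, and the stray $\M{2}^{---}$---are natural manifestations of this singularity.

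I would first construct $\TT{000}$ by applying $\E$ to the antidominant typical tilting $\TT{-2,0,0}=\M{-2,0,0}$. Among the weights of the adjoint, only $+2\delta$ and the four $-(\delta\pm\ep_1\pm\ep_2)$ shift $(-2,0,0)$ into an atypical weight, producing the five-term flag $\M{000}+\sum_{\sigma,\tau=\pm}\M{-1,\sigma,\tau}$. Since $\M{000}$ is the unique top, $\TT{000}$ is a summand; indecomposability follows because each $\TT{-1,\pm 1,\pm 1}$ (Theorem \ref{thm:tilting1}, $n=1$) contains $\M{-2,-2,-2}$ or other lower-weight Vermas absent from the flag above. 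For $\TT{1}^{+-+}$ and $\TT{1}^{++-}$, the same recipe applied to the antidominant typical Vermas $\M{0,-2,0}$ and $\M{0,0,-2}$ yields the claimed five-term flags, with an analogous indecomposability argument. For $\TT{1}^{+++}$, I apply $\E$ to the typical tilting $\TT{2,0,0}=\M{2,0,0}+\M{-2,0,0}$; each contribution $\E\M{\pm 2,0,0}$ yields a five-term flag, and their sum gives the claimed ten-term flag with $\M{000}$ of multiplicity $2$. Since $\TT{1,1,1}$ is the unique tilting with top $(1,1,1)$ and itself has a ten-term Verma flag, $\E\TT{2,0,0}$ must coincide with $\TT{1,1,1}$.

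The main obstacle is the exceptional $\TT{1}^{+--}$: its length-seven Verma flag cannot be realized as $\E$ applied to any single typical Verma. I apply $\E$ to the known $\TT{-1,-1,-1}=\M{-1,-1,-1}+\M{-2,-2,-2}$. Summing the atypical shifts of each summand produces a $13$-term flag with unique top $\M{1,-1,-1}$, containing $\M{-3,-3,-3}$ once, $\M{-2,-2,-2}$ and $\M{-1,-1,-1}$ four times each, and $\M{000}$, $\M{-1,1,-1}$, $\M{-1,-1,1}$ each once. Alternate tilting summands are ruled out: $\TT{000}$ and $\TT{-1,\pm 1,\mp 1}$ would require $\M{-1,1,1}$ or $\M{-2,\pm 2,\mp 2}$ (absent), while $\TT{-3,-3,-3}$ would require $\M{-4,-4,-4}$ (absent). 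Hence $\E\TT{-1,-1,-1}=\TT{1,-1,-1}\oplus m_1\TT{-1,-1,-1}\oplus m_2\TT{-2,-2,-2}$. Counting $\M{-3,-3,-3}$ yields $m_2=1$, while the counts at $\M{-2,-2,-2}$ and $\M{-1,-1,-1}$ give the relation $(\TT{1,-1,-1}:\M{-1,-1,-1})-(\TT{1,-1,-1}:\M{-2,-2,-2})=1$.

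To pin these multiplicities down to $(2,1)$, I invoke Soergel duality \eqref{tiltingD}, reducing the problem to establishing $[\M{1,1,1}:\LL{-1,1,1}]=2$ and $[\M{2,2,2}:\LL{-1,1,1}]=1$. Lemmas \ref{lem:oddhom} and \ref{lem:evenhom} provide two independent embeddings $\M{-1,1,1}\hookrightarrow\M{1,1,1}$: the odd-reflection composition $\M{-1,1,1}\hookrightarrow\M{0,0,0}\hookrightarrow\M{1,1,1}$ (using $\gamma=\delta-\ep_1-\ep_2$ at $(0,0,0)$ and $\gamma=\delta+\ep_1+\ep_2$ at $(1,1,1)$) and the even reflection $s_{2\delta}$ on $(1,1,1)$ with $n=1$, together with a further odd-reflection embedding $\M{1,1,1}\hookrightarrow\M{2,2,2}$. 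These force the lower bounds $[\M{1,1,1}:\LL{-1,1,1}]\geq 2$ and $[\M{2,2,2}:\LL{-1,1,1}]\geq 1$. Matching upper bounds follow from the global consistency of composition factor multiplicities in $\Bl_0$: once the Verma flags of all other tilting modules of the block are established, Soergel duality determines $[\M{\mu}:\LL{-1,1,1}]$ for $\mu$ not in $\{(1,1,1),(2,2,2)\}$, and the decomposition constraint above then forces the remaining two multiplicities to their minimal values, completing the identification of $\TT{1}^{+--}$.
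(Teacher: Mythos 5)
Your constructions of $\TT{000}$, $\TT{1}^{+-+}$, $\TT{1}^{++-}$ follow the paper's route, though the "analogous indecomposability argument" for $\TT{1}^{+-+}$, $\TT{1}^{++-}$ silently needs one more input: a summand headed by $\M{1}^{+--}$ cannot be excluded by "absent lower terms" (its flag is not yet known), and the paper excludes it by showing $(\TT{1}^{+-+}:\M{1}^{+--})=[\M{-1,1,1}:\LL{-1,1,-1}]>0$ via the singular vector $f_2^2v^+_{-1,1,1}$. Your step for $\TT{1}^{+++}$ is circular as written: "since $\TT{1,1,1}$ itself has a ten-term Verma flag" assumes the formula being proved; the paper instead rules out every other Verma as the head of a summand of $\E\TT{2,0,0}$ using Lemmas \ref{lem:oddhom}, \ref{lem:evenhom} with \eqref{tiltingD}, plus the fact that $\TT{000}$ occurs as an (indecomposable) quotient of $\E\TT{2,0,0}$. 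These points are repairable along the paper's lines.

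The genuine gap is $\TT{1}^{+--}$. Your reduction is correct: with $\E\TT{1}^{---}$ you must know $(\TT{1}^{+--}:\M{1}^{---})=[\M{1,1,1}:\LL{-1,1,1}]$ and $(\TT{1}^{+--}:\M{2}^{---})=[\M{2,2,2}:\LL{-1,1,1}]$ exactly, but neither value is established. (i) The bound $[\M{1,1,1}:\LL{-1,1,1}]\ge 2$ needs the two singular vectors (the odd--odd composite and the $s_{2\delta}$ vector of Lemma \ref{lem:evenhom}) to be nonzero and linearly independent; you assert independence without proof, and that is precisely the hard content, which the paper supplies by a different argument (Lemma \ref{lem:ML3}: $\dim\M{1,1,1}^{\rho}=5$ versus $\dim\LL{1,1,1}^{\rho}=3$ after showing the other relevant simples have trivial zero-weight space). (ii) The bound $[\M{2,2,2}:\LL{-1,1,1}]\ge 1$ via composing Verma homomorphisms is unjustified: such maps need not be injective and composites can vanish; indeed the paper's remark after Lemma \ref{lem:ML3} shows the composite of the $s_{2\delta}$ map $\M{-1,1,1}\to\M{1,1,1}$ with $\M{1,1,1}\to\M{2,2,2}$ is zero. (iii) Most seriously, the upper bounds are nowhere proved: writing $t_j=(\TT{1}^{+--}:\M{j}^{---})$, your $13$-term flag only gives $t_1+m_1=4$, $t_2+m_1+m_2=4$, $t_3+m_2=1$, and with lower bounds alone the solution $(t_1,t_2,t_3,m_1,m_2)=(4,4,1,0,0)$ is not excluded; Soergel duality applied to the other (known) tilting modules says nothing about $[\M{1,1,1}:\LL{-1,1,1}]$ or $[\M{2,2,2}:\LL{-1,1,1}]$, since these are encoded exactly by the unknown $\TT{1}^{+--}$, so the appeal to "global consistency" is not an argument. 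This is why the paper both proves the equality $[\M{1,1,1}:\LL{-1,1,1}]=2$ and, instead of your functor (which it explicitly deems less effective), tensors $\M{-1,-2,-1}$ with the $32$-dimensional module $\LL{1,2,1}$: the resulting flag $\M{1}^{+--}+\M{000}+\M{1}^{-+-}+\M{1}^{--+}+3\M{1}^{---}+2\M{2}^{---}$ contains no $\M{3}^{---}$, so the single input from Lemma \ref{lem:ML3} forces $\E_4\TT{-1,-2,-1}=\TT{1}^{+--}\oplus\TT{1}^{---}$ and hence the stated flag. With your choice of functor, even granting $t_1=2$, the cases $(t_2,t_3)=(2,1)$ and $(1,0)$ remain indistinguishable, so the argument as proposed cannot close.
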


\begin{proof}
We use the traditional indexing convention of modules in the proof.

The translation functors below from a typical block to the atypical block $\Bl_0$
are obtained from tensoring with the adjoint representation, except in the case for $\TT{1,-1,-1}$ (i.e., $\TT{1}^{+--}$). We shall treat this case at last.

Applying a translation functor $\E_0$ to $\TT{-2,0,0} =\M{-2,0,0}$, we obtain a module with the following Verma flag
\begin{equation}
 \label{eq:T000}
 \E_0 \M{-2,0,0} =
\M{0,0,0} + \M{-1,1,1} + \M{-1,1,-1} +  \M{-1,-1,1} +\M{-1,-1,-1}.
\end{equation}
Since we know the tilting module indexed by  either
$(-1,1,1), (-1,1,-1), (-1,-1,1)$ or $(-1,-1,-1)$ contains the Verma $\M{-2,-2,-2}$ (which is not present in \eqref{eq:T000}),
we conclude that \eqref{eq:T000} must be a single tilting module with highest weight $(0,0,0)$, i.e., $\TT{0,0,0}$.

Applying a translation functor $\E_1$ to $\TT{0,-2,0} =\M{0,-2,0}$, we obtain
\begin{equation}
 \label{eq:T1-11}
\E_1 \M{0,-2,0} =
\M{1,-1,1} +  \M{1,-1,-1} +\M{0,0,0} + \M{-1,-1,1} + \M{-1,-1,-1}.
\end{equation}
Note $(\TT{1,-1,1}: \M{1,-1,-1}) =[\M{-1,1,1}: \LL{-1,1,-1}] > 0$,
since there is a singular vector $f_2^2 v^+_{-1,1,1}$ in $\M{-1,1,1}$.
The tilting module indexed by either
$(-1,-1,1)$, or $(-1,-1,-1)$ contains the term $\M{-2,-2,-2}$ and $\TT{0,0,0}$ contains the term $\M{-1,1,1}$ (not present in \eqref{eq:T1-11}).
Therefore we conclude that \eqref{eq:T1-11} must be a single tilting module,  which is $\TT{1,-1,1}$.

A completely analogous argument by applying a suitable translation functor $\E_2$ to $\TT{0,0,-2} =\M{0,0,-2}$ gives us
the tilting module $\TT{1,1,-1}$:
\begin{equation}
 \label{eq:T11-}
\TT{1,1,-1} =
\E_2 \M{0,0,-2}
= \M{1,1,-1} +   \M{1,-1,-1} + \M{0,0,0} + \M{-1,1,-1} + \M{-1,-1,-1}.
\end{equation}

Now we shall apply a suitable translation functor $\E_3$ to the following short exact sequence
of $\g$-modules
\[
0\longrightarrow \M{2,0,0} \longrightarrow \TT{2,0,0} \longrightarrow \M{-2,0,0} \longrightarrow 0.
\]
 Recalling
that $\TT{0,0,0}$ from \eqref{eq:T000} is obtained by applying $\E_3$ (=$\E_0$) to $\M{-2,0,0}$, we
have obtained a short exact sequence
\begin{equation}
  \label{eq:ses}
  0\longrightarrow \E_3 \M{2,0,0} \longrightarrow \E_3 \TT{2,0,0} \longrightarrow \TT{0,0,0} \longrightarrow 0.
\end{equation}
We obtain
\begin{align}
 \label{eq:T111}
   \begin{split}
\E_3 \TT{2,0,0} &=
\M{1,1,1} +  \M{1,1,-1}   + \M{1,-1,1} +  \M{1,-1,-1} + \M{0,0,0} +   \TT{0,0,0}
\\
 &=
\M{1,1,1} +  \M{1,1,-1}   + \M{1,-1,1} +  \M{1,-1,-1}
\\ & \qquad
+ 2\M{0,0,0}  + \M{-1,1,1} + \M{-1,1,-1} +  \M{-1,-1,1} +\M{-1,-1,-1}.
  \end{split}
\end{align}
Note $\E_3 \TT{2,0,0}$ must be a direct sum of tilting modules,
and we shall show that $\E_3 \TT{2,0,0}$  is indecomposable by arguing no Verma in \eqref{eq:T111} except $\M{1,1,1}$
can appear as a highest term in a direct summand.
Indeed, the tilting module indexed by either $(1,-1,1)$, or $(1,-1,-1)$ contains $\M{-2,-2,-2}$.
Note $(\TT{1,1,1}: \M{1,1,-1}) =[\M{-1,-1,1}: \LL{-1,-1,-1}] > 0$, by Lemma~\ref{lem:oddhom}.
Also, $(\TT{1,1,1}: \M{0,0,0}) =[\M{0,0,0}: \LL{-1,-1,-1}] > 0$, by Lemma~\ref{lem:oddhom}.
Similarly, $(\TT{1,1,1}: \M{-1,1,1}) =[\M{1,-1,-1}: \LL{-1,-1,-1}] > 0$,  by Lemma~\ref{lem:evenhom}.
By \eqref{eq:ses} $\TT{0,0,0}$ appears as a quotient module of $\E_3 \TT{2,0,0}$ and it is indecomposable.
This shows none of the Verma modules in \eqref{eq:T111} except $\M{1,1,1}$
appears as highest weight term in a direct summand of \eqref{eq:T111};
that is, $\E_3 \TT{2,0,0}$ is indecomposable and must be $\TT{1,1,1}$.

\vspace{2mm}
It takes some extra effort to establish the Verma flag structure for the remaining tilting module $\TT{1,-1,-1}$.
It turns out to be less effective to apply the translation functor from tensoring
with the adjoint module to $\TT{-1,-1,-1} =\M{-1,-1,-1} +\M{-2,-2,-2}$ (of atypical highest weight).
Instead, we shall apply the translation functor $\E_4$ from tensoring
with the module $\LL{1,2,1}$ of highest weight $(2,1,0)$  
to $\TT{-1,-2,-1}$, which produces a module with a Verma flag of smaller length.
The module $\LL{1,2,1}$ has dimension $32$, and its weights are (here it is understood that we mix all possible positive/negative sign combinations):
\begin{align*}
&(\pm 2,\pm 1, 0),\quad  (\pm 1, \pm 2, \pm 1), \quad (0, \pm 3, 0),  \quad  (0, \pm 1, \pm 2),
\\
&\quad (\pm 1, 0, \pm 1) \; [\text{of multiplicity }2],  \quad
(0, \pm 1, 0) \; [\text{of multiplicity }3].
\end{align*}
Applying a suitable translation functor $\E_4$ to $\TT{-1,-2,-1} =\M{-1,-2,-1}$, we obtain
\begin{align}
 \label{eq:T1--}
\E_4 \TT{-1,-2,-1} =
& \M{1,-1,-1} +  \M{0,0,0}   +  \M{-1,1,-1} + \M{-1,-1,1}
+ 3 \M{-1,-1,-1} + 2\M{-2,-2,-2}.
\end{align}

It follows from Lemma \ref{lem:ML3} below that $(\TT{1,-1,-1} : \M{-1,-1,-1})=2$ and hence $\E_4 \TT{-1,-2,-1}=T_{1,-1,-1}\oplus T_{-1,-1,-1}$.
Since $\TT{-1,-1,-1} =\M{-1,-1,-1} + \M{-2,-2,-2}$ by Theorem~\ref{thm:tilting1}, the desired Verma flag formula for $\TT{1,-1,-1}$ follows from \eqref{eq:T1--}.
\end{proof}


\begin{lem}
  \label{lem:ML3}
In $\Bl_0$, we have $[\M{1,1,1}:\LL{-1,1,1}]=2$.
\end{lem}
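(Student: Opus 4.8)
The plan is to read off $[\M{1,1,1}:\LL{-1,1,1}]$ from a single character identity evaluated in the zero weight space, exploiting that $\LL{-1,1,1}$ is the trivial module: indeed $(-1,1,1)-\rho=0$, so $\LL{-1,1,1}=\mathbb{C}$ and $\mathrm{ch}\,\LL{-1,1,1}$ is concentrated at weight $0$. So I would first determine all composition factors of $\M{1,1,1}$ except possibly $\LL{-1,1,1}$, then compute $\dim(\M{1,1,1})_0$ and the zero-weight multiplicities of the other factors, and finally solve for the one remaining multiplicity.

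To pin down the composition factors, I would use the tilting character formula \eqref{tiltingD}: $[\M{1,1,1}:\LL{\la}]=(\TT{-\la}:\M{-1,-1,-1})$. Going through the Verma flags in Theorems~\ref{thm:tilting1}, \ref{thm:tilting2} and the part of Theorem~\ref{tilting:extra} established without this lemma, one checks that $\M{-1,-1,-1}$ occurs, always with multiplicity one, in precisely twelve of those tilting modules (those indexed by the seven weights $(\pm1,\pm1,\pm1)$ other than $(-1,1,1)$, by $(0,0,0)$, and by the four weights $(-2,\pm2,\pm2)$), and in no others among the ones already known; only the still-undetermined $\TT{1,-1,-1}$ could contribute an extra occurrence, which would show up against $\LL{-1,1,1}$. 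Hence, writing $S$ for the corresponding twelve-element set of weights and $m:=[\M{1,1,1}:\LL{-1,1,1}]$, the composition factors of $\M{1,1,1}$ are $\LL{\la}$ for $\la\in S$ (each once) together with $\LL{-1,1,1}$ with multiplicity $m$, so $\mathrm{ch}\,\M{1,1,1}=m\,\mathrm{ch}\,\LL{-1,1,1}+\sum_{\la\in S}\mathrm{ch}\,\LL{\la}$.

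Next I would evaluate this at the zero weight space. A direct count of PBW monomials of weight $-2\delta$ in $U(\n_-)$ gives $\dim(\M{1,1,1})_0=5$ (note $\M{1,1,1}$ has highest weight $2\delta$). On the other side: $\LL{1,1,1}$ is the adjoint module, since its highest weight $2\delta$ is the highest root and $\g$ is simple, so $\dim(\LL{1,1,1})_0=\dim\h=3$. For the eight weights $\la\in S$ for which $\la-\rho$ is not a non-negative integral combination of positive roots with $\alpha_0$ occurring at most once — this excludes, in particular, $(2,-2,-2)=2\alpha_0$ — the weight $0$ is not a weight of $\M{\la}$ at all, so $\dim(\LL{\la})_0=0$. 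For the remaining three, $\la\in\{(0,0,0),(1,1,-1),(1,-1,1)\}$, one computes $\dim(\M{\la})_0=1$; since \eqref{eq:T1--} already gives $[\M{\la}:\LL{-1,1,1}]=1$ (the relevant Verma occurs once in $\E_4\TT{-1,-2,-1}$ and does not occur in $\TT{-1,-1,-1}$), the trivial module is a composition factor of $\mathrm{rad}\,\M{\la}$, so $(\mathrm{rad}\,\M{\la})_0\ge 1=\dim(\M{\la})_0$ and hence $\dim(\LL{\la})_0=0$. Thus $5=m\cdot 1+3$, i.e. $m=2$.

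The main obstacle is the bookkeeping in the second step: being certain, from the already-proved tilting-module formulas, that $\M{-1,-1,-1}$ appears in exactly those twelve tilting modules and with multiplicity one, and in none of the others (this is where the low rank of $\D$ is essential). A secondary subtlety is the zero-weight analysis for $\LL{(0,0,0)},\LL{(1,1,-1)},\LL{(1,-1,1)}$; I would handle it via the radical argument above rather than by producing explicit singular vectors, so that no injectivity statement for homomorphisms between Verma modules is required. Everything else is elementary weight-counting and consultation of the Verma flags in Theorems~\ref{thm:tilting1}–\ref{tilting:extra}.
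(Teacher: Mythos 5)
Your proof is correct and is essentially the paper's argument: both reduce the lemma to the zero-weight count $\dim(\M{1,1,1})_{0}=5$ versus $\dim(\LL{1,1,1})_{0}=3$ after showing that every other composition factor except the trivial module $\LL{-1,1,1}$ has trivial zero-weight space (the paper treats $\LL{1,-1,1}$ and $\LL{1,1,-1}$ by the same one-dimensional weight-space reasoning via \eqref{eq:aux1}, $\LL{1,-1,-1}$ by noting $0$ is not a weight of $\M{1,-1,-1}$, and $\LL{0,0,0}$ via the singular vector $f_0v^+$). The one step you should spell out is that extracting $[\M{\la}:\LL{-1,1,1}]=1$ from \eqref{eq:T1--} requires excluding not only $\TT{-1,-1,-1}$ but also $\TT{0,0,0}$, $\TT{-1,1,-1}$, $\TT{-1,-1,1}$ and $\TT{-2,-2,-2}$ as direct summands of $\E_4\TT{-1,-2,-1}$ (each contains a Verma module absent from \eqref{eq:T1--}), which is the same routine check implicitly underlying the paper's derivation of \eqref{eq:aux1}.
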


\begin{proof}
Let us first introduce some notation. We shall write $\M{\mu} =\sum_{\la} p_{\la\mu} \LL{\la}$  to denote that $\LL{\la}$ is a composition factor of $\M{\mu}$ with multiplicity $p_{\la\mu}$. For $X=M,L$ denote by $X_\la^\mu$ the $\left(\mu-\rho\right)$-weight space of $X_\la$. Recall $\rho=(-1,1,1),$ and hence $X_\la^{\rho}$ denote the zero-weight subspace.

{\bf Claim.}  The modules $\LL{1,-1,1},\LL{1,1,-1},\LL{1,-1,-1}$ and $\LL{0,0,0}$ all have trivial zero-weight spaces, that is,
$\LL{1,-1,1}^{\rho}= \LL{1,1,-1}^{\rho}= \LL{1,-1,-1}^{\rho}=\LL{0,0,0}^{\rho}=0.$

Let us prove the claim. First, using the Verma flag structures of the tilting modules in $\Bl_0$ which we have established so far, together with the duality
\eqref{tilt=proj},
we obtain the following:
\begin{align}\label{eq:aux1}
\begin{split}
\M{1,-1,1} =& \LL{1,-1,1} +\LL{1,-1,-1} + \LL{0,0,0}  + \LL{-1,1,1} + \LL{-1,-1,1} + \LL{-1,-1,-1}  + \LL{-2,-2,2} + \LL{-2,-2,-2},
 \\
\M{1,1,-1} =& \LL{1,1,-1} +\LL{1,-1,-1} + \LL{0,0,0}  + \LL{-1,1,1} + \LL{-1,1,-1} + \LL{-1,-1,-1}  +\LL{-2,2,-2} +\LL{-2,-2,-2}.
\end{split}
\end{align}
Now observe that $\dim \M{1,-1,1}^{\rho}
=1$. Since $[\M{1,-1,1}:\LL{-1,1,1}]=1$ by \eqref{eq:aux1}, this implies that $\LL{1,-1,1}^{\rho}=0$. An identical argument  shows that $\LL{1,1,-1}^{\rho}=0$.

On the other hand, we have $\M{1,-1,-1}^{\rho}=0$ and so $\LL{1,-1,-1}^{\rho}=0$.
Finally, we have $\LL{0,0,0}^{\rho}=0$, since $f_0v_{(0,0,0)}$ is singular in $\M{0,0,0}$ and $\dim \M{0,0,0}^{\rho}=1$. This proves the claim.

Next, again using the Verma flag structures of the tilting modules that we have proved so far together with  the duality
\eqref{tilt=proj} we can list the possible composition factors in the Verma module $\M{1,1,1}$ (ignoring multiplicities) that have highest weights greater than or equal to the zero weight:
\begin{align*}
\LL{1,1,1},\quad \LL{1,-1,1},\quad \LL{1,1,-1},\quad \LL{1,-1,-1},\quad \LL{0,0,0},\quad \LL{-1,1,1}.
\end{align*}
We shall compare the dimension of the zero-weight subspaces.
A direct computation gives us $\dim \M{1,1,1}^{\rho}=5$, while $\dim \LL{1,1,1}^{\rho}=3$ (which follows from the fact that $\LL{1,1,1}$ is the adjoint representation).

Hence, combining with the claim above, we conclude that the multiplicity of
the composition factor $\LL{-1,1,1}$ (which is the trivial module) in $\M{1,1,1}$ is $5-3=2$.
\end{proof}

\begin{rem}
From Theorem \ref{tilting:extra} we have $(\TT{1,-1,1}:\M{-2,-2,-2})=0$, which by \eqref{tiltingD} gives
\begin{align}\label{aux001}
[\M{2,2,2}:\LL{-1,1,-1}]=0.
\end{align}
By Lemmas \ref{lem:oddhom} and \ref{lem:evenhom} we have
\begin{align}\label{nonzerohom}
\text{Hom}_\g(\M{-1,1,-1},\M{-1,1,1})\not=0,\quad \text{Hom}_\g(\M{-1,1,1},\M{1,1,1})\not=0,\quad \text{Hom}_\g(\M{1,1,1},\M{2,2,2})\not=0.
\end{align}
Recall that $v^+_\la$ denotes a nonzero highest weight vector of highest weight $\la-\rho$ in $\M{\la}$. Then we have $f_2v^+_{-1,1,1}$ is a nonzero singular vector in $\M{-1,1,1}$ of weight $(-1,1,-1)-\rho$. We denote by $u v^+_{1,1,1}$ a nonzero singular vector in $\M{1,1,1}$ of highest weight $(-1,1,1)-\rho$, and by $u' v^+_{2,2,2}$ a nonzero singular vector in $\M{2,2,2}$ of highest weight $(1,1,1)-\rho$, where $u,u'\in U(\mathfrak{n}_-)$. These vectors exist by \eqref{nonzerohom}. Composing these nonzero homomorphisms between Verma modules we get a homomorphism in $\text{Hom}(\M{-1,1,-1},\M{2,2,2})$ given by sending
\begin{align*}
v^+_{-1,1,-1}\in\M{-1,1,-1}\rightarrow f_2v^+_{-1,1,1}\in\M{-1,1,1}\rightarrow f_2u u'v_{2,2,2}\in\M{2,2,2}.
\end{align*}
By \eqref{aux001} $f_2u u'=0$, and hence $uu'=0$, as $f_2$ is not a zero divisor. Therefore, $u\in U(\mathfrak{n}_-)$ (which gives us the Verma module homomorphism associated to the even reflection $s_{2\delta}$) is a (left) zero divisor. This is the  first such an example in super representation theory which we are aware of.
\end{rem}

\subsection{Characters of projectives in $\Bl_0$}

Recall that projective modules in $\OO$ admit Verma flags.
The tilting character formulae in Theorems~\ref{thm:tilting1}--\ref{tilting:extra}
can be readily reformulated using \eqref{tilt=proj} into character formulae for projective modules in Propositions~\ref{proj1:irr}--\ref{proj2:irr} below. We shall write $\PP{\la} =\sum_{\mu} p_{\la\mu} \M{\mu}$ to indicate that
$\PP{\la}$ admits a Verma flag with $(\PP{\la}:\M{\mu})=p_{\la\mu}$.
The Verma flags of projective modules are arranged in a weight increasing order.

\begin{prop}
 \label{proj1:irr}
For $n\ge 1$, we have the following Verma flags for projective modules:
\begin{align*}
\PP{n}^{+++} =& \M{n}^{+++} +\M{n+1}^{+++},
 \\
\PP{n}^{++-} =& \M{n}^{++-} + \M{n}^{+++} +\M{n+1}^{++-} + \M{n+1}^{+++},
 \\
\PP{n}^{+-+} =& \M{n}^{+-+} + \M{n}^{+++} +\M{n+1}^{+-+} + \M{n+1}^{+++},
 \\
\PP{n}^{+--}  =& \M{n}^{+--} + \M{n}^{+-+} + \M{n}^{++-} +  \M{n}^{+++}
  +\M{n+1}^{+--} +\M{n+1}^{+-+} +\M{n+1}^{++-} + \M{n+1}^{+++}.
\end{align*}
\end{prop}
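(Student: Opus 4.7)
The plan is to obtain these projective Verma flag formulae as a direct consequence of Soergel duality \eqref{tilt=proj}, namely $(\PP{\la} : \M{\mu}) = (\TT{-\la} : \M{-\mu})$, applied to the tilting module formulae already proved in Theorem~\ref{thm:tilting1}. Since $\PP{n}^{\sigma\tau\iota}$ corresponds (after sign flip on the index) to $\TT{n}^{-\sigma,-\tau,-\iota}$, and since sign-flipping $-\mu$ in a Verma subquotient $\M{-\mu}$ of the tilting module returns the corresponding Verma $\M{\mu}$ of the projective, the four proposed formulae should match the four formulae of Theorem~\ref{thm:tilting1} in a one-to-one manner.

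Concretely, I would verify the cases as follows. For $\PP{n}^{+++}$, duality yields $(\PP{n}^{+++}:\M{\mu}) = (\TT{n}^{---}:\M{-\mu})$, and Theorem~\ref{thm:tilting1} gives $\TT{n}^{---} = \M{n}^{---} + \M{n+1}^{---}$, so the only nonzero contributions come from $-\mu \in \{(-n,-n,-n),(-n-1,-n-1,-n-1)\}$, each with multiplicity one, producing $\PP{n}^{+++} = \M{n}^{+++} + \M{n+1}^{+++}$. The cases $\PP{n}^{++-}$, $\PP{n}^{+-+}$, and $\PP{n}^{+--}$ are handled identically by reading off $\TT{n}^{--+}$, $\TT{n}^{-+-}$, and $\TT{n}^{-++}$, respectively, and flipping all the signs of the weight labels of the Verma subquotients.

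There is essentially no obstacle here beyond bookkeeping: the proof is a mechanical application of Soergel duality to a known tilting formula. The one small point worth flagging is the convention that Verma flags of projectives are arranged in weight-\emph{increasing} order (while those of tiltings are in weight-\emph{decreasing} order), so the listing of terms in the final answer must be re-ordered accordingly after performing the sign flip; the multiplicities themselves are unchanged. No new homomorphism constructions, translation functor computations, or indecomposability arguments are needed at this stage, since all of that work has already been invested in establishing the tilting characters.
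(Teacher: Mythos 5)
Your proposal is correct and is exactly the paper's argument: the paper obtains Proposition~\ref{proj1:irr} by reformulating the tilting formulae of Theorem~\ref{thm:tilting1} via Soergel duality \eqref{tilt=proj}, precisely the sign-flip bookkeeping you describe. No further comment is needed.
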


\begin{prop}
 \label{proj2:irr}
For $n\ge 2$, we have the following Verma flags for projective modules:
\begin{align*}
\PP{n}^{-++} =& \M{n}^{-++} +\M{n-1}^{-++} + \M{n-1}^{+++} + \M{n}^{+++},
 \\
\PP{n}^{-+-}  =& \M{n}^{-+-} +\M{n}^{-++} + \M{n-1}^{-+-}  + \M{n-1}^{-++}
 + \M{n-1}^{++-}  + \M{n-1}^{+++} +\M{n}^{++-} + \M{n}^{+++},
\\
\PP{n}^{--+}  =& \M{n}^{--+} + \M{n}^{-++} + \M{n-1}^{--+} + \M{n-1}^{-++}
 + \M{n-1}^{+-+}  + \M{n-1}^{+++} +\M{n}^{+-+} + \M{n}^{+++},
 \\
\PP{n}^{---}  =& \M{n}^{---} + \M{n}^{--+} + \M{n}^{-+-} + \M{n}^{-++}
+\M{n-1}^{---}  +\M{n-1}^{--+} + \M{n-1}^{-+-}  + \M{n-1}^{-++}   &\\
&
+\M{n-1}^{+--} +\M{n-1}^{+-+} +\M{n-1}^{++-}  + \M{n-1}^{+++}
+\M{n}^{+--}  +\M{n}^{+-+} +\M{n}^{++-}  +\M{n}^{+++}.
\end{align*}
\end{prop}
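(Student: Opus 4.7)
The plan is to apply Soergel duality \eqref{tilt=proj} directly to the tilting character formulas established in Theorem~\ref{thm:tilting2}. Indeed, the four highest weights $(-n,n,n)$, $(-n,n,-n)$, $(-n,-n,n)$, $(-n,-n,-n)$ indexing the projective modules $\PP{n}^{-++}$, $\PP{n}^{-+-}$, $\PP{n}^{--+}$, $\PP{n}^{---}$ are precisely the negatives of the highest weights $(n,-n,-n)$, $(n,-n,n)$, $(n,n,-n)$, $(n,n,n)$ indexing the four tilting modules $\TT{n}^{+--}$, $\TT{n}^{+-+}$, $\TT{n}^{++-}$, $\TT{n}^{+++}$ treated in Theorem~\ref{thm:tilting2}.

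First, I would recall Soergel duality \eqref{tilt=proj}, which asserts that $(\PP{\la}:\M{\mu}) = (\TT{-\la}:\M{-\mu})$ for all $\la,\mu \in X$. Applying this to $\la = (-n,n,n)$, one reads off the Verma flag of $\PP{n}^{-++}$ from that of $\TT{n}^{+--}$ by negating the index of each Verma factor; concretely, the Verma terms $\M{n}^{+--}$, $\M{n-1}^{+--}$, $\M{n-1}^{---}$, $\M{n}^{---}$ appearing in $\TT{n}^{+--}$ transform into $\M{n}^{-++}$, $\M{n-1}^{-++}$, $\M{n-1}^{+++}$, $\M{n}^{+++}$, which is exactly the claimed Verma flag of $\PP{n}^{-++}$. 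The three remaining formulas follow by the same mechanism: the flags of $\PP{n}^{-+-}$, $\PP{n}^{--+}$, $\PP{n}^{---}$ are obtained from the flags of $\TT{n}^{+-+}$, $\TT{n}^{++-}$, $\TT{n}^{+++}$ respectively by replacing each $\M{\mu}$ with $\M{-\mu}$.

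Since projective modules in $\OO$ admit Verma flags (so that $(\PP{\la}:\M{\mu})$ is a well-defined nonnegative integer for all $\mu$), and the multiplicities on the right-hand sides of the four identities in Theorem~\ref{thm:tilting2} exhaust the Verma flags of the corresponding tilting modules, the term-by-term equalities given by Soergel duality completely determine the Verma flags of the four projective modules. Listing the resulting Verma subquotients in weight-increasing order (i.e., reversing the weight-decreasing order used for tilting modules) yields the precise expressions stated in the proposition. No further input is required; in particular, there is no obstacle to overcome since all the hard work has already been carried out in establishing the tilting character formulas.
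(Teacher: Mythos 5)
Your proposal is correct and coincides with the paper's own argument: the paper likewise obtains Propositions~\ref{proj1:irr}--\ref{proj3:irr} by reformulating the tilting character formulae of Theorems~\ref{thm:tilting1}--\ref{tilting:extra} via Soergel duality \eqref{tilt=proj}, negating highest weights exactly as you do. The identification of $\PP{n}^{-\pm\pm}$ with the dualized $\TT{n}^{+\mp\mp}$ from Theorem~\ref{thm:tilting2} and the resulting term-by-term translation are precisely the intended proof.
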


The following projective modules admit irregular Verma flags.
\begin{prop} 
 \label{proj3:irr}
We have the following Verma flags for the projective modules:
\begin{align*}
\PP{000} =& \M{000} +\M{1}^{+--}  + \M{1}^{+-+} + \M{1}^{++-} +\M{1}^{+++},
\\
\PP{1}^{-++} =& \M{1}^{-++} + \M{000} + \M{1}^{+-+}   + \M{1}^{++-} + {2\M{1}^{+++} +  \M{2}^{+++}},
 \\
\PP{1}^{-+-} =& \M{1}^{-+-} +\M{1}^{-++} +\M{000} +\M{1}^{++-}  +\M{1}^{+++},
 \\
\PP{1}^{--+} =& \M{1}^{--+} +\M{1}^{-++} +\M{000} +\M{1}^{+-+}  +\M{1}^{+++},
 \\
\PP{1}^{---} =& \M{1}^{---} + \M{1}^{--+} +\M{1}^{-+-} +\M{1}^{-++}
+2 \M{000} + \M{1}^{+--}  + \M{1}^{+-+} + \M{1}^{++-} +\M{1}^{+++}.
\end{align*}
\end{prop}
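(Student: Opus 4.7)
The plan is to obtain each projective Verma flag by applying the Soergel duality \eqref{tilt=proj} directly to the corresponding tilting Verma flag in Theorem~\ref{tilting:extra}. Concretely, I would use the identity
\[
(\PP{\la} : \M{\mu}) = (\TT{-\la} : \M{-\mu})
\]
to convert each of the five tilting characters in Theorem~\ref{tilting:extra} into a projective character by negating the index on every weight appearing. Since negation interchanges the superscripts $+$ and $-$ on our shorthand symbols $\TT{n}^{\pm\pm\pm}$, $\PP{n}^{\pm\pm\pm}$, and so on, and fixes the weight $(0,0,0)$, the pairing between Theorem~\ref{tilting:extra} and the present proposition is as follows:
\begin{align*}
\PP{000}      &\longleftrightarrow \TT{000}, \\
\PP{1}^{-++}  &\longleftrightarrow \TT{1}^{+--}, \\
\PP{1}^{-+-}  &\longleftrightarrow \TT{1}^{+-+}, \\
\PP{1}^{--+}  &\longleftrightarrow \TT{1}^{++-}, \\
\PP{1}^{---}  &\longleftrightarrow \TT{1}^{+++}.
\end{align*}

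For each of these five pairs I would read off the Verma flag of $\TT{-\la}$ from Theorem~\ref{tilting:extra} and write down the corresponding list of Verma factors of $\PP{\la}$ by negating every $\rho$-shifted weight that appears (preserving multiplicities). For instance, starting from
\[
\TT{1}^{+--} = \M{1}^{+--} + \M{000} + \M{1}^{-+-} + \M{1}^{--+} + 2\M{1}^{---} + \M{2}^{---},
\]
the identity above gives the multiplicities of $\PP{1}^{-++}$ as $\M{1}^{-++} + \M{000} + \M{1}^{+-+} + \M{1}^{++-} + 2\M{1}^{+++} + \M{2}^{+++}$, which is precisely the asserted formula. The same mechanical passage yields the remaining four formulae.

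There is essentially no obstacle beyond bookkeeping: the nontrivial content (existence of the tilting modules, their indecomposability, and the Verma flag multiplicities, including the two places where a multiplicity $2$ occurs) has already been established in Theorem~\ref{tilting:extra}, so the present proposition reduces to a direct application of \eqref{tilt=proj}. The only minor care needed is to reorder each resulting Verma flag into the weight-increasing convention adopted for projectives, as opposed to the weight-decreasing convention used for tiltings; this is purely cosmetic.
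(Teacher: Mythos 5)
Your proposal is correct and coincides with the paper's own treatment: the paper states that the projective character formulae are obtained by reformulating the tilting formulae of Theorems~\ref{thm:tilting1}--\ref{tilting:extra} via \eqref{tilt=proj}, which is exactly the weight-negation bookkeeping you carry out. The five pairings and the transferred multiplicities (including the two multiplicity-$2$ entries and the term $\M{2}^{+++}$ in $\PP{1}^{-++}$) all check out.
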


\subsection{Projective tilting modules in $\Bl_0$}

We are interested in knowing which tilting modules are projective.
Let us first examine the lists of tilting modules and projective modules with regular flags.
Note that these tilting modules always have lowest terms $\M{n}^{---}$ for $n\ge 2$, while the projective modules always have highest terms $\M{n}^{+++}$ for $n\ge 2$.
Hence the only possible isomorphism is between
$\TT{n}^{+++} $ and $\PP{n}^{---}$ for $n\ge 2$, whose Verma flags of length $16$ indeed match perfectly.

Among the 5 tilting modules with irregular flags, $\TT{1}^{+--}$ cannot be projective as it cannot match with the Verma flags of possible candidates $\PP{2}^{---}$ or $\PP{1}^{---}$.
Among the 5 projective modules with irregular flags, $\PP{1}^{-++}$ cannot be tilting as it cannot match with the Verma flags of possible candidates $\TT{2}^{+++}$ or $\TT{1}^{+++}$.
The other 4 tilting modules all have lowest term $\M{1}^{---}$, while the other 4 projective modules all have highest term $\M{1}^{+++}$.
So the only possible isomorphism is between
$\TT{1}^{+++} $ and $\PP{1}^{---}$, whose Verma flags of length $9$ indeed match perfectly.

\begin{thm}
 \label{T=P:irrational}
We have an isomorphism of modules $\TT{n}^{+++} \cong \PP{n}^{---}$, for $n\ge 1$.
Furthermore, there are no other projective tilting modules of atypical weights in $\Bl_0$.
\end{thm}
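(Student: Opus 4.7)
The plan is straightforward matching: since both tilting modules and (indecomposable) projective modules are indecomposable and determined by their highest weight, two such modules are isomorphic if and only if their Verma flags coincide. So the strategy is simply to compare the explicit Verma flag formulae already produced in Theorems~\ref{thm:tilting1}--\ref{tilting:extra} and Propositions~\ref{proj1:irr}--\ref{proj3:irr}, and to match them up.

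First I would treat the regular range. Every tilting module in Theorems~\ref{thm:tilting1}--\ref{thm:tilting2} has lowest Verma factor of the form $\M{n}^{---}$ (for some $n\ge 1$), while every projective module listed in Propositions~\ref{proj1:irr}--\ref{proj2:irr} has highest Verma factor of the form $\M{m}^{+++}$ (for some $m\ge 1$). Hence the only way a projective tilting module with regular flag can occur is as $\TT{n}^{+++}=\PP{m}^{---}$, and comparing highest weights forces $n=m$. For these it remains to read off both flags and observe that the length-$16$ expressions in \eqref{Tnnn} and in Proposition~\ref{proj2:irr} for $\PP{n}^{---}$ match term by term; this gives $\TT{n}^{+++}\cong \PP{n}^{---}$ for $n\ge 2$.

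Next I would handle the five exceptional tilting modules in Theorem~\ref{tilting:extra} and the five exceptional projectives in Proposition~\ref{proj3:irr}. The lowest Verma factor of $\TT{1}^{+--}$ is $\M{2}^{---}$ with multiplicity $1$ and the Verma flag contains $2\M{1}^{---}$, which cannot match either $\PP{2}^{---}$ (whose flag contains $\M{3}^{---}$ and $\M{1}^{---}$ only with multiplicity one) or $\PP{1}^{---}$ (whose flag contains $2\M{000}$ and no $\M{2}^{---}$); so $\TT{1}^{+--}$ is not projective. Symmetrically, $\PP{1}^{-++}$ has highest factor $\M{2}^{+++}$ but flag $2\M{1}^{+++}$, which cannot match $\TT{2}^{+++}$ or $\TT{1}^{+++}$; so $\PP{1}^{-++}$ is not tilting. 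Of the remaining four exceptional tilting modules, all have lowest Verma factor $\M{1}^{---}$, and all remaining four exceptional projectives have highest Verma factor $\M{1}^{+++}$; matching weights singles out $\TT{1}^{+++}$ versus $\PP{1}^{---}$, and both have the same length-$9$ Verma flag, giving the case $n=1$ of the isomorphism.

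Finally I would rule out the typical blocks: in a typical block $\Bl_{n,m,\ell}$ (equivalent via Proposition~\ref{prop:typical} to the principal block of a direct sum of copies of $\sll$), the only projective tilting modules correspond to anti-dominant weights, which are non-atypical; so no additional projective tilting modules arise at atypical weights. The main ``obstacle'' is purely bookkeeping — one must carefully enumerate all candidate pairs and verify the mismatches — but nothing beyond direct inspection of the tables in Theorems~\ref{thm:tilting1}--\ref{tilting:extra} and Propositions~\ref{proj1:irr}--\ref{proj3:irr} is required.
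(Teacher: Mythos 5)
There is a genuine gap at the heart of your argument. Your opening principle, that a tilting module and an indecomposable projective module ``are isomorphic if and only if their Verma flags coincide,'' is not valid: equality of Verma flag multiplicities is an equality of characters, and it does not imply an isomorphism of modules, even for indecomposable modules admitting Verma flags. (Indeed, Soergel duality \eqref{tilt=proj} forces $(\PP{\la}:\M{\mu})=(\TT{-\la}:\M{-\mu})$ quite generally, yet $\PP{\la}$ and $\TT{-\la}$ are almost never isomorphic; the coincidence of the flags of $\TT{n}^{+++}$ and $\PP{n}^{---}$ is exactly such a numerical identity and carries no structural information by itself.) So your bookkeeping does prove the negative half of the theorem --- if a tilting module in $\Bl_0$ were projective it would have to be isomorphic to some $\PP{\mu}$, hence have the same flag, and your comparison correctly eliminates every pair except $\TT{n}^{+++}$ versus $\PP{n}^{---}$ --- but it does not prove that $\TT{n}^{+++}\cong\PP{n}^{---}$ actually holds.

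The missing ingredient, which is the actual content of the paper's proof, is to show that $\TT{n}^{+++}$ is projective. The paper does this by observing that $\TT{n}^{+++}$ was constructed (in Theorem~\ref{thm:tilting2} for $n\ge 2$ and Theorem~\ref{tilting:extra} for $n=1$) by applying translation functors to tilting modules of typical highest weight, which are projective by the description of typical blocks (Proposition~\ref{prop:typical}); since translation functors are exact functors built from tensoring with a finite-dimensional module, they preserve projectivity, so $\TT{n}^{+++}$ is projective. Being an indecomposable projective whose Verma flag has lowest term $\M{n}^{---}$ with multiplicity one, it must be the projective cover $\PP{n}^{---}$. Your final paragraph about typical blocks is also unnecessary: the theorem only concerns atypical weights, i.e.\ modules in $\Bl_0$, so no statement about the blocks $\Bl_{n,m,\ell}$ is needed.
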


\begin{proof}
Thanks to our discussion above it suffices to show that the tilting modules $\TT{n}^{+++}$ are indeed projective, for $n\ge 1$. To that end, we observe that the initial tilting modules to which we applied translation functors to obtain $\TT{n}^{+++}$ (for $n\ge 2$ in Theorem~ \ref{thm:tilting2} and for $n=1$ in Theorem~ \ref{tilting:extra}) are typical and projective by Proposition~\ref{prop:typical}.
Since translation functors are exact, the resulting tilting modules $\TT{n}^{+++}$ are also projective.
\end{proof}

Note that Theorem \ref{T=P:irrational} implies that the tilting module $\TT{n}^{+++}$ has a simple head isomorphic to $\LL{n}^{---}$, for $n\ge 1$.

\subsection{Composition factors of Verma modules in $\Bl_0$}

We shall simply write $\M{\mu} =\sum_{\la} p_{\la\mu} \LL{\la}$ below to denote the composition multiplicity $[\M{\mu}: \LL{\la}] =p_{\la\mu}$.
Using the BGG reciprocity
$(\PP{\la} : \LL{\mu}) = [\M{\mu}: \LL{\la}]$, we obtain the formulae for composition factors in Verma modules.
The composition factors of Verma modules are arranged in a weight decreasing order.

\begin{prop}
For $n\ge 1$, we have the following composition factors of Verma modules:
\begin{align*}
\M{n}^{---} =& \LL{n}^{---} +\LL{n+1}^{---},
 \\
\M{n}^{--+} =& \LL{n}^{--+} + \LL{n}^{---} +\LL{n+1}^{--+} + \LL{n+1}^{---},
 \\
\M{n}^{-+-} =& \LL{n}^{-+-} + \LL{n}^{---} +\LL{n+1}^{-+-} + \LL{n+1}^{---},
 \\
\M{n}^{-++}  =& \LL{n}^{-++} +\LL{n}^{-+-} + \LL{n}^{--+}  +\LL{n}^{---}
+\LL{n+1}^{-++}  +\LL{n+1}^{-+-} +\LL{n+1}^{--+} + \LL{n+1}^{---}.
\end{align*}
\end{prop}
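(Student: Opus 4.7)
The plan is to apply BGG reciprocity \eqref{BGG},
\[
[\M{\mu}: \LL{\la}] = (\PP{\la}: \M{\mu}),
\]
to translate the projective Verma flag data of Propositions~\ref{proj1:irr}--\ref{proj3:irr} directly into composition multiplicities for $\M{n}^{---}$, $\M{n}^{--+}$, $\M{n}^{-+-}$, $\M{n}^{-++}$. By Proposition~\ref{prop:blocks:irr}, only $\la \in \Wt_0$ need be considered, so the relevant projectives are $\PP{m}^{\pm\pm\pm}$ for $m \ge 1$ together with $\PP{000}$, whose Verma flags are all recorded.

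First I would eliminate the projectives that contribute nothing. Inspection of Proposition~\ref{proj1:irr} shows every $\PP{m}^{+\sigma\tau}$ has a Verma flag supported only on weights with positive first sign, hence these contribute no composition factor to any $\M{n}^{-\sigma\tau}$. Likewise $\PP{000}$ and $\PP{1}^{-++}$ from Proposition~\ref{proj3:irr} contain no Verma of the form $\M{n}^{-\sigma\tau}$ for $n \ge 1$ in their flags. Thus only the projectives $\PP{m}^{-\sigma'\tau'}$ need to be scanned.

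The key observation from Proposition~\ref{proj2:irr} is the uniform containment pattern: for $m \ge 2$, the Verma module $\M{n}^{-\sigma\tau}$ occurs in $\PP{m}^{-\sigma'\tau'}$ precisely when $m \in \{n, n+1\}$ and $(\sigma',\tau') \preceq (\sigma,\tau)$ componentwise (with the convention $- < +$), always with multiplicity one. Transcribing these contributions via BGG reciprocity yields the four stated formulas for $n \ge 2$. For the boundary case $n = 1$, the $\PP{2}^{-\sigma'\tau'}$ contributions still come from Proposition~\ref{proj2:irr} and are identical, while the needed $\PP{1}^{-\sigma'\tau'}$ come from Proposition~\ref{proj3:irr}; a direct check of those four flags shows the same containment pattern for their $\M{1}^{-\sigma\tau}$ entries (the extra $2\M{000}$ summands there are supported on the weight $000 \ne n^{-\sigma\tau}$ and so are irrelevant), so the formulas extend verbatim to $n = 1$.

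The only real obstacle is bookkeeping: verifying that every projective indecomposable in $\Bl_0$ has been accounted for, so that no hidden contribution is missed. This is immediate from the classification of simples in $\Bl_0$ in Proposition~\ref{prop:blocks:irr} together with the completeness of Propositions~\ref{proj1:irr}--\ref{proj3:irr}, which together exhaust the indecomposable projectives of the principal block.
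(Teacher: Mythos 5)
Your proposal is correct and takes essentially the paper's own route: the paper obtains these formulae precisely by applying BGG reciprocity \eqref{BGG} to the Verma flags of the projectives recorded in Propositions~\ref{proj1:irr}--\ref{proj3:irr}, which is exactly your computation, and your "uniform containment pattern" for the $\PP{m}^{-\sigma'\tau'}$ transcribes correctly. One small slip: $\PP{1}^{-++}$ does contain a Verma of the form $\M{n}^{-\sigma\tau}$ with $n\ge 1$ (namely its top term $\M{1}^{-++}$), so it cannot be discarded at the outset; this is harmless since that entry only records the trivially present head $\LL{1}^{-++}$ of $\M{1}^{-++}$, and your subsequent $n=1$ check over all four flags $\PP{1}^{-\sigma'\tau'}$ accounts for it anyway.
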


\begin{prop}
For $n\ge 2$, we have the following composition factors of Verma modules:
\begin{align*}
\M{n}^{+--} =& \LL{n}^{+--} +\LL{n-1}^{+--}  + \LL{n}^{---} + \LL{n+1}^{---},
 \\
\M{n}^{+-+}  =& \LL{n}^{+-+}  +\LL{n}^{+--}  + \LL{n-1}^{+-+}    + \LL{n-1}^{+--}  +\LL{n}^{--+}    + \LL{n}^{---} + \LL{n+1}^{--+} + \LL{n+1}^{---},
 \\
\M{n}^{++-}  =& \LL{n}^{++-}   +\LL{n}^{+--}  + \LL{n-1}^{++-}   + \LL{n-1}^{+--}   +\LL{n}^{-+-}   + \LL{n}^{---} + \LL{n+1}^{-+-}   + \LL{n+1}^{---},
 \\
\M{n}^{+++}  =& \LL{n}^{+++}  +\LL{n}^{++-} + \LL{n}^{+-+}  +\LL{n}^{+--}
   +\LL{n-1}^{+++}   +\LL{n-1}^{++-} + \LL{n-1}^{+-+}  +\LL{n-1}^{+--} &\\
+& \LL{n}^{-++}  +\LL{n}^{-+-}   +\LL{n}^{--+}    + \LL{n}^{---}
 +\LL{n+1}^{-++}  + \LL{n+1}^{-+-}  +\LL{n+1}^{--+}  +\LL{n+1}^{---} + {\delta_{n,2} \LL{1}^{-++}}.
\end{align*}
\end{prop}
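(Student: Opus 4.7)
The plan is to derive these composition factor formulae as a direct consequence of the projective character formulae in Propositions~\ref{proj1:irr}, \ref{proj2:irr} and \ref{proj3:irr}, via the BGG reciprocity \eqref{BGG}:
\[
[\M{\mu} : \LL{\la}] = (\PP{\la} : \M{\mu}).
\]
Thus, for each fixed $\mu \in \Wt_0$ of the form $(n, \pm n, \pm n)$ with $n \ge 2$, the task reduces to scanning the projective Verma flags and recording, for every $\la$, the multiplicity of $\M{\mu}$ in $\PP{\la}$.

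First I would handle $\M{n}^{+--}$. Scanning the proven projective formulae, $\M{n}^{+--}$ occurs as a Verma factor of exactly four projectives: $\PP{n}^{+--}$ and $\PP{n-1}^{+--}$ (from Proposition~\ref{proj1:irr}), and $\PP{n}^{---}$ and $\PP{n+1}^{---}$ (from Proposition~\ref{proj2:irr}), each with multiplicity one. By BGG reciprocity these give the four composition factors listed. The cases of $\M{n}^{+-+}$ and $\M{n}^{++-}$ proceed identically: each Verma appears once in the corresponding projective within its own tower and within its partner $(--)$-tower, yielding the eight terms listed in each formula. Note that for $n \ge 2$, none of the irregular projectives in Proposition~\ref{proj3:irr} contain $\M{n}^{+\pm\mp}$ or $\M{n}^{+-+}$ or $\M{n}^{++-}$ in their Verma flags, so there are no correction terms.

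The only subtle case is $\M{n}^{+++}$, which is the one that carries the $\delta_{n,2}\LL{1}^{-++}$ correction. For $n \ge 2$, each of the sixteen composition factors in the formula arises in the obvious way from Propositions~\ref{proj1:irr}--\ref{proj2:irr}: the eight factors with superscript $(+\pm\pm)$ come from the four projectives $\PP{n}^{+\pm\pm}$ and $\PP{n-1}^{+\pm\pm}$, and the eight factors with superscript $(-\pm\pm)$ come from the four projectives $\PP{n}^{-\pm\pm}$ and $\PP{n+1}^{-\pm\pm}$. The extra term $\delta_{n,2}\LL{1}^{-++}$ comes from the irregular projective $\PP{1}^{-++}$ in Proposition~\ref{proj3:irr}, which has $\M{2}^{+++}$ in its Verma flag with multiplicity one but does not contain any other $\M{n}^{+++}$ for $n \ge 2$. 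One must then check that no further irregular projective in Proposition~\ref{proj3:irr} contains any $\M{n}^{+++}$ for $n \ge 2$: inspection of the five listed flags confirms that $\M{2}^{+++}$ appears only in $\PP{1}^{-++}$, and $\M{n}^{+++}$ for $n \ge 3$ appears in none of them.

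I do not anticipate a genuine obstacle here, since all the work has already been done in establishing the projective Verma flags; the main task is just a careful bookkeeping exercise to confirm that the irregular projectives contribute exactly the $\delta_{n,2}$ correction and nothing else. The only delicate point is to be exhaustive in listing, for each $\mu$, every projective whose Verma flag contains $\M{\mu}$, paying particular attention to the boundary index $n = 2$ where the regular and irregular towers meet.
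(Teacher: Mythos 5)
Your proposal is correct and is exactly the paper's route: the paper obtains these propositions by applying BGG reciprocity \eqref{BGG} to the Verma flags of projectives in Propositions~\ref{proj1:irr}--\ref{proj3:irr}, and your bookkeeping (in particular attributing the $\delta_{n,2}\LL{1}^{-++}$ term to the single occurrence of $\M{2}^{+++}$ in the irregular projective $\PP{1}^{-++}$, with no other irregular contributions for $n\ge 2$) is accurate.
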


The Verma modules with irregular composition series are given as follows.
\begin{prop}
We have the following composition factors of Verma modules:
\begin{align*}
\M{000} =& \LL{000} +\LL{1}^{-++}  + \LL{1}^{-+-} + \LL{1}^{--+} + 2\LL{1}^{---},
 \\
\M{1}^{+--} =& \LL{1}^{+--} + \LL{000} + \LL{1}^{---} +  \LL{2}^{---},
 \\
\M{1}^{+-+} =& \LL{1}^{+-+} +\LL{1}^{+--} + \LL{000}  + \LL{1}^{-++} + \LL{1}^{--+} + \LL{1}^{---}  +\LL{2}^{--+} +\LL{2}^{---},
 \\
\M{1}^{++-} =& \LL{1}^{++-} +\LL{1}^{+--} + \LL{000}  + \LL{1}^{-++} + \LL{1}^{-+-} + \LL{1}^{---} +\LL{2}^{-+-} + \LL{2}^{---},
 \\
\M{1}^{+++} =& \LL{1}^{+++} +\LL{1}^{++-} +\LL{1}^{+-+} +\LL{1}^{+--} + \LL{000}  + {2 \LL{1}^{-++}} + \LL{1}^{-+-} + \LL{1}^{--+} + \LL{1}^{---} \\
+ & \LL{2}^{-++} + \LL{2}^{-+-} + \LL{2}^{--+} + \LL{2}^{---}.
\end{align*}
\end{prop}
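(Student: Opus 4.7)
The plan is to deduce each of the five composition-series formulae directly from BGG reciprocity, exactly as in the proof of the preceding proposition for the regular Verma modules. By BGG reciprocity in the form
\[
[\M{\mu}:\LL{\la}] = (\PP{\la}:\M{\mu}),
\]
the composition multiplicity $[\M{\mu}:\LL{\la}]$ is obtained by reading off how many times $\M{\mu}$ appears in the Verma flag of $\PP{\la}$. Since the Verma flags of all projective modules in $\Bl_0$ have already been determined in Propositions~\ref{proj1:irr}, \ref{proj2:irr} and \ref{proj3:irr}, the proof becomes a pure bookkeeping exercise.

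For each of the five irregular highest weights
\[
\mu \in \{(0,0,0),\, (1,-1,-1),\, (1,-1,1),\, (1,1,-1),\, (1,1,1)\},
\]
I would scan through all projectives in $\Bl_0$ and collect the set of weights $\la$ for which $(\PP{\la}:\M{\mu}) > 0$, together with those multiplicities. The contributing projectives fall into three groups: those with regular Verma flags from Proposition~\ref{proj1:irr} (in particular for $n=1,2$ near the singular weight, which supply the terms $\LL{1}^{+\pm\pm}$ and $\LL{2}^{+++}$ when present), those with regular Verma flags from Proposition~\ref{proj2:irr} for $n=2$ (which supply $\LL{2}^{---}$, $\LL{2}^{--+}$, $\LL{2}^{-+-}$, $\LL{2}^{-++}$ for the $\M{1}^{+\pm\pm}$), and the five irregular projectives from Proposition~\ref{proj3:irr} (which supply $\LL{000}$ and $\LL{1}^{-\pm\pm}$). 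The two doubled composition factors in the statement arise precisely from the two doubled Verma multiplicities in Proposition~\ref{proj3:irr}: namely $(\PP{1}^{---}:\M{000}) = 2$ yields $[\M{000}:\LL{1}^{---}] = 2$, and $(\PP{1}^{-++}:\M{1}^{+++}) = 2$ yields $[\M{1}^{+++}:\LL{1}^{-++}] = 2$.

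Since BGG reciprocity is an equality of nonnegative integers rather than a bound, no additional argument is needed once the tabulation is complete; the formulae then match line by line. There is no genuine obstacle here — this proposition is essentially a transpose of Propositions~\ref{proj1:irr}--\ref{proj3:irr} under the reciprocity pairing — and the only thing to guard against is incompleteness of the sweep. Each of the five $\mu$ must be checked against all projectives $\PP{\la}$ whose highest weight $\la-\rho$ lies in the small finite set of weights dominating or adjacent to $\mu-\rho$ in the Bruhat order on $\Bl_0$, so that no contribution is overlooked.
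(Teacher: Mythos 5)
Your proposal is correct and is exactly the paper's route: the paper derives this proposition with no further argument, simply by transposing the Verma flag data of Propositions~\ref{proj1:irr}--\ref{proj3:irr} through BGG reciprocity \eqref{BGG}, and your identification of the two doubled multiplicities (from $(\PP{1}^{---}:\M{000})=2$ and $(\PP{1}^{-++}:\M{1}^{+++})=2$) is precisely right.
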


 \section{Character formulae in $\OO$, for $\ka \in \Q$}
  \label{sec:rational}

 In this section, we assume  $\ka   \in \Q \backslash \{0,-1\}$.
We can and will assume $\ka>0$ thanks to the isomorphisms \eqref{D:iso}. We shall obtain the Verma flag multiplicities of every tilting module and every projective module in $\OO$.
 We also describe the composition factors of every Verma module.

\subsection{Classification of blocks}

We denote
 \[
 \ka = p/d,
 \qquad \text{for } p,d\in \Z_{>0}, \;(p,d)=1.
 \]
Recall a $\rho$-shifted weight $(x,y,z)$ is atypical if it satisfies \eqref{atypical}.
A $\rho$-shifted weight is called {\em singular} if at least one of the 3 coordinates is zero, and a weight is
{\em singular} if its $\rho$-shifted weight is singular; otherwise it is called {\em regular}.

For $k \in \N$, we introduce
 the following $\rho$-shifted weights (with 8 choices of signs):
 \begin{align}
 f_{k;n}^{\pm\pm\pm} =
  \begin{cases}
 \big( {\pm(-n)},\; \pm(-n-kp),\; \pm(-n+kd) \big), & \text{ if } n \le -kp,
 \\
\big( {\pm(-n)},\; \pm(n+kp),\; \pm(-n+kd) \big), & \text{ if } 1-kp \le n \le 0,
 \\
\big( {\pm n},\; \pm(n+kp),\; \pm(-n+kd) \big), & \text{ if }  0 \le n \le kd-1,
 \\
\big( {\pm n},\; \pm(n+kp),\; \pm(n-kd) \big), & \text{ if } kd \le n.
 \end{cases}
\end{align}
For $k\ge 1$ and for each of 4 choices of signs, we have the following identification
\begin{align*}
f_{k;0}^{+\pm\pm} = f_{k;0}^{-\pm\pm} (\stackrel{\text{def}}{=:} f_{k;0}^{\circ\pm\pm}),
\quad
f_{k;-kp}^{\pm+\pm} = f_{k;-kp}^{\pm-\pm} (\stackrel{\text{def}}{=:} f_{k;0}^{\pm\circ\pm}),
\quad
f_{k;kd}^{\pm\pm+} = f_{k;kd}^{\pm\pm-} (\stackrel{\text{def}}{=:} f_{k;0}^{\pm\pm\circ}),
\end{align*}
and these identifications are the only possible ones among all weights $f_{k;n}^{\pm\pm\pm}$, for $k\ge 1$.

Denote $\Wt_0 =\{f_{0;n}^{\pm\pm\pm}\mid n\in \Z\}
=\{ (\pm n, \pm n, \pm n) \mid n\in \Z_{>0}\} \cup \{0,0,0\}$,
and
\[
\Wt_k =\big\{f_{k;n}^{\pm\pm\pm} \mid n\in \Z \backslash \{0, -kp, kd\} \big\}
 \sqcup \{f_{k;0}^{\circ\pm\pm}, \, f_{k;-kp}^{\pm\circ\pm}, \, f_{k;kd}^{\pm\pm\circ}\}, \quad \text{ for } k\ge 1.
\]
 We shall often omit the index $k$ when there is no confusion by setting
\[
 f_{n}^{\pm\pm\pm} = f_{k;n}^{\pm\pm\pm},
 \quad
 f_{0}^{\circ\pm\pm}=f_{k;0}^{\circ\pm\pm},
 \quad
 f_{-kp}^{\pm\circ\pm}=f_{k;-kp}^{\pm\circ\pm},
\quad
f_{kd}^{\pm\pm\circ}=f_{k;kd}^{\pm\pm\circ}.
\]

We denote by $X_{\text{aty}}$ the set of atypical weights in the weight lattice $X$.
 \begin{lem}
   \label{lem:atypical}
The set $X_{\text{aty}}$
 is identified with the disjoint union $\bigsqcup_{k\in \N} \Wt_k$ via the isomorphism  $X\cong \Z^3$ in \eqref{eq:lattice}.
 \end{lem}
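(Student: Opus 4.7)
My plan is to unwind the Diophantine condition \eqref{atypical} and match integer solutions against the piecewise definitions of $f_{k;n}^{\pm\pm\pm}$.

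Substituting $\ka = p/d$ and multiplying through by $d$, a weight $(x,y,z) \in \Z^3$ is atypical if and only if it lies on one of four hyperplanes
\[
H_{\epsilon,\eta}:\ p(x+\epsilon z) + d(x+\eta y) = 0, \qquad \epsilon,\eta \in \{+1,-1\}.
\]
The Weyl group $W = \Z_2 \times \Z_2 \times \Z_2$ acts on $X \cong \Z^3$ by sign changes; it permutes the four hyperplanes $H_{\epsilon,\eta}$ transitively, with common stabilizer $\{(+,+,+),(-,-,-)\}$. So it suffices to parametrize integer points on $H_{-1,-1}$ and then sweep out the $W$-action.

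On $H_{-1,-1}$, the equation $p(x-z)=d(y-x)$ combined with $\gcd(p,d)=1$ forces $d\mid x-z$; writing $x-z=dk$ yields $y=x+pk$, so
\[
H_{-1,-1} \cap \Z^3 \;=\; \{(n,\, n+pk,\, n-dk) : n, k \in \Z\}.
\]
The involution $(-,-,-)\in W$ acts on the parameters by $(n,k)\mapsto(-n,-k)$, so $W$-orbits of atypical weights correspond bijectively to pairs $(n,k)\in\Z^2$ modulo $(n,k)\sim(-n,-k)$. I would pick the representative with $k\in\N$, and additionally $n\ge 0$ when $k=0$. This immediately gives disjointness of the $\Wt_k$ for distinct $k$, since $k$ is recovered as the $W$-invariant $|x-z|/d$ evaluated on the (essentially unique) $W$-translate of the weight into $H_{-1,-1}$.

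Finally I would match this parametrization with the notation of the lemma. For $k=0$ the representatives are $(n,n,n)$ with $n\ge 0$, whose $W$-orbits jointly constitute $\Wt_0$. For $k\ge 1$, I would apply the appropriate sign twist to $(n,n+pk,n-dk)$ to make every coordinate nonnegative: the second coordinate changes sign at $n=-kp$ and the third at $n=kd$, producing precisely the four ranges $n\le -kp$, $1-kp\le n\le 0$, $0\le n\le kd-1$, $n\ge kd$ that define the four cases of $f_{k;n}^{+++}$. Applying the remaining elements of $W$ sweeps out the $\pm\pm\pm$ decorations. Orbits of size strictly less than $8$ occur exactly when a coordinate vanishes, i.e., at $n\in\{0,-kp,kd\}$, which yields the identifications $f_{k;0}^{\circ\pm\pm}$, $f_{k;-kp}^{\pm\circ\pm}$, $f_{k;kd}^{\pm\pm\circ}$ already built into the definition of $\Wt_k$. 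The main obstacle is the sign bookkeeping in this last step, i.e., verifying that the four ranges of $n$ align correctly with the sign patterns of $(n,n+pk,n-dk)$ under the appropriate $W$-twist; this is a finite linear check with no conceptual difficulty.
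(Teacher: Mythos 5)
Your argument is correct and follows essentially the same route as the paper's proof: both rewrite the atypicality condition \eqref{atypical} with $\ka=p/d$ and use $(p,d)=1$ to force the parametrization $(n,\,n\pm pk,\,n\mp dk)$ up to sign changes, then match the resulting families with the defining cases of $f_{k;n}^{\pm\pm\pm}$. Your extra bookkeeping (organizing by the $W$-action on the four hyperplanes and recovering $k$ as a $W$-invariant to get disjointness) only makes explicit what the paper leaves implicit.
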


\begin{proof}
Recall that $\ka=\frac{p}{d}\in\Q$ with $p,d>0$ and $(p,d)=1$.  Let $f=(n,m,\ell) \in\Z^3$ be an atypical $\rho$-shifted  weight. It follows by \eqref{atypical} that for some $\sigma,\tau\in\{+1,-1\}$ we have
\[
(-n+\sigma m)d=(n-\tau \ell)p.
\]
Since $(p,d)=1$ there exists $k\in\Z$ such that $(-n+\sigma m)=kp$, and hence $(n-\tau \ell)=kd$. Thus, we get
\begin{align*}
m=\pm(n+kp), \quad \ell=\pm(n-kd).
\end{align*}
It follows that any atypical weight is of the form
\begin{align*}
&f^{\pm\pm\pm}_{k;n}=(\pm|n|,\pm|n+ kp|,\pm|n- kd|),\\
&f^{\pm\pm\pm}_{k;-n}=(\pm|n|,\pm|n- kp|,\pm|n+ kd|),
\end{align*}
for $k,n\in\N$.
This completes the proof.
\end{proof}

The Bruhat order $\preceq$ on the lattice $\Z^3$ is defined as the transitive closure of the following relations:
\begin{enumerate}
\item
$(-a,b,c) \prec (a,b,c)$ if $a>0$,
\item
$(a,-b,c) \prec (a,b,c)$ if $b>0$,
\item
$(a,b,-c) \prec (a,b,c)$ if $c>0$,
\item
$f_{k;\pm (n+1)}^{- \sigma\tau}\prec f_{k;\pm n}^{- \sigma\tau}$,
and $f_{k;\pm n}^{+ \sigma\tau}\prec f_{k;\pm(n+1)}^{+ \sigma\tau}$, for any $k\ge 0$, $n\in\N$ and $\sigma, \tau=\pm$.
\end{enumerate}
This restricts to a Bruhat ordering on $\Wt_k$, for each $k \ge 0$.
Via the isomorphism $X \cong \Z^3$ in \eqref{eq:lattice}, this defines a Bruhat ordering on the weight lattice $X$. We note that the Bruhat ordering thus defined is consistent with the Harish-Chandra homomorphism; cf. \eqref{im:HC}. Furthermore, by Lemmas \ref{lem:oddhom} and \ref{lem:evenhom}, we have $f\prec g$ if and only if there exists a sequence of weights $f=f_1\prec f_2\prec \cdots\prec f_k=g$ such that $\text{Hom}_\g(M_{f_i},M_{f_{i+1}})\not=0$, for all $i=1,\cdots k-1$.

The typical blocks are controlled by the Weyl group $W$.
A typical block is the full subcategory of $\OO$ whose composition factors are of the form $\LL{f}$, where $f$ runs over a fixed $W$-orbit in $\Z^3 \backslash \bigsqcup_{k\in \N} \Wt_k$. There is a unique representative in $Wf$ of the form $(n,m,\ell)$, for $n,m,\ell \in \N$ and we denote this block by $\Bl_{n,m,\ell}$. A version of Proposition~\ref{prop:typical} holds (cf.~\cite{Gor02}), that is,
the typical block $\Bl_{n,m,\ell}$
is equivalent to the principal block for the Lie algebra which is a direct sum of $r$ copies of $\sll$, where $r$ is as in \eqref{eq:r}. We shall freely use the character formulae $\TT{f}$ for typical $f$ below.

From now on we shall focus on the atypical blocks in $\OO$.
\begin{prop}
  \label{prop:blocks}
For each $k\in \N$,
there is a block $\Bl_k$ in $\OO$ which contains simple modules $\LL{f}$, for $f \in \Wt_k$.
Moreover, any atypical block in $\OO$ is one of $\Bl_k$, for $k\in \N$.
\end{prop}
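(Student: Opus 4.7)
The plan is to establish the proposition in three connected steps, exploiting Lemma~\ref{lem:atypical}, the linkage criterion supplied by Lemmas~\ref{lem:oddhom} and~\ref{lem:evenhom}, and the central-character computation via \eqref{action:cas}.

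First, I will separate the candidate blocks $\Bl_k$ from each other and from the typical blocks using central characters. Since the polynomial $P$ of \eqref{form:P} vanishes on every atypical weight, the atypical part of the Harish-Chandra image in \eqref{im:HC} is controlled by the single generator $-\delta^2/(1+\ka) + \ep_1^2 + \ep_2^2/\ka$. I will compute $c_f$ of \eqref{action:cas} for a representative $f \in \Wt_k$ from each of the four ranges defining $f_{k;n}^{\pm\pm\pm}$; in every case the atypicality identity $\ka d = p$ forces the cross terms $\pm 2nkp$ and $\mp 2\ka n k d$ to cancel, leaving
\[
c_f = -(1+\ka) f_1^2 + f_2^2 + \ka f_3^2 = k^2 p(p+d).
\]
Since the values $\{k^2 p(p+d) : k \in \N\}$ are pairwise distinct (with $k=0$ giving $c_f = 0$), simples indexed by different $\Wt_k$ carry different central characters and cannot be linked. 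Combined with Lemma~\ref{lem:central}(1), this separates each $\Bl_k$ from every other block of $\OO$, typical or atypical.

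Second, I will show that for each $k$ the simples $\LL{f}$ with $f \in \Wt_k$ all lie in one block. Using the observation recorded just after the definition of the Bruhat order on $X$, it suffices to show that the Bruhat relations (1)--(4), restricted to $\Wt_k$, form a connected graph. Relations (1)--(3) link the eight sign variations $f_{k;n}^{\pm\pm\pm}$ through their common dominant-chamber representative, and by Lemma~\ref{lem:evenhom} applied to the corresponding positive even root each such Bruhat comparison is realized by a nonzero Verma-module homomorphism. Relation~(4) links consecutive values of $n$ within a fixed sign pattern; the linkage is realized by Lemma~\ref{lem:oddhom} applied to the odd root $\gamma \in \Phi_{\bar 1}^+$ whose sign choice matches the prevailing range of $n$, and a direct calculation shows $(f,\gamma)=0$ again reduces to $\ka d = p$. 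Combining these, any two elements of $\Wt_k$ can be joined by a zig-zag of nonzero homomorphisms between Verma modules, so the corresponding simples share a block.

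Finally, Lemma~\ref{lem:atypical} identifies the set of atypical integral weights with $\bigsqcup_{k \in \N} \Wt_k$, so every atypical simple in $\OO$ is indexed by some $\Wt_k$ and, by Steps~1 and~2, belongs to $\Bl_k$; this gives both the existence of each $\Bl_k$ as a genuine block and the exhaustion of all atypical blocks. The main technical obstacle is Step~1: although the final simplification to $k^2 p(p+d)$ is the same in each case, one must verify the formula across the four ranges in the definition of $f_{k;n}^{\pm\pm\pm}$ together with the three singular $\circ$-families, which requires careful bookkeeping. A secondary subtlety in Step~2 is the boundary values $n \in \{0,-kp,kd\}$, where the range switches and the odd root witnessing relation~(4) must be chosen accordingly while remaining in $\Phi_{\bar 1}^+$; this is routine but not automatic.
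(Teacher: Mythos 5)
Your proposal is correct and follows essentially the same route as the paper: separate the candidate subcategories $\Bl_k$ (and the typical blocks) by the Casimir eigenvalue $c_f=k^2(p^2+pd)$ from \eqref{action:cas}, link all weights within a fixed $\Wt_k$ by the Verma-module homomorphisms of Lemmas~\ref{lem:oddhom} and~\ref{lem:evenhom}, and exhaust the atypical weights by Lemma~\ref{lem:atypical}. The extra bookkeeping you flag (the four ranges of $n$, the $\circ$-families, and the boundary values $n\in\{0,-kp,kd\}$) is exactly the routine verification the paper leaves implicit, so no gap remains.
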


\begin{proof}
Let $\Bl_k$ be the full subcategory of $\OO$ of modules that have composition factors of the form $L_f$, for $f\in\Wt_k$.

Recall that the eigenvalue $c_f$ of the Casimir operator on $L_f$ is given by \eqref{action:cas}. Now for $\ka=\frac{p}{d}$ we have $c_{f^{\pm\pm\pm}_{k;n}}=k^2(p^2+pd)$, for $f^{\pm\pm\pm}_{k;n}\in\Wt_k$, and hence is independent of $n$. Clearly, for $k,\ell\in\N$, with $k\not=\ell$ we have $k^2(p^2+pd)\not=\ell^2(p^2+pd)$. Thus, we have established that the subcategories $\mathcal B_k$ are separated by the Casimir element.

By Lemmas~\ref{lem:oddhom} and \ref{lem:evenhom}, all the weights in $\Wt_k=\{f_{k,n}^{\pm\pm\pm}|n\in\Z\}$ are linked, for each fixed $k$. Hence $\Bl_k$ are indeed blocks. As we have exhausted the set of atypical weights by Lemma~\ref{lem:atypical}, the proposition is proved.
\end{proof}

\begin{rem}
The block $\Bl_0$ is the principal block which contains the trivial module. As seen above, the blocks $\Bl_k$, for $k\in\N$, are separated by the eigenvalues of the Casimir element, which acts on any simple module in $\Bl_k$ by the scalar $k^2(p^2+pd)$. Similar results for the category of {\em finite-dimensional} $\D$-modules were obtained in \cite{Ger00}.
\end{rem}

We introduce the shorthand notations for various modules in $\Bl_k\, (k\ge 1)$, and
we often omit the index $k$ when there is no confusion:
\begin{align}
  \begin{split}
\TT{n}^{\pm\pm\pm}
=\TT{k;n}^{\pm\pm\pm} &=\TT{f_{k;n}^{\pm\pm\pm}} \; (n\in\Z \backslash \{0,-kp, kd\}),
\\
\TT{0}^{\circ\pm\pm} =\TT{k;0}^{\circ\pm\pm} =\TT{f_{k;0}^{\circ\pm\pm}},
\quad
\TT{-kp}^{\pm\circ\pm}
&=\TT{k;-kp}^{\pm\circ\pm} =\TT{f_{k;-kp}^{\pm\circ\pm}}, \quad
\TT{kd}^{\pm\pm\circ} =\TT{k;kd}^{\pm\pm\circ} =\TT{f_{k;kd}^{\pm\pm\circ}}.
  \end{split}
\end{align}
Similar self-explanatory notations, with $P/M/L$ replacing $T$ above,
will be used for the projective, Verma, and simple modules.

\subsection{The principal block $\Bl_0$ $(\ka \in \Q)$}

The weight poset  $\Wt_0$ is identical to the weight poset  for the principal block with $\ka \not\in\Q$; see Section \ref{sec:irrational}.
The Verma flag structures of tilting modules and projective modules in $\Bl_0$
are exactly the same as for $\Bl_0$ with $\ka \not\in\Q$; see Section \ref{sec:irrational}.
They can be proved in the same way as before, and we will not repeat it here.
These observations motivate us to formulate the following conjecture.

\begin{conj}
 \label{block0:same}
The principal blocks $\Bl_0$ for all parameters $\ka$ are equivalent (as highest weight categories).
\end{conj}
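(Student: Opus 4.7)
The plan is to realize the family of principal blocks $\{\Bl_0(\ka)\}_{\ka \in U}$, with $U = \C \setminus \{0,-1\}$, as the fibers of a flat family of highest weight categories over $U$ and to prove that this family is locally trivial. The $\ka$-independence of the Verma flag multiplicities $(\PP{\la} : \M{\mu})$ and of the composition multiplicities $[\M{\mu} : \LL{\la}]$, established above in Section~\ref{sec:irrational} and reiterated for $\ka \in \Q$ here, provides precisely the numerical input such a deformation argument needs.

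The first step is to define a relative Lie superalgebra $\tilde{\g}$ over the ring $R = \C[\ka, \ka^{-1}, (1+\ka)^{-1}]$, whose brackets on the Chevalley basis have structure constants given by the rational functions of $\ka$ appearing in the Cartan matrix \eqref{Cartan:matrix} and the commutators \eqref{eq:comm:rel}. On top of this I would build a relative BGG category $\OO_R$ of integral-weight modules, construct relative Verma modules $\M{\la}$ and relative projective covers $\PP{\la}$ of $\LL{\la}$ for $\la \in \Wt_0$, and verify that each weight space $(\PP{\la})_\mu$ is a free $R$-module of rank
\[
\sum_\nu (\PP{\la} : \M{\nu}) \dim_{\C} (\M{\nu})_\mu,
\]
which is $\ka$-independent by our character formulae. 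Truncating to a finite-dimensional quotient, this yields a projective generator $P$ of a given truncation of $\Bl_0$, and the endomorphism algebra $A_R = \End_{\OO_R}(P)^{op}$ is then a finitely generated flat $R$-algebra whose fiber at any $\ka_0 \in U$ is Morita equivalent to the corresponding truncation of $\Bl_0(\ka_0)$.

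The heart of the argument, and the place where the main obstacle lies, is to show that the $R$-algebra $A_R$ is locally trivial along the smooth connected base $\mrm{Spec}\, R$; by Morita theory and a direct-limit argument this would then upgrade to the desired equivalence of highest weight categories. Numerical flatness alone does not force local triviality, so some extra structure is required. I would pursue two complementary routes. The direct route is to present $A_R$ by explicit generators and relations, taking as generators the natural transformations built from the translation functors together with the Verma-module homomorphisms of Section~\ref{sec:homom} (in particular the singular vector for the even non-simple reflection given in \eqref{sing:even}, whose coefficients are polynomial in $\ka$), and then rescaling the generators so that all defining relations become independent of $\ka$. The more conceptual route would model $A_R$ on a Soergel-style combinatorial category in the spirit of \cite{Soe98, Br04}, governed by the Weyl group $W \cong \Z_2 \times \Z_2 \times \Z_2$ and the Bruhat poset $\Wt_0$, both of which are manifestly $\ka$-free. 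The specialization at $\ka = 1$, where $\D \cong \osp(4|2)$ and Bao's $\imath$-canonical-basis description \cite{Bao17} pins down the endomorphism algebra explicitly, provides a concrete target against which the generic $A_R$ can be tested, and the expected degeneration to this target at every $\ka$ would yield the conjecture.
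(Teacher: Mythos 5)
The statement you are addressing is stated in the paper as Conjecture~\ref{block0:same}; the paper offers no proof of it. The only evidence the authors give is exactly the input you start from, namely that the Verma flag multiplicities of tilting and projective modules and the composition multiplicities of Verma modules in $\Bl_0$ are the same for every $\ka$, and they explicitly point to a possible future route through a super Kazhdan--Lusztig theory and Bao's $\imath$-canonical bases at $\ka=1$. So your proposal cannot be measured against a proof in the paper; it can only be judged as a proof on its own terms, and as such it has a genuine gap which you yourself identify but do not close.

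Concretely: the deformation setup (the algebra $\tilde{\g}$ over $R=\C[\ka,\ka^{-1},(1+\ka)^{-1}]$, a relative category $\OO_R$, a flat projective generator of a truncation, and the flat $R$-algebra $A_R=\End(P)^{op}$) only repackages the numerical coincidences of Section~\ref{sec:irrational}; it does not prove equivalence of the fibers. Equality of all $(\PP{\la}:\M{\mu})$ and $[\M{\mu}:\LL{\la}]$ across $\ka$ determines the dimensions of the Hom spaces between projectives but not the multiplication on $A_\ka$: flat families of finite-dimensional algebras with constant decomposition numbers can and do have non-isomorphic fibers, because the quiver relations (equivalently, higher Ext structure) can vary. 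Establishing that $A_R$ is ``locally trivial,'' i.e.\ that the generators built from translation functors and the Verma homomorphisms of Section~\ref{sec:homom} can be rescaled so that the relations become $\ka$-independent, is precisely the content of the conjecture, and your two ``complementary routes'' are programs, not arguments: no presentation of $A_R$ is computed, and the Soergel-type combinatorial model is only asserted to exist. Moreover, the ``concrete target'' at $\ka=1$ is not available either, since \cite{Bao17} gives KL-type character combinatorics for $\osp(4|2)$, not a generators-and-relations description of the endomorphism algebra of a projective generator (the paper itself lists such a presentation as an open problem). Secondary points also need care — existence of relative projective covers in $\OO_R$ with $R$-free weight spaces, compatibility of truncation with specialization, and the passage from Morita equivalence of truncations to an equivalence of highest weight categories respecting the standard objects — but the decisive missing step is the rigidity/local triviality of $A_R$, without which the proposal establishes nothing beyond what the paper already records as evidence for the conjecture.
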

The blocks $\Bl_k$, for distinct $k$, are not equivalent since their weight posets  $\Wt_k$ are pairwise non-isomorphic. But one may ask if the blocks $\Bl_k$ can possibly be derived equivalent.

\subsection{Tilting modules in the blocks $\Bl_k$ for $kp\ge 2,  kd \ge 2$}
\label{subsec:tilting:rational}

In the remainder of Section~\ref{sec:rational}, we make the following assumption
\begin{align}
kp\ge 2, \quad kd \ge 2.
\end{align}
The remaining cases will be treated in Sections~\ref{sec:block1} and ~\ref{sec:block:ka=1}.

We shall describe the Verma flags for all tilting modules in all blocks $\Bl_k$ $(k\ge 1)$ in
Theorems~\ref{thm:tilting:reg}--\ref{thm:tilting:+} below.
The translation functors below from a typical block
to the atypical block $\Bl_k$ are obtained from tensoring with the adjoint module.
Our strategy here is the same as described at the beginning of Section \ref{subsec:tilting:irr}.

We read one fixed sign for the indices at a time for the formulae in Theorem~\ref{thm:tilting:reg}  below
for tilting modules with regular Verma flags.
\begin{thm}
  \label{thm:tilting:reg}
  Assume that $kp\ge 2,  kd \ge 2.$
\\
(1) We have the following Verma flag formulae for $\TT{-n}^{-\pm\pm}$  $(n\ge 1, n\not =kp, kp-1)$
and  for $\TT{n}^{-\pm\pm}$  $(n\ge 1, n \not=kd, kd-1)$ in the block $\Bl_k$:
\begin{align*}
\TT{\pm n}^{---}  =
& \M{\pm n}^{---}   + \M{\pm (n+1)}^{---},
\\
\TT{\pm n}^{--+}  =
& \M{\pm n}^{--+} +\M{\pm n}^{---}  +\M{\pm(n+1)}^{--+}   + \M{\pm(n+1)}^{---},
\\
\TT{\pm n}^{-+-}  =
& \M{\pm n}^{-+-} +\M{\pm n}^{---}  +\M{\pm(n+1)}^{-+-}   + \M{\pm(n+1)}^{---},
\\
\TT{\pm n}^{-++}  =
& \M{\pm n}^{-++} +\M{\pm n}^{-+-} + \M{\pm n}^{--+}  +\M{\pm n}^{---}
 +\M{\pm(n+1)}^{-++}    +\M{\pm(n+1)}^{-+-}   +\M{\pm(n+1)}^{--+}   + \M{\pm(n+1)}^{---}.
\end{align*}

(2) We have the following Verma flag formulae for $\TT{-n}^{+\pm\pm}$  $(n\ge 2, n\not =kp, kp+1)$
and  for $\TT{n}^{+\pm\pm}$  $(n\ge 2, n \not=kd, kd+1)$ in the block $\Bl_k$:
\begin{align*}
\TT{\pm n}^{+--}  =
& \M{\pm n}^{+--} +\M{\pm(n-1)}^{+--}  +\M{\pm(n-1)}^{---}   + \M{\pm n}^{---},
 \\ %
\TT{\pm n}^{+-+}  =
& \M{\pm n}^{+-+} +\M{\pm n}^{+--} + \M{\pm(n-1)}^{+-+}  +\M{\pm(n-1)}^{+--}
 +\M{\pm(n-1)}^{--+}    +\M{\pm(n-1)}^{---}   +\M{\pm n}^{--+}   + \M{\pm n}^{---},
  \\ %
\TT{\pm n}^{++-}  =
& \M{\pm n}^{++-} +\M{\pm n}^{+--} + \M{\pm(n-1)}^{++-}  +\M{\pm(n-1)}^{+--}
 +\M{\pm(n-1)}^{-+-}    +\M{\pm(n-1)}^{---}   +\M{\pm n}^{-+-}   + \M{\pm n}^{---},
  \\ %
\TT{\pm n}^{+++}  =
& \M{\pm n}^{+++} +\M{\pm n}^{++-} +\M{\pm n}^{+-+} +\M{\pm n}^{+--}
+ \M{\pm(n-1)}^{+++}  +\M{\pm(n-1)}^{++-}   +\M{\pm(n-1)}^{+-+}    +\M{\pm(n-1)}^{+--}
\\
&  + \M{\pm(n-1)}^{-++}  +\M{\pm(n-1)}^{-+-}   +\M{\pm(n-1)}^{--+}    +\M{\pm(n-1)}^{---}
+\M{\pm n}^{-++}   + \M{\pm n}^{-+-}+\M{\pm n}^{--+}   + \M{\pm n}^{---}.
\end{align*}
\end{thm}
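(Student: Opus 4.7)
The plan is to follow the three-step translation-functor strategy executed in Theorems~\ref{thm:tilting1}--\ref{thm:tilting2}, adapted to the weight poset $\Wt_k$. For each sign pattern $\sigma\in\{+,-\}^3$ and each index $\pm n$ permitted by the hypotheses, I choose a typical weight $\mu$ related to $\la = f_{k;\pm n}^{\sigma}$ by a single root of the $17$-dimensional adjoint module, so that $\TT{\mu}$ is known via Proposition~\ref{prop:typical}. Applying the translation functor $\E$ obtained from $(-\otimes\g)$ followed by projection to $\Bl_k$, I compute $\E\TT{\mu}$ by tracking which atypical weights near $\la$ in the Bruhat order lie in $\mu+(\text{weights of }\g)$. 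The hypotheses $n \ne kp,kp-1$ (respectively $kd,kd-1$) in Part~(1), and the analogous exclusions in Part~(2), keep both $\la$ and its Bruhat-adjacent weight $f_{k;\pm(n\pm 1)}^{---}$ away from the singular locus, so the weights picked up by $\E$ are pairwise distinct and the resulting Verma flag of $\E\TT{\mu}$ has precisely the length stated.

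I would handle the two parts separately, in each part descending the $W$-orbit in the anti-dominance order $---, --+, -+-, -++$ for Part~(1) and $+--, +-+, ++-, +++$ for Part~(2), doing Part~(1) first so that its formulas are available for Part~(2). Once Step~(1) is arranged properly, Step~(2) -- the Verma flag of $\E\TT{\mu}$ -- is dictated mechanically by the multiset of weights of $\g$ together with Proposition~\ref{prop:typical}, and poses no real difficulty.

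The main obstacle is Step~(3): showing $\E\TT{\mu}$ is indecomposable. The strategy is the same as in Theorems~\ref{thm:tilting1} and \ref{thm:tilting2}. Since $\E\TT{\mu}$ is a direct sum of tilting modules and its highest weight is $\la$, one summand is $\TT{\la}$; it suffices to rule out further nontrivial summands. Using Soergel duality \eqref{tiltingD} together with the explicit singular vectors from Lemmas~\ref{lem:oddhom} and \ref{lem:evenhom}, I first verify that $\TT{\la}$ already contains the lowest Verma module $\M{f_{k;\pm(n\pm 1)}^{---}}$ of the displayed flag with multiplicity one; this pins down the bottom of the flag to lie in the single summand $\TT{\la}$. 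Any remaining putative summand, headed by some intermediate $\M{\nu}$, would by the inductive hypothesis (the Verma flag of $\TT{\nu}$ established by a previous case in the anti-dominance order) contribute either a Verma module outside the displayed flag or a second copy of the lowest Verma, contradicting the computed character of $\E\TT{\mu}$.

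The hardest case is $\TT{\pm n}^{+++}$, of flag length $16$, where fifteen possible intermediate leading terms must be excluded. Here the required composition multiplicity $[\M{f_{k;\pm n}^{+++}}:\LL{f_{k;\pm n}^{---}}]>0$ is obtained, as in the argument surrounding \eqref{Tnnn}, by composing singular-vector constructions from Lemmas~\ref{lem:oddhom} and \ref{lem:evenhom}. Because the exclusion of $n=kp,kp\pm 1$ (respectively $kd,kd\pm 1$) keeps every relevant weight regular, the pattern of these composition factors is identical to that in the irrational case and no new phenomenon arises; the argument is in every respect parallel to the proof of \eqref{Tnnn} in Theorem~\ref{thm:tilting2}.
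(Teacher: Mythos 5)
Your proposal is correct and follows essentially the same route as the paper: apply the adjoint-module translation functor to the typical tilting module $\TT{f-(2,0,0)}$, read off the Verma flag, and prove indecomposability by descending the anti-dominance order, using Soergel duality \eqref{tiltingD} with the singular vectors of Lemmas~\ref{lem:oddhom} and \ref{lem:evenhom} (including the composed $f_1^{**}f_2^{*}u^+$ construction for the $+++$ case) to pin the lowest Verma term inside $\TT{\la}$ and to rule out other leading terms of summands. No substantive deviation from the paper's argument.
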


\begin{proof}
The proof is a rerun of the proofs of Theorems~\ref{thm:tilting1} and \ref{thm:tilting2}, and so let us be brief.

We apply translation functors  $\E$ to $T =\TT{f_{\pm n}^{abc}-(2,0,0)}$ to obtain $\TT{\pm}^{abc}$, for all 8 choices $a,b,c \in \{\pm\}$.
We first check the resulting modules $\E T$ indeed have Verma flags as given in the theorem.
To show these 8 modules $\E  T$ obtained via translation functors (applied on tilting modules) are tilting modules, it remains to show they are indecomposable. We proceed following the anti-dominant order as listed in the theorem, as the proof of the latter formulae assumes the validity of preceding formulae.

We check the lowest term $M_{low}$ on the RHS of the 8 formulae in the theorem is indeed in a Verma flag of the LHS.
Case~1 in (1) is easy by Lemma~\ref{lem:oddhom}.
For Case~ 2 in (1), we have $(\TT{\pm n}^{--+}: \M{\pm(n+1)}^{---}) =[ \M{\pm(n+1)}^{+++} : \LL{\pm n}^{++-}]>0$, since
there is a singular vector of the form $f_2^* u^+$ (for some positive power $*$) where $u^+$ is a singular vector
of $\rho$-shifted weight $f_{{\pm n}}^{+++}$ by Lemma~\ref{lem:oddhom}. Case~3 in (1) is similar.
For Case~ 4 in (1), we have $(\TT{\pm n}^{-++}: \M{\pm(n+1)}^{---}) =[ \M{\pm(n+1)}^{+++} : \LL{\pm n}^{+--}]>0$, since
there is a singular vector of the form $f_1^{**}f_2^* u^+$ (for some positive powers $*, **$).
For example, for Case~4 of (2), we have $(\TT{\pm n}^{+++}: \M{\pm n}^{---}) =[ \M{\pm n}^{+++} : \LL{\pm n}^{---}]>0$, since
there is a singular vector of the form $f_1^{**} f_2^* u^+$ (for some positive powers $*, **$),
where $u^+$ is a singular vector of $\rho$-shifted weight $f_{\pm n}^{-++}$ by Lemma~\ref{lem:evenhom}.
The first 3 cases of (2) are similar and easier.

Finally, by base-by-case inspection we check the tilting module whose highest term is any Verma on the RHS (of any of the 8 formulae in the theorem)
other than the first one has its lowest term being $M_{low}$ or a term not on the RHS.
So it is impossible for such a tilting module to be a direct summand of the modules $\E T$. This proves the desired indecomposability.
\end{proof}

It remains to describe the  irregular tilting modules
$\TT{0}^{\circ\pm\pm}$,
$\TT{1-kp}^{-\pm\pm}$, $\TT{kd-1}^{-\pm\pm}$,
$\TT{-1}^{+\pm\pm}$, $\TT{1}^{+\pm\pm}$,
$\TT{-kp}^{\pm\circ\pm}$,  $\TT{kd}^{\pm\pm\circ}$, $\TT{-1-kp}^{+\pm\pm}$ and $\TT{kd+1}^{+\pm\pm}$.

\begin{thm}
  \label{thm:tilting:0-}
Assume that $kp\ge 2,  kd \ge 2.$
We have the following Verma flags for tilting modules in the block $\Bl_k$:
\begin{align}
\TT{0}^{\circ--}  & =\M{0}^{\circ--} +\M{-1}^{---} +\M{1}^{---},    \notag
\\
\TT{0}^{\circ-+}  & =\M{0}^{\circ-+} +\M{0}^{\circ--} +\M{-1}^{--+} +\M{-1}^{---}  +\M{1}^{--+}  +\M{1}^{---},    \notag
\\
\TT{0}^{\circ+-}  & =\M{0}^{\circ+-} +\M{0}^{\circ--} +\M{1}^{-+-} +\M{-1}^{-+-} +\M{-1}^{---} +\M{1}^{---},     \label{T0}%
\\
\TT{0}^{\circ++}  & =\M{0}^{\circ++} +\M{0}^{\circ+-} +\M{0}^{\circ-+} +\M{0}^{\circ--}
+\M{1}^{-++} +\M{1}^{-+-} +\M{-1}^{-++} +\M{-1}^{-+-}    \notag \\
    &\qquad +\M{-1}^{--+} +\M{-1}^{---}  +\M{1}^{--+} +\M{1}^{---},    \notag
\end{align}
\begin{align}
\TT{1-kp}^{---}  & =\M{1-kp}^{---}  +\M{-kp}^{-\circ-}  +\M{-1-kp}^{---},    \notag
\\
\TT{1-kp}^{--+} & =\M{1-kp}^{--+}  +\M{1-kp}^{---} +\M{-kp}^{-\circ+} +\M{-kp}^{-\circ-} +\M{-1-kp}^{--+}   +\M{-1-kp}^{---},    \notag
\\
\TT{1-kp}^{-+-} & =\M{1-kp}^{-+-}  +\M{1-kp}^{---} +\ 2 \M{-kp}^{-\circ-} +\M{-1-kp}^{-+-}   +\M{-1-kp}^{---},   \label{T1-kp}%
\\
\TT{1-kp}^{-++}  &=\M{1-kp}^{-++}  +\M{1-kp}^{-+-}  +\M{1-kp}^{--+}  +\M{1-kp}^{---}  +2\M{-kp}^{-\circ+} + 2 \M{-kp}^{-\circ-}    \notag\\
&\qquad +\M{-1-kp}^{-++} +\M{-1-kp}^{-+-} +\M{-1-kp}^{--+}   +\M{-1-kp}^{---},     \notag
\\
   \notag\\
\TT{kd-1}^{---}  & =\M{kd-1}^{---}  +\M{kd}^{--\circ}  +\M{kd+1}^{---},    \notag
\\
\TT{kd-1}^{--+} & =\M{kd-1}^{--+}  +\M{kd-1}^{---} +2\M{kd}^{--\circ} +\M{kd+1}^{--+}   +\M{kd+1}^{---},    \notag
\\
\TT{kd-1}^{-+-} & =\M{kd-1}^{-+-}  +\M{kd-1}^{---} + \M{kd}^{-+\circ} + \M{kd}^{--\circ} +\M{kd+1}^{-+-}   +\M{kd+1}^{---},   \label{Tkd-1}
\\
\TT{kd-1}^{-++}  &=\M{kd-1}^{-++}  +\M{kd-1}^{-+-}  +\M{kd-1}^{--+}  +\M{kd-1}^{---}  +2\M{kd}^{-+\circ} + 2 \M{kd}^{--\circ}    \notag\\
&\qquad +\M{kd+1}^{-++} +\M{kd+1}^{-+-} +\M{kd+1}^{--+}   +\M{kd+1}^{---}.    \notag
\end{align}
\end{thm}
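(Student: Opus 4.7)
The plan is to follow the three-step strategy used in the proof of Theorem~\ref{thm:tilting:reg}: for each of the twelve target tilting modules $\TT{\la}$ in the statement, choose a typical tilting module $\TT{\mu}$ whose Verma flag is explicit from Proposition~\ref{prop:typical}, and apply a translation functor $\E$ (tensoring with the $17$-dimensional adjoint representation and projecting onto $\Bl_k$). It then suffices to verify that (i)~$\E\TT{\mu}$ has a Verma flag matching the right-hand side of the claimed formula, and (ii)~$\E\TT{\mu}$ is indecomposable; by Lemma~\ref{tilting:comp} this forces $\E\TT{\mu} = \TT{\la}$.

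For the four modules $\TT{0}^{\circ\sigma\tau}$, a natural choice is $\mu = f_0^{\circ\sigma\tau} - 2\delta$, which is typical since the $\delta$-component of $f_0^{\circ\sigma\tau}$ is zero. For $\TT{1-kp}^{-\sigma\tau}$ and $\TT{kd-1}^{-\sigma\tau}$, one translates from typical tilting modules whose highest weights differ from $f_{1-kp}^{-\sigma\tau}$ or $f_{kd-1}^{-\sigma\tau}$ by $\pm 2\delta$ or $\pm 2\ep_i$, chosen so that the singular weight $f_{-kp}^{-\circ\tau}$ (respectively $f_{kd}^{-\sigma\circ}$) is hit by translating the head. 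Step~(i) is a routine computation using the weight multiset of the adjoint module together with the typical Verma flag of $\TT{\mu}$; the multiplicity-$2$ contributions at singular weights (such as $2\M{-kp}^{-\circ-}$ in $\TT{1-kp}^{-+-}$) arise because two distinct adjoint weights shift different Verma factors of $\TT{\mu}$ into the same singular $\rho$-shifted weight --- the familiar wall-crossing phenomenon.

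For step~(ii), we argue in parallel with Theorem~\ref{thm:tilting:reg}. To establish that every Verma $\M{\la'}$ on the claimed right-hand side actually appears in $\TT{\la}$, apply Lemmas~\ref{lem:oddhom} and~\ref{lem:evenhom} to produce explicit singular vectors yielding $[\M{-\la'} : \LL{-\la}] > 0$, and then pass through Soergel duality \eqref{tiltingD} to deduce $(\TT{\la} : \M{\la'}) > 0$. To rule out any alternative direct summand, suppose that some Verma $\M{\nu}$ listed in $\E\TT{\mu}$ beyond the intended head $\M{\la}$ were to be the head of a tilting summand; invoke the known Verma flag of $\TT{\nu}$ (either from Theorem~\ref{thm:tilting:reg} or from an earlier case in the present theorem, treated in anti-dominance order) and observe that it contains some Verma factor absent from the flag of $\E\TT{\mu}$ --- a contradiction.

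The main obstacle is verifying the multiplicity-$2$ contributions at the singular weights, which via \eqref{tiltingD} amount to $[\M{-\la'} : \LL{-\nu'}] = 2$ for the relevant singular $\nu'$. We plan to exhibit two linearly independent singular vectors of the required weight inside $\M{-\la'}$: one obtained by composing the odd-reflection singular vectors of Lemma~\ref{lem:oddhom} through a specific sequence of atypical odd roots meeting at $\nu'$, and the second through the opposite sequence of odd roots, combined with an application of the even-reflection formula \eqref{sing:even} from Lemma~\ref{lem:evenhom}. In cases where a direct exhibit is unwieldy, a zero-weight (or suitable singular-weight) dimension count in the spirit of Lemma~\ref{lem:ML3} will confirm the multiplicity. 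Once these singular multiplicities are pinned down, the indecomposability argument of step~(ii) applies uniformly; the $\TT{0}^{\circ\sigma\tau}$ cases are the simplest, since the singular weight is itself the head and all Verma factors occur with multiplicity~$1$.
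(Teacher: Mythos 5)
Your overall framework (translate suitable typical tilting modules into $\Bl_k$, read off the flag of $\E\TT{\mu}$, then prove indecomposability using Lemmas~\ref{lem:oddhom}, \ref{lem:evenhom}, duality \eqref{tiltingD} and the previously established flags) is the same as the paper's, and it does carry the cases $\TT{0}^{\circ\pm\pm}$ and most of \eqref{T1-kp}--\eqref{Tkd-1}. The genuine gap is that you have misidentified where the difficulty lies and have no substitute for the one step that needs a new idea. The multiplicity-$2$ entries are \emph{not} the obstacle: they are computed in step~(i) from the tensor decomposition of $\E\TT{\mu}$ and become multiplicities of $\TT{\la}$ automatically once indecomposability is known, so no independent confirmation of $[\M{}\,{:}\,\LL{}]=2$ is needed. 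The hard point is to show that the lowest term $\M{-1-kp}^{---}$ actually occurs in $\TT{1-kp}^{-++}$ (and similarly $\M{kd+1}^{---}$ in $\TT{kd-1}^{-++}$). By \eqref{tiltingD} this amounts to $[\M{-1-kp}^{+++}:\LL{1-kp}^{+--}]>0$, and $f_{1-kp}^{+--}$ is not a single odd or even reflection away from $f_{-1-kp}^{+++}$, so Lemmas~\ref{lem:oddhom} and \ref{lem:evenhom} do not apply directly; composing the singular-vector homomorphisms along a multi-step chain is not legitimate, because homomorphisms between Verma modules of $\D$ need not be injective and such composites can vanish --- precisely the zero-divisor phenomenon recorded in the remark after Lemma~\ref{lem:ML3}. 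The paper circumvents this by a different device: it applies $\E$ to the epimorphisms \eqref{SES:N1}--\eqref{SES:N2} out of the typical tilting $\TT{-1-kp,1,k(p+d)-1}$ and invokes Lemma~\ref{tilting:comp}, so that the already-established modules $\TT{1-kp}^{--+}$ and $\TT{1-kp}^{-+-}$ occur as quotients of the translated module; this forces all of their Verma factors, in particular $\M{-1-kp}^{---}$, into the relevant summand and yields indecomposability. (Note also that Lemma~\ref{tilting:comp} is not the statement that an indecomposable translate of a tilting module is tilting --- that is the standard fact about translation functors; the lemma is needed exactly for this delicate case.)

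Moreover, the method you propose for the multiplicity-$2$ entries is unsound as stated: $[\M{\mu}:\LL{\la}]=2$ does not imply that $\M{\mu}$ contains two linearly independent singular vectors of weight $\la-\rho$ (equivalently $\dim\Hom(\M{\la},\M{\mu})\ge 2$); typically the second copy of the composition factor is not generated by a highest weight vector, so the two independent singular vectors you intend to exhibit need not exist. Fortunately nothing of that strength is required: for ruling out tilting summands headed at the singular weights one only needs positivity statements such as $(\TT{1-kp}^{-+-}:\M{-kp}^{-\circ-})>0$, i.e.\ $[\M{-kp}^{+\circ+}:\LL{1-kp}^{+-+}]>0$, and these do follow from a single application of Lemma~\ref{lem:oddhom}. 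So the paragraph devoted to your ``main obstacle'' should be discarded, and the missing ingredient to supply is the Lemma~\ref{tilting:comp} argument for the lowest terms of $\TT{1-kp}^{-++}$ and $\TT{kd-1}^{-++}$.
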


\begin{thm}
  \label{thm:tilting:pm1}
Assume that $kp\ge 2,  kd \ge 2.$ We have the following Verma flags for tilting modules in the block $\Bl_k$:
\begin{align}
\TT{-1}^{+--}   &= \M{-1}^{+--}  +\M{0}^{\circ--} + \M{-1}^{---},    \notag
\\
\TT{-1}^{+-+}  &= \M{-1}^{+-+}  +\M{-1}^{+--}  +\M{0}^{\circ-+} +\M{0}^{\circ--} + \M{-1}^{--+} + \M{-1}^{---},    \notag
\\
\TT{-1}^{++-}  &= \M{-1}^{++-}  +\M{-1}^{+--}  +\M{0}^{\circ+-} +\M{0}^{\circ--} + \M{-1}^{-+-} + \M{-1}^{---},   \label{T-1} %
\\
\TT{-1}^{+++}  &= \M{-1}^{+++}  +\M{-1}^{++-}  +\M{-1}^{+-+}  +\M{-1}^{+--}
+\M{0}^{\circ++} +\M{0}^{\circ+-} +\M{0}^{\circ-+} +\M{0}^{\circ--}     \notag \\
    &\qquad + \M{-1}^{-++} + \M{-1}^{-+-}+ \M{-1}^{--+} + \M{-1}^{---},    \notag
\end{align}
\begin{align}
\TT{1}^{+--}   &= \M{1}^{+--}  +\M{0}^{\circ--} + \M{1}^{---},    \notag
\\
\TT{1}^{+-+}  &= \M{1}^{+-+}  +\M{1}^{+--}  +\M{0}^{\circ-+} +\M{0}^{\circ--} + \M{1}^{--+} + \M{1}^{---},    \notag
\\
\TT{1}^{++-}  &= \M{1}^{++-}  +\M{1}^{+--}  +\M{0}^{\circ+-} +\M{0}^{\circ--} + \M{1}^{-+-} + \M{1}^{---},   \label{T1}
\\
\TT{1}^{+++}  &= \M{1}^{+++}  +\M{1}^{++-}  +\M{1}^{+-+}  +\M{1}^{+--}
+\M{0}^{\circ++} +\M{0}^{\circ+-} +\M{0}^{\circ-+} +\M{0}^{\circ--}      \notag \\
    &\qquad + \M{1}^{-++} + \M{1}^{-+-}+ \M{1}^{--+} + \M{1}^{---},    \notag
\end{align}
\end{thm}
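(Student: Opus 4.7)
The plan is to follow the 3-step strategy used throughout Section~\ref{sec:rational}: for each of the eight tilting modules $\TT{\pm 1}^{+\pm\pm}$ I will (1) select a typical tilting module $\TT{\mu}$, with $\mu$ chosen so that some weight $\nu$ of the adjoint $\g$ satisfies $\mu+\nu=f_{\pm 1}^{+\pm\pm}$ while $\mu$ itself lies in a typical $W$-orbit (so $\TT{\mu}$ is available through Proposition~\ref{prop:typical}); (2) apply the translation functor $\E$ obtained by tensoring with the $17$-dimensional adjoint and projecting to $\Bl_k$, then read off the Verma flag of $\E\TT{\mu}$ by enumerating the atypical translates lying in $\Wt_k$; and (3) show $\E\TT{\mu}$ is indecomposable, so it must equal the claimed $\TT{f_{\pm 1}^{+\pm\pm}}$. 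The eight formulae will be processed in anti-dominance order $\TT{\pm 1}^{+--},\,\TT{\pm 1}^{+-+},\,\TT{\pm 1}^{++-},\,\TT{\pm 1}^{+++}$, so that every argument can draw on Theorems~\ref{thm:tilting:reg} and \ref{thm:tilting:0-} as well as on the preceding cases of the present theorem.

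Step~(2) is mechanical: the weights of $\g$ are $\pm 2\delta,\,\pm 2\ep_1,\,\pm 2\ep_2,\,\pm\delta\pm\ep_1\pm\ep_2$ and $0$ with multiplicity $3$, and one keeps those translates landing in $\Wt_k$. A key point is the identification $f_0^{+\pm\pm}=f_0^{-\pm\pm}=f_0^{\circ\pm\pm}$, which forces two naively distinct contributions at the singular weight to collapse to a single copy of $\M{0}^{\circ\pm\pm}$; this is exactly what yields the singular Verma with multiplicity $1$ in \eqref{T-1} and \eqref{T1}. Before turning to Step~(3) I must verify that the lowest term $\M{\pm 1}^{---}$ genuinely appears in $\TT{\pm 1}^{+\pm\pm}$. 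By the tilting duality \eqref{tiltingD} this reduces to $[\M{\mp 1}^{---}:\LL{\mp 1}^{-\pm\pm}]>0$, which I establish by exhibiting explicit singular vectors in $\M{\mp 1}^{---}$, constructed by composing the odd singular vectors from Lemma~\ref{lem:oddhom} across the positive odd roots $\delta\pm\ep_1\pm\ep_2$ in precisely the manner of the proofs of Theorems~\ref{thm:tilting1} and \ref{thm:tilting2}. Appearance of the singular $\M{0}^{\circ\pm\pm}$ in the flag is obtained similarly via a single odd reflection at the corresponding weight.

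The hard part will be Step~(3) at the singular weight: since $\M{0}^{\circ\pm\pm}$ also appears at the head of $\TT{0}^{\circ\pm\pm}$ from Theorem~\ref{thm:tilting:0-}, a priori $\TT{0}^{\circ\pm\pm}$ could split off as a direct summand of $\E\TT{\mu}$. This will be ruled out by comparing the multiplicities of the symmetric pair $\M{-1}^{---}$ and $\M{1}^{---}$: the flag \eqref{T0} of $\TT{0}^{\circ\pm\pm}$ contains \emph{both}, whereas the proposed $\TT{\pm 1}^{+\pm\pm}$ contains only the term whose sign agrees with its first coordinate, so the presence of $\TT{0}^{\circ\pm\pm}$ as a summand is impossible. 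For every other intermediate Verma $\M{\nu}$ in the flag, the tilting module $\TT{\nu}$ (known from Theorem~\ref{thm:tilting:reg}, Theorem~\ref{thm:tilting:0-}, or the preceding cases in the current theorem) is seen to contain a Verma strictly lower in the Bruhat order that is absent from the flag of $\E\TT{\mu}$ — typically an $\M{\pm 2}^{---}$, an $\M{-1-kp}^{\cdots}$, or an $\M{kd+1}^{\cdots}$ — so $\TT{\nu}$ cannot be a direct summand. The longest case $\TT{\pm 1}^{+++}$ (flag of length $12$) requires the most bookkeeping, but the involution $n\leftrightarrow -n$ on $\Wt_k$ lets the $\TT{-1}^{+\pm\pm}$ and $\TT{1}^{+\pm\pm}$ families be handled in parallel, halving the casework.
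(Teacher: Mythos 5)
Your three-step outline is the paper's generic strategy and your indecomposability bookkeeping (ruling out $\TT{0}^{\circ\pm\pm}$ because its flag \eqref{T0} contains both $\M{-1}^{---}$ and $\M{1}^{---}$ while the computed flag contains only one, and ruling out other candidates via Vermas absent from the flag) is sound. The genuine gap is in how you set up Steps (1)--(2). The choice of starting module is not free here, and your one substantive claim about the flag computation is wrong: the multiplicity of $\M{0}^{\circ\pm\pm}$ in $\E\TT{\mu}$ equals the number of pairs (Verma in the flag of $\TT{\mu}$, adjoint weight $\nu$) whose sum is $(0,\pm kp,\pm kd)$, and the identification $f_{0}^{+ab}=f_{0}^{-ab}=f_0^{\circ ab}$ is purely a statement about labels -- it cannot ``collapse'' two such contributions into one. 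The reason this theorem needs special treatment is that the default starting weight is unavailable: $f_{\pm1}^{+ab}-2\delta=f_{\pm1}^{-ab}$ lies in $\Wt_k$ itself, so it is atypical. The paper instead translates from $\TT{f-(1,\pm1,\pm1)}$ (typical tilting modules at singular weights such as $(0,-kp,-2-kd)$), and with these choices each singular Verma arises from exactly one pair, so each occurs with multiplicity one without any collapse. With other admissible typical choices the claimed formula is simply not what comes out: e.g.\ for $\mu=f_{-1}^{+--}-2\ep_1$ the module $\E\TT{\mu}$ contains $2\M{0}^{\circ--}$ together with both $\M{-1}^{+--},\M{-1}^{---}$ and $\M{1}^{+--},\M{1}^{---}$, i.e.\ it is $\TT{-1}^{+--}\oplus\TT{1}^{+--}$, and no relabeling changes its Verma flag. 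So you must actually specify the starting modules (in effect the paper's choice) and delete the collapse argument.

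The second problem is your verification that the lowest term appears. Your application of \eqref{tiltingD} is mis-stated: negation fixes the subscript and flips all three superscript signs, so for instance $(\TT{-1}^{+--}:\M{-1}^{---})=[\M{-1}^{+++}:\LL{-1}^{-++}]$, not $[\M{1}^{---}:\LL{1}^{\,\cdot\cdot\cdot}]$, and similarly for the other seven modules. In every case the required weight drop from $f_{\pm1}^{+++}$ is $2\delta$ (followed by powers of $f_1,f_2$ in three of the four cases), so the essential input is the non-simple even reflection singular vector of Lemma~\ref{lem:evenhom}, exactly as in the proof of Theorem~\ref{thm:tilting2}; Lemma~\ref{lem:oddhom} alone does not reach these factors. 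A chain of odd reflections would have to pass through $f_0^{\circ++}$ and compose two odd singular-vector constructions, and the resulting element of $U(\n_-)$ is not automatically nonzero -- the remark following Lemma~\ref{lem:ML3} exhibits precisely such a vanishing composition -- so ``composing the odd singular vectors'' is not a proof without an additional nonvanishing argument. (Your treatment of the singular terms $\M{0}^{\circ\pm\pm}$ by a single odd reflection is fine once the duality is applied correctly.) With the starting modules fixed as above and the lowest-term verification run through Lemma~\ref{lem:evenhom}, your plan becomes the paper's proof.
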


\begin{thm}
  \label{thm:tilting:sing}
  Assume that $kp\ge 2,  kd \ge 2.$ We have the following Verma flags for tilting modules in the block $\Bl_k$:
\begin{align}
\TT{-kp}^{-\circ-}&= \M{-kp}^{-\circ-} +\M{-1-kp}^{-+-} +\M{-1-kp}^{---},      \notag
\\
\TT{-kp}^{-\circ+}&=  \M{-kp}^{-\circ+} + \M{-kp}^{-\circ-} + \M{-1-kp}^{-++} + \M{-1-kp}^{-+-}+ \M{-1-kp}^{--+} + \M{-1-kp}^{---},      \notag
\\
\TT{-kp}^{+\circ-}  &= \M{-kp}^{+\circ-}  +\M{1-kp}^{++-} +\M{1-kp}^{+--}   + \M{1-kp}^{-+-} + \M{1-kp}^{---} +\M{-kp}^{-\circ-},     \label{T-kp}
\\
\TT{-kp}^{+\circ+}  &= \M{-kp}^{+\circ+}  +\M{-kp}^{+\circ-}  +\M{1-kp}^{+++}  +\M{1-kp}^{++-} +\M{1-kp}^{+-+} +\M{1-kp}^{+--} \notag \\
                               &\qquad  + \M{1-kp}^{-++} + \M{1-kp}^{-+-} + \M{1-kp}^{--+}+ \M{1-kp}^{---}  +\M{-kp}^{-\circ+} +\M{-kp}^{-\circ-},    \notag
\\
   \notag\\
\TT{kd}^{--\circ}&=  \M{kd}^{--\circ}  + \M{kd+1}^{--+} + \M{kd+1}^{---},       \notag
\\
\TT{kd}^{-+\circ}&= \M{kd}^{-+\circ}  + \M{kd}^{--\circ}  + \M{kd+1}^{-++} + \M{kd+1}^{-+-} + \M{kd+1}^{--+} + \M{kd+1}^{---},      \notag
\\
\TT{kd}^{+-\circ}  &= \M{kd}^{+-\circ}  +\M{kd-1}^{+-+} +\M{kd-1}^{+--}  + \M{kd-1}^{--+} + \M{kd-1}^{---} +\M{kd}^{--\circ},     \label{Tkd}
\\
\TT{kd}^{++\circ}  &= \M{kd}^{++\circ}  +\M{kd}^{+-\circ}  +\M{kd-1}^{+++}  +\M{kd-1}^{++-}  +\M{kd-1}^{+-+} +\M{kd-1}^{+--}  \notag \\
                               &\qquad  + \M{kd-1}^{-++} + \M{kd-1}^{--+} + \M{kd-1}^{-+-}+ \M{kd-1}^{---} +\M{kd}^{-+\circ} +\M{kd}^{--\circ}.    \notag
\end{align}
\end{thm}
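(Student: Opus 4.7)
The plan is to follow the three-step strategy used in Theorems~\ref{thm:tilting:reg}, \ref{thm:tilting:0-} and \ref{thm:tilting:pm1}. For each of the eight target tilting modules, I would select a tilting module $T_\mu$ whose Verma flag is already known (either typical, so Proposition~\ref{prop:typical} and its singular analogue apply, or an atypical one already established in Theorems~\ref{thm:tilting:reg}--\ref{thm:tilting:pm1}), apply a translation functor $\E$ obtained by tensoring with the $17$-dimensional adjoint module and projecting onto $\Bl_k$, compute the Verma flag of $\E T_\mu$ from the translation principle, and then verify indecomposability to conclude $\E T_\mu =\TT{\la}$.

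For the four tilting modules $\TT{-kp}^{-\circ\pm}$ and $\TT{kd}^{-\pm\circ}$, I would take $T_\mu$ to be a typical tilting module in a singular typical block whose $\rho$-shifted highest weight differs from the target by a weight of the adjoint module; the translation principle immediately yields a Verma flag of length $3$ or $6$ matching the first two formulae of \eqref{T-kp} and of \eqref{Tkd}. For the four tilting modules $\TT{-kp}^{+\circ\pm}$ and $\TT{kd}^{+\pm\circ}$, I would instead start either from $\TT{1-kp}^{-\pm\pm}$, $\TT{kd-1}^{-\pm\pm}$ (already established in Theorem~\ref{thm:tilting:0-}) or from a typical tilting of longer Verma flag, again applying a translation functor arising from the adjoint module, to produce the advertised Verma flags of length $6$ and $12$. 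Step~(3) --- indecomposability --- is the main technical point in this range: for each target $\TT{\la}$ I would confirm that the \emph{lowest} Verma listed in \eqref{T-kp}/\eqref{Tkd} actually occurs in a Verma flag of $\TT{\la}$; by \eqref{tiltingD} this is equivalent to the existence of certain composition factors in the dominant Verma module $\M{-\la}$, which is supplied by the explicit singular vectors constructed in Lemmas~\ref{lem:oddhom} and \ref{lem:evenhom}. The other potential direct summands are then ruled out by comparing their lowest Verma terms (known from the preceding theorems) with the Verma modules actually present in $\E T_\mu$, exactly as in the proofs of Theorems~\ref{thm:tilting:reg} and \ref{thm:tilting:pm1}.

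The hard part will be the four tilting modules $\TT{-kp}^{+\circ\pm}$ and $\TT{kd}^{+\pm\circ}$, whose Verma flags contain the singular Verma $\M{-kp}^{-\circ-}$ or $\M{kd}^{--\circ}$ at the very bottom --- the same bottom as the already-established $\TT{-kp}^{-\circ-}$ and $\TT{kd}^{--\circ}$. The danger is that $\E T_\mu$ could split as $\TT{-kp}^{+\circ-}\oplus \TT{-kp}^{-\circ-}$ (and similarly in the other three cases). To exclude this, I plan to produce a chain of nonzero Verma module homomorphisms running from $\M{-kp}^{-\circ-}$ up to $\M{-kp}^{+\circ-}$ by composing odd reflection homomorphisms from Lemma~\ref{lem:oddhom} with the even non-simple reflection homomorphism of Lemma~\ref{lem:evenhom} applied at the singular $2\delta$-hyperplane; BGG reciprocity then forces the required composition factor in the dominant Verma and hence the Verma submodule in $\TT{-kp}^{+\circ-}$, breaking any potential split. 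The same kind of composition of odd and even non-simple reflection homomorphisms will handle the remaining three cases $\TT{-kp}^{+\circ+}$, $\TT{kd}^{+-\circ}$ and $\TT{kd}^{++\circ}$, after which Lemma~\ref{tilting:comp} together with the Verma flag counts assembled in Step~(2) identifies $\E T_\mu$ with the asserted tilting module.
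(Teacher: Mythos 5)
Your overall plan is the paper's: every module in the statement is obtained by applying the adjoint-module translation functor to the typical singular tilting module $\TT{f-(2,0,0)}$, reading off the Verma flag, pinning down the lowest Verma via \eqref{tiltingD} together with Lemmas~\ref{lem:oddhom} and \ref{lem:evenhom}, and then ruling out proper direct summands by comparing lowest terms of the previously established tilting modules. Your treatment of $\TT{-kp}^{-\circ\pm}$, $\TT{kd}^{-\pm\circ}$ and your ``typical tilting of longer Verma flag'' option for the remaining four coincide with this. One branch of your plan, however, would not work: starting from $\TT{1-kp}^{-\pm\pm}$ or $\TT{kd-1}^{-\pm\pm}$ and ``applying a translation functor arising from the adjoint module'' is not a translation functor from a typical block --- these modules already lie in $\Bl_k$, and tensoring with the adjoint module and projecting back to $\Bl_k$ contributes each Verma of the starting flag with multiplicity at least $3$ (the zero-weight space of the adjoint module is the Cartan), so the resulting module is far larger than the length-$6$ and length-$12$ flags asserted in \eqref{T-kp}--\eqref{Tkd} and cannot be identified with a single tilting module by this argument. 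Stick with $\TT{f-(2,0,0)}$ throughout, as the paper does.

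The genuine soft spot is your key step for $\TT{-kp}^{+\circ\pm}$ and $\TT{kd}^{+\pm\circ}$: a ``chain of nonzero Verma module homomorphisms'' does not by itself yield the needed multiplicity, since composition multiplicities are not transitive along such a chain and composites of nonzero Verma homomorphisms can vanish --- the paper's remark following Lemma~\ref{lem:ML3} exhibits exactly such a vanishing composite involving the $s_{2\delta}$-homomorphism. Fortunately no chain is needed. For $\TT{-kp}^{+\circ-}$, duality \eqref{tiltingD} asks for $[\M{(kp,0,kp+kd)}:\LL{(-kp,0,kp+kd)}]>0$, which is a single application of Lemma~\ref{lem:evenhom} with $\gamma=2\delta$; for $\TT{-kp}^{+\circ+}$ one needs $[\M{(kp,0,kp+kd)}:\LL{(-kp,0,-(kp+kd))}]>0$, obtained by applying $f_2^{kp+kd}$ to the image of that singular vector (nonzero because $f_2$ acts freely on a Verma module), and the cases $\TT{kd}^{+-\circ}$, $\TT{kd}^{++\circ}$ are identical with $f_1$ in place of $f_2$. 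Also be careful which Verma carries the factor: a nonzero homomorphism $\M{-kp}^{-\circ-}\rightarrow\M{-kp}^{+\circ-}$ would, via \eqref{BGG} and \eqref{tilt=proj}, give $(\TT{-kp}^{+\circ+}:\M{-kp}^{-\circ+})>0$, i.e.\ information about $\TT{-kp}^{+\circ+}$ rather than about $\TT{-kp}^{+\circ-}$; you must first dualize the weights as in \eqref{tiltingD} and then construct the singular vector in the dominant Verma $\M{(kp,0,kp+kd)}$ (resp.\ $\M{(kd,kd+kp,0)}$). With these corrections your argument is the paper's.
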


\begin{thm}
  \label{thm:tilting:+}
Assume that $kp\ge 2,  kd \ge 2.$ We have the following Verma flags for tilting modules in the block $\Bl_k$:
\begin{align}
\TT{-1-kp}^{+--}  &= \M{-1-kp}^{+--} +\M{-kp}^{+\circ-}  +\M{1-kp}^{+--}
                               +\M{1-kp}^{---}  +\M{-kp}^{-\circ-}   + \M{-1-kp}^{---},     \notag
\\
\TT{-1-kp}^{+-+}  &= \M{-1-kp}^{+-+} +\M{-1-kp}^{+--} +\M{-kp}^{+\circ+}  +\M{-kp}^{+\circ-}  +\M{1-kp}^{+-+} +\M{1-kp}^{+--}    \notag
                            \\
                            &\qquad   +\M{1-kp}^{--+}  +\M{1-kp}^{---}  +\M{-kp}^{-\circ+} +\M{-kp}^{-\circ-}   + \M{-1-kp}^{--+} + \M{-1-kp}^{---},    \notag
\\
\TT{-1-kp}^{++-}    &=  \M{-1-kp}^{++-} +\M{-1-kp}^{+--} +2\M{-kp}^{+\circ-}  +\M{1-kp}^{++-} +\M{1-kp}^{+--}   \label{T-1-kp}
                             \\
                             &\qquad  +\M{1-kp}^{-+-}  +\M{1-kp}^{---}  +2\M{-kp}^{-\circ-}   + \M{-1-kp}^{-+-} + \M{-1-kp}^{---},    \notag
\\
\TT{-1-kp}^{+++}  &= \M{-1-kp}^{+++} +\M{-1-kp}^{++-} +\M{-1-kp}^{+-+} +\M{-1-kp}^{+--} +2\M{-kp}^{+\circ+}  +2\M{-kp}^{+\circ-}     \notag
                                    \\
                                    & +\M{1-kp}^{+++} +\M{1-kp}^{++-} +\M{1-kp}^{+-+} +\M{1-kp}^{+--} +\M{1-kp}^{-++}  +\M{1-kp}^{-+-}  +\M{1-kp}^{--+}  +\M{1-kp}^{---}     \notag  \\
                            &\qquad    + 2\M{-kp}^{-\circ+} + 2\M{-kp}^{-\circ-} + \M{-1-kp}^{-++} + \M{-1-kp}^{-+-} + \M{-1-kp}^{--+} + \M{-1-kp}^{---},   \notag
\end{align}
\begin{align}
\TT{kd+1}^{+--}  &= \M{kd+1}^{+--} +\M{kd}^{+-\circ}  +\M{kd-1}^{+--}
                               +\M{kd-1}^{---}  +\M{kd}^{--\circ}   + \M{kd+1}^{---},    \notag
\\
\TT{kd+1}^{+-+}  &= \M{kd+1}^{+-+} +\M{kd+1}^{+--} +2\M{kd}^{+-\circ}  +\M{kd-1}^{+-+} +\M{kd-1}^{+--}     \notag
                            \\
                            &\qquad   +\M{kd-1}^{--+}  +\M{kd-1}^{---}  +2\M{kd}^{--\circ}   + \M{kd+1}^{--+} + \M{kd+1}^{---},    \notag
\\
\TT{kd+1}^{++-}    &=  \M{kd+1}^{++-} +\M{kd+1}^{+--} +\M{kd}^{++\circ} +\M{kd}^{+-\circ}  +\M{kd-1}^{++-} +\M{kd-1}^{+--}    \label{Tkd+1}
                             \\
                             &\qquad  +\M{kd-1}^{-+-}  +\M{kd-1}^{---}  +\M{kd}^{-+\circ} +\M{kd}^{--\circ}   + \M{kd+1}^{-+-} + \M{kd+1}^{---},    \notag
\\
\TT{kd+1}^{+++}  &= \M{kd+1}^{+++} +\M{kd+1}^{++-} +\M{kd+1}^{+-+} +\M{kd+1}^{+--} +2\M{kd}^{++\circ}  +2\M{kd}^{+-\circ}     \notag
                                    \\
                                    & +\M{kd-1}^{+++} +\M{kd-1}^{++-} +\M{kd-1}^{+-+} +\M{kd-1}^{+--} +\M{kd-1}^{-++}  +\M{kd-1}^{-+-}  +\M{kd-1}^{--+}  +\M{kd-1}^{---}      \notag \\
                            &\qquad  + 2\M{kd}^{-+\circ} + 2\M{kd}^{--\circ}  + \M{kd+1}^{-++} + \M{kd+1}^{-+-} + \M{kd+1}^{--+} + \M{kd+1}^{---}.   \notag
\end{align}
\end{thm}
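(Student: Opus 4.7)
The plan is to follow the three-step strategy employed throughout Section~\ref{sec:rational}: for each of the eight tilting modules in \eqref{T-1-kp} and \eqref{Tkd+1}, I would realize it as $\E T$ for an exact translation functor $\E$ applied to a tilting module $T$ whose Verma flag is already known, either from Proposition~\ref{prop:typical} (when $T$ is typical) or from Theorems~\ref{thm:tilting:reg}--\ref{thm:tilting:sing} (when $T$ is atypical). The four formulas in \eqref{T-1-kp} would be treated in the anti-dominance order $+--,\,+-+,\,++-,\,+++$ so that later steps may invoke the earlier ones, and the four formulas in \eqref{Tkd+1} would then be obtained by an essentially parallel argument, swapping the roles of the two singular-weight families $f_{-kp}^{\pm\circ\pm}$ and $f_{kd}^{\pm\pm\circ}$.

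The translation functor $\E$ in each case arises from tensoring with the adjoint $17$-dimensional module and projecting onto $\Bl_k$. For $\TT{-1-kp}^{+abc}$ I would choose $T$ to be a typical tilting module whose $\rho$-shifted highest weight is close enough to $f_{-1-kp}^{+abc}$ that some weight of the adjoint shifts a Verma in $T$ to $\M{-1-kp}^{+abc}$. Since all three $\sll$-coordinates of this weight are dominant, the Verma flag of $T$ has length $2^{3}=8$ by Proposition~\ref{prop:typical}. A direct bookkeeping of which root shifts in the adjoint map Vermas of $T$ into $\Bl_k$ then produces, for each of the eight cases, precisely the right-hand side listed in the theorem.

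With a candidate $\E T$ in hand of the prescribed Verma-flag shape, the remaining task is to verify indecomposability, which forces $\E T \cong \TT{-1-kp}^{+abc}$ (respectively $\cong \TT{kd+1}^{+abc}$) by uniqueness of indecomposable tilting modules of a given highest weight. For each Verma $\M{\nu}$ appearing in the flag of $\E T$ below the leading one, the tilting module $\TT{\nu}$ is known by the preceding theorems; a direct comparison shows that $\TT{\nu}$ drags in some Verma $\M{\eta}$ absent from $\E T$ (typically one strictly below the anti-dominant end of our flag), so $\TT{\nu}$ cannot be a direct summand. This ruling-out is routine case work.

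The main obstacle will be pinning down the multiplicity-$2$ contributions at the singular weights, namely $2\M{-kp}^{+\circ\pm}$ and $2\M{-kp}^{-\circ\pm}$ appearing in $\TT{-1-kp}^{++-},\TT{-1-kp}^{+++}$, and the analogous doubled terms at $f_{kd}^{\pm\pm\circ}$ in \eqref{Tkd+1}. The upper bound on these multiplicities drops out of the tensor bookkeeping of the previous paragraph, since the three-dimensional zero weight space of the adjoint together with several distinct root shifts land on the same singular weight. For the matching lower bound, I would construct two linearly independent singular vectors of the relevant weights inside the Verma modules $\M{f_{-1-kp}^{++-}}$ and $\M{f_{-1-kp}^{+++}}$: one by composing a chain of odd-reflection homomorphisms from Lemma~\ref{lem:oddhom} that passes through the singular weight, and a second by applying the non-simple even reflection $s_{2\delta}$ homomorphism from Lemma~\ref{lem:evenhom}, whose explicit singular vector \eqref{u:singular} is manifestly not a composite of odd-reflection vectors. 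Combined with Soergel duality \eqref{tiltingD} and BGG reciprocity \eqref{BGG}, these two singular vectors force the multiplicity $\ge 2$ and hence equality, completing the proof.
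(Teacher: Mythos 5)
Your overall framework (translate a nearby tilting module with known flag by tensoring with the adjoint module, bookkeep the resulting Verma flag, then prove indecomposability by ruling out summands) is indeed the paper's strategy, but the step you yourself identify as the main obstacle --- forcing the multiplicity-two Verma factors at the singular weights --- is handled in a way that does not work. First, the duality is applied in the wrong direction: by \eqref{tiltingD}, $(\TT{-1-kp}^{++-}:\M{-kp}^{+\circ-})=[\M{-kp}^{-\circ+}:\LL{-1-kp}^{--+}]$, and similarly for the other doubled terms, so the composition multiplicities you need live in the Verma modules $\M{\mp kp}^{\cdot\circ\cdot}$ with \emph{singular} highest weight, not in $\M{f_{-1-kp}^{++-}}$ or $\M{f_{-1-kp}^{+++}}$ as you propose. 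Second, even in the correct Verma module, producing ``two linearly independent singular vectors'' is not a viable plan as stated: a composition factor of multiplicity two need not be detected by a two-dimensional space of singular vectors; Lemma~\ref{lem:evenhom} does not even apply at the relevant weight in several of these cases (e.g.\ for $\M{-kp}^{-\circ+}$ one has $\langle\la,\hdel\rangle=-kp<0$, and the $2\ep_2$-reflection vector has the wrong weight); and linear independence of a composed odd-reflection vector with an $s_{2\delta}$-vector is not ``manifest'' --- the remark following Lemma~\ref{lem:ML3} shows that compositions of nonzero Verma homomorphisms can vanish in this super setting, so such claims require genuine proof. This gap matters because your decomposability test (``$\TT{\nu}$ drags in a Verma absent from $\E T$'') fails exactly here: every Verma factor of $\TT{-kp}^{+\circ-}$ (resp.\ $\TT{kd}^{+-\circ}$), as given in Theorem~\ref{thm:tilting:sing}, already occurs on the right-hand side of Case 3 (resp.\ Case 2), so splitting off such a summand cannot be excluded without the multiplicity-two lower bound.

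The paper closes precisely these cases (Cases 3 and 4 of \eqref{T-1-kp}, Cases 2 and 4 of \eqref{Tkd+1}) by a different mechanism, namely Lemma~\ref{tilting:comp}: one uses epimorphisms from the typical initial tilting onto more antidominant typical tiltings, applies the exact translation functor to get epimorphisms from the candidate module onto previously established indecomposable tiltings, and this forces all of their Verma factors (including the doubled singular-weight terms and the lowest terms) into the candidate, after which the leftover terms cannot form a tilting summand. You would need to replace your singular-vector argument by this (or an equivalent) device. Two smaller inaccuracies: the initial typical tilting has Verma flag of length $2^m$ where $m$ is the number of positive coordinates, so length $8$ only in the $+++$ cases (length $2$ or $4$ otherwise); and the doubled terms do not come from the zero weight space of the adjoint module (zero-weight shifts of typical Vermas stay typical) but from distinct odd-root shifts of different Vermas in the initial flag.
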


\begin{proof}[Proofs of Theorems~\ref{thm:tilting:0-}--\ref{thm:tilting:+}]

The usual strategy to prove that these resulting modules are tilting
still applies here. We only specify the new features and new cases below.

The most tedious part of the proofs is to verify that when applying suitable translation functors to suitable modules the resulting modules
admit the Verma flags as described in the theorems.
To obtain all the tilting modules $\TT{f}$ in Theorems~\ref{thm:tilting:reg},  \ref{thm:tilting:0-}, \ref{thm:tilting:sing} and \ref{thm:tilting:+},
we apply translation functors to the initial tilting modules $\TT{f-(2,0,0)}$.
The  4 formulae for tilting modules $\TT{f}$ in \eqref{T-1} in Theorem~\ref{thm:tilting:pm1} require
applying translation functors to the initial tilting modules $\TT{f-(1,1,1)}$, $\TT{f-(1,1,-1)}$, $\TT{f-(1,-1,1)}$ and $\TT{f-(1,-1,-1)}$, respectively; that is,
\begin{align*}
\TT{0,-kp,-2-kd} =\M{0,-kp,-2-kd} &\leadsto\leadsto \TT{-1}^{+--},
\\
\TT{0,-kp,2+kd} =\M{0,-kp,2+kd} +\M{0,-kp,-2-kd} &\leadsto\leadsto \TT{-1}^{+-+},
\\
\TT{0,kp,-2-kd} =\M{0,kp,-2-kd} +\M{0,-kp,-2-kd} &\leadsto\leadsto \TT{-1}^{++-},
\\
\TT{0,kp,2+kd} =\M{0,kp,2+kd} +\M{0,kp,-2-kd} +\M{0,-kp,2+kd} +\M{0,-kp,-2-kd}
&\leadsto\leadsto
\TT{-1}^{+++}.
\end{align*}
The 4 formulae in \eqref{T1} are obtained similarly.

In most of the cases it is straightforward to verify the appearance of lowest terms in the tilting formulae and indecomposability. However, it takes a little extra work to verify that the lowest term $\M{-1-kp}^{---}$ indeed appears in the tilting module in Case~ 4 of \eqref{T1-kp}.
(Formulae \eqref{Tkd-1} can be treated in a way entirely similar to \eqref{T1-kp}.) To this end we make use of Lemma~\ref{tilting:comp}:
We apply a translation functor $\E$ to $\TT{-1-kp, 1, k(p+d)-1}$ to obtain a module $\E \TT{-1-kp, 1, k(p+d)-1}$
with highest term $\M{1-kp, 1, k(p+d)-1}$.
Note, since $(-1-kp, 1, k(p+d)-1)$ is typical we have short exact sequences
\begin{align}
   \label{SES:N1}
& \TT{-1-kp, 1, k(p+d)-1}  \longrightarrow \TT{-1-kp, -1, k(p+d)-1} \longrightarrow 0,
\\
   \label{SES:N2}
& \TT{-1-kp, 1, k(p+d)-1} \longrightarrow \TT{-1-kp, 1, 1-k(p+d)} \longrightarrow 0.
\end{align}
Applying $\E$ to \eqref{SES:N1} we see that $\E  \TT{-1-kp, 1, k(p+d)-1}$ has an indecomposable quotient module
$\E \TT{-1-kp, -1, k(p+d)-1} =\TT{1-kp, -1, k(p+d)-1} =\TT{1-kp}^{--+}$ (as given in Case~2 of \eqref{T1-kp}).
Applying $\E$ to \eqref{SES:N2} we see that $\E  \TT{-1-kp, 1, k(p+d)-1}$ has an indecomposable quotient
$\E \TT{-1-kp, 1, 1-k(p+d)} =\TT{1-kp, 1, 1-k(p+d)} =\TT{1-kp}^{-+-}$  (as given in Case~3 of \eqref{T1-kp}).
It is easy to see that $\M{kd-1}^{-+-}$ and $\M{kd-1}^{--+}$ must appear in $\TT{1-kp}^{-++}$.
Since $\M{kd-1}^{-+-}$ and $\M{kd-1}^{--+}$ are leading terms of $\TT{1-kp}^{--+}$ and $\TT{1-kp}^{-+-}$,
all the terms in  Cases~2--3 of \eqref{T1-kp} will appear in $\TT{1-kp}^{-++}$. Finally, it is clear that the remaining terms
in \eqref{T1-kp},
$\M{kd}^{-+\circ}$ and/or $\M{kd+1}^{-++}$, cannot form a tilting module and hence cannot be a direct summand
of $\E  \TT{-1-kp, 1, k(p+d)-1}$.
We conclude that $\E  \TT{-1-kp, 1, k(p+d)-1}$ is indecomposable and hence must be $\TT{1-kp}^{-++}$.

Finally, the formulae in the Cases 3 and 4 of \eqref{T-1-kp} and in the Cases 2 and 4 of \eqref{Tkd+1} in Theorem \ref{thm:tilting:+} can be established using an analogous argument based on Lemma~\ref{tilting:comp}.
\end{proof}

\subsection{Characters of projectives in $\Bl_k$} 
\label{subsec:proj:rational}

The formulae of Verma flags for tilting modules in
Theorems~\ref{thm:tilting:reg}--\ref{thm:tilting:+}
are readily translated into formulae of Verma flags for projective modules using the identity
\eqref{tilt=proj}. We formulate the results in Propositions~\ref{proj:reg}--\ref{proj:+} below.
Having these explicit formulae available are helpful in identifying the projective tilting modules and then computing the composition factors in Verma modules in the next subsections.

\begin{prop}
  \label{proj:reg}
   Assume that $kp\ge 2,  kd \ge 2.$
\\
(1) We have the following Verma flag formulae for the projective modules $\PP{-n}^{+\pm\pm}$  $(n\ge 1, n\not =kp, kp-1)$
and  for $\PP{n}^{+\pm\pm}$  $(n\ge 1, n \not=kd, kd-1)$ in the block $\Bl_k$:
\begin{align*}
\PP{\pm n}^{+++}  = & \M{\pm n}^{+++}   + \M{\pm (n+1)}^{+++},
\\
\PP{\pm n}^{++-}  = & \M{\pm n}^{++-} +\M{\pm n}^{+++}  +\M{\pm(n+1)}^{++-}   + \M{\pm(n+1)}^{+++},
\\
\PP{\pm n}^{+-+}  =& \M{\pm n}^{+-+} +\M{\pm n}^{+++}  +\M{\pm(n+1)}^{+-+}   + \M{\pm(n+1)}^{+++},
\\
\PP{\pm n}^{+--}  =& \M{\pm n}^{+--} +\M{\pm n}^{+-+} + \M{\pm n}^{++-}  +\M{\pm n}^{+++}
                           +\M{\pm(n+1)}^{+--}+\M{\pm(n+1)}^{+-+}   +\M{\pm(n+1)}^{++-}   + \M{\pm(n+1)}^{+++}.
\end{align*}

(2) We have the following Verma flag formulae for the projective modules $\PP{-n}^{-\pm\pm}$  $(n\ge 2, n\not =kp, kp+1)$
and  for $\PP{n}^{-\pm\pm}$  $(n\ge 2, n \not=kd, kd+1)$ in the block $\Bl_k$:
\begin{align*}
\PP{\pm n}^{-++}  =& \M{\pm n}^{-++} +\M{\pm(n-1)}^{-++}  +\M{\pm(n-1)}^{+++}   + \M{\pm n}^{+++},
\notag \\ %
\PP{\pm n}^{-+-}  =& \M{\pm n}^{-+-} +\M{\pm n}^{-++} + \M{\pm(n-1)}^{-+-}  +\M{\pm(n-1)}^{-++}
 +\M{\pm(n-1)}^{++-}    +\M{\pm(n-1)}^{+++}   +\M{\pm n}^{++-}   + \M{\pm n}^{+++},
\notag \\ %
\PP{\pm n}^{--+}  =& \M{\pm n}^{--+} +\M{\pm n}^{-++} + \M{\pm(n-1)}^{--+}  +\M{\pm(n-1)}^{-++}
 +\M{\pm(n-1)}^{+-+}    +\M{\pm(n-1)}^{+++}   +\M{\pm n}^{+-+}   + \M{\pm n}^{+++},
\notag \\ %
\PP{\pm n}^{---}  =& \M{\pm n}^{---} +\M{\pm n}^{--+} +\M{\pm n}^{-+-} +\M{\pm n}^{-++}
+ \M{\pm(n-1)}^{---}  +\M{\pm(n-1)}^{--+}   +\M{\pm(n-1)}^{-+-}    +\M{\pm(n-1)}^{-++}
\notag  
\\
&  + \M{\pm(n-1)}^{+--}  +\M{\pm(n-1)}^{+-+}   +\M{\pm(n-1)}^{++-}    +\M{\pm(n-1)}^{+++}
+\M{\pm n}^{+--}   + \M{\pm n}^{+-+}+\M{\pm n}^{++-}   + \M{\pm n}^{+++}.
\notag
\end{align*}
\end{prop}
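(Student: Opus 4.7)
The plan is to deduce Proposition \ref{proj:reg} directly from the tilting character formulae of Theorem \ref{thm:tilting:reg} via the Soergel duality \eqref{tilt=proj}, which reads $(\PP{\la}:\M{\mu}) = (\TT{-\la}:\M{-\mu})$. First I would record the key bookkeeping identity for the atypical labels, namely
$$-f_{k;n}^{\epsilon_1\epsilon_2\epsilon_3} = f_{k;n}^{-\epsilon_1,-\epsilon_2,-\epsilon_3}, \qquad n \in \Z, \; \epsilon_i \in \{\pm\},$$
which can be verified directly by inspecting the four cases in the definition of $f_{k;n}^{\pm\pm\pm}$. Thus the weight-negation involution $\la \mapsto -\la$ acts on the atypical labels by flipping all three signs in the superscript while preserving the integer index $n$.

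Given this identity, I would then apply Soergel duality term-by-term to each of the eight Verma flag formulae in Theorem \ref{thm:tilting:reg}. For instance, the formula $\TT{\pm n}^{---} = \M{\pm n}^{---} + \M{\pm(n+1)}^{---}$ transforms into $\PP{\pm n}^{+++} = \M{\pm n}^{+++} + \M{\pm(n+1)}^{+++}$, and all remaining formulae translate analogously by globally flipping every sign triple and keeping the indices intact. The excluded values of $n$ (i.e., $n \neq kp, kp-1, kd, kd-1$) are preserved by the involution, so parts (1) and (2) of the proposition follow immediately from parts (1) and (2) of Theorem \ref{thm:tilting:reg}, respectively.

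There is no serious obstacle in this argument, since all the substantive work has already been done in establishing the tilting character formulae; the present proposition is a purely formal corollary of Theorem \ref{thm:tilting:reg} combined with \eqref{tilt=proj}. The only point requiring any verification is the sign-flip identity on the labels $f_{k;n}^{\pm\pm\pm}$, which is routine.
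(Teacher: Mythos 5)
Your proposal is correct and is exactly the paper's argument: the authors obtain Propositions \ref{proj:reg}--\ref{proj:+} by translating the tilting formulae of Theorems \ref{thm:tilting:reg}--\ref{thm:tilting:+} through the Soergel duality \eqref{tilt=proj}, which for Proposition \ref{proj:reg} means applying $(\PP{\la}:\M{\mu})=(\TT{-\la}:\M{-\mu})$ to Theorem \ref{thm:tilting:reg} term by term, using that negation of weights fixes the index $n$ of $f_{k;n}^{\pm\pm\pm}$ and flips all three superscript signs. Your explicit verification of that sign-flip identity is the only (routine) bookkeeping step, and it checks out, including the preservation of the excluded values of $n$.
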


\begin{prop}
  \label{proj:0-}
Assume that $kp\ge 2,  kd \ge 2.$
We have the following Verma flags for projective modules in the block $\Bl_k$:
  \begin{align}
\PP{0}^{\circ++}  & =\M{0}^{\circ++} +\M{-1}^{+++} +\M{1}^{+++},
\notag
\\
\PP{0}^{\circ+-}  & =\M{0}^{\circ+-} +\M{0}^{\circ++} +\M{-1}^{++-}  +\M{-1}^{+++} +\M{1}^{++-}  +\M{1}^{+++},
\notag
\\
\PP{0}^{\circ-+}  & =\M{0}^{\circ-+} +\M{0}^{\circ++} +\M{1}^{+-+} +\M{-1}^{+-+} +\M{-1}^{+++} +\M{1}^{+++},   \label{P0}%
\\
\PP{0}^{\circ--}  & =\M{0}^{\circ--} +\M{0}^{\circ-+} +\M{0}^{\circ+-} +\M{0}^{\circ++}
+\M{1}^{+--} +\M{1}^{+-+} +\M{-1}^{+--} +\M{-1}^{+-+}   \notag \\
    &\quad +\M{-1}^{++-}  +\M{-1}^{+++} +\M{1}^{++-} +\M{1}^{+++},
    \notag
\end{align}
\begin{align}
\PP{1-kp}^{+++}  & =\M{1-kp}^{+++}  +\M{-kp}^{+\circ+}  +\M{-1-kp}^{+++},    \notag
\\
\PP{1-kp}^{++-} & =\M{1-kp}^{++-}  +\M{1-kp}^{+++} +\M{-kp}^{+\circ-} +\M{-kp}^{+\circ+} +\M{-1-kp}^{++-}   +\M{-1-kp}^{+++},    \notag
\\
\PP{1-kp}^{+-+} & =\M{1-kp}^{+-+}  +\M{1-kp}^{+++} +\ 2 \M{-kp}^{+\circ+} +\M{-1-kp}^{+-+}   +\M{-1-kp}^{+++},   \label{P1-kp}%
\\
\PP{1-kp}^{+--}  &=\M{1-kp}^{+--}  +\M{1-kp}^{+-+}  +\M{1-kp}^{++-}  +\M{1-kp}^{+++}  +2\M{-kp}^{+\circ-} + 2 \M{-kp}^{+\circ+}    \notag\\
&\qquad +\M{-1-kp}^{+--} +\M{-1-kp}^{+-+} +\M{-1-kp}^{++-}   +\M{-1-kp}^{+++},     \notag
\\
   \notag\\
\PP{kd-1}^{+++}  & =\M{kd-1}^{+++}  +\M{kd}^{++\circ}  +\M{kd+1}^{+++},    \notag
\\
\PP{kd-1}^{++-} & =\M{kd-1}^{++-}  +\M{kd-1}^{+++} +2\M{kd}^{++\circ} +\M{kd+1}^{++-}   +\M{kd+1}^{+++},    \notag
\\
\PP{kd-1}^{+-+} & =\M{kd-1}^{+-+}  +\M{kd-1}^{+++} + \M{kd}^{+-\circ} + \M{kd}^{++\circ} +\M{kd+1}^{+-+}   +\M{kd+1}^{+++},   \label{Pkd-1}
\\
\PP{kd-1}^{+--}  &=\M{kd-1}^{+--}  +\M{kd-1}^{+-+}  +\M{kd-1}^{++-}  +\M{kd-1}^{+++}  +2\M{kd}^{+-\circ} + 2 \M{kd}^{++\circ}    \notag\\
&\qquad +\M{kd+1}^{+--} +\M{kd+1}^{+-+} +\M{kd+1}^{++-}   +\M{kd+1}^{+++}.    \notag
\end{align}
\end{prop}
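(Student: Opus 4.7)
The plan is to derive Proposition~\ref{proj:0-} as an immediate consequence of Theorem~\ref{thm:tilting:0-} via Soergel duality \eqref{tilt=proj}: $(\PP{\la}:\M{\mu}) = (\TT{-\la}:\M{-\mu})$.

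The first step is to pin down the effect of the negation $f \mapsto -f$ on the set $\Wt_k$ of atypical $\rho$-shifted weights. Inspection of the piecewise definition of $f_{k;n}^{\pm\pm\pm}$ shows that negating a triple of the form $(\pm X, \pm Y, \pm Z)$ simply flips all three outer $\pm$ signs while leaving the numerical index $n$ fixed; that is, $-f_{k;n}^{abc} = f_{k;n}^{\bar a\bar b\bar c}$, with the convention $\bar{+} = -$, $\bar{-} = +$, and $\bar\circ = \circ$. This is a routine case-by-case verification on the four intervals of $n$ in the piecewise definition, and at the three boundary singular weights one uses that $\pm 0 = 0$ so that a $\circ$-entry is fixed by the involution.

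With this involution in hand, each projective module $\PP{\la}$ appearing in Proposition~\ref{proj:0-} is paired with a tilting module $\TT{-\la}$ whose Verma flag is already recorded in Theorem~\ref{thm:tilting:0-}: concretely, $\PP{0}^{\circ abc}$ pairs with $\TT{0}^{\circ\bar a\bar b\bar c}$ listed in \eqref{T0}; $\PP{1-kp}^{abc}$ pairs with $\TT{1-kp}^{\bar a\bar b\bar c}$ listed in \eqref{T1-kp}; and $\PP{kd-1}^{abc}$ pairs with $\TT{kd-1}^{\bar a\bar b\bar c}$ listed in \eqref{Tkd-1}. Applying \eqref{tilt=proj} transfers each Verma multiplicity verbatim: every $\M{\mu}$ appearing in the tilting flag becomes $\M{-\mu}$ in the projective flag with the same coefficient, while the weight-decreasing order on the tilting side becomes the prescribed weight-increasing order on the projective side.

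I expect no genuine obstacle; the work is essentially bookkeeping. The only points requiring care are keeping track of the multiplicity $2$'s that occur in certain entries (for instance in $\PP{1-kp}^{+-+}$ and $\PP{1-kp}^{+--}$, matching the corresponding $2$'s in $\TT{1-kp}^{-+-}$ and $\TT{1-kp}^{-++}$, and analogously for the $\PP{kd-1}$ and $\PP{0}^{\circ--}$ rows), and ensuring the singular labels $\circ$ are preserved under the involution $\bar{(\cdot)}$ so that boundary weights on the tilting side map to the correct boundary weights on the projective side.
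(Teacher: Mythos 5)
Your proposal is correct and is exactly the paper's argument: the paper obtains Proposition~\ref{proj:0-} by translating the tilting flags of Theorem~\ref{thm:tilting:0-} through the Soergel duality \eqref{tilt=proj}, which is what you do, with the added (correct) bookkeeping that negation acts on $\Wt_k$ by flipping all three outer signs while fixing the index $n$ and the $\circ$-labels.
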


\begin{prop}
  \label{proj:pm1}
Assume that $kp\ge 2,  kd \ge 2.$ We have the following Verma flags for projective modules in the block $\Bl_k$
(here we read a fixed sign at a time):
\begin{align}
\PP{\pm 1}^{-++}   &= \M{\pm 1}^{-++}  +\M{0}^{\circ++} + \M{\pm 1}^{+++},
\notag
\\
\PP{\pm 1}^{-+-}  &= \M{\pm 1}^{-+-}  +\M{\pm 1}^{-++}  +\M{0}^{\circ+-} +\M{0}^{\circ++} + \M{\pm 1}^{++-} + \M{\pm 1}^{+++},
\notag
\\
\PP{\pm 1}^{--+}  &= \M{\pm 1}^{--+}  +\M{\pm 1}^{-++}  +\M{0}^{\circ-+} +\M{0}^{\circ++} + \M{\pm 1}^{+-+} + \M{\pm 1}^{+++}, \label{P1-1}
\\
\PP{\pm 1}^{---}  &= \M{\pm 1}^{---}  +\M{\pm 1}^{--+}  +\M{\pm 1}^{-+-}  +\M{\pm 1}^{-++}
+\M{0}^{\circ--} +\M{0}^{\circ-+} +\M{0}^{\circ+-} +\M{0}^{\circ++}    \notag \\
    &\quad + \M{\pm 1}^{+--} + \M{\pm 1}^{+-+}+ \M{\pm 1}^{++-} + \M{\pm 1}^{+++}.
    \notag
\end{align}
\end{prop}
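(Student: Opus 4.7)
The plan is to derive Proposition~\ref{proj:pm1} as a direct consequence of Theorem~\ref{thm:tilting:pm1} via Soergel duality $(\PP{\la}:\M{\mu})=(\TT{-\la}:\M{-\mu})$ recorded in~\eqref{tilt=proj}. Since the Verma flags of the eight tilting modules $\TT{\pm 1}^{+\pm\pm}$ are already recorded in~\eqref{T-1} and~\eqref{T1}, no further representation-theoretic input is needed; the proof should reduce to identifying how the involution $\la\mapsto -\la$ acts on the combinatorial indexing $\Wt_k$ and transcribing summand by summand.

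First, I would verify the effect of negation on the indexing set. Inspecting the four cases in the piecewise definition of $f_{k;n}^{\pm\pm\pm}$ at the start of Section~\ref{sec:rational}, one sees that in each case only the overall sign of each coordinate depends on the superscript, so $\la\mapsto -\la$ preserves the integer $n$ and flips each of the three superscript signs (with $\circ$ unchanged), namely
\begin{align*}
-f_{k;n}^{abc}=f_{k;n}^{(-a)(-b)(-c)},
\qquad
-f_{k;0}^{\circ bc}=f_{k;0}^{\circ(-b)(-c)}.
\end{align*}
In particular, for each $a,b,c\in\{+,-\}$ and each choice of the sign in $\pm 1$, the projective module $\PP{\pm 1}^{-abc}$ on the left-hand side of Proposition~\ref{proj:pm1} pairs under negation with the tilting module $\TT{\pm 1}^{+(-a)(-b)(-c)}$ appearing in Theorem~\ref{thm:tilting:pm1}, establishing the four correspondences
\begin{align*}
\PP{\pm 1}^{-++}\leftrightarrow\TT{\pm 1}^{+--},\quad
\PP{\pm 1}^{-+-}\leftrightarrow\TT{\pm 1}^{+-+},\quad
\PP{\pm 1}^{--+}\leftrightarrow\TT{\pm 1}^{++-},\quad
\PP{\pm 1}^{---}\leftrightarrow\TT{\pm 1}^{+++}.
\end{align*}

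Next, I would apply~\eqref{tilt=proj} term by term: for each of the eight correspondences above, every Verma summand $\M{\nu}$ of $\TT{-\la}$ in~\eqref{T-1} or~\eqref{T1} contributes a summand $\M{-\nu}$ to the Verma flag of $\PP{\la}$ with the same multiplicity. Applying the two sign-flipping rules above to every Verma index appearing in~\eqref{T-1} and~\eqref{T1}, and reading the result in weight-increasing rather than weight-decreasing order as dictated by the conventions of Section~\ref{subsec:proj:rational}, yields precisely the eight formulae displayed in Proposition~\ref{proj:pm1}. I anticipate no significant obstacle: given Theorem~\ref{thm:tilting:pm1}, the present proposition is a purely combinatorial restatement via Soergel duality, the only point requiring care being the sign bookkeeping at the singular weights $f_{k;0}^{\circ\pm\pm}$, whose $\circ$ superscript is unaffected by negation simply because the corresponding coordinate is $0$.
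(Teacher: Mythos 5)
Your proposal is correct and follows essentially the same route as the paper: Proposition~\ref{proj:pm1} is obtained there precisely by translating the tilting formulae \eqref{T-1} and \eqref{T1} of Theorem~\ref{thm:tilting:pm1} through the Soergel duality \eqref{tilt=proj}. Your sign bookkeeping is the right (and only) point to check, since negation of a $\rho$-shifted weight fixes the subscript $n$ and flips the superscripts, leaving $\circ$ unchanged at the singular coordinates.
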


If we set $n=1$ in Proposition~\ref{proj:reg}(2) and interpret the sum of $\M{\pm(n-1)}^{-bc}$ and $\M{\pm(n-1)}^{+bc}$
in $\PP{\pm n}^{-\pm\pm}$  therein as
a single $\M{0}^{\circ bc}$, then we recover
 the formulae for $\PP{\pm 1}^{-\pm\pm}$ in Proposition~\ref{proj:pm1}.

\begin{prop}
  \label{proj:sing}
  Assume that $kp\ge 2,  kd \ge 2.$ We have the following Verma flags for projective modules in the block $\Bl_k$:
\begin{align}
\PP{-kp}^{+\circ+}&= \M{-kp}^{+\circ+} +\M{-1-kp}^{+-+} +\M{-1-kp}^{+++},      \notag
\\
\PP{-kp}^{+\circ-}&=  \M{-kp}^{+\circ-} + \M{-kp}^{+\circ+} + \M{-1-kp}^{+--} + \M{-1-kp}^{+-+} + \M{-1-kp}^{++-} + \M{-1-kp}^{+++},      \notag
\\
\PP{-kp}^{-\circ+}  &= \M{-kp}^{-\circ+}  +\M{1-kp}^{--+} +\M{1-kp}^{-++}   + \M{1-kp}^{+-+} + \M{1-kp}^{+++} +\M{-kp}^{+\circ+},     \label{P-kp}
\\
\PP{-kp}^{-\circ-}  &= \M{-kp}^{-\circ-}  +\M{-kp}^{-\circ+}  +\M{1-kp}^{---}  +\M{1-kp}^{--+} +\M{1-kp}^{-+-} +\M{1-kp}^{-++} \notag \\
                               &\qquad  + \M{1-kp}^{+--} + \M{1-kp}^{+-+} + \M{1-kp}^{++-}+ \M{1-kp}^{+++}  +\M{-kp}^{+\circ-} +\M{-kp}^{+\circ+},    \notag
\\
   \notag\\
\PP{kd}^{++\circ}&=  \M{kd}^{++\circ}  + \M{kd+1}^{++-} + \M{kd+1}^{+++},       \notag
\\
\PP{kd}^{+-\circ}&= \M{kd}^{+-\circ}  + \M{kd}^{++\circ}  + \M{kd+1}^{+--} + \M{kd+1}^{+-+} + \M{kd+1}^{++-} + \M{kd+1}^{+++},      \notag
\\
\PP{kd}^{-+\circ}  &= \M{kd}^{-+\circ}  +\M{kd-1}^{-+-} +\M{kd-1}^{-++}  + \M{kd-1}^{++-} + \M{kd-1}^{+++} +\M{kd}^{++\circ},     \label{Pkd}
\\
\PP{kd}^{--\circ}  &= \M{kd}^{--\circ}  +\M{kd}^{-+\circ}  +\M{kd-1}^{---}  +\M{kd-1}^{--+} +\M{kd-1}^{-+-} +\M{kd-1}^{-++}  \notag \\
                               &\qquad  + \M{kd-1}^{+--} + \M{kd-1}^{++-} + \M{kd-1}^{+-+}+ \M{kd-1}^{+++} +\M{kd}^{+-\circ} +\M{kd}^{++\circ}.    \notag
\end{align}
\end{prop}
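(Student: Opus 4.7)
The plan is to derive Proposition~\ref{proj:sing} as an immediate consequence of Theorem~\ref{thm:tilting:sing} via Soergel duality \eqref{tilt=proj}, which gives $(\PP{\la} : \M{\mu}) = (\TT{-\la} : \M{-\mu})$ for all $\la, \mu \in X$. No new modules or translation functors need be introduced; the argument is purely one of bookkeeping against the character formulae already established for the eight singular tilting modules in block $\Bl_k$.

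The first step is to unpack the negation map $\la \mapsto -\la$ on the relevant atypical weights in the shorthand notation. A case-by-case inspection of the definition of $f_{k;n}^{\pm\pm\pm}$ shows that, for every atypical weight appearing in the eight formulae of Theorem~\ref{thm:tilting:sing}, negation fixes the subscript and flips all three superscript signs: for instance, $-f_{-kp}^{+\circ+} = f_{-kp}^{-\circ-}$, $-f_{kd}^{++\circ} = f_{kd}^{--\circ}$, $-f_{-1-kp}^{+-+} = f_{-1-kp}^{-+-}$, and $-f_{kd-1}^{+--} = f_{kd-1}^{-++}$, with analogous identities for all remaining labels in play. At the boundary subscripts $n \in \{-kp, kd\}$ the weight sits between two defining cases of $f_{k;n}^{\pm\pm\pm}$, but both cases produce the same tuple, so the identification is unambiguous.

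Once this dictionary is in place, each of the eight formulae in Proposition~\ref{proj:sing} reads off mechanically from the corresponding formula in Theorem~\ref{thm:tilting:sing} by simultaneously flipping all superscript signs on both sides of the equation (including the signs on the indices neighbouring a $\circ$). Concretely, the Verma flag of $\TT{-kp}^{-\circ-}$ yields that of $\PP{-kp}^{+\circ+}$, the Verma flag of $\TT{-kp}^{-\circ+}$ yields that of $\PP{-kp}^{+\circ-}$, the four formulae for $\TT{kd}^{\ep_1 \ep_2 \circ}$ yield the four formulae for $\PP{kd}^{-\ep_1\, -\ep_2\,\circ}$, and so on through all eight pairings. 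Since all of the substantive work---construction of these tilting modules via translation functors, verification of their Verma flags, and establishing indecomposability---has already been carried out in the proof of Theorem~\ref{thm:tilting:sing}, there is no genuine obstacle here; the only point to watch is the systematic sign-flipping in the shorthand notation, which in particular preserves any integer multiplicities appearing in the Verma flag.
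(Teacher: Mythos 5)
Your proposal is correct and follows exactly the paper's own route: the paper obtains Proposition~\ref{proj:sing} (like all of Propositions~\ref{proj:reg}--\ref{proj:+}) by translating the tilting formulae of Theorem~\ref{thm:tilting:sing} through the Soergel duality \eqref{tilt=proj}, using precisely the observation that negation fixes the subscript of $f_{k;n}^{\pm\pm\pm}$ and flips the superscript signs (with the $\circ$ marking a zero coordinate, hence unaffected). The sign-flip bookkeeping you carry out matches the stated formulae, so nothing further is needed.
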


\begin{prop}
  \label{proj:+}
Assume that $kp\ge 2,  kd \ge 2.$ We have the following Verma flags for projective modules in the block $\Bl_k$:
\begin{align}
\PP{-1-kp}^{-++}  &= \M{-1-kp}^{-++} +\M{-kp}^{-\circ+}  +\M{1-kp}^{-++}
                               +\M{1-kp}^{+++}  +\M{-kp}^{+\circ+}   + \M{-1-kp}^{+++},     \notag
\\
\PP{-1-kp}^{-+-}  &= \M{-1-kp}^{-+-} +\M{-1-kp}^{-++} +\M{-kp}^{-\circ-}  +\M{-kp}^{-\circ+}  +\M{1-kp}^{-+-} +\M{1-kp}^{-++}    \notag
                            \\
                            &\qquad   +\M{1-kp}^{++-}  +\M{1-kp}^{+++}  +\M{-kp}^{+\circ-} +\M{-kp}^{+\circ+}   + \M{-1-kp}^{++-} + \M{-1-kp}^{+++},    \notag
\\
\PP{-1-kp}^{--+}    &=  \M{-1-kp}^{--+} +\M{-1-kp}^{-++} +2\M{-kp}^{-\circ+}  +\M{1-kp}^{--+} +\M{1-kp}^{-++}   \label{P-1-kp}
                             \\
                             &\qquad  +\M{1-kp}^{+-+}  +\M{1-kp}^{+++}  +2\M{-kp}^{+\circ+}   + \M{-1-kp}^{+-+} + \M{-1-kp}^{+++},    \notag
\\
\PP{-1-kp}^{---}  &= \M{-1-kp}^{---} +\M{-1-kp}^{--+} +\M{-1-kp}^{-+-} +\M{-1-kp}^{-++} +2\M{-kp}^{-\circ-}  +2\M{-kp}^{-\circ+}     \notag
                                    \\
                          & +\M{1-kp}^{---} +\M{1-kp}^{--+} +\M{1-kp}^{-+-} +\M{1-kp}^{-++} +\M{1-kp}^{+--}  +\M{1-kp}^{+-+}  +\M{1-kp}^{++-}  +\M{1-kp}^{+++}     \notag  \\
                            &\qquad    + 2\M{-kp}^{+\circ-} + 2\M{-kp}^{+\circ+} + \M{-1-kp}^{+--} + \M{-1-kp}^{+-+} + \M{-1-kp}^{++-} + \M{-1-kp}^{+++},   \notag
\end{align}
\begin{align}
\PP{kd+1}^{-++}  &= \M{kd+1}^{-++} +\M{kd}^{-+\circ}  +\M{kd-1}^{-++}
                               +\M{kd-1}^{+++}  +\M{kd}^{++\circ}   + \M{kd+1}^{+++},    \notag
\\
\PP{kd+1}^{-+-}  &= \M{kd+1}^{-+-} +\M{kd+1}^{-++} +2\M{kd}^{-+\circ}  +\M{kd-1}^{-+-} +\M{kd-1}^{-++}     \notag
                            \\
                            &\qquad   +\M{kd-1}^{++-}  +\M{kd-1}^{+++}  +2\M{kd}^{++\circ}   + \M{kd+1}^{++-} + \M{kd+1}^{+++},    \notag
\\
\PP{kd+1}^{--+}    &=  \M{kd+1}^{--+} +\M{kd+1}^{-++} +\M{kd}^{--\circ} +\M{kd}^{-+\circ}  +\M{kd-1}^{--+} +\M{kd-1}^{-++}    \label{Pkd+1}
                             \\
                             &\qquad  +\M{kd-1}^{+-+}  +\M{kd-1}^{+++}  +\M{kd}^{+-\circ} +\M{kd}^{++\circ}   + \M{kd+1}^{+-+} + \M{kd+1}^{+++},    \notag
\\
\PP{kd+1}^{---}  &= \M{kd+1}^{---} +\M{kd+1}^{--+} +\M{kd+1}^{-+-} +\M{kd+1}^{-++} +2\M{kd}^{--\circ}  +2\M{kd}^{-+\circ}     \notag
                                    \\
                          & +\M{kd-1}^{---} +\M{kd-1}^{--+} +\M{kd-1}^{-+-} +\M{kd-1}^{-++} +\M{kd-1}^{+--}  +\M{kd-1}^{+-+}  +\M{kd-1}^{++-}  +\M{kd-1}^{+++}      \notag \\
                            &\qquad  + 2\M{kd}^{+-\circ} + 2\M{kd}^{++\circ}  + \M{kd+1}^{+--} + \M{kd+1}^{+-+} + \M{kd+1}^{++-} + \M{kd+1}^{+++}.   \notag
\end{align}
\end{prop}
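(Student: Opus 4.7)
The plan is to derive each formula in Proposition~\ref{proj:+} as a direct consequence of Soergel duality \eqref{tilt=proj} applied to the tilting character formulae established in Theorem~\ref{thm:tilting:+}. Recall that $(\PP{\la}:\M{\mu}) = (\TT{-\la}:\M{-\mu})$ for all $\la,\mu \in X$. Thus, if $\TT{-\la}$ has a Verma flag $\sum_\nu t_{-\la,\nu}\,\M{\nu}$, then $\PP{\la}$ has the Verma flag $\sum_\nu t_{-\la,\nu}\,\M{-\nu}$. So the proof will amount to copying the eight Verma flag formulae of Theorem~\ref{thm:tilting:+} while applying weight negation to every index.

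The only bookkeeping step is to verify that the $\rho$-shifted weight negation $f\mapsto -f$ acts on the shorthand notation $f_{k;n}^{abc}$ simply by flipping the sign superscripts, i.e.\ $-f_{k;n}^{abc} = f_{k;n}^{(-a)(-b)(-c)}$, and analogously $-f_{k;0}^{\circ bc} = f_{k;0}^{\circ(-b)(-c)}$, $-f_{k;-kp}^{a\circ c} = f_{k;-kp}^{(-a)\circ(-c)}$, and $-f_{k;kd}^{ab\circ} = f_{k;kd}^{(-a)(-b)\circ}$. This is immediate from the piecewise definition of $f_{k;n}^{\pm\pm\pm}$: in each of the four ranges $n\le -kp$, $1-kp\le n\le 0$, $0\le n\le kd-1$, $kd\le n$, the three quantities appearing inside the $\pm$ symbols are nonnegative integers, so negating the integer triple coincides exactly with flipping each of the three signs. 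For the singular shorthand, the `$\circ$' coordinate is already zero and is unchanged under negation, so the same sign-flip rule applies in the remaining two coordinates.

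With this sign-flip dictionary in hand, each of the eight formulae in Proposition~\ref{proj:+} is obtained by pairing $\PP{\la}$ with the tilting module $\TT{-\la}$ whose superscript sign pattern is the flip of that of $\PP{\la}$ (so $\PP{-1-kp}^{-++}$ pairs with $\TT{-1-kp}^{+--}$, $\PP{-1-kp}^{---}$ pairs with $\TT{-1-kp}^{+++}$, $\PP{kd+1}^{-++}$ pairs with $\TT{kd+1}^{+--}$, and so on), then reading off the flag from Theorem~\ref{thm:tilting:+} and flipping every sign superscript in every Verma term while preserving the multiplicities. For instance, applying this transformation to \eqref{T-1-kp} with $\TT{-1-kp}^{+--}$ produces exactly the formula \eqref{P-1-kp} for $\PP{-1-kp}^{-++}$; the remaining seven cases are checked identically.

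There is no substantive obstacle in this step: all the genuine representation-theoretic content has already been carried out in Theorem~\ref{thm:tilting:+} via the translation functor arguments on the tilting side. What remains is a purely formal transcription. The only point warranting a moment of care is the consistency of the sign-flip identification across the singular weights $f_{k;-kp}^{\pm\circ\pm}$ and $f_{k;kd}^{\pm\pm\circ}$ and their regular neighbours in the Verma flags, which is settled by the case analysis of the piecewise definition as noted above.
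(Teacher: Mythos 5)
Your proposal is correct and is exactly the paper's route: Proposition~\ref{proj:+} is obtained from Theorem~\ref{thm:tilting:+} by the Soergel duality \eqref{tilt=proj}, with the negation of indices amounting to flipping the sign superscripts (including at the singular weights), which you verify carefully. Nothing further is needed.
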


\subsection{Projective tilting modules in $\Bl_k$} 
\label{subsec:prinj:kp,kd>1}

We would like to determine when a Verma flag of a tilting module could match a Verma flag of a projective module.

By inspection of the formulae for Verma flags of the tilting modules in $\Bl_k$ in Section \ref{subsec:tilting:rational},
all tilting modules except $\TT{-kp}^{+\circ \mp}$ and $\TT{kd}^{+\mp\circ}$ have lowest terms (in the Verma flags) of the form $M_{n}^{---}$ $(n\neq 0)$.
On the other hand, by the formulae in Section \ref{subsec:proj:rational}, the projective modules $\PP{n}^{---}$  (for $n\neq 0, kd, -kp$) have highest terms (in the Verma flags) of the form $M_{n}^{+++}$.
The tilting modules $\TT{-kp}^{+\circ \mp}$ have lowest terms $\M{-kp}^{-\circ-}$, while
the tilting modules $\TT{kd}^{+\mp\circ}$ have lowest terms $\M{kd}^{--\circ}$.
On the other hand, the projective modules $\PP{-kp}^{-\circ-}$ have highest terms $\M{-kp}^{+\circ+}$, while
the projective modules $\PP{kd}^{--\circ}$ have highest terms $\M{kd}^{++\circ}$.

Hence the only possible matchings of Verma flags are between $\TT{-kp}^{+\circ+}$ and  $\PP{-kp}^{-\circ-}$, $\TT{kd}^{++\circ}$ and $\PP{kd}^{--\circ}$, as well as
$\TT{n}^{+++} $ and $\PP{n}^{---}$ (for $n\neq 0, kd, -kp$),  respectively.
Now, a similar argument as in the proof of Theorem \ref{T=P:irrational} enables us to establish the following.

\begin{thm}
 \label{T=P:rational}
Assume $kp\ge 2,  kd \ge 2$. We have the following isomorphisms between projective and tilting modules in $\Bl_k$:
\[
\TT{-kp}^{+\circ+} \cong \PP{-kp}^{-\circ-},
\quad
\TT{kd}^{++\circ}\cong \PP{kd}^{--\circ},
\quad
\TT{n}^{+++} \cong \PP{n}^{---} \;\; (n\in \Z \backslash \{0, kd, -kp\}).
\]
Furthermore, there are no other projective tilting modules of atypical weights in $\Bl_k$.
\end{thm}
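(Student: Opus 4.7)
The plan is to imitate the proof of Theorem~\ref{T=P:irrational} mutatis mutandis. Much of the work is already done in the paragraphs preceding the statement: by inspecting the lowest Verma terms of the tilting modules in Section~\ref{subsec:tilting:rational} against the highest Verma terms of the projective modules in Section~\ref{subsec:proj:rational}, the only possible matches of Verma flags between a tilting and a projective in $\Bl_k$ are exactly those listed in the theorem. Thus the task reduces to (i) verifying that each of the three claimed pairs does have coinciding Verma flags, and (ii) showing that the three tilting modules $\TT{n}^{+++}$ $(n\ne 0, kd, -kp)$, $\TT{-kp}^{+\circ+}$ and $\TT{kd}^{++\circ}$ are actually projective.

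For (i), I would compare term-by-term the Verma flag formulae from Theorem~\ref{thm:tilting:reg}(2) against Proposition~\ref{proj:reg}(2) in the generic regular cases, from Theorem~\ref{thm:tilting:sing} against Proposition~\ref{proj:sing} for the two singular cases. In each pair, the listed Verma subquotients constitute the same multiset (with all multiplicities equal to $1$, except the pair of $2$'s in the singular cases, which match on both sides), presented in reverse order consistent with the opposite Bruhat-order conventions chosen for tilting flags (weight decreasing) and projective flags (weight increasing). This step is pure bookkeeping.

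For (ii), I would trace back the constructions in Section~\ref{subsec:tilting:rational}: each of these tilting modules was obtained in the form $\E\TT{\mu}$, where $\E$ is a translation functor $\text{pr}_{\Bl_k}(-\otimes V)$ for a finite-dimensional module $V$, and $\TT{\mu}$ is a tilting module of typical antidominant highest weight. By Proposition~\ref{prop:typical} the typical block containing $\mu$ is equivalent to the principal block of $\sll^{\oplus r}$, in which the antidominant tilting module is the projective cover of the antidominant simple and hence is projective. Since translation functors are exact and send projectives to projectives, $\E\TT{\mu}$ is projective, and being indecomposable it is the projective cover of its simple head; this head is pinned down by the lowest Verma subquotient, matching it with the projective modules named in the theorem.

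Once the three isomorphism families are established, the final clause follows from the matching analysis in the paragraphs preceding the theorem: every other tilting module has a lowest Verma term of shape $M_n^{---}$ incompatible with the highest Verma term $M_n^{+++}$ of any unaccounted projective. The main obstacle is purely organizational, namely checking that the initial typical tilting modules $\TT{f-(2,0,0)}$ (in the regular cases) and the corresponding starting points for the singular cases really do lie in the antidominant $W$-orbit of their respective typical blocks, so that Proposition~\ref{prop:typical} supplies projectivity; this is visible from the construction but must be verified case by case.
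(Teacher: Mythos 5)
Your strategy is the paper's own: reduce, via the flag-matching analysis preceding the theorem, to showing that $\TT{n}^{+++}$, $\TT{-kp}^{+\circ+}$ and $\TT{kd}^{++\circ}$ are projective, and obtain projectivity by tracing each back through a translation functor to a projective tilting module of typical highest weight (translation functors, being exact with biadjoints, preserve both tilting and projective objects); this is exactly how Theorem~\ref{T=P:irrational} is proved and how the paper handles the rational case. However, your step (ii) as written would fail. You assert that the initial modules $\TT{\mu}$ have typical \emph{antidominant} highest weight and that ``the antidominant tilting module is the projective cover of the antidominant simple.'' In the principal block of $\sll$ (and hence of $\sll^{\oplus r}$) the tilting module of antidominant highest weight is the antidominant Verma module, which is simple and \emph{not} projective; the projective cover of the antidominant simple is the tilting module of \emph{dominant} highest weight, i.e.\ the one with the full-length Verma flag. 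Correspondingly, the starting modules actually used in Section~\ref{subsec:tilting:rational} --- $\TT{f-(2,0,0)}$ for $f=f_{n}^{+++}$, $f_{-kp}^{+\circ+}$, $f_{kd}^{++\circ}$, and the modules $\TT{(0,\pm kp,\pm(kd+2))}$ used for $\TT{\pm1}^{+++}$ --- have $\rho$-shifted highest weights with all coordinates nonnegative, so they are the \emph{dominant} tilting modules of their typical blocks, and that is the reason Proposition~\ref{prop:typical} gives their projectivity. So the case-by-case verification you defer to the end should check dominance, not membership in an ``antidominant orbit''; checked as you propose, it would come out negative and the argument would collapse. With this correction your proof coincides with the paper's.

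A small further inaccuracy in step (i): for the two singular pairs the flags \eqref{T-kp} versus \eqref{P-kp} and \eqref{Tkd} versus \eqref{Pkd} each consist of $12$ Verma subquotients all occurring with multiplicity $1$; the multiplicity-$2$ entries you mention occur in other (ir)regular tilting and projective modules that play no role in the matching. Otherwise the bookkeeping, the identification of the head via the lowest Verma term, and the final clause are as in the paper.
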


\subsection{Composition factors of Verma modules in $\Bl_k$} 
  \label{subsec:comp:Bk}

Using the BGG reciprocity \eqref{BGG}, we obtain the formulae for composition factors of Verma modules in $\Bl_k$, for $k\ge 1$,
in Propositions~\ref{composition1:r}
--\ref{comp-1-kp:kp>1} below.
The composition factors for Verma modules $\M{\pm 1}^{+\pm\pm}$
turn out to have a uniform description as for $\M{\pm n}^{+\pm\pm}$ with $n\ge 2$,
as given in Proposition~\ref{composition1:r}.

\begin{prop}
\label{composition1:r}
Assume $kp\ge 2, kd\ge 2$.
We have the following composition factors for Verma modules $\M{-n}^{\pm\pm\pm}$  $(n\ge 1, n\not =kp, kp \pm 1)$
and  for $\M{n}^{\pm\pm\pm}$  $(n\ge 1, n \not=kd, kd \pm 1)$ in the block $\Bl_k$:
\begin{align*}
\M{\pm n}^{---} =& \LL{\pm n}^{---} +\LL{\pm(n+1)}^{---},
 \\
\M{\pm n}^{--+} =& \LL{\pm n}^{--+} + \LL{\pm n}^{---} +\LL{\pm(n+1)}^{--+} + \LL{\pm(n+1)}^{---},
 \\
\M{\pm n}^{-+-} =& \LL{\pm n}^{-+-} + \LL{\pm n}^{---} +\LL{\pm(n+1)}^{-+-} + \LL{\pm(n+1)}^{---},
 \\
\M{\pm n}^{-++}  =& \LL{\pm n}^{-++} +\LL{\pm n}^{-+-} + \LL{\pm n}^{--+}  +\LL{\pm n}^{---}
+\LL{\pm(n+1)}^{-++}  +\LL{\pm(n+1)}^{-+-} +\LL{\pm(n+1)}^{--+} + \LL{\pm(n+1)}^{---};
\end{align*}
\begin{align*}
\M{\pm n}^{+--} =& \LL{\pm n}^{+--} +\LL{\pm(n-1)}^{+--}  + \LL{\pm n}^{---} + \LL{\pm(n+1)}^{---},
 \\
\M{\pm n}^{+-+}  =& \LL{\pm n}^{+-+}  +\LL{\pm n}^{+--}  + \LL{\pm(n-1)}^{+-+}    + \LL{\pm(n-1)}^{+--}  +\LL{\pm n}^{--+}    + \LL{\pm n}^{---} + \LL{\pm(n+1)}^{--+} + \LL{\pm(n+1)}^{---},
 \\
\M{\pm n}^{++-}  =& \LL{\pm n}^{++-}   +\LL{\pm n}^{+--}  + \LL{\pm(n-1)}^{++-}   + \LL{\pm(n-1)}^{+--}   +\LL{\pm n}^{-+-}   + \LL{\pm n}^{---} + \LL{\pm(n+1)}^{-+-}   + \LL{\pm(n+1)}^{---},
 \\
\M{\pm n}^{+++}  =& \LL{\pm n}^{+++}  +\LL{\pm n}^{++-} + \LL{\pm n}^{+-+}  +\LL{\pm n}^{+--}
   +\LL{\pm(n-1)}^{+++}   +\LL{\pm(n-1)}^{++-} + \LL{\pm(n-1)}^{+-+}  +\LL{\pm(n-1)}^{+--} &\\
+& \LL{\pm n}^{-++}  +\LL{\pm n}^{-+-}   +\LL{\pm n}^{--+}    + \LL{\pm n}^{---}
 +\LL{\pm(n+1)}^{-++}  + \LL{\pm(n+1)}^{-+-}  +\LL{\pm(n+1)}^{--+}  +\LL{\pm(n+1)}^{---}.
\end{align*}
(In the last four formulae when $n =1$, it is understood that $\LL{\pm(n-1)}^{+**} =\LL{0}^{\circ **}$.)
\end{prop}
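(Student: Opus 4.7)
The proof is a direct application of the BGG reciprocity \eqref{BGG}: $[\M{\mu}:\LL{\la}] = (\PP{\la}:\M{\mu})$. To compute the composition series of a Verma module $\M{\mu}$ it therefore suffices to identify every projective $\PP{\la}$ whose Verma flag (as listed in Propositions~\ref{proj:reg}--\ref{proj:+}) contains $\M{\mu}$, and to record the multiplicity. Since these propositions describe every projective module in $\Bl_k$, the argument reduces to a systematic bookkeeping exercise.

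The plan is to fix a regular weight $\mu = f_{\pm n}^{\pm\pm\pm}$ with $n$ in the range specified in the proposition (so $\mu$ is neither equal nor Bruhat-adjacent to any of the singular weights $f_{0}^{\circ\pm\pm}, f_{-kp}^{\pm\circ\pm}, f_{kd}^{\pm\pm\circ}$), and to collect all $\la$ such that $\M{\mu}$ appears in the Verma flag of $\PP{\la}$. Inspection of Proposition~\ref{proj:reg} shows that the only candidates for $\la$ lie in the $W$-orbits of $\mu$, $f_{\pm(n+1)}^{\pm\pm\pm}$ and $f_{\pm(n-1)}^{\pm\pm\pm}$, reflecting the ``shift by $\pm 1$'' relation (4) in the Bruhat order. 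Because $\mu$ is regular by hypothesis, none of the singular projectives in Propositions~\ref{proj:0-}, \ref{proj:sing}, \ref{proj:+} contribute a copy of $\M{\mu}$, and the only relevant projectives are those from Proposition~\ref{proj:reg} (plus the $n=1$ exception from Proposition~\ref{proj:pm1}). For each of the eight sign patterns $\epsilon_1\epsilon_2\epsilon_3 \in \{\pm\}^3$, the eight formulas in Proposition~\ref{composition1:r} then follow by reading off the indices and multiplicities. The convention $\LL{\pm(n-1)}^{+\ast\ast}=\LL{0}^{\circ\ast\ast}$ when $n=1$ is exactly what is needed to merge Proposition~\ref{proj:reg}(1) (formally evaluated at $n-1=0$) with the genuine formulas for $\PP{\pm 1}^{-\ast\ast}$ in Proposition~\ref{proj:pm1}, whose Verma flags contain $\M{0}^{\circ\ast\ast}$.

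The only point requiring real care is the completeness of this enumeration: one must verify that no projective outside the listed neighbourhood (e.g.\ $\PP{\pm(n\pm 2)}^{\pm\pm\pm}$ or a typical-block projective) can contain $\M{\mu}$. For atypical $\la$ this is immediate from the shape of Proposition~\ref{proj:reg}, whose Verma flags only involve pairs of weights with adjacent $|n|$-values; for typical $\la$ this is clear since $\mu$ is atypical and typical/atypical weights lie in different blocks by Lemma~\ref{lem:central}. Modulo this routine check, the proof reduces to matching sign patterns, which is verified case-by-case for each of the eight formulas.
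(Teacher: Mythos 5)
Your overall strategy coincides with the paper's: Proposition~\ref{composition1:r} is obtained there in one stroke by applying BGG reciprocity \eqref{BGG} to the Verma-flag formulae for the projectives in Propositions~\ref{proj:reg}--\ref{proj:+} (themselves read off from the tilting formulae via \eqref{tilt=proj}), and your plan of listing all $\PP{\la}$ whose Verma flag contains $\M{\mu}$ is exactly that bookkeeping.

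There is, however, a concrete error in your handling of the boundary case $n=1$. You assert that ``none of the singular projectives in Propositions~\ref{proj:0-}, \ref{proj:sing}, \ref{proj:+} contribute a copy of $\M{\mu}$''. This fails precisely when $n=1$ lies in the stated range: by \eqref{P0} the singular projectives $\PP{0}^{\circ\ast\ast}$ contain $\M{\pm 1}^{+\ast\ast}$ with multiplicity one, and via \eqref{BGG} these are exactly what produce the composition factors $\LL{0}^{\circ\ast\ast}$ of $\M{\pm 1}^{+\ast\ast}$ encoded in the convention $\LL{\pm(n-1)}^{+\ast\ast}=\LL{0}^{\circ\ast\ast}$. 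Your justification of that convention points at the wrong projectives: the fact that the flags of $\PP{\pm 1}^{-\ast\ast}$ in Proposition~\ref{proj:pm1} contain $\M{0}^{\circ\ast\ast}$ is relevant to the composition factors of the singular Verma modules $\M{0}^{\circ\ast\ast}$ (Proposition~\ref{composition:0}), not of $\M{\pm 1}^{+\ast\ast}$; what Proposition~\ref{proj:pm1} actually supplies at $n=1$ are the factors $\LL{\pm 1}^{-\ast\ast}$, since $\PP{\pm 1}^{-\ast\ast}$ replaces Proposition~\ref{proj:reg}(2) at $m=1$ (cf.\ the paragraph following Proposition~\ref{proj:pm1}), while ``Proposition~\ref{proj:reg}(1) formally evaluated at $n-1=0$'' refers to a nonexistent projective $\PP{0}^{+\ast\ast}$. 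Followed literally, your enumeration would omit the terms $\LL{0}^{\circ\ast\ast}$ from the $n=1$ formulae. The fix is simply to include $\PP{0}^{\circ\ast\ast}$ from \eqref{P0} in the list of contributing projectives, and to run the completeness check against all of Propositions~\ref{proj:reg}--\ref{proj:+} (not only \ref{proj:reg}); with that correction the argument goes through and agrees with the paper's.
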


\begin{prop}
 \label{composition:0}
 Assume $kp\ge 2, kd\ge 2$.
We have the following composition factors of Verma modules in $\Bl_k$:
\begin{align*}
\M{0}^{\circ--} =& \LL{0}^{\circ--}  + \LL{-1}^{---} + \LL{1}^{---},
 \\
\M{0}^{\circ-+} =& \LL{0}^{\circ-+} +\LL{0}^{\circ--}  + \LL{-1}^{--+} + \LL{1}^{--+}   + \LL{-1}^{---} + \LL{1}^{---},
 \\
\M{0}^{\circ+-} =& \LL{0}^{\circ+-} +\LL{0}^{\circ--}  + \LL{-1}^{-+-}  + \LL{1}^{-+-} + \LL{-1}^{---} + \LL{1}^{---},
 \\
\M{0}^{\circ++} =& \LL{0}^{\circ++} +\LL{0}^{\circ+-}  +\LL{0}^{\circ-+} +\LL{0}^{\circ--}
+ \LL{1}^{-++}  + \LL{1}^{-+-} + \LL{-1}^{-++} + \LL{-1}^{-+-} \\
&\quad + \LL{-1}^{--+}  + \LL{1}^{--+} + \LL{-1}^{---} + \LL{1}^{---}.
\end{align*}
\end{prop}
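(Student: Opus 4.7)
The plan is to derive these composition factor formulae as a direct consequence of BGG reciprocity \eqref{BGG}: namely $[M_\mu : L_\lambda] = (P_\lambda : M_\mu)$. Since Propositions \ref{proj:0-}, \ref{proj:pm1}, \ref{proj:sing} (and by reference the full list of projective modules in $\mathcal B_k$ from Section~\ref{subsec:proj:rational}) give explicit Verma flags of every projective in the block, the composition multiplicity $[M_0^{\circ\epsilon\epsilon'}:L_\lambda]$ is computed by reading off the coefficient of $M_0^{\circ\epsilon\epsilon'}$ in $P_\lambda$, as $\lambda$ ranges over all weights of $\mathcal B_k$.

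First I would fix $\mu \in \{ f_0^{\circ--}, f_0^{\circ-+}, f_0^{\circ+-}, f_0^{\circ++}\}$ and scan the list of projective modules to find those $P_\lambda$ whose Verma flag contains $M_\mu$. Inspection of the formulae for $P_0^{\circ\pm\pm}$ in \eqref{P0} and for $P_{\pm 1}^{-\pm\pm}$, $P_{\pm 1}^{---}$ in \eqref{P1-1} immediately yields the candidate $\lambda$'s with their multiplicities; moreover, one checks that no other projective $P_\lambda$ in the block has $M_\mu$ in its flag (for instance, the projectives $P_{\pm n}^{\pm\pm\pm}$ with $n \ge 2$ in Proposition~\ref{proj:reg} only involve Verma modules indexed by weights of the form $f_{\pm n}^{\pm\pm\pm}$ or $f_{\pm(n-1)}^{\pm\pm\pm}$, which never equal $f_0^{\circ\pm\pm}$ since $n\geq 2$; similarly the projectives in \eqref{P1-kp}--\eqref{Pkd-1}, \eqref{P-kp}--\eqref{Pkd}, and \eqref{P-1-kp}--\eqref{Pkd+1} involve only weights supported on the regions near the singular weights $f_{-kp}^{\pm\circ\pm}$ and $f_{kd}^{\pm\pm\circ}$, which are disjoint from the zero region since $kp \ge 2$ and $kd \ge 2$).

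For each $\mu$, assembling the resulting list gives exactly the formula asserted in Proposition~\ref{composition:0}. For instance, $M_0^{\circ++}$ appears (with multiplicity one) only in the eight projectives $P_0^{\circ\pm\pm}$ and $P_{\pm 1}^{-\pm\pm}$, producing the eight composition factors $L_0^{\circ\pm\pm}$ and $L_{\pm 1}^{-\pm\pm}$ in $M_0^{\circ++}$; and the other three formulae are verified analogously, each reducing to a short bookkeeping check.

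The only substantive point, and hence the main \emph{a priori} obstacle, is to be certain that the inventory of projectives $P_\lambda$ containing $M_\mu$ in its flag is complete — that no projective outside the lists in \eqref{P0}, \eqref{P1-1} contributes. This is transparent from the disjointness of the Bruhat-supports observed above, so the argument is essentially a mechanical consequence of the tilting/projective formulae already established.
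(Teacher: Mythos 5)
Your proposal is correct and is essentially the paper's own argument: the paper obtains Proposition~\ref{composition:0} precisely by applying BGG reciprocity \eqref{BGG} to the projective Verma-flag formulae of Section~\ref{subsec:proj:rational}, exactly as you do, with the completeness of the inventory guaranteed because no other projective in $\Bl_k$ has a Verma of the form $\M{0}^{\circ\pm\pm}$ in its flag. (One trivial slip: $\M{0}^{\circ++}$ occurs in twelve projectives, the four $\PP{0}^{\circ\pm\pm}$ and the eight $\PP{\pm 1}^{-\pm\pm}$, yielding the twelve composition factors in the stated formula, not eight; the families you identify are nonetheless the right ones.)
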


\begin{prop}
 \label{composition:kd,kp>1}
 Assume $kp\ge 2, kd\ge 2$.
We have the following composition factors of Verma modules in $\Bl_k$:
\begin{align*}
\M{kd}^{--\circ} &=\LL{kd}^{--\circ} +\LL{kd+1}^{--+} + 2\LL{kd+1}^{---},
\\
\M{kd}^{-+\circ} &=\LL{kd}^{-+\circ} +\LL{kd}^{--\circ} +\LL{kd+1}^{-++} + 2\LL{kd+1}^{-+-} +\LL{kd+1}^{--+} + 2\LL{kd+1}^{---},
\\
\M{kd}^{+-\circ} &=\LL{kd}^{+-\circ} +\LL{kd-1}^{+-+} + 2\LL{kd-1}^{+--} +\LL{kd}^{--\circ} +\LL{kd+1}^{--+} + 2\LL{kd+1}^{---},
\\
\M{kd}^{++\circ} &=\LL{kd}^{++\circ} +\LL{kd}^{+-\circ}
+\LL{kd-1}^{+++} + 2\LL{kd-1}^{++-} +\LL{kd-1}^{+-+} + 2\LL{kd-1}^{+--}
\\
&\qquad\quad +\LL{kd}^{-+\circ} +\LL{kd}^{--\circ}
+\LL{kd+1}^{-++} + 2\LL{kd+1}^{-+-} +\LL{kd+1}^{--+} + 2\LL{kd+1}^{---};
\\ \\
%
\M{-kp}^{-\circ-} &=\LL{-kp}^{-\circ-} +\LL{-1-kp}^{-+-} + 2\LL{-1-kp}^{---},
\\
\M{-kp}^{-\circ+} &=\LL{-kp}^{-\circ+} +\LL{-kp}^{-\circ-} +\LL{-1-kp}^{-++} + \LL{-1-kp}^{-+-} +2\LL{-1-kp}^{--+} + 2\LL{-1-kp}^{---},
\\
\M{-kp}^{+\circ-} &=\LL{-kp}^{+\circ-} +\LL{1-kp}^{++-} + 2\LL{1-kp}^{+--} +\LL{-kp}^{-\circ-} +\LL{-1-kp}^{-+-} + 2\LL{-1-kp}^{---},
\\
\M{-kp}^{+\circ+} &=\LL{-kp}^{+\circ+} +\LL{-kp}^{+\circ-}
+\LL{1-kp}^{+++} + \LL{1-kp}^{++-} +2\LL{1-kp}^{+-+} + 2\LL{1-kp}^{+--}
\\
&\qquad\quad +\LL{-kp}^{-\circ+} +\LL{-kp}^{-\circ-}
+\LL{-1-kp}^{-++} + \LL{-1-kp}^{-+-} +2\LL{-1-kp}^{--+} + 2\LL{-1-kp}^{---}.
\end{align*}\end{prop}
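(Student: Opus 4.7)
The plan is to apply BGG reciprocity \eqref{BGG} to the Verma flag formulae of projective modules that were already established in Propositions~\ref{proj:reg}--\ref{proj:+}. Recall that BGG reciprocity gives
\[
[M_\mu : L_\lambda] = (P_\lambda : M_\mu), \qquad \lambda,\mu \in X,
\]
so that to read off the composition factors of a given Verma module $M_\mu$ it suffices to run through every projective $P_\lambda$ in $\Bl_k$ and record those in whose Verma flag the Verma $M_\mu$ appears, together with its multiplicity.

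First I would handle the four Verma modules $M_{kd}^{ab\circ}$ with $a,b\in\{\pm\}$. For each, the search is actually very short: since $P_\lambda$ has a Verma flag concentrated in weights $\mu$ with $\mu \preceq \lambda$ in the Bruhat order (this follows directly from the construction of the tilting modules in Theorems~\ref{thm:tilting:reg}--\ref{thm:tilting:+} combined with \eqref{tilt=proj}), only projectives with $\lambda \succeq f_{kd}^{ab\circ}$ need to be inspected. Concretely, for $M_{kd}^{--\circ}$ I would scan $P_{kd}^{--\circ}$ (Proposition~\ref{proj:sing}), $P_{kd+1}^{--+}$ and $P_{kd+1}^{---}$ (Proposition~\ref{proj:+}); for $M_{kd}^{-+\circ}$ the relevant ones are $P_{kd}^{-+\circ}, P_{kd}^{--\circ}, P_{kd+1}^{-++}, P_{kd+1}^{-+-}, P_{kd+1}^{--+}, P_{kd+1}^{---}$; for $M_{kd}^{+-\circ}$ the ones labelled $P_{kd-1}^{+\pm\pm}, P_{kd}^{+-\circ}, P_{kd}^{--\circ}$ and the $P_{kd+1}^{-\pm\pm}$; and for $M_{kd}^{++\circ}$ all of the projectives $P_\lambda$ with $\lambda$ in the upper set of $f_{kd}^{++\circ}$. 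Reading off $(P_\lambda : M_{kd}^{ab\circ})$ from the tabulated Verma flags in Propositions~\ref{proj:sing} and \ref{proj:+} and substituting back into BGG reciprocity then yields exactly the four formulae claimed in the first display block.

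The four Verma modules $M_{-kp}^{a\circ b}$ are handled in completely parallel fashion: the relevant projectives to inspect are $P_{-kp}^{a\circ b}$ and its neighbours, together with $P_{-1-kp}^{\pm\pm\pm}$ from Proposition~\ref{proj:+} and, in the $+$ cases, also $P_{1-kp}^{\pm\pm\pm}$ from Proposition~\ref{proj:0-}. The multiplicities of 2 appearing in the statement come precisely from those projectives whose Verma flags already exhibit an entry of the form $2M_{-kp}^{a\circ b}$, namely the modules $P_{-1-kp}^{---}$ etc., and this is visible on the face of the formulae in Proposition~\ref{proj:+}.

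The only real work is therefore the organized bookkeeping of the finite list of relevant $P_\lambda$ for each $M_\mu$; there is no extra representation-theoretic input required beyond Propositions~\ref{proj:reg}--\ref{proj:+}. Consequently the main obstacle is purely clerical: ensuring that no projective in the finite upper set of the chosen singular weight is missed and that the multiplicities $2$ appearing (in e.g.\ $M_{kd}^{--\circ}$ and $M_{-kp}^{-\circ-}$) are correctly traced back to the corresponding doubled entries in the Verma flags of $P_{kd+1}^{---}$ and $P_{-1-kp}^{---}$.
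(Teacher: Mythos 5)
Your proposal is correct and is essentially the paper's own argument: the paper obtains these composition factor formulae purely by applying BGG reciprocity \eqref{BGG} to the Verma flags of the projectives already recorded in Propositions~\ref{proj:reg}--\ref{proj:+}, which is exactly the bookkeeping you describe, and your concrete readings (e.g.\ the multiplicity $2$ of $\LL{kd+1}^{---}$ in $\M{kd}^{--\circ}$ coming from the entry $2\M{kd}^{--\circ}$ in $\PP{kd+1}^{---}$) match the tabulated flags. One directional slip should be fixed: the Verma flag of $\PP{\la}$ is concentrated in weights $\mu\succeq\la$ (since $\M{\la}$ is the top quotient and $(\PP{\la}:\M{\mu})=[\M{\mu}:\LL{\la}]$ forces $\la\preceq\mu$), so the projectives to inspect for $\M{\mu}$ are those with $\la\preceq\mu$, not $\la\succeq\mu$ as your parenthetical asserts --- though the explicit lists of projectives you actually scan are the correct ones, so the resulting multiplicities are unaffected.
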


\begin{prop}
 \label{composition:-1}
 Assume $kp\ge 2, kd\ge 2$.
We have the following composition factors of Verma modules in $\Bl_k$:
\begin{align*}
\M{kd-1}^{---} &=\LL{kd-1}^{---} +\LL{kd}^{--\circ} + \LL{kd+1}^{---},
\\
\M{kd-1}^{--+} &=\LL{kd-1}^{--+} +\LL{kd-1}^{---} +\LL{kd}^{--\circ} +\LL{kd+1}^{--+} +\LL{kd+1}^{---},
\\
\M{kd-1}^{-+-} &=\LL{kd-1}^{-+-} +\LL{kd-1}^{---} +\LL{kd}^{-+\circ} + \LL{kd}^{--\circ} +\LL{kd+1}^{-+-} + \LL{kd+1}^{---},
\\
\M{kd-1}^{-++} &=\LL{kd-1}^{-++} +\LL{kd-1}^{-+-} +\LL{kd-1}^{--+} +\LL{kd-1}^{---} +
\LL{kd}^{-+\circ} +\LL{kd}^{--\circ} +
\LL{kd+1}^{-++} +\LL{kd+1}^{-+-}+ \LL{kd+1}^{--+} +\LL{kd+1}^{---};
\\
\\
\M{kd-1}^{+--} &=\LL{kd-1}^{+--} +\LL{kd-2}^{+--} +\LL{kd-1}^{---}+\LL{kd}^{--\circ} + \LL{kd+1}^{---},
\\
\M{kd-1}^{+-+} &=\LL{kd-1}^{+-+} +\LL{kd-1}^{+--} +\LL{kd-2}^{+-+} +\LL{kd-2}^{+--} +\LL{kd-1}^{--+} +\LL{kd-1}^{---} +\LL{kd}^{--\circ} +\LL{kd+1}^{--+} +\LL{kd+1}^{---},
\\
\M{kd-1}^{++-} &=\LL{kd-1}^{++-} +\LL{kd-1}^{+--} +\LL{kd-2}^{++-} +\LL{kd-2}^{+--} +\LL{kd-1}^{-+-} +\LL{kd-1}^{---} +\LL{kd}^{-+\circ} +\LL{kd}^{--\circ} +\LL{kd+1}^{-+-} +\LL{kd+1}^{---},
\\
\M{kd-1}^{+++} &= \LL{kd-1}^{+++} +\LL{kd-1}^{++-} +\LL{kd-1}^{+-+} +\LL{kd-1}^{+--}
+ \LL{kd-2}^{+++} +\LL{kd-2}^{++-} +\LL{kd-2}^{+-+} +\LL{kd-2}^{+--}
\\
&\quad
+\LL{kd-1}^{-++} +\LL{kd-1}^{-+-} +\LL{kd-1}^{--+} +\LL{kd-1}^{---}
+\LL{kd}^{-+\circ} +\LL{kd}^{--\circ}
+\LL{kd+1}^{-++} +\LL{kd+1}^{-+-} +\LL{kd+1}^{--+} +\LL{kd+1}^{---}.
\end{align*}
\end{prop}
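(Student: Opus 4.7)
The plan is to apply the BGG reciprocity \eqref{BGG}, namely $[\M{\mu}: \LL{\la}] = (\PP{\la}: \M{\mu})$, and to read off all composition multiplicities directly from the Verma flag formulae of projective modules already established in Propositions~\ref{proj:reg}--\ref{proj:+}. Thus, for each of the eight Verma modules $\M{kd-1}^{\pm\pm\pm}$, I will enumerate every projective $\PP{\la}$ whose Verma flag contains $\M{kd-1}^{\pm\pm\pm}$ and record the corresponding multiplicity.

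By inspection of Section~\ref{subsec:proj:rational}, the only projectives whose Verma flag contains a Verma module of index $kd-1$ come from the following five families: $\PP{kd-2}^{+\pm\pm}$ from Proposition~\ref{proj:reg}(1); $\PP{kd-1}^{-\pm\pm}$ from Proposition~\ref{proj:reg}(2); $\PP{kd-1}^{+\pm\pm}$ from Proposition~\ref{proj:0-}; $\PP{kd}^{\pm\pm\circ}$ from Proposition~\ref{proj:sing}; and $\PP{kd+1}^{-\pm\pm}$ from Proposition~\ref{proj:+}. All other projectives in $\Bl_k$ have Verma flags supported at indices outside a neighborhood of $kd-1$ and therefore contribute nothing. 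Given this short list, each of the eight composition formulae is obtained by direct inspection. For example, $\M{kd-1}^{---}$ appears with multiplicity one in the Verma flag of $\PP{kd-1}^{---}$, of $\PP{kd}^{--\circ}$, and of $\PP{kd+1}^{---}$, and in no other projective, yielding $\M{kd-1}^{---}=\LL{kd-1}^{---}+\LL{kd}^{--\circ}+\LL{kd+1}^{---}$. The remaining seven cases are handled identically; all multiplicities that appear turn out to be $1$, because $\M{kd-1}^{\pm\pm\pm}$ is never the doubled Verma $\M{kd}^{-?\circ}$ in $\PP{kd+1}^{-\pm\pm}$ nor the doubled Verma $\M{kd}^{++\circ}$ (and relatives) in the projectives of Proposition~\ref{proj:sing}.

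The main point requiring care is the boundary situation $kd=2$, where $kd-1=1$ and $kd-2=0$. Here Proposition~\ref{proj:pm1} replaces Proposition~\ref{proj:reg}(2) for $\PP{1}^{-\pm\pm}$, while the weight $kd-2=0$ falls into the singular family $\PP{0}^{\circ\pm\pm}$ of Proposition~\ref{proj:0-}, absorbing what would otherwise be $\PP{0}^{+\pm\pm}$. Under the convention $\LL{0}^{+\pm\pm}=\LL{0}^{\circ\pm\pm}$ already used elsewhere in the paper (cf.\ the parenthetical remark in Proposition~\ref{composition1:r}), the stated formulae remain valid verbatim and no change of strategy is needed. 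Beyond this bookkeeping subtlety, the proof is a routine application of BGG reciprocity to the explicit projective Verma flags.
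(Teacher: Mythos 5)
Your proposal is correct and is essentially the paper's own argument: the paper obtains Propositions~\ref{composition1:r}--\ref{comp-1-kp:kp>1} precisely by applying BGG reciprocity \eqref{BGG} to the projective Verma flag formulae of Propositions~\ref{proj:reg}--\ref{proj:+}, and your enumeration of the contributing projectives $\PP{kd-2}^{+\pm\pm}$, $\PP{kd-1}^{\pm\pm\pm}$, $\PP{kd}^{-\pm\circ}$, $\PP{kd+1}^{-\pm\pm}$, together with the $kd=2$ boundary convention $\LL{0}^{+\ast\ast}=\LL{0}^{\circ\ast\ast}$, matches what the paper implicitly does. One cosmetic slip: the multiplicity-two Vermas $2\M{kd}^{+\pm\circ}$ and $2\M{kd}^{-\pm\circ}$ occur in \eqref{Pkd-1} and \eqref{Pkd+1} rather than in Proposition~\ref{proj:sing}, but since all such doubled terms sit at index $kd$ and never at $kd-1$, your conclusion that every multiplicity in these eight formulae equals one is unaffected.
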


\begin{prop}
 \label{composition:+1}
 Assume $kp\ge 2, kd\ge 2$.
We have the following composition factors of Verma modules in $\Bl_k$:
\begin{align*}
\M{kd+1}^{---} &=\LL{kd+1}^{---}  + \LL{kd+2}^{---},
\\
\M{kd+1}^{--+} &=\LL{kd+1}^{--+}  + \LL{kd+1}^{---}  + \LL{kd+2}^{--+} + \LL{kd+2}^{---},
\\
\M{kd+1}^{-+-} &=\LL{kd+1}^{-+-}  + \LL{kd+1}^{---}  + \LL{kd+2}^{-+-} + \LL{kd+2}^{---},
\\
\M{kd+1}^{-++} &=\LL{kd+1}^{-++}  + \LL{kd+1}^{-+-}  + \LL{kd+1}^{--+}  + \LL{kd+1}^{---}  +
\LL{kd+2}^{-++} + \LL{kd+2}^{-+-} +\LL{kd+2}^{--+} + \LL{kd+2}^{---};
\\
\\
\M{kd+1}^{+--} &=\LL{kd+1}^{+--} + \LL{kd}^{+-\circ}  + \LL{kd-1}^{+--}  + \LL{kd+1}^{---}  + \LL{kd+2}^{---},
\\
\M{kd+1}^{+-+} &=\LL{kd+1}^{+-+} + \LL{kd+1}^{+--} + \LL{kd}^{+-\circ}  + \LL{kd-1}^{+-+} + \LL{kd-1}^{+--}
  + \LL{kd+1}^{--+}  + \LL{kd+1}^{---}  + \LL{kd+2}^{--+} + \LL{kd+2}^{---},
\\
\M{kd+1}^{++-} &=\LL{kd+1}^{++-} + \LL{kd+1}^{+--} + \LL{kd}^{++\circ} + \LL{kd}^{+-\circ}  + \LL{kd-1}^{++-} + \LL{kd-1}^{+--}
  + \LL{kd+1}^{-+-}  + \LL{kd+1}^{---}  + \LL{kd+2}^{-+-} + \LL{kd+2}^{---},
\\
\M{kd+1}^{+++} &=\LL{kd+1}^{+++} + \LL{kd+1}^{++-}  +  \LL{kd+1}^{+-+} + \LL{kd+1}^{+--}
+ \LL{kd}^{++\circ} + \LL{kd}^{+-\circ}
+ \LL{kd-1}^{+++} + \LL{kd-1}^{++-}  +  \LL{kd-1}^{+-+} + \LL{kd-1}^{+--}
\\
&\qquad
  + \LL{kd+1}^{-++}  + \LL{kd+1}^{-+-}    + \LL{kd+1}^{--+}  + \LL{kd+1}^{---}
  + \LL{kd+2}^{-++} + \LL{kd+2}^{-+-}  + \LL{kd+2}^{--+} + \LL{kd+2}^{---}.
\end{align*}
\end{prop}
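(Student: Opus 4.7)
The proof is a direct application of BGG reciprocity \eqref{BGG}: $[\M{\mu}: \LL{\la}] = (\PP{\la}: \M{\mu})$, following the same template used for Propositions~\ref{composition1:r}--\ref{composition:+1}. Since Propositions~\ref{proj:reg} through \ref{proj:+} list the Verma flag of every projective module of atypical weight in $\Bl_k$, the plan is: for each of the eight weights $\mu \in \{f_{k;kd+1}^{\pm\pm\pm}\}$, scan these projective formulae and record, with multiplicity, each $\la$ such that $\M{\mu}$ occurs in the Verma flag of $\PP{\la}$.

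Only projectives $\PP{\la}$ with $\la$ in the Bruhat neighborhood of $f_{k;kd+1}^{\pm\pm\pm}$ can contribute, and inspection narrows this to $\la \in \{f_{kd-1}^{+\pm\pm},\, f_{kd}^{\pm\pm\circ},\, f_{kd+1}^{\pm\pm\pm},\, f_{kd+2}^{\pm\pm\pm}\}$. The contributions sort into four natural groups: $\PP{kd+1}^{\pm\pm\pm}$ from Proposition~\ref{proj:+} yields the $\LL{kd+1}^{\pm\pm\pm}$ terms; $\PP{kd+2}^{\pm\pm\pm}$ from Proposition~\ref{proj:reg}(2) yields the $\LL{kd+2}^{\pm\pm\pm}$ terms; $\PP{kd}^{++\circ}$ and $\PP{kd}^{+-\circ}$ from Proposition~\ref{proj:sing} supply the $\LL{kd}^{++\circ}$ and $\LL{kd}^{+-\circ}$ terms in the $\M{kd+1}^{+\pm\pm}$ formulae (note that $\PP{kd}^{-+\circ}$ and $\PP{kd}^{--\circ}$ contain no $\M{kd+1}^{\pm\pm\pm}$, matching the absence of $\LL{kd}^{-\pm\circ}$ from the statement); and the four $\PP{kd-1}^{+\pm\pm}$ from Proposition~\ref{proj:0-} supply the $\LL{kd-1}^{+\pm\pm}$ terms in the $\M{kd+1}^{+\pm\pm}$ formulae.

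The main obstacle is purely combinatorial bookkeeping: one must verify that no multiplicity-$2$ entry appearing in the ``irregular'' projective flags (Propositions~\ref{proj:sing} and \ref{proj:+}) is attached to a Verma of the form $\M{kd+1}^{\pm\pm\pm}$. A quick scan confirms that the multiplicity-$2$ Vermas in these propositions are indexed by $f_{-kp}^{\pm\circ\pm}$, $f_{kd}^{\pm\pm\circ}$, $f_{-1-kp}^{\pm\pm\pm}$, or $f_{kd-1}^{\pm\pm\pm}$, never by $f_{k;kd+1}^{\pm\pm\pm}$. Consequently every composition multiplicity in Proposition~\ref{composition:+1} equals $1$, and matching the assembled lists against the right-hand sides in the statement completes the verification; no further translation-functor computations or linkage arguments are required beyond those already carried out for the tilting and projective characters.
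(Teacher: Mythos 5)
Your proposal is correct and is essentially the paper's own argument: the paper obtains Propositions~\ref{composition1:r}--\ref{comp-1-kp:kp>1} precisely by applying BGG reciprocity \eqref{BGG} to the projective Verma-flag formulae of Propositions~\ref{proj:reg}--\ref{proj:+}, which is exactly the bookkeeping you carry out, and your multiplicity-one check is sound since the only multiplicity-$2$ Verma entries in those flags are indexed by $f_{-kp}^{\pm\circ\pm}$ or $f_{kd}^{\pm\pm\circ}$, never by $f_{k;kd+1}^{\pm\pm\pm}$. One harmless slip in attribution: the projectives $\PP{kd+1}^{+\pm\pm}$ supplying the leading factors $\LL{kd+1}^{+\pm\pm}$ come from Proposition~\ref{proj:reg}(1), while Proposition~\ref{proj:+} covers only $\PP{kd+1}^{-\pm\pm}$; this does not affect the verification.
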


The formulae in Propositions~\ref{comp:1-kp:kp>1}--\ref{comp-1-kp:kp>1}
below are in a pattern dual to those in Propositions~\ref{composition:-1}--\ref{composition:+1}.

\begin{prop}
 \label{comp:1-kp:kp>1}
 Assume $kp\ge 2, kd\ge 2$.
We have the following composition factors of Verma modules in $\Bl_k$:
\begin{align*}
\M{1-kp}^{---} &=\LL{1-kp}^{---} +\LL{-kp}^{-\circ-} + \LL{-1-kp}^{---},
\\
\M{1-kp}^{--+} &=\LL{1-kp}^{--+} +\LL{1-kp}^{---} +\LL{-kp}^{-\circ+} + \LL{-kp}^{-\circ-} +\LL{-1-kp}^{--+} + \LL{-1-kp}^{---},
\\
\M{1-kp}^{-+-} &=\LL{1-kp}^{-+-} +\LL{1-kp}^{---} +\LL{-kp}^{-\circ-} +\LL{-1-kp}^{-+-} +\LL{-1-kp}^{---},
\\
\M{1-kp}^{-++} &=\LL{1-kp}^{-++} +\LL{1-kp}^{-+-} +\LL{1-kp}^{--+} +\LL{1-kp}^{---} +
\LL{-kp}^{-\circ+} +\LL{-kp}^{-\circ-}
\\
&\qquad
+\LL{-1-kp}^{-++} +\LL{-1-kp}^{-+-}+ \LL{-1-kp}^{--+} +\LL{-1-kp}^{---};
\\  \\
\M{1-kp}^{+--} &=\LL{1-kp}^{+--} +\LL{2-kp}^{+--} +\LL{1-kp}^{---}+\LL{-kp}^{-\circ-} + \LL{-1-kp}^{---},
\\
\M{1-kp}^{+-+} &=\LL{1-kp}^{+-+} +\LL{1-kp}^{+--} +\LL{2-kp}^{+-+} +\LL{2-kp}^{+--} +\LL{1-kp}^{--+} +\LL{1-kp}^{---} +\LL{-kp}^{-\circ+} +\LL{-kp}^{-\circ-} +\LL{-1-kp}^{--+} +\LL{-1-kp}^{---},
\\
\M{1-kp}^{++-} &=\LL{1-kp}^{++-} +\LL{1-kp}^{+--} +\LL{2-kp}^{++-} +\LL{2-kp}^{+--} +\LL{1-kp}^{-+-} +\LL{1-kp}^{---} +\LL{-kp}^{-\circ-} +\LL{-1-kp}^{-+-} +\LL{-1-kp}^{---},
\\
\M{1-kp}^{+++} &= \LL{1-kp}^{+++} +\LL{1-kp}^{++-} +\LL{1-kp}^{+-+} +\LL{1-kp}^{+--}
+ \LL{2-kp}^{+++} +\LL{2-kp}^{++-} +\LL{2-kp}^{+-+} +\LL{2-kp}^{+--}
\\
&\qquad
+\LL{1-kp}^{-++} +\LL{1-kp}^{-+-} +\LL{1-kp}^{--+} +\LL{1-kp}^{---}
+\LL{-kp}^{-\circ+} +\LL{-kp}^{-\circ-}
\\
&\qquad
+\LL{-1-kp}^{-++} +\LL{-1-kp}^{-+-} +\LL{-1-kp}^{--+} +\LL{-1-kp}^{---}.
\end{align*}
\end{prop}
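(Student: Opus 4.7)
The plan is to derive each of the eight stated composition series via BGG reciprocity \eqref{BGG},
$[\M{\mu}:\LL{\la}] = (\PP{\la}:\M{\mu}),$
using the Verma flag formulae for projective modules already computed in Propositions~\ref{proj:reg}--\ref{proj:+}. Fixing one of the eight target weights $\mu = f_{1-kp}^{\sigma\tau\epsilon}$ at a time, the multiplicity $[\M{\mu}:\LL{\la}]$ is read off as the coefficient of $\M{\mu}$ in the Verma flag of $\PP{\la}$, so the task reduces to sweeping through the known projective character formulae and recording every occurrence of $\M{\mu}$.

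Before sweeping, I would first bound the range of candidate highest weights $\la$. Since the Bruhat order on $\Wt_k$ is consistent with the Harish-Chandra homomorphism, any contributing $\la$ must satisfy $\la \succeq \mu$; inspection of the Bruhat poset near $f_{1-kp}^{\sigma\tau\epsilon}$, combined with the fact that each projective Verma flag terminates within a bounded distance of its highest weight, restricts the candidates to $\rho$-shifted weights among $f_{1-kp}^{\ast\ast\ast}$, $f_{-1-kp}^{-\ast\ast}$, $f_{2-kp}^{+\ast\ast}$, and the singular weights $f_{-kp}^{\pm\circ\pm}$. The Verma flags of the corresponding projectives are supplied by Proposition~\ref{proj:reg}(1)--(2) (for $\PP{2-kp}^{+\ast\ast}$ and $\PP{1-kp}^{-\ast\ast}$ when $kp \ge 3$), by Proposition~\ref{proj:0-} (for $\PP{1-kp}^{+\ast\ast}$), by Proposition~\ref{proj:+} (for $\PP{-1-kp}^{-\ast\ast}$), and by Proposition~\ref{proj:sing} (for $\PP{-kp}^{\pm\circ\pm}$); the boundary case $kp = 2$ uses Proposition~\ref{proj:pm1} in place of Proposition~\ref{proj:reg}(2).

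With this shortlist in hand, for each of the eight $\mu$ the coefficients $(\PP{\la}:\M{\mu})$ can be tabulated directly and assembled into the stated composition series. The argument is essentially mechanical bookkeeping once the projective-character formulae are granted, and all multiplicities turn out to be $1$ in this proposition, in contrast with Proposition~\ref{composition:kd,kp>1} where the coefficients of $2$ arise from doubled entries in certain singular projective Verma flags such as the $2\M{-kp}^{-\circ\pm}$ terms inside $\PP{-1-kp}^{---}$. The main obstacle I expect is purely combinatorial: confirming that no projective outside the listed neighborhood of $\mu$ can contribute, and carefully handling the singular-adjacent cases where multiple projectives (notably $\PP{-kp}^{\pm\circ\pm}$ and the two neighbors $\PP{\pm(1-kp) \mp (1,1,1)}^{\ast\ast\ast}$) have Verma flags reaching $\M{1-kp}^{\ast\ast\ast}$. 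Both points follow from Bruhat-order compatibility together with the explicit bounded support of each $\PP{\la}$, so the verification is routine once the candidates have been enumerated.
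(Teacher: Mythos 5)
Your proposal is correct and follows essentially the same route as the paper, which obtains this proposition (and its companions) purely by BGG reciprocity \eqref{BGG} applied to the explicit projective Verma flags of Propositions~\ref{proj:reg}--\ref{proj:+}, i.e.\ exactly the bookkeeping you describe, with all multiplicities equal to $1$ as you note. The only small point to watch is the boundary case $kp=2$, where $f_{2-kp}$ is the singular weight $f_{0}^{\circ**}$, so $\PP{2-kp}^{+**}$ must be read from Proposition~\ref{proj:0-} (and the corresponding factor is the one labeled $\LL{0}^{\circ**}$), rather than from Proposition~\ref{proj:reg}; this does not affect the argument.
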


\begin{prop}
 \label{comp-1-kp:kp>1}
 Assume $kp\ge 2, kd\ge 2$.
We have the following composition factors of Verma modules in $\Bl_k$:
\begin{align*}
\M{-1-kp}^{---} &=\LL{-1-kp}^{---}  + \LL{-2-kp}^{---},
\\
\M{-1-kp}^{--+} &=\LL{-1-kp}^{--+}  + \LL{-1-kp}^{---}  + \LL{-2-kp}^{--+} + \LL{-2-kp}^{---},
\\
\M{-1-kp}^{-+-} &=\LL{-1-kp}^{-+-}  + \LL{-1-kp}^{---}  + \LL{-2-kp}^{-+-} + \LL{-2-kp}^{---},
\\
\M{-1-kp}^{-++} &=\LL{-1-kp}^{-++}  + \LL{-1-kp}^{-+-}  + \LL{-1-kp}^{--+}  + \LL{-1-kp}^{---}  +
\LL{-2-kp}^{-++} + \LL{-2-kp}^{-+-} +\LL{-2-kp}^{--+} + \LL{-2-kp}^{---};
\\  \\
\M{-1-kp}^{+--} &=\LL{-1-kp}^{+--} + \LL{-kp}^{+\circ-}  + \LL{1-kp}^{+--}  + \LL{-1-kp}^{---}  + \LL{-2-kp}^{---},
\\
\M{-1-kp}^{+-+} &=\LL{-1-kp}^{+-+} + \LL{-1-kp}^{+--} + \LL{-kp}^{+\circ+} + \LL{-kp}^{+\circ-}  + \LL{1-kp}^{+-+} + \LL{1-kp}^{+--}
 \\
 &\qquad + \LL{-1-kp}^{--+}  + \LL{-1-kp}^{---}  + \LL{-2-kp}^{--+} + \LL{-2-kp}^{---},
\\
\M{-1-kp}^{++-} &=\LL{-1-kp}^{++-} + \LL{-1-kp}^{+--} + \LL{-kp}^{+\circ-}  + \LL{1-kp}^{++-} + \LL{1-kp}^{+--}
  + \LL{-1-kp}^{-+-}  + \LL{-1-kp}^{---}  + \LL{-2-kp}^{-+-} + \LL{-2-kp}^{---},
\\
\M{-1-kp}^{+++} &=\LL{-1-kp}^{+++} + \LL{-1-kp}^{++-}  +  \LL{-1-kp}^{+-+} + \LL{-1-kp}^{+--}
+ \LL{-kp}^{+\circ+} + \LL{-kp}^{+\circ-}
\\
&\qquad + \LL{1-kp}^{+++} + \LL{1-kp}^{++-}  +  \LL{1-kp}^{+-+} + \LL{1-kp}^{+--}
\\
&\qquad
  + \LL{-1-kp}^{-++}  + \LL{-1-kp}^{-+-}    + \LL{-1-kp}^{--+}  + \LL{-1-kp}^{---}
  + \LL{-2-kp}^{-++} + \LL{-2-kp}^{-+-}  + \LL{-2-kp}^{--+} + \LL{-2-kp}^{---}.
\end{align*}
\end{prop}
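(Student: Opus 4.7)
The proof proposal is that this entire proposition is obtained as a direct reading of BGG reciprocity \eqref{BGG}, namely
\[
[\M{\mu}:\LL{\la}] = (\PP{\la}:\M{\mu}), \qquad \mu = f_{-1-kp}^{\pm\pm\pm},
\]
applied to the Verma flag formulae for projective modules established in Propositions~\ref{proj:reg}--\ref{proj:+}. So the plan is not to do any new representation-theoretic construction, but to collect, for each $\mu=f_{-1-kp}^{\pm\pm\pm}$, all weights $\la\in \Wt_k$ such that $\M{\mu}$ appears in the Verma flag of $\PP{\la}$, together with its multiplicity.

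First, I would identify which projectives can possibly contribute. Since Verma flags respect the Bruhat order and $\M{\mu}$ can only appear in $\PP{\la}$ when $\la\preceq\mu$ in the Bruhat ordering on $\Wt_k$, the relevant projectives for $\mu=f_{-1-kp}^{+\pm\pm}$ are those $\PP{\la}$ with $\la$ among the weights of the form $f_{-1-kp}^{+\pm\pm}$ (given by Proposition~\ref{proj:+}), $f_{-kp}^{+\circ\pm}$ (given by Proposition~\ref{proj:sing}), and $f_{1-kp}^{+\pm\pm}$ (given by Proposition~\ref{proj:reg}(1) with the substitution $n = kp-1$, which is allowed since $kp\geq 2$). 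For $\mu = f_{-1-kp}^{-\pm\pm}$ the relevant projectives are additionally $\PP{\la}$ with $\la = f_{-kp}^{-\circ\pm}$ (Proposition~\ref{proj:sing}), $\la=f_{1-kp}^{-\pm\pm}$ (Proposition~\ref{proj:reg}(2) with $n = kp-1$), $\la=f_{-1-kp}^{-\pm\pm}$ (Proposition~\ref{proj:reg}(2) with $n = kp+1$), and $\la=f_{-2-kp}^{-\pm\pm}$ (Proposition~\ref{proj:reg}(1) with $n = kp+2$).

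Second, for each such $\la$, I would read off the coefficient $(\PP{\la}:\M{\mu})$ from the corresponding explicit formula. For example, to produce the formula for $\M{-1-kp}^{+++}$ one reads: (i) $\PP{-1-kp}^{+\pm\pm}$ each contain $\M{-1-kp}^{+++}$ with multiplicity $1$, giving the four $\LL{-1-kp}^{+\pm\pm}$ terms; (ii) $\PP{-kp}^{+\circ\pm}$ each contain $\M{-1-kp}^{+++}$ with multiplicity $1$, giving the two $\LL{-kp}^{+\circ\pm}$ terms; (iii) $\PP{1-kp}^{+\pm\pm}$ each contain $\M{-1-kp}^{+++}$ with multiplicity $1$, giving the four $\LL{1-kp}^{+\pm\pm}$ terms; (iv) the terms involving $\LL{-1-kp}^{-\pm\pm}$ and $\LL{-2-kp}^{-\pm\pm}$ come from $\PP{-1-kp}^{-\pm\pm}$ and $\PP{-2-kp}^{-\pm\pm}$, each contributing $\M{-1-kp}^{+++}$ exactly once. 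A completely parallel accounting produces the formulae for $\M{-1-kp}^{++-}$, $\M{-1-kp}^{+-+}$ and $\M{-1-kp}^{+--}$, and likewise the four $-\pm\pm$ formulae are read off from the $-$-sign block of Proposition~\ref{proj:+} together with Proposition~\ref{proj:reg}(2).

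The main (and in fact only) obstacle is purely combinatorial bookkeeping: one must verify that no projective other than those listed contains $\M{-1-kp}^{\pm\pm\pm}$ in its Verma flag, which would artificially inflate a multiplicity. This follows by checking in each of Propositions~\ref{proj:reg}--\ref{proj:+} that the weights appearing on the right hand side are exactly as catalogued above; no other projective's Verma flag mentions any $f_{-1-kp}^{\pm\pm\pm}$. Once that verification is done, the sums of the listed contributions match term-by-term with the stated composition series, and BGG reciprocity delivers the proposition.
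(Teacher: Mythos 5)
Your proposal follows exactly the paper's own route: the paper obtains Propositions~\ref{composition1:r}--\ref{comp-1-kp:kp>1} purely by applying BGG reciprocity \eqref{BGG} to the Verma flag formulae of projective modules in Propositions~\ref{proj:reg}--\ref{proj:+}, and the multiplicities you actually tally (which projectives contain $\M{-1-kp}^{\pm\pm\pm}$, and with what multiplicity) agree with those formulae, so the final formulas come out right. Two of your internal citations are off, though, and one of them would matter if taken literally: the flags of $\PP{1-kp}^{+\pm\pm}$ are \emph{not} given by Proposition~\ref{proj:reg}(1) with $n=kp-1$ --- that proposition explicitly excludes $n=kp-1$ --- but by the irregular formulae \eqref{P1-kp} in Proposition~\ref{proj:0-}; the regular formula at $n=kp-1$ would contain no $\M{-1-kp}^{+\pm\pm}$ terms at all (and misses the multiplicity-two terms $2\M{-kp}^{+\circ\pm}$), so it would wrongly delete the factors $\LL{1-kp}^{+\pm\pm}$ from $\M{-1-kp}^{+\pm\pm}$. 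Similarly, $\PP{-1-kp}^{+\pm\pm}$ comes from Proposition~\ref{proj:reg}(1) with $n=kp+1$, not from Proposition~\ref{proj:+}, which treats $\PP{-1-kp}^{-\pm\pm}$. Since the numbers you plug in are those of the correct (irregular) formulae, these are attribution slips rather than gaps in the argument.
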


\section{Character formulae in the block $\Bl_1$, for $\ka \in \Z_{\ge 2}$}
 \label{sec:block1}

The character formulae in Section~\ref{sec:rational} exclude the blocks $\Bl_1$ for $\ka  \in \Z^{>0}$.
In this section we work  out the character formulae for tilting modules and projective modules as well as the composition factors of Verma modules in  the block $\Bl_1$ with $\ka \in \Z_{\ge 2}$.
The block $\Bl_1$ with $\ka=1$ will be treated in Section~\ref{sec:block:ka=1}.

\subsection{Verma flags for tilting modules in $\Bl_1$} 
   \label{subsec:tilting2}

Throughout Sections \ref{subsec:tilting2}--\ref{subsec:comp2}, we assume $\ka =p/d \in \Z_{\ge 2}$, that is,
$p\ge 2,  d=1.$

We shall describe the Verma flags for all tilting modules in the block $\Bl_1$.
The translation functors below from a typical block
to the atypical block $\Bl_k$ are obtained from tensoring with the adjoint module unless otherwise specified.
Our strategy here is the same as described at the beginning of Section \ref{subsec:tilting:irr}.

Theorem~\ref{thm:tilting:reg} for regular tilting modules remains valid for $\ka\in \Z_{\ge 2}$ here.

The remaining irregular tilting modules are:
$\TT{0}^{\circ\pm\pm}$,
$\TT{1-kp}^{-\pm\pm}$,
$\TT{-1}^{+\pm\pm}$,
$\TT{-kp}^{\pm\circ\pm}$,  $\TT{1}^{\pm\pm\circ}$, $\TT{-1-kp}^{+\pm\pm}$ and $\TT{2}^{+\pm\pm}$.
The earlier formulae for irregular tilting modules $\TT{1-kp}^{-\pm\pm}$,
$\TT{-1}^{+\pm\pm}$, $\TT{-kp}^{\pm\circ\pm}$ and $\TT{-1-kp}^{+\pm\pm}$ remain valid.
More precisely, the formula \eqref{T1-kp} for $\TT{1-kp}^{-\pm\pm}$ in Theorem~\ref{thm:tilting:0-},
the formula \eqref{T-1}  for $\TT{-1}^{+\pm\pm}$  in Theorem~\ref{thm:tilting:pm1},
the formula \eqref{T-kp} for $\TT{-kp}^{\pm\circ\pm}$
in Theorem~\ref{thm:tilting:sing},
and the formula \eqref{T-1-kp} for $\TT{-1-kp}^{+\pm\pm}$ in Theorem~\ref{thm:tilting:+}
 remain valid in the current setting. We summarize these as follows for future reference.

 \begin{thm}
   \label{tilting:same}
Assume that  $\ka\in \Z_{\ge 2}$.
The formulae in Theorem~\ref{thm:tilting:reg} for regular tilting modules
and the formulae for irregular tilting modules $\TT{1-kp}^{-\pm\pm}$,
$\TT{-1}^{+\pm\pm}$,  $\TT{-kp}^{\pm\circ\pm}$ and $\TT{-1-kp}^{+\pm\pm}$ in Section \ref{subsec:tilting:rational} remain valid.
 \end{thm}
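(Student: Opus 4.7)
The plan is to verify that the arguments proving the cited formulae in Section~\ref{subsec:tilting:rational} transfer verbatim to the current regime $p\ge 2$, $d=1$. What each of those arguments requires are three ingredients: (i) that the initial tilting module $\TT{f-(2,0,0)}$ (or, in the case of Theorem~\ref{thm:tilting:pm1}, the shifts $\TT{f-(1,\pm 1,\pm 1)}$) to which a translation functor $\E$ is applied lies in a \emph{typical} block; (ii) that the lowest Verma term on the RHS of the proposed formula actually appears in $\E\TT{f-(2,0,0)}$, which in each instance was established by producing an explicit singular vector via Lemma~\ref{lem:oddhom} or Lemma~\ref{lem:evenhom}; and (iii) that the remaining Verma modules in the flag of $\E\TT{f-(2,0,0)}$ cannot be highest weights of proper tilting direct summands, verified by inspection of the already-proved tilting formulae of lower dominance.

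For ingredient (i), one checks that each initial weight $f-(2,0,0)$ is typical in exactly the same way as before, since the atypicality condition \eqref{atypical} does not become more restrictive when $d=1$; the same $W$-typical pattern supplies the initial tilting module with the needed length $2^r$ Verma flag. For ingredient (ii), Lemmas~\ref{lem:oddhom} and \ref{lem:evenhom} are valid for arbitrary $\ka\in\C\setminus\{0,-1\}$, so the same explicit singular vectors (powers of $f_1$, $f_2$, and $f_0$ applied to the dominant singular vector of $\M{f-(2,0,0)}$) continue to exist and guarantee nonvanishing of the relevant BGG multiplicities $[\M{\mu}:\LL{\la}]$ needed in (ii).

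For ingredient (iii), the key observation is that the highest weights appearing in the theorem, namely $f_{k;\pm n}^{\pm\pm\pm}$ (regular, $n$ away from $kd=1$ and $kp$), $f_{1-kp}^{-\pm\pm}$, $f_{-1}^{+\pm\pm}$, $f_{-kp}^{\pm\circ\pm}$ and $f_{-1-kp}^{+\pm\pm}$, all lie in the region $n\le -1$ of the weight poset $\Wt_k$ together with the positions near the negative singular stratum $f_{-kp}^{\pm\circ\pm}$. On this region the Bruhat order on $\Wt_k$ is locally indistinguishable from its $kd\ge 2$ counterpart, and in particular the tilting modules of strictly smaller dominance used to exclude putative direct summands in the proofs of Theorems~\ref{thm:tilting:reg}, \ref{thm:tilting:0-}, \ref{thm:tilting:pm1}, \ref{thm:tilting:sing}, \ref{thm:tilting:+} have already been established in exactly the same form. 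Where Lemma~\ref{tilting:comp} was invoked (for the last cases of \eqref{T1-kp} and \eqref{T-1-kp}), the same translation-functor ladder applies without change.

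The main obstacle, which is the reason the remaining tilting modules are \emph{not} covered by this theorem, is that when $d=1$ the positive singular stratum $f_{1;kd}^{\pm\pm\circ}=f_{1;1}^{\pm\pm\circ}$ collides with the singular stratum $f_{1;0}^{\circ\pm\pm}$ in the Bruhat order, so the exclusion step (iii) fails for the modules $\TT{0}^{\circ\pm\pm}$, $\TT{kd-1}^{-\pm\pm}$, $\TT{1}^{+\pm\pm}$, $\TT{kd}^{\pm\pm\circ}$, and $\TT{kd+1}^{+\pm\pm}$. These are excluded from the statement of this theorem and will be treated separately in the next subsections.
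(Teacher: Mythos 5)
Your overall approach is the same as the paper's: Theorem~\ref{tilting:same} is stated there without a separate argument, the implicit justification being exactly your points (i)--(iii) — the translation-functor computations, the singular-vector Lemmas~\ref{lem:oddhom} and \ref{lem:evenhom} (valid for every $\ka$), and the exclusion-of-summands step all carry over for the listed weights, while the weights affected by the collision of singular strata are postponed.

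Two inaccuracies in your supporting reasoning should be fixed, though neither overturns the conclusion. First, step (iii) claims that all weights covered by the theorem lie in the region $n\le -1$ together with the negative singular stratum; this is false, since Theorem~\ref{thm:tilting:reg} also covers the positive side ($\TT{n}^{-\pm\pm}$ for $n\ge 2$ and $\TT{n}^{+\pm\pm}$ for $n\ge 3$ when $kd=1$), and there the Bruhat neighborhood does feel the change: the flag of $\TT{3}^{+bc}$ contains the Vermas $\M{2}^{+\ast\ast}$, and the tilting modules $\TT{2}^{+\ast\ast}$ now have the irregular flags of Theorem~\ref{thm:tilting:2:kd=1}, not the $kd\ge 2$ ones, so "the tilting modules used to exclude putative summands have already been established in exactly the same form" is not the correct reason. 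The exclusion still succeeds — for instance each $\TT{2}^{+bc}$ contains a Verma of the form $\M{1}^{\ast\ast\circ}$, which follows from \eqref{tiltingD} together with Lemma~\ref{lem:oddhom} and which does not occur in $\E\,\TT{f_{3}^{+bc}-(2,0,0)}$ — but this small extra check must be made and cannot be replaced by "local indistinguishability". Second, your assertion that typicality of the auxiliary weights "does not become more restrictive when $d=1$" is precisely where the hypothesis $\ka\in\Z_{\ge 2}$ enters: e.g. the auxiliary weight $f_{3}^{+++}-(2,0,0)=(1,3+p,2)$ is typical if and only if $2p-2\neq 0$, i.e. it degenerates exactly at $\ka=1$, which is one reason that case is deferred to Section~\ref{sec:block:ka=1}. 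So the typicality of each starting weight has to be verified anew using $p\ge 2$, rather than dismissed on general grounds.
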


The new irregular cases
$\TT{0}^{\circ\pm\pm}$,  $\TT{2}^{+\pm\pm}$ and $\TT{1}^{\pm\pm\circ}$, respectively, are treated in
Theorems~\ref{thm:tilting:0:kd=1}, \ref{thm:tilting:2:kd=1} and  \ref{thm:tilting:1:kd=1} below.

\begin{thm}
  \label{thm:tilting:0:kd=1}
Assume that $\ka\in \Z_{\ge 2}$.
We have the following Verma flags for tilting modules in the block $\Bl_1$:
\begin{align}
\TT{0}^{\circ--}  & =\M{0}^{\circ--} +\M{-1}^{---} +\M{1}^{--\circ} +\M{2}^{---},    \notag
\\
\TT{0}^{\circ-+}  & =\M{0}^{\circ-+} +\M{0}^{\circ--} +\M{-1}^{--+} +\M{-1}^{---}
                                +2 \M{1}^{--\circ}  +\M{2}^{--+} +\M{2}^{---},    \notag
\\
\TT{0}^{\circ+-}  & =\M{0}^{\circ+-} +\M{0}^{\circ--} +\M{1}^{-+\circ} +\M{-1}^{-+-} +\M{-1}^{---} +\M{1}^{--\circ}  +\M{2}^{-+-} +\M{2}^{---},     \label{T0:kd=1}%
\\
\TT{0}^{\circ++}  & =\M{0}^{\circ++} +\M{0}^{\circ+-} +\M{0}^{\circ-+} +\M{0}^{\circ--}
+ 2\M{1}^{-+\circ} +\M{-1}^{-++} +\M{-1}^{-+-}   \notag \\
    &\qquad +\M{-1}^{--+}  +\M{-1}^{---}  +2\M{1}^{--\circ}  +\M{2}^{-++} +\M{2}^{-+-} +\M{2}^{--+} +\M{2}^{---}.    \notag
\end{align}
\end{thm}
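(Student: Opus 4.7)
The plan is to construct each of the four tilting modules $T_0^{\circ\pm\pm}$ as the image of a suitable typical tilting module under a translation functor $\mathcal E$ arising from tensoring with the $17$-dimensional adjoint module and projecting to $\mathcal B_1$, exactly as in the three-step strategy from the introduction. The starting tilting modules will be those with the $\rho$-shifted weights $(-2,-p,-1)$, $(-2,-p,1)$, $(-2,p,-1)$, $(-2,p,1)$, respectively; these are typical (a direct check using \eqref{atypical} shows that $\kappa(x\pm z)+(x\pm y)\neq 0$ for each choice of signs), so Proposition~\ref{prop:typical} gives their Verma flags: $T_{(-2,-p,-1)} = M_{(-2,-p,-1)}$, $T_{(-2,-p,1)}$ has Verma flag $M_{(-2,-p,1)}+M_{(-2,-p,-1)}$, and analogously for the other two (the last has a flag of length~$4$).

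Step (1)--(2): Compute the Verma flag of $\mathcal E T_\mu$. This reduces to enumerating, for each of the weights $\mu$ in the starting Verma flag, the adjoint weights $\nu$ such that $\mu+\nu$ is an atypical weight in $\mathcal B_1$ (via the Casimir eigenvalue $k^2(p^2+p)$ with $k=1$, cf.~\eqref{action:cas}). For $T_0^{\circ--}$, the four atypical translates of $(-2,-p,-1)$ are $(0,-p,-1)=f_0^{\circ--}$, $(-1,-p-1,0)=f_1^{--\circ}$, $(-1,-(p-1),-2)=f_{-1}^{---}$, and $(-2,-(p+2),-1)=f_2^{---}$, each arising with multiplicity one. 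For $T_0^{\circ-+}$ the weight $f_1^{--\circ}$ appears with multiplicity~$2$, once as $(-2,-p,1)+(1,-1,-1)$ and once as $(-2,-p,-1)+(1,-1,1)$; the doubled multiplicities in the remaining formulae $T_0^{\circ+-}$ and $T_0^{\circ++}$ occur for the analogous ``bookkeeping'' reasons.

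Step~(3): Prove that each $\mathcal E T_\mu$ is indecomposable, so that it is the required tilting module $T_0^{\circ\pm\pm}$. First, using Soergel duality \eqref{tiltingD}, the appearance of the lowest Verma $M_2^{---}$ in $T_0^{\circ--}$ is equivalent to $[M_{f_2^{+++}}:L_{f_0^{\circ++}}]>0$; this follows by composing two nonzero Verma homomorphisms given by Lemma~\ref{lem:oddhom}, namely $M_{f_0^{\circ++}}\hookrightarrow M_{f_1^{++\circ}}$ (via the odd reflection along $\delta+\epsilon_1-\epsilon_2$, which is orthogonal to $(1,p+1,0)$) and $M_{f_1^{++\circ}}\hookrightarrow M_{f_2^{+++}}$ (via $\delta+\epsilon_1+\epsilon_2$, orthogonal to $(2,p+2,1)$); the remaining ``lowest Verma'' checks in $T_0^{\circ\pm\pm}$ are parallel and reduce to similar Verma-hom chains supplemented by even reflections $s_{2\epsilon_1}, s_{2\epsilon_2}$ via Lemma~\ref{lem:evenhom}.

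To eliminate alternative direct summands of $\mathcal E T_\mu$, we argue case-by-case that any candidate summand $T_\lambda$ (whose highest weight is one of the lower weights in the computed flag) must contain a Verma absent from our flag. The regular candidates $T_{-1}^{-??}, T_2^{-??}$ are ruled out immediately by Theorem~\ref{tilting:same} (which remains valid here), as their flags contain $M_{-2}^{-??}$ or $M_3^{-??}$ not occurring in $\mathcal E T_\mu$. \emph{The main obstacle} is the singular candidates $T_1^{--\circ}$ and $T_1^{-+\circ}$, whose Verma flags are the subject of the later Theorem~\ref{thm:tilting:1:kd=1} and so threaten a circular dependency. I would break this by observing directly, via Lemmas~\ref{lem:oddhom}--\ref{lem:evenhom} and Soergel duality applied to $M_{f_2^{++\circ}}$, $M_{f_2^{+-\circ}}$, that $T_1^{--\circ}$ must contain some $M_2^{-??}$-term with appropriate sign pattern forcing an extra Verma outside our flag (equivalently, one interleaves the proof with that of Theorem~\ref{thm:tilting:1:kd=1}, treating $T_0^{\circ\pm\pm}$ and $T_1^{\pm\pm\circ}$ simultaneously). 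The case of $T_0^{\circ++}$, with its Verma flag of length~$16$ including four doubled singular Vermas, will be the most delicate; here one reduces the indecomposability check to comparing with the already-established flags of $T_0^{\circ+-}, T_0^{\circ-+}, T_0^{\circ--}$ and applying Lemma~\ref{tilting:comp} to quotient maps of the initial typical tilting $T_{(-2,p,1)}$.
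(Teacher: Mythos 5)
Your construction is the same as the paper's (the paper's proof just says: apply the adjoint-module translation functor to $\TT{f-(2,0,0)}$ and ``the rest is standard''), and your Step (1)--(2) flag computations, including the multiplicity-two occurrences of the singular Vermas, check out. The genuine gap is in Step (3), in how you certify the needed composition-factor multiplicities. You claim, e.g., that $[\M{f_2^{+++}}:\LL{f_0^{\circ++}}]>0$ ``follows by composing two nonzero Verma homomorphisms'' $\M{f_0^{\circ++}}\hookrightarrow \M{f_1^{++\circ}}\hookrightarrow \M{f_2^{+++}}$ from Lemma~\ref{lem:oddhom}. But that lemma gives nonzero homomorphisms, not injections, and for $\D$ a composition of nonzero Verma homomorphisms can vanish: this is exactly the phenomenon recorded in the remark following Lemma~\ref{lem:ML3} (the element $u$ there is a left zero divisor, $uu'=0$). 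Nor is the composition-factor statement transitive along such a chain: $[\M{f_2^{+++}}:\LL{f_1^{++\circ}}]>0$ and $[\M{f_1^{++\circ}}:\LL{f_0^{\circ++}}]>0$ do not formally imply $[\M{f_2^{+++}}:\LL{f_0^{\circ++}}]>0$, since the image of $\M{f_1^{++\circ}}$ in $\M{f_2^{+++}}$ is only a quotient of $\M{f_1^{++\circ}}$. The paper's detailed arguments elsewhere avoid this trap by composing a singular vector from Lemmas~\ref{lem:oddhom}--\ref{lem:evenhom} only with powers $f_1^af_2^b$ of the even simple root vectors, whose left multiplication on $U(\n_-)$ is injective; for the weights you need here no singular vector of the form $f_1^af_2^b u^{+}$ exists, so the gap is not cosmetic: you must either compute the composite singular vector explicitly and show it is nonzero, or replace the step by another device (a weight-space count as in Lemma~\ref{lem:ML3}, or Lemma~\ref{tilting:comp}).

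Second, your worry about circularity with Theorem~\ref{thm:tilting:1:kd=1} is misplaced, while the real difficulty sits elsewhere. The flags of $\TT{1}^{--\circ},\TT{1}^{-+\circ}$ are the $kd=1$ case of \eqref{Tkd}, proved independently of $\TT{0}^{\circ\pm\pm}$, so they may be quoted freely (also, your ``$\M{f_2^{++\circ}}$, $\M{f_2^{+-\circ}}$'' do not exist: the third-slot $\circ$ occurs only at $n=kd=1$). The problem is that for $\TT{0}^{\circ-+}$, $\TT{0}^{\circ+-}$, $\TT{0}^{\circ++}$ those flags are entirely contained in the computed flag of $\E\TT{\mu}$, so quoting them excludes nothing; to rule out, say, $\E\TT{(-2,-p,1)}=\TT{0}^{\circ-+}\oplus\TT{1}^{--\circ}$ you need a genuine two-step multiplicity statement such as $(\TT{0}^{\circ-+}:\M{2}^{--+})=[\M{f_2^{++-}}:\LL{f_0^{\circ+-}}]>0$, which is precisely of the problematic composite type above. (For $\TT{0}^{\circ--}$ the composite is avoidable: a single odd reflection gives $(\TT{0}^{\circ--}:\M{1}^{--\circ})=[\M{f_1^{++\circ}}:\LL{f_0^{\circ++}}]>0$, and the remaining candidates are excluded by their known flags.) So the overall plan is the right one, but as written the indecomposability step rests on an unjustified---and, in this paper's own setting, demonstrably dangerous---composition argument.
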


\begin{thm}
 \label{thm:tilting:2:kd=1}
Assume that $\ka\in \Z_{\ge 2}.$
We have the following Verma flags for tilting modules in the block $\Bl_1$:
\begin{align}
\TT{2}^{+--}  &= \M{2}^{+--} +\M{1}^{+-\circ}  +\M{0}^{\circ--} +\M{1}^{--\circ}   + \M{2}^{---},    \notag
\\
\TT{2}^{+-+}  &= \M{2}^{+-+} +\M{2}^{+--} +2\M{1}^{+-\circ}  +\M{0}^{\circ-+} +\M{0}^{\circ--} +2\M{1}^{--\circ}   + \M{2}^{--+} + \M{2}^{---},    \notag
\\
\TT{2}^{++-}    &=  \M{2}^{++-} +\M{2}^{+--} +\M{1}^{++\circ} +\M{1}^{+-\circ}  +\M{0}^{\circ+-} +\M{0}^{\circ--}      \label{T2:kd=1}
                             \\ &\qquad +\M{1}^{-+\circ} +\M{1}^{--\circ}   + \M{2}^{-+-} + \M{2}^{---},    \notag
\\
\TT{2}^{+++}  &= \M{2}^{+++} +\M{2}^{++-} +\M{2}^{+-+} +\M{2}^{+--} +2\M{1}^{++\circ}  +2\M{1}^{+-\circ}    +\M{0}^{\circ++} +\M{0}^{\circ+-}
\notag
\\
&+\M{0}^{\circ-+} +\M{0}^{\circ--}    + 2\M{1}^{-+\circ} + 2\M{1}^{--\circ}  + \M{2}^{-++} + \M{2}^{-+-} + \M{2}^{--+} + \M{2}^{---}.   \notag
\end{align}
\end{thm}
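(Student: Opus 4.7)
The plan is to adapt the translation-functor strategy used throughout the paper, applying $\E$ (tensoring with the $17$-dimensional adjoint module followed by projection onto $\Bl_1$) to typical tilting modules of the form $\TT{(0, \sigma(p+2), \tau)}$ with $\sigma, \tau \in \{\pm\}$. A direct check using $\ka = p$ in \eqref{atypical} shows that $(0, \sigma(p+2), \tau)$ is typical, and its $W$-orbit has $4$ elements. By Proposition~\ref{prop:typical}, the four typical tilting modules $\TT{(0, \sigma(p+2), \tau)}$ have Verma flags of lengths $1, 2, 2, 4$ respectively, with $\TT{(0, -p-2, -1)} = \M{(0, -p-2, -1)}$ being anti-dominant and $\TT{(0, p+2, 1)}$ containing all four Vermas as subquotients.

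The key calculation determines, for each of the $17$ weights of the adjoint (namely $0$ with multiplicity $3$; the even roots $\pm 2\ep_1, \pm 2\ep_2, \pm 2\delta$; and the eight odd roots $\pm(\delta \pm \ep_1 \pm \ep_2)$), which shifts of $(0, \sigma(p+2), \tau)$ land in $\Bl_1$ (i.e., atypical with Casimir eigenvalue $p(p+1)$). A finite enumeration using \eqref{atypical} shows that exactly five shifts of each Verma $\M{(0, \sigma(p+2), \tau)}$ land in $\Bl_1$: the shifts $\pm 2\delta$ yield $\M{2}^{+\sigma\tau}$ and $\M{2}^{-\sigma\tau}$; the shift $-2\sigma\ep_1$ yields the singular Verma $\M{0}^{\circ \sigma\tau}$; and the two odd-root shifts of the form $(\pm 1, -\sigma, -\tau)$ yield the singular Vermas $\M{1}^{+\sigma\circ}$ and $\M{1}^{-\sigma\circ}$. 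Summing these contributions over the Vermas comprising $\TT{(0, \sigma(p+2), \tau)}$ produces exactly the multiset appearing in \eqref{T2:kd=1}: in particular, in the cases $\TT{2}^{+-+}, \TT{2}^{++-}, \TT{2}^{+++}$ both $\M{(0, \sigma(p+2), 1)}$ and $\M{(0, \sigma(p+2), -1)}$ contribute to each $\M{1}^{\pm\sigma\circ}$, accounting for the stated multiplicity~$2$.

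It remains to establish indecomposability of $\E \TT{(0, \sigma(p+2), \tau)}$. Any putative direct summand would be a tilting module $\TT{\nu}$ with $\nu \prec f_{1;2}^{+\sigma\tau}$ whose leading Verma appears in \eqref{T2:kd=1}. For $\nu = f_{1;2}^{-\sigma'\tau'}$ the tilting modules are described by Theorem~\ref{thm:tilting:reg} and each contains $\M{3}^{-\sigma'\tau'}$, which is absent from \eqref{T2:kd=1}; for $\nu = f_{1;0}^{\circ \sigma'\tau'}$ the tilting modules are given by Theorem~\ref{thm:tilting:0:kd=1} and each contains $\M{-1}^{-\sigma'\tau'}$, again absent from \eqref{T2:kd=1}. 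The remaining candidates $\TT{1}^{\pm\sigma'\circ}$ are not yet constructed at this stage of the paper; to rule these out I would invoke Lemma~\ref{tilting:comp} applied to the surjections among the four typical tilting modules $\TT{(0, \pm(p+2), \pm 1)}$, producing epimorphisms among the various $\E \TT{(0, \sigma(p+2), \tau)}$ that force the asserted Verma flag components to appear inductively and leave no room for an additional $\TT{1}^{\pm\sigma'\circ}$ summand. In particular, the easiest case $\TT{2}^{+--}$ can be handled first directly, then the two intermediate cases, and finally $\TT{2}^{+++}$ via $\E$ applied to the two quotient maps $\TT{(0, p+2, 1)} \twoheadrightarrow \TT{(0, -p-2, 1)}$ and $\TT{(0, p+2, 1)} \twoheadrightarrow \TT{(0, p+2, -1)}$.

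The main technical obstacle will be the multiplicity-$2$ singular Vermas $\M{1}^{\pm\sigma\circ}$, and the need to exclude the not-yet-constructed tilting modules $\TT{1}^{\pm\pm\circ}$ as summands; both difficulties are handled by exploiting exactness of $\E$ on short exact sequences among the typical $\TT{(0,\pm(p+2),\pm 1)}$, together with Lemma~\ref{tilting:comp} to propagate indecomposability inductively through the four cases.
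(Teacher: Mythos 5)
Your route is the same as the paper's: the paper also obtains \eqref{T2:kd=1} by applying the adjoint-module translation functor to the typical tilting modules $\TT{f-(2,0,0)}=\TT{(0,\sigma(p+2),\tau)}$, and your computation of the Verma flag of $\E\TT{(0,\sigma(p+2),\tau)}$ (the five atypical shifts per Verma, for $p\ge 2$) is correct. One bookkeeping slip: for $\TT{2}^{++-}$ the initial flag is $\M{(0,p+2,-1)}+\M{(0,-p-2,-1)}$, whose two Vermas differ in the $\ep_1$-sign, so the four singular Vermas $\M{1}^{\pm\pm\circ}$ each occur once there; the multiplicity $2$ arises only in the $\TT{2}^{+-+}$ and $\TT{2}^{+++}$ cases, exactly as in the stated formula, so your ``in particular'' clause should not include $\TT{2}^{++-}$.

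The genuine gap is in the indecomposability step. Your enumeration of candidate summands is incomplete: besides $\TT{2}^{-\sigma'\tau'}$, $\TT{0}^{\circ\sigma'\tau'}$ and $\TT{1}^{\pm\sigma'\circ}$, a direct summand could have leading Verma $\M{2}^{+\sigma'\tau'}$ with $(\sigma',\tau')$ strictly below $(\sigma,\tau)$. These cannot be dismissed by your ``contains a Verma absent from the list'' criterion, since, e.g., the entire flag of $\TT{2}^{+--}$ (your first case) lies inside the multiset computed for $\E\TT{(0,-p-2,1)}$, and similarly $\TT{2}^{++-}$, $\TT{2}^{+-+}$, $\TT{2}^{+--}$ all have flags inside the multiset for $\E\TT{(0,p+2,1)}$; and for the $\TT{1}^{\pm\sigma'\circ}$ candidates the appeal to Lemma~\ref{tilting:comp} is asserted but never carried out. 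What is missing is the paper's standard third step (as executed in the proofs of Theorems~\ref{thm:tilting1}, \ref{thm:tilting2}, \ref{thm:tilting:reg}): verify, via the duality \eqref{tiltingD} together with Lemmas~\ref{lem:oddhom} and \ref{lem:evenhom}, that the low Verma terms occurring with multiplicity one in $\E T$ must already occur in the relevant tilting modules. For instance, $(\TT{2}^{+-+}:\M{2}^{---})=[\M{2}^{+++}:\LL{2}^{-+-}]>0$, because $f_{2}u^{+}$ is singular in $\M{2}^{+++}$, where $u^{+}$ is the singular vector of $\rho$-shifted weight $(-2,p+2,1)$ furnished by Lemma~\ref{lem:evenhom}; and $(\TT{1}^{--\circ}:\M{2}^{---})=[\M{2}^{+++}:\LL{1}^{++\circ}]>0$ by Lemma~\ref{lem:oddhom}, since $(2,p+2,1)$ is orthogonal to $\delta+\ep_1+\ep_2$. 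Facts of this kind, combined with the multiplicity-one occurrence of the relevant low Vermas (such as $\M{2}^{---}$ and $\M{0}^{\circ--}$) in $\E\TT{(0,\sigma(p+2),\tau)}$, are what actually exclude the extra summands, including the not-yet-constructed $\TT{1}^{\pm\sigma'\circ}$; without them your argument does not close, whereas with them it coincides with the (admittedly terse) proof in the paper.
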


\begin{proof}[Proof of Theorems~\ref{thm:tilting:0:kd=1} and \ref{thm:tilting:2:kd=1}]
To obtain all the tilting modules $\TT{f}$ in these cases, we apply translation functors to the initial tilting modules $\TT{f-(2,0,0)}$.
The rest is standard and we skip the detail.
\end{proof}

\begin{thm}
   \label{thm:tilting:1:kd=1}
Assume that $\ka\in \Z_{\ge 2}$. We have the following Verma flags for the tilting modules in the block $\Bl_1$:
\begin{align}
\TT{1}^{--\circ}&=  \M{1}^{--\circ}  + \M{2}^{--+} + \M{2}^{---},       \notag
\\
\TT{1}^{-+\circ}&= \M{1}^{-+\circ}  + \M{1}^{--\circ}  + \M{2}^{-++} + \M{2}^{-+-} + \M{2}^{--+} + \M{2}^{---},       \label{T1:kd=1}
\\
\TT{1}^{+-\circ}  & =\M{1}^{+-\circ} +\M{0}^{\circ-+} +\M{0}^{\circ--} +\M{1}^{--\circ},    \notag
\\
\TT{1}^{++\circ}  & =\M{1}^{++\circ} + \M{1}^{+-\circ} +\M{0}^{\circ++} +\M{0}^{\circ+-}+\M{0}^{\circ-+} +\M{0}^{\circ--} +\M{1}^{-+\circ}
                              +\M{1}^{--\circ} .    \notag
\end{align}
\end{thm}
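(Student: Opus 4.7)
The plan is to construct the four tilting modules in sequence, following the anti-dominance order, each via the adjoint translation functor $\E$ (tensoring with $\g$ and projecting to $\Bl_1$) applied to a suitable tilting module whose Verma flag is already known. The overall strategy closely mirrors that of Theorem~\ref{thm:tilting:sing}, but additional care is required because at $kd=1$ the identifications $\M{0}^{+\pm\pm}=\M{0}^{-\pm\pm}=\M{0}^{\circ\pm\pm}$ at $n=0$, together with $\M{1}^{\pm\pm+}=\M{1}^{\pm\pm-}=\M{1}^{\pm\pm\circ}$ at $n=kd=1$, compress certain Verma multiplicities into single copies, which is what distinguishes the present formulae from those in Theorem~\ref{thm:tilting:sing}.

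For $\TT{1}^{--\circ}$ and $\TT{1}^{-+\circ}$, I would apply $\E$ to the typical singular tiltings $\TT{-1,-3-p,0}=\M{-1,-3-p,0}$ and $\TT{-1,p+3,0}=\M{-1,p+3,0}+\M{-1,-p-3,0}$, whose Verma flags are forced by anti-dominance within the corresponding typical $W$-orbits. A case check of the 17 weights of $\g$ shows that the adjoint shifts landing at atypical weights in $\Wt_1$ yield exactly the three-term character for $\TT{1}^{--\circ}$ and the six-term character for $\TT{1}^{-+\circ}$ as stated. Indecomposability is confirmed by noting that the only alternative tilting candidates with a matching highest-weight Verma are those of the form $\TT{2}^{-\pm\pm}$ from Theorem~\ref{thm:tilting:reg}, whose Verma flags contain $\M{3}^{-\pm\pm}$ terms absent from the computed characters.

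For $\TT{1}^{+-\circ}$, I would apply $\E$ to the typical tilting $\TT{1,1-p,0}=\M{1,1-p,0}+\M{-1,1-p,0}$ (here $p\ge 2$ ensures $1-p<0$, so the first term is not anti-dominant in its $W$-orbit). The resulting character has ten terms; Krull--Schmidt uniqueness, together with the known Verma flags from Theorems~\ref{thm:tilting:reg} and \ref{tilting:same}, forces the decomposition $\E \TT{1,1-p,0}=\TT{1}^{+-\circ}\oplus\TT{-1}^{+-+}$. Competing tilting candidates are excluded because their Verma flags contain terms (such as $\M{-2}^{-\pm\pm}$, $\M{-2}^{-\circ\pm}$, or $\M{2}^{-\pm\pm}$) that are absent from the computed character. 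Subtracting the known $\TT{-1}^{+-+}$ then yields the claimed four-term Verma flag for $\TT{1}^{+-\circ}$.

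For $\TT{1}^{++\circ}$, I would apply $\E$ to the already-constructed $\TT{1}^{-+\circ}$. A careful enumeration shows the Verma character of $\E \TT{1}^{-+\circ}$ has length $40$, and the multiplicities of the distinguished Vermas $\M{0}^{\circ\pm\pm}$, $\M{1}^{++\circ}$, $\M{1}^{+-\circ}$, and $\M{3}^{-\pm\pm}$ impose a linear system with a unique non-negative-integer solution compatible with the Krull--Schmidt decomposition, namely $\E \TT{1}^{-+\circ}=\TT{1}^{++\circ}\oplus 4\,\TT{1}^{-+\circ}\oplus \TT{2}^{-++}$, with $\TT{2}^{-++}$ from Theorem~\ref{thm:tilting:reg}. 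Subtraction then yields the claimed eight-term Verma flag for $\TT{1}^{++\circ}$. The main obstacle throughout is the meticulous enumeration of which adjoint shifts land in $\Wt_1$ versus typical weights or weights in other blocks $\Bl_k$, together with the systematic exclusion of alternative tilting summands at each step by matching against the Verma flags of previously established tilting modules.
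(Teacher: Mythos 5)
Your route is genuinely different from the paper's: the paper obtains $\TT{1}^{+-\circ}$ and $\TT{1}^{++\circ}$ not with the adjoint module but by tensoring the typical tiltings $T_{0,-2,0}$ and $T_{0,2,0}$ with the $(4p+2)$-dimensional simple module of highest weight $\delta+(p-1)\ep_1$ (using Germoni's character formula), precisely so that the projected characters are already the four- and eight-term sums of the theorem; then only \emph{non-vanishing} statements (Lemmas~\ref{lem:oddhom}, \ref{lem:evenhom}, Lemma~\ref{tilting:comp}, via \eqref{tiltingD}) are needed to conclude indecomposability. Your adjoint-functor computations are correct at the level of characters (the ten-term and forty-term counts check out, as do the three- and six-term ones), but the identification of the Krull--Schmidt decomposition is where the argument has real gaps: excluding candidate summands ``because their flags contain absent terms'' only handles some candidates. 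For $\TT{1}^{-+\circ}$ you overlook $\TT{1}^{--\circ}$, whose flag lies entirely inside your six-term character; for $\TT{1}^{+-\circ}$ the candidates $\TT{-1}^{+-+}$ and $\TT{-1}^{+--}$ likewise have flags inside your ten-term character, so nothing you state rules out that $\E\TT{1,1-p,0}$ is indecomposable (all ten terms in $\TT{1}^{+-\circ}$) or splits as $\TT{1}^{+-\circ}\oplus\TT{-1}^{+--}$ instead. These two gaps are fillable with tools in the paper: $(\TT{1}^{-+\circ}:\M{1}^{--\circ})=[\M{1}^{++\circ}:\LL{1}^{+-\circ}]>0$ by Lemma~\ref{lem:evenhom} and \eqref{tiltingD}, and $(\TT{1}^{+-\circ}:\M{-1}^{+-+})=[\M{(-1,p-1,-2)}:\LL{(-1,p+1,0)}]=0$ because the weight difference $-2\ep_1-2\ep_2$ is not in the positive root cone --- but you did not make these arguments.

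The serious gap is $\TT{1}^{++\circ}$. Your claim that the multiplicities of $\M{0}^{\circ\pm\pm}$, $\M{1}^{++\circ}$, $\M{1}^{+-\circ}$, $\M{3}^{-\pm\pm}$ force a unique nonnegative solution is false: the decomposition in which $\TT{1}^{++\circ}$ has the sixteen-term flag consisting of your eight claimed terms together with $\M{2}^{-++}+\M{2}^{-+-}+\M{2}^{--+}+\M{2}^{---}+\M{3}^{-++}+\M{3}^{-+-}+\M{3}^{--+}+\M{3}^{---}$, with complement $4\,\TT{1}^{-+\circ}$, is equally consistent with the forty-term character, with the known flags of every other candidate tilting, and with all the multiplicities you list. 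Ruling it out requires vanishing statements such as $(\TT{1}^{++\circ}:\M{2}^{-++})=[\M{2}^{+--}:\LL{1}^{--\circ}]=0$, which are \emph{not} forced by weight considerations (here the weight difference $3\delta-\ep_1-\ep_2$ does lie in the positive cone) and cannot come from Lemmas~\ref{lem:oddhom}--\ref{lem:evenhom}, which only produce non-vanishing; deriving them from the composition-factor propositions of Section~\ref{sec:block1} would be circular, since those rest on the very theorem being proved. This is exactly the difficulty the paper's choice of the smaller $(4p+2)$-dimensional translation functor is designed to avoid, and as it stands your construction of $\TT{1}^{++\circ}$ does not go through.
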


\begin{proof}
The formulae for $\TT{1}^{--\circ}$ and $\TT{1}^{-+\circ}$ are the same as \eqref{Tkd} in Theorem~\ref{thm:tilting:sing} with $kd=1$.

According to \cite[Lemma 3.3.2(iii)]{Ger00} we have the following character formula for the finite-dimensional irreducible $\g$-module $\mathbb L$ of highest weight $\delta+(p-1)\ep_1$:
\begin{align*}
{\rm ch }\; {\mathbb L} =\sum_{i=0}^{p-1} (e^{\delta+(p-1-2i)\ep_1}+e^{-\delta+(p-1-2i)\ep_1}) +\sum_{j=0}^p ( e^{(p-2j) \ep_1+\ep_2}+e^{(p-2j)\ep_1-\ep_2})
\end{align*}
We apply the translation functor $\mathcal E$ by tensoring $T_{0,-2,0}$ with $\mathbb L$ and then projecting to the block $\Bl_1$. This gives us
\begin{align}
 \label{ET0-20}
\mathcal ET_{0,-2,0}=\M{1}^{+-\circ} +\M{0}^{\circ-+} +\M{0}^{\circ--} +\M{1}^{--\circ}.
\end{align}
It follows from Lemmas \ref{lem:oddhom} and \ref{lem:evenhom} that all terms in \eqref{ET0-20}  must appear in a flag of $\TT{1}^{+-\circ}$, and hence $\mathcal ET_{0,-2,0} ={\TT{1}}^{+-\circ}$.

Similarly, in order to construct ${\TT{1}}^{++\circ}$ we apply $\mathcal E$ to $T_{0,2,0}=M_{0,2,0}+M_{0,-2,0}$. We have
\begin{align*}
\mathcal E T_{0,2,0}=\M{1}^{++\circ} + \M{1}^{+-\circ} +\M{0}^{\circ++} +\M{0}^{\circ+-}+\M{0}^{\circ-+} +\M{0}^{\circ--} +\M{1}^{-+\circ}+\M{1}^{--\circ} .
\end{align*}
Now, in addition to using Lemmas \ref{lem:oddhom} and \ref{lem:evenhom}, we make use of Lemma~\ref{tilting:comp} to establish that $\mathcal E T_{0,2,0}=\TT{1}^{++\circ} $.
\end{proof}

\begin{rem}
The Verma flags for tilting modules in $\Bl_1$ for $\ka =1/d$ with $d\in \Z_{\ge 2}$ can be read off
completely from Section \ref{subsec:tilting2} by using the Dynkin diagram symmetry.
We skip the detail except noting the  irregular tilting modules in the current setting are
$\TT{0}^{\circ\pm\pm}$,
$\TT{kd-1}^{-\pm\pm}$,
$\TT{1}^{+\pm\pm}$,
$\TT{-1}^{\pm\circ\pm}$,  $\TT{kd}^{+\pm\circ}$, $\TT{-2}^{+\pm\pm}$ and $\TT{kd+1}^{+\pm\pm}$.
\end{rem}

\subsection{Verma flags for projectives in $\Bl_1$} 
  \label{subsec:proj2}

The formulae of Verma flags for tilting modules in
Theorems~\ref{tilting:same}--\ref{thm:tilting:1:kd=1}
are readily translated into formulae for Verma flags of projective modules using the identity
\eqref{tilt=proj}. We formulate these results in Propositions~\ref{proj:same}--\ref{proj:1:kd=1} below.

Proposition~\ref{proj:reg} for regular projective modules remains valid here.
The irregular projective modules are:
$\PP{0}^{\circ\pm\pm}$,
$\PP{1-kp}^{+\pm\pm}$,
$\PP{-1}^{-\pm\pm}$,
$\PP{-kp}^{\pm\circ\pm}$,  $\PP{1}^{\pm\pm\circ}$, $\PP{-1-kp}^{-\pm\pm}$ and $\PP{2}^{-\pm\pm}$.

The earlier formulae for irregular projective modules $\PP{1-kp}^{+\pm\pm}$,
$\PP{-1}^{-\pm\pm}$, $\PP{-kp}^{\pm\circ\pm}$ and $\PP{-1-kp}^{-\pm\pm}$ remain valid.
More precisely, the formula \eqref{P1-kp} for $\PP{1-kp}^{+\pm\pm}$,
the formula \eqref{P1-1}  for $\PP{-1}^{-\pm\pm}$,
the formula \eqref{P-kp} for $\PP{-kp}^{\pm\circ\pm}$,
and the formula \eqref{P-1-kp} for $\PP{-1-kp}^{-\pm\pm}$
remain valid in the current setting. We summarize these as follows.

 \begin{prop}
   \label{proj:same}
Assume that  $\ka\in \Z_{\ge 2}$.
The formulae in Proposition~\ref{proj:reg} for regular projective modules
and the formulae for irregular projective modules $\PP{1-kp}^{+\pm\pm}$,
$\PP{-1}^{-\pm\pm}$, $\PP{-kp}^{\pm\circ\pm}$ and $\PP{-1-kp}^{-\pm\pm}$ in Section \ref{subsec:proj:rational} remain valid.
 \end{prop}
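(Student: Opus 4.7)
The plan is to derive Proposition~\ref{proj:same} as a purely formal consequence of Soergel duality \eqref{tilt=proj},
\[
(\PP{\la}:\M{\mu}) = (\TT{-\la}:\M{-\mu}),
\]
applied term-by-term to each formula in Theorem~\ref{tilting:same}. No additional representation-theoretic input is needed; the only genuine task is to check that the negation map $\la\mapsto -\la$ is compatible with the labeling scheme used throughout Section~\ref{sec:rational}.

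First I would verify that negation acts transparently on the parametrization $f_{k;n}^{\pm\pm\pm}$. Inspecting the four cases in its definition (for the ranges $n\le -kp$, $1-kp\le n\le 0$, $0\le n\le kd-1$, and $kd\le n$), one observes that every coordinate of $f_{k;n}^{\sigma_1\sigma_2\sigma_3}$ splits as a product of a sign $\sigma_i\in\{\pm\}$ with a nonnegative integer depending only on $n$. Consequently negation fixes the index $n$ and flips all three signs:
\[
-f_{k;n}^{\sigma_1\sigma_2\sigma_3} = f_{k;n}^{-\sigma_1,-\sigma_2,-\sigma_3},
\]
with the obvious variants on the three singular families $f_{k;0}^{\circ\pm\pm}$, $f_{k;-kp}^{\pm\circ\pm}$, $f_{k;kd}^{\pm\pm\circ}$ where only the two unfrozen signs flip.

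Next I would dualize each tilting formula recalled in Theorem~\ref{tilting:same} in turn. Dualizing the eight regular families $\TT{\pm n}^{-\pm\pm}$ and $\TT{\pm n}^{+\pm\pm}$ of Theorem~\ref{thm:tilting:reg} produces exactly the eight regular projective families $\PP{\pm n}^{+\pm\pm}$ and $\PP{\pm n}^{-\pm\pm}$ of Proposition~\ref{proj:reg}, with the exclusion sets on $n$ preserved verbatim because negation fixes $n$. Similarly, dualizing the four irregular families $\TT{1-kp}^{-\pm\pm}$, $\TT{-1}^{+\pm\pm}$, $\TT{-kp}^{\pm\circ\pm}$, $\TT{-1-kp}^{+\pm\pm}$ reproduces precisely the formulas \eqref{P1-kp}, \eqref{P1-1}, \eqref{P-kp}, \eqref{P-1-kp} for the irregular projective families named in the statement.

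There is essentially no obstacle to this argument: the representation-theoretic substance, namely the construction of the relevant tilting modules via translation functors and the determination of their Verma flags in the $\ka\in\Z_{\ge 2}$ setting, has already been absorbed into Theorem~\ref{tilting:same}. The present proposition is then a mechanical verification that Soergel duality intertwines the tilting and projective labelings in exactly the expected way.
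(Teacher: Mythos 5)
Your proposal is correct and is essentially the paper's own argument: the paper obtains Proposition~\ref{proj:same} (and its companions) precisely by translating the tilting formulae of Theorem~\ref{tilting:same} term-by-term through Soergel duality \eqref{tilt=proj}. Your explicit check that negation fixes the index $n$ and flips the unfrozen signs in the labels $f_{k;n}^{\pm\pm\pm}$ (and the singular families) is the only bookkeeping needed, and it is carried out correctly.
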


 The remaining new irregular cases
$\PP{0}^{\circ\pm\pm}$,  $\PP{2}^{-\pm\pm}$ and $\PP{1}^{\pm\pm\circ}$, respectively, are treated below.

\begin{prop}
  \label{proj:0:kd=1}  
Assume that $\ka\in \Z_{\ge 2}$.
We have the following Verma flags for the projective modules in the block $\Bl_1$:
\begin{align*}
\PP{0}^{\circ++}  & =\M{0}^{\circ++} +\M{-1}^{+++} +\M{1}^{++\circ} +\M{2}^{+++},    \notag
\\
\PP{0}^{\circ+-}  & =\M{0}^{\circ+-} +\M{0}^{\circ++} +\M{-1}^{++-} +\M{-1}^{+++}
                                +2 \M{1}^{++\circ}  +\M{2}^{++-} +\M{2}^{+++},    \notag
\\
\PP{0}^{\circ-+}  & =\M{0}^{\circ-+} +\M{0}^{\circ++} +\M{1}^{+-\circ} +\M{-1}^{+-+} +\M{-1}^{+++} +\M{1}^{++\circ}  +\M{2}^{+-+} +\M{2}^{+++},   
\\
\PP{0}^{\circ--}  & =\M{0}^{\circ--} +\M{0}^{\circ-+} +\M{0}^{\circ+-} +\M{0}^{\circ++}
+ 2\M{1}^{+-\circ} +\M{-1}^{+--} +\M{-1}^{+-+}   \notag \\
    &\qquad +\M{-1}^{++-}  +\M{-1}^{+++}  +2\M{1}^{++\circ}  +\M{2}^{+--} +\M{2}^{+-+} +\M{2}^{++-} +\M{2}^{+++}.    \notag
\end{align*}
\end{prop}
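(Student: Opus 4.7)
The plan is to obtain each of the four Verma flag formulae as a direct translation of the corresponding tilting character formula in Theorem \ref{thm:tilting:0:kd=1}, via Soergel duality \eqref{tilt=proj}:
$(\PP{\la} : \M{\mu}) = (\TT{-\la} : \M{-\mu}).$
This is exactly the pattern already used to deduce Proposition \ref{proj:same} from the tilting data in Section~\ref{subsec:tilting2}, so the work is entirely in the sign bookkeeping.

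The key preliminary step is to record how the negation $f\mapsto -f$ acts on the weight labels $f_{1;n}^{abc}$ that appear on the right-hand sides in Theorem \ref{thm:tilting:0:kd=1}. Those are labels with $n\in\{-1,0,1,2\}$. A quick case inspection using the four cases in the definition of $f_{k;n}^{\pm\pm\pm}$ (applied with $k=d=1$, $p\ge 2$) shows that in every one of these cases negation simply flips every concrete sign while preserving the position of any $\circ$: one has $-f_{1;n}^{abc}=f_{1;n}^{(-a)(-b)(-c)}$ for $n=-1,2$, and analogously $-f_{1;0}^{\circ ab}=f_{1;0}^{\circ(-a)(-b)}$ and $-f_{1;1}^{ab\circ}=f_{1;1}^{(-a)(-b)\circ}$. (The remaining singular label $f_{1;-p}^{\pm\circ\pm}$ does not occur anywhere in Theorem~\ref{thm:tilting:0:kd=1}, so we need not treat it.)

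With this dictionary in hand, each formula of the proposition is obtained by flipping all concrete signs in the corresponding formula of Theorem \ref{thm:tilting:0:kd=1} and invoking \eqref{tilt=proj}. For example,
$\TT{0}^{\circ--}=\M{0}^{\circ--}+\M{-1}^{---}+\M{1}^{--\circ}+\M{2}^{---}$ translates term-by-term into
$\PP{0}^{\circ++}=\M{0}^{\circ++}+\M{-1}^{+++}+\M{1}^{++\circ}+\M{2}^{+++}$; and the three remaining cases $\PP{0}^{\circ+-}$, $\PP{0}^{\circ-+}$, $\PP{0}^{\circ--}$ arise from $\TT{0}^{\circ-+}$, $\TT{0}^{\circ+-}$, $\TT{0}^{\circ++}$ in exactly the same way, with all multiplicities (in particular the coefficients $2$ appearing in the latter two formulae) preserved.

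There is no genuine mathematical obstacle — the entire content is the sign bookkeeping outlined above, together with \eqref{tilt=proj} and the already-established Theorem~\ref{thm:tilting:0:kd=1}. If anything merits care, it is only to verify that every Verma appearing in the tilting formulae indexed by $\la = 0^{\circ\pm\pm}$ is labeled by an $n$ for which the negation rule takes the simple sign-flipping form described above; the inspection is immediate from the short list $n\in\{-1,0,1,2\}$.
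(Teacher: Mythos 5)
Your proposal is correct and coincides with the paper's own proof: Proposition~\ref{proj:0:kd=1} is obtained there exactly by applying Soergel duality \eqref{tilt=proj} to the tilting formulae of Theorem~\ref{thm:tilting:0:kd=1}, with the negation of weight labels amounting to flipping all concrete signs while keeping the subscript and the $\circ$ in place. Your sign bookkeeping for the labels $n\in\{-1,0,1,2\}$ checks out term by term, including the multiplicity-$2$ entries.
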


\begin{prop}
 \label{proj:2:kd=1}       
Assume that $\ka\in \Z_{\ge 2}.$
We have the following Verma flags for the projective modules in the block $\Bl_1$:
\begin{align}
\PP{2}^{-++}  &= \M{2}^{-++} +\M{1}^{-+\circ}  +\M{0}^{\circ++} +\M{1}^{++\circ}   + \M{2}^{+++},    \notag
\\
\PP{2}^{-+-}  &= \M{2}^{-+-} +\M{2}^{-++} +2\M{1}^{-+\circ}  +\M{0}^{\circ+-} +\M{0}^{\circ++} +2\M{1}^{++\circ}   + \M{2}^{++-} + \M{2}^{+++},    \notag
\\
\PP{2}^{--+}    &=  \M{2}^{--+} +\M{2}^{-++} +\M{1}^{--\circ} +\M{1}^{-+\circ}  +\M{0}^{\circ-+} +\M{0}^{\circ++}      \label{P2:kd=1}
                             \\ &\qquad +\M{1}^{+-\circ} +\M{1}^{++\circ}   + \M{2}^{+-+} + \M{2}^{+++},    \notag
\\
\PP{2}^{---}  &= \M{2}^{---} +\M{2}^{--+} +\M{2}^{-+-} +\M{2}^{-++} +2\M{1}^{--\circ}  +2\M{1}^{-+\circ}    +\M{0}^{\circ--} +\M{0}^{\circ-+}
\notag
\\
&+\M{0}^{\circ+-} +\M{0}^{\circ++}    + 2\M{1}^{+-\circ} + 2\M{1}^{++\circ}  + \M{2}^{+--} + \M{2}^{+-+} + \M{2}^{++-} + \M{2}^{+++}.   \notag
\end{align}
\end{prop}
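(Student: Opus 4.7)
The plan is simply to apply Soergel duality \eqref{tilt=proj}, which asserts
$(\PP{\la}:\M{\mu})=(\TT{-\la}:\M{-\mu})$, to the four Verma flag formulae for $\TT{2}^{+\pm\pm}$ already established in Theorem~\ref{thm:tilting:2:kd=1}. The only thing that needs to be checked is how the componentwise negation $\la \mapsto -\la$ acts on the combinatorial indexing of weights in $\Bl_1$, namely that
\[
-f_{1;2}^{abc} = f_{1;2}^{-a,-b,-c}, \qquad
-f_{1;0}^{\circ bc} = f_{1;0}^{\circ,-b,-c}, \qquad
-f_{1;1}^{ab\circ} = f_{1;1}^{-a,-b,\circ}.
\]
These identities are immediate from the explicit formulae defining $f_{k;n}^{\pm\pm\pm}$ in Section~\ref{sec:rational} with $k=1$, $d=1$ (so that the singular positions are $n=0,-p,1$).

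Granting these sign rules, each Verma module $\M{\nu}$ appearing in a given $\TT{2}^{+abc}$ in \eqref{T2:kd=1} gets sent to $\M{-\nu}$ (with the same multiplicity), and the latter is precisely the corresponding Verma module appearing in the asserted formula for $\PP{2}^{-a,-b,-c}$. For instance, reading off the first line of \eqref{T2:kd=1} gives
\[
\TT{2}^{+--} = \M{2}^{+--}+\M{1}^{+-\circ}+\M{0}^{\circ--}+\M{1}^{--\circ}+\M{2}^{---},
\]
and sign-negating each Verma yields the claimed formula for $\PP{2}^{-++}$. The remaining three formulae for $\PP{2}^{-+-}$, $\PP{2}^{--+}$, $\PP{2}^{---}$ are obtained in identical fashion from the Verma flags of $\TT{2}^{+-+}$, $\TT{2}^{++-}$, $\TT{2}^{+++}$, respectively.

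Since Theorem~\ref{thm:tilting:2:kd=1} is assumed already proved, there is no genuine obstacle; the argument is the mechanical translation via Soergel duality that was used throughout Section~\ref{subsec:proj:rational} to pass from tilting to projective character formulae. In particular, the combinatorial matching of indices—keeping careful track of the multiplicities $2$ and the circle symbols at the singular positions—is the only step that requires any care.
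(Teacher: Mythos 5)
Your proposal is correct and is exactly the paper's argument: Propositions~\ref{proj:same}--\ref{proj:1:kd=1} are obtained by translating the tilting formulae of Theorems~\ref{tilting:same}--\ref{thm:tilting:1:kd=1} through Soergel duality \eqref{tilt=proj}, and your sign rules $-f_{1;2}^{abc}=f_{1;2}^{-a,-b,-c}$, $-f_{1;0}^{\circ bc}=f_{1;0}^{\circ,-b,-c}$, $-f_{1;1}^{ab\circ}=f_{1;1}^{-a,-b,\circ}$ are the correct bookkeeping for $k=d=1$. The resulting flags match \eqref{P2:kd=1} term by term, multiplicities included.
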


\begin{prop}
   \label{proj:1:kd=1}             
Assume that $\ka\in \Z_{\ge 2}$. We have the following Verma flags for the projective modules in the block $\Bl_1$:
\begin{align}
\PP{1}^{++\circ}&=  \M{1}^{++\circ}  + \M{2}^{++-} + \M{2}^{+++},       \notag
\\
\PP{1}^{+-\circ}&= \M{1}^{+-\circ}  + \M{1}^{++\circ}  + \M{2}^{+--} + \M{2}^{+-+} + \M{2}^{++-} + \M{2}^{+++},   \label{P1:kd=1}
\\
\PP{1}^{-+\circ}  & =\M{1}^{-+\circ} +\M{0}^{\circ+-} +\M{0}^{\circ++} +\M{1}^{++\circ},    \notag
\\
\PP{1}^{--\circ}  & =\M{1}^{--\circ} + \M{1}^{-+\circ} +\M{0}^{\circ--} +\M{0}^{\circ-+}+\M{0}^{\circ+-} +\M{0}^{\circ++} +\M{1}^{+-\circ}
                              +\M{1}^{++\circ} .    \notag
\end{align}
\end{prop}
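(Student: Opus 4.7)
The plan is to derive the four formulae as immediate consequences of Soergel duality \eqref{tilt=proj} applied to the tilting module formulae for $\TT{1}^{\pm\pm\circ}$ established in Theorem~\ref{thm:tilting:1:kd=1}. Recall that \eqref{tilt=proj} states $(\PP{\la}:\M{\mu}) = (\TT{-\la}:\M{-\mu})$, so it suffices to check that the map $f\mapsto -f$ on $\h^*$ induces a bijection on $\Wt_1$ that swaps the tilting labels into the projective labels in the manner required by the statement.

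First I would verify this weight bookkeeping explicitly. For $\ka\in\Z_{\ge 2}$ (so $p\ge 2$, $d=1$) and $k=1$, the formulae in Section~\ref{sec:rational} give $f_{1;1}^{ab\circ} = (a\cdot 1,\ b\cdot(1+p),\ 0)$ (from the range $kd\le n$), $f_{1;2}^{abc} = (a\cdot 2,\ b\cdot(2+p),\ c\cdot 1)$, and $f_{1;0}^{\circ ab} = (0,\ a\cdot p,\ b\cdot 1)$ (using either of the two ranges meeting at $n=0$, which agree here since $kd=1$). In every case the map $f\mapsto -f$ simply flips all of the non-circled signs, so
\[
-f_{1;1}^{ab\circ} = f_{1;1}^{\bar a\bar b\circ}, \qquad
-f_{1;2}^{abc} = f_{1;2}^{\bar a\bar b\bar c},\qquad
-f_{1;0}^{\circ ab} = f_{1;0}^{\circ \bar a\bar b},
\]
where $\bar{\pm}=\mp$. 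In particular the involution preserves $\Wt_1$ and acts on the decorated labels by flipping every $\pm$ while fixing every $\circ$.

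Next, I would pair each projective with its corresponding tilting module:
\[
\PP{1}^{++\circ}\!\leftrightarrow\!\TT{1}^{--\circ},\quad
\PP{1}^{+-\circ}\!\leftrightarrow\!\TT{1}^{-+\circ},\quad
\PP{1}^{-+\circ}\!\leftrightarrow\!\TT{1}^{+-\circ},\quad
\PP{1}^{--\circ}\!\leftrightarrow\!\TT{1}^{++\circ}.
\]
Applying \eqref{tilt=proj} term by term to the four formulae of Theorem~\ref{thm:tilting:1:kd=1} and invoking the sign-flip identities above, each $\M{1}^{ab\circ}$, $\M{2}^{abc}$, $\M{0}^{\circ ab}$ appearing in $\TT{1}^{ab\circ}$ is converted into $\M{1}^{\bar a\bar b\circ}$, $\M{2}^{\bar a\bar b\bar c}$, $\M{0}^{\circ \bar a\bar b}$ respectively in $\PP{1}^{\bar a\bar b\circ}$, with multiplicities preserved. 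A direct comparison shows that the resulting four formulae match those asserted in Proposition~\ref{proj:1:kd=1}.

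Since this argument is entirely parallel to the derivation of Propositions~\ref{proj:same}--\ref{proj:2:kd=1} from Theorems~\ref{tilting:same}--\ref{thm:tilting:2:kd=1}, no genuine obstacle arises; the only mildly delicate point is confirming that the involution $f\mapsto -f$ behaves correctly at the singular weights $f_{1;1}^{\pm\pm\circ}$ and $f_{1;0}^{\circ\pm\pm}$, which is handled by the case analysis of the piecewise definition of $f_{k;n}^{\pm\pm\pm}$ recalled above. Once this is checked, all four Verma flag formulae follow simultaneously from \eqref{tilt=proj} and Theorem~\ref{thm:tilting:1:kd=1}.
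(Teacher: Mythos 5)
Your proposal is correct and coincides with the paper's own derivation: Proposition~\ref{proj:1:kd=1} is obtained there precisely by applying the Soergel duality \eqref{tilt=proj} to the four tilting formulae of Theorem~\ref{thm:tilting:1:kd=1}, with the negation map flipping all non-circled signs on $\Wt_1$. Your explicit verification of the weight bookkeeping at the singular labels $f_{1;1}^{\pm\pm\circ}$ and $f_{1;0}^{\circ\pm\pm}$ just makes precise what the paper leaves implicit.
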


\subsection{Projective tilting modules in $\Bl_1$}  

As in Section \ref{subsec:prinj:kp,kd>1}, first by examining possible matches between the
Verma flags of tilting and projective modules and then using a similar argument as in Theorem \ref{T=P:irrational} we prove the following.

\begin{thm}
 \label{T=P:kd=1}
Assume $p\ge 2,  d=1$. We have the following isomorphisms between projective and tilting modules in $\Bl_1$:
\[
\TT{-p}^{+\circ+} \cong \PP{-p}^{-\circ-},
\quad
\TT{1}^{++\circ}\cong \PP{1}^{--\circ},
\quad
\TT{n}^{+++} \cong \PP{n}^{---} \;\; (n\in \Z \backslash \{0, 1, -p\}).
\]
Furthermore, there are no other projective tilting modules of atypical weights in $\Bl_1$.
\end{thm}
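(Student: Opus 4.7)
The plan is to follow the three-step pattern of Theorems~\ref{T=P:irrational} and \ref{T=P:rational}, adapted to the special singular-weight structure of $\Bl_1$ when $d=1$.

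First I would identify candidate isomorphisms by reading off extremal Verma subquotients from the character formulae. A quick survey of Theorems~\ref{tilting:same}--\ref{thm:tilting:1:kd=1} shows that the lowest Verma term in the flag of $\TT{\la}$ has the shape $\M{n}^{---}$, except for $\TT{-p}^{+\circ\pm}$ (lowest term $\M{-p}^{-\circ-}$) and $\TT{1}^{+\pm\circ}$ (lowest term $\M{1}^{--\circ}$). Dually, by Propositions~\ref{proj:same}--\ref{proj:1:kd=1}, the highest Verma term in the flag of $\PP{\mu}$ is $\M{n}^{+++}$, except for $\PP{-p}^{-\circ\pm}$ (highest term $\M{-p}^{+\circ+}$) and $\PP{1}^{-\pm\circ}$ (highest term $\M{1}^{++\circ}$). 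Since any isomorphism $\TT{\la}\cong \PP{\mu}$ must match both the top and the bottom of the common Verma flag, the only remaining candidates are the three families in the statement. For each such pair I would compare the respective formulae and verify that the Verma flag multiplicities match exactly.

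The second step is to prove that each candidate tilting module is in fact projective. By the constructions in Section~\ref{subsec:tilting2}, each of $\TT{n}^{+++}$ (for the allowed $n$), $\TT{-p}^{+\circ+}$, and $\TT{1}^{++\circ}$ is realized as $\E T_0$, where $T_0$ is an initial tilting module in a typical block and $\E$ is a translation functor---the standard one arising from tensoring with the adjoint representation in all but one case, and the functor built from the finite-dimensional module $\mathbb L$ of Theorem~\ref{thm:tilting:1:kd=1} for $\TT{1}^{++\circ}$. In each instance the $\rho$-shifted weight of $T_0$ is the most dominant element of its $W$-orbit (checked coordinate-wise), and hence by Proposition~\ref{prop:typical} $T_0$ is the unique projective tilting in its typical block. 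Since a translation functor is the composition of tensoring with a finite-dimensional module (which has an exact biadjoint given by tensoring with the dual) and projection onto a block, it preserves projectivity. Therefore each candidate tilting module is projective, and the three stated isomorphisms follow. Any remaining tilting module has a lowest Verma term failing to appear as the highest Verma term of any projective in $\Bl_1$, ruling out further projective tilting modules.

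The main technical obstacle is the bookkeeping in the first step: one must carefully verify, across all irregular families clustered around the singular weights $f_{-p}^{\pm\circ\pm}$ and $f_{1}^{\pm\pm\circ}$, that no additional tilting-projective pair shares both extremal Verma terms. This is mechanical but requires checking several more cases than in the $kp, kd \ge 2$ setting of Theorem~\ref{T=P:rational}, owing to the richer singular structure when $d=1$.
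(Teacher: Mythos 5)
Your proposal is correct and follows essentially the same route as the paper: first narrow the candidates to the three stated pairs by comparing the extremal Verma terms of the tilting and projective flags, then deduce projectivity of $\TT{n}^{+++}$, $\TT{-p}^{+\circ+}$ and $\TT{1}^{++\circ}$ from the fact that each was constructed by applying a translation functor (exact with exact biadjoint, hence projectivity-preserving) to a projective tilting module in a typical block, exactly as in Theorems~\ref{T=P:irrational} and \ref{T=P:rational}. Only your closing sentence is misphrased: the remaining tilting modules are excluded because their \emph{highest} Verma term never occurs as the highest term of any projective flag (equivalently, the top terms mismatch once the bottom term pins down the only possible $\PP{\mu}$), not because of their lowest term.
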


\subsection{Composition factors of Verma modules in $\Bl_1$} 
  \label{subsec:comp2}

Proposition~\ref{composition1:r} on composition factors for  Verma modules of regular highest weights remain valid here
(note now that the formulae for $\M{n}^{\pm\pm\pm}$ are valid for $n\ge 3$).

 We have the following irregular cases:
 $\M{0}^{\circ\pm\pm}, \M{1}^{\pm\pm\circ}, \M{2}^{\pm\pm\pm}, \M{-kp}^{\pm\circ\pm}, \M{1-kp}^{\pm\pm\pm}, \M{-1-kp}^{\pm\pm\pm}$.
It turns out that the composition factor formulae for $\M{1}^{\pm\pm\circ}$ and $\M{-kp}^{\pm\circ\pm}$ are the same as those in Proposition~\ref{composition:kd,kp>1}
 by setting $kd=1$,
and the composition factor formulae for $\M{2}^{\pm\pm\pm}$ is the same as those in Proposition~\ref{composition:+1} by setting $kd=1$ (where $\LL{kd-1}^{+**}$ is understood as $\LL{0}^{\circ **}$).
The composition factor formulae for $\M{1-kp}^{\pm\pm\pm}, \M{-1-kp}^{\pm\pm\pm}$ are the same as those given in Propositions~\ref{comp:1-kp:kp>1}, and \ref{comp-1-kp:kp>1}, respectively.
Finally, the composition factors for the remaining irregular Verma modules $\M{0}^{\circ\pm\pm}$ are given in Proposition~\ref{comp0:ka>1} below.

\begin{prop}
 \label{comp0:ka>1}
 Assume $\ka\in \Z_{\ge 2}$.
We have the following composition factors of Verma modules in $\Bl_1$:
\begin{align*}
\M{0}^{\circ--} =& \LL{0}^{\circ--}  +  \LL{1}^{--\circ} + \LL{-1}^{---} +  \LL{2}^{---},
 \\
\M{0}^{\circ-+} =& \LL{0}^{\circ-+} +\LL{0}^{\circ--} + \LL{1}^{--\circ}  + \LL{-1}^{--+}+ \LL{-1}^{---}  +\LL{2}^{--+} + \LL{2}^{---},
 \\
\M{0}^{\circ+-} =& \LL{0}^{\circ+-} +\LL{0}^{\circ--} +\LL{1}^{-+\circ} +\LL{1}^{--\circ} +\LL{-1}^{-+-} + \LL{-1}^{---} +\LL{2}^{-+-} + \LL{2}^{---},
 \\
\M{0}^{\circ++} =& \LL{0}^{\circ++} +\LL{0}^{\circ+-}  +\LL{0}^{\circ-+} +\LL{0}^{\circ--}
+ \LL{1}^{-+\circ}  + \LL{1}^{--\circ}
 \\
&\quad + \LL{-1}^{-++} + \LL{-1}^{-+-}  + \LL{-1}^{--+}  + \LL{-1}^{---}
+ \LL{2}^{-++} + \LL{2}^{-+-}  + \LL{2}^{--+}  + \LL{2}^{---}.
\end{align*}
\end{prop}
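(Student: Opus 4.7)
The plan is to derive these composition factor formulae as an immediate consequence of BGG reciprocity \eqref{BGG}, using the Verma flag formulae for the projective modules in $\mathcal{B}_1$ already established in Propositions \ref{proj:same}, \ref{proj:0:kd=1}, \ref{proj:2:kd=1}, and \ref{proj:1:kd=1}. Specifically, for each pair of signs $\sigma,\tau \in \{+,-\}$, BGG reciprocity gives
\[
[M_0^{\circ\sigma\tau} : L_\lambda] \;=\; (P_\lambda : M_0^{\circ\sigma\tau}),
\]
so it suffices to record, for every atypical $\lambda$ in $\mathcal{B}_1$, the multiplicity with which $M_0^{\circ\sigma\tau}$ occurs in the Verma flag of $P_\lambda$.

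Proceeding case by case, I would scan through the propositions of Section \ref{subsec:proj2} and collect all occurrences of the four Verma modules $M_0^{\circ\pm\pm}$. For $M_0^{\circ--}$, inspection shows that $M_0^{\circ--}$ appears with multiplicity one in $P_0^{\circ--}$ (Proposition \ref{proj:0:kd=1}), in $P_1^{--\circ}$ (Proposition \ref{proj:1:kd=1}), in $P_{-1}^{---}$ (Proposition \ref{proj:same}, invoking Proposition \ref{proj:pm1}), and in $P_2^{---}$ (Proposition \ref{proj:2:kd=1}); no other projective module has $M_0^{\circ--}$ in its Verma flag. Applying BGG reciprocity then gives the first displayed formula. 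Exactly the same bookkeeping yields the remaining three: for $M_0^{\circ++}$, say, one picks up single copies from $P_0^{\circ\pm\pm}$, from $P_1^{-+\circ}$ and $P_1^{--\circ}$, from each of $P_{-1}^{-\pm\pm}$, and from each of $P_2^{-\pm\pm}$, matching the claimed formula.

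The essential input is the exhaustiveness of the scan: every projective of atypical highest weight in $\mathcal{B}_1$ must appear among those whose Verma flags were written down in Section \ref{subsec:proj2}. This is guaranteed by the classification of simples in $\mathcal{B}_1$ (Proposition \ref{prop:blocks}) together with the existence of projective covers. Thus there is no genuine obstacle to the proof beyond careful bookkeeping over the four propositions in Section \ref{subsec:proj2} to ensure that no appearance of $M_0^{\circ\pm\pm}$ is overlooked. In particular, no new homomorphism or singular-vector computation is required here, since all the necessary structural information was already established when computing the tilting characters and transferring them via Soergel duality \eqref{tilt=proj}.
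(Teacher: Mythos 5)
Your proposal is correct and is essentially the paper's own argument: the composition multiplicities $[\M{0}^{\circ\sigma\tau}:\LL{\la}]=(\PP{\la}:\M{0}^{\circ\sigma\tau})$ are read off via BGG reciprocity \eqref{BGG} from the projective Verma-flag formulae of Section~\ref{subsec:proj2}, which were themselves obtained from the tilting characters through Soergel duality \eqref{tilt=proj}. Your bookkeeping of which projectives ($\PP{0}^{\circ\pm\pm}$, $\PP{1}^{\pm\pm\circ}$, $\PP{-1}^{-\pm\pm}$, $\PP{2}^{-\pm\pm}$) contain the Vermas $\M{0}^{\circ\pm\pm}$, each with multiplicity one, matches the stated formulae.
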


\section{Character formulae in the block $\Bl_1$, for $\ka=1$}
 \label{sec:block:ka=1}

We present the character formulae in the block $\Bl_1$ for the remaining case $\ka=1$, i.e., when $p=d=1$ in this section.
Recall the isomorphism $D(2|1;1) \cong \osp(4|2)$.
\subsection{Verma flags for tilting modules in $\Bl_1$ for $\ka=1$}
  \label{subsec:tilting1}

Theorem~\ref{thm:tilting:reg} for regular tilting modules remains valid in $\Bl_1$ for $\ka=1$ here.
The  irregular tilting modules in $\Bl_1$ with $\ka=1$ are
$\TT{0}^{\circ\pm\pm}$,
$\TT{-1}^{\pm\circ\pm}$,  $\TT{1}^{\pm\pm\circ}$, $\TT{-2}^{+\pm\pm}$ and $\TT{2}^{+\pm\pm}$.

\begin{thm}
  \label{tilting:Tpm2:p=d=1}
Assume that $\ka=1$.
The formulae for $\TT{2}^{+\pm\pm}$ are the same as in \eqref{T2:kd=1}.
The formulae for $\TT{-2}^{+\pm\pm}$ are given as follows:
\begin{align*}
\TT{-2}^{+--}  &= \M{-2}^{+--} +\M{-1}^{+\circ-}  +\M{0}^{\circ--} +\M{-1}^{-\circ-}   + \M{-2}^{---},    \notag
\\
\TT{-2}^{+-+}    &=  \M{-2}^{+-+} +\M{-2}^{+--} +\M{-1}^{+\circ+} +\M{-1}^{+\circ-}  +\M{0}^{\circ-+} +\M{0}^{\circ--}   
                             \\ &\qquad +\M{-1}^{-\circ+} +\M{-1}^{-\circ-}   + \M{-2}^{--+} + \M{-2}^{---},    \notag
\\
\TT{-2}^{++-}  &= \M{-2}^{++-} +\M{-2}^{+--} +2\M{-1}^{+\circ-}  +\M{0}^{\circ+-} +\M{0}^{\circ--} +2\M{-1}^{-\circ-}   + \M{-2}^{-+-} + \M{-2}^{---},    \notag
\\
\TT{-2}^{+++}  &= \M{-2}^{+++} +\M{-2}^{++-} +\M{-2}^{+-+} +\M{-2}^{+--}
+2\M{-1}^{+\circ+}  +2\M{-1}^{+\circ-}    +\M{0}^{\circ++} +\M{0}^{\circ+-}
\notag
\\
&+\M{0}^{\circ-+} +\M{0}^{\circ--}    + 2\M{-1}^{-\circ+} + 2\M{-1}^{-\circ-}
+ \M{-2}^{-++} + \M{-2}^{-+-} + \M{-2}^{--+} + \M{-2}^{---}.   \notag
\end{align*}
\end{thm}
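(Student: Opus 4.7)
The plan is to follow the standard translation functor strategy used throughout the paper. For each tilting module in the theorem, we realize it as an indecomposable summand of $\E \TT{\mu}$, where $\TT{\mu}$ is a typical tilting module (hence a direct sum of Verma modules with known character) and $\E$ is a translation functor obtained by tensoring with the $17$-dimensional adjoint representation and projecting onto $\Bl_1$.

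The assertion that the Verma flag formulae for $\TT{2}^{+\pm\pm}$ coincide with \eqref{T2:kd=1} follows by rerunning the proof of Theorem~\ref{thm:tilting:2:kd=1} verbatim. The only prerequisite is that the $\ka=1$ analogues of Theorems~\ref{thm:tilting:0:kd=1} and \ref{thm:tilting:1:kd=1} be in place; these are obtained by the same translation functor constructions as in Section~\ref{subsec:tilting2}, with the single adjustment that at $p=d=1$ certain Verma labels must be rewritten using the $\circ$-sign convention to reflect the singular weights $f_{-1}^{\pm\circ\pm}$ and $f_{1}^{\pm\pm\circ}$ now lying inside $\Bl_1$.

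For $\TT{-2}^{+\pm\pm}$ we apply $\E$ to the typical initial tilting modules $\TT{f_{-2}^{+\pm\pm}-(2,0,0)}$. Each of the four initial weights is typical, as is immediate from \eqref{atypical} with $\ka=1$, and Proposition~\ref{prop:typical} expresses these initial tilting modules in terms of Verma modules indexed by the corresponding $W$-orbits of length up to~$4$. A direct enumeration of the weights of the adjoint, followed by the atypicality test for membership in $\Wt_1$, yields precisely the Verma flags of lengths $5$, $10$, $10$, $20$ stated in the theorem. The multiplicities~$2$ at the singular labels $f_{-1}^{\pm\circ\pm}$ in $\TT{-2}^{++-}$ and $\TT{-2}^{+++}$ arise naturally because two distinct typical summands in the initial tilting module contribute to the same atypical $\rho$-shifted weight through the odd-root translations $(1,\pm 1,\pm 1)$ appearing in the adjoint.

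The indecomposability of each $\E\TT{f_{-2}^{+\pm\pm}-(2,0,0)}$ is verified by the three standard tools: Lemmas~\ref{lem:oddhom} and \ref{lem:evenhom} to certify that the extremal term $\M{-2}^{---}$ and the other predicted Vermas occur in the tilting module of the claimed highest weight; Lemma~\ref{tilting:comp} applied to the natural quotient maps among the typical Verma summands of the initial tilting module, which identifies known indecomposable quotient tilting modules at intermediate highest weights and hence forces the corresponding Vermas into the final flag; and a case-by-case inspection ruling out any other highest term as the top of a direct summand, via the familiar argument that the competing tilting module (known from Section~\ref{subsec:tilting:rational} or the preceding part of the present section) would have a lowest Verma absent from our candidate flag. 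The main obstacle is the multiplicity-$2$ bookkeeping at $\M{-1}^{\pm\circ\pm}$ in $\TT{-2}^{++-}$ and $\TT{-2}^{+++}$; here we apply Lemma~\ref{tilting:comp} to the two independent quotient maps of the typical initial tilting module onto smaller typical tilting modules, thereby extracting two independent occurrences of each singular Verma $\M{-1}^{\pm\circ\pm}$ to account for the doubled multiplicity.
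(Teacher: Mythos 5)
Your route is genuinely different from the paper's, and as written it has gaps. The paper proves the $\TT{-2}^{+\pm\pm}$ formulae in one line: at $\ka=1$ the Cartan matrix is symmetric in the two even simple roots, so the diagram automorphism swapping $2\ep_1\leftrightarrow 2\ep_2$ induces a self-equivalence of $\Bl_1$ carrying $\TT{2}^{+ab}$ to $\TT{-2}^{+ba}$, and the stated flags are exactly the image of \eqref{T2:kd=1} under this symmetry. You never use this, and instead redo the translation-functor computation for each $\TT{-2}^{+\pm\pm}$. The flag bookkeeping in your computation is in fact correct (applying $\E$ to $\TT{f_{-2}^{+\pm\pm}-(2,0,0)}$ does reproduce the stated Vermas, with the multiplicities $2$ coming from two Verma factors of the initial typical tilting hitting the same singular weight), but the two places where you wave your hands are precisely where the work lies, and your described mechanisms do not deliver them.

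First, your prerequisite claim is false: the $\ka=1$ versions of Theorem~\ref{thm:tilting:0:kd=1} are \emph{not} obtained from the $\ka\ge 2$ formulae by rewriting labels with the $\circ$-convention. Compare Theorem~\ref{tilting:0:kappa=1} with Theorem~\ref{thm:tilting:0:kd=1}: at $\ka=1$ the flag of $\TT{0}^{\circ--}$ acquires the extra term $\M{-2}^{---}$ (length $5$, not $4$), and $\TT{0}^{\circ+-}$ acquires $2\M{-1}^{-\circ-}$ plus two new $\M{-2}$ terms. Since these flags are exactly the data you need to exclude competing summands (both in rerunning the proof of Theorem~\ref{thm:tilting:2:kd=1} at $\ka=1$ and in your $\TT{-2}$ computations), asserting they are mere relabelings is a genuine error, not a cosmetic one. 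Second, the indecomposability step in the multiplicity-two cases is not secured by your argument. For $\TT{-2}^{++-}$ the initial typical tilting $\TT{(0,1,-3)}=\M{(0,1,-3)}+\M{(0,-1,-3)}$ has only \emph{one} proper tilting quotient (only the $\ep_1$-coordinate is dominant), so the "two independent quotient maps" you invoke via Lemma~\ref{tilting:comp} simply do not exist there; and the dangerous competitors, e.g. $\TT{-1}^{+\circ-}$ and $\TT{-2}^{+--}$, have their entire Verma support contained in the computed flag of $\E\TT{(0,1,-3)}$, so the "lowest term absent" exclusion fails, while Lemmas~\ref{lem:oddhom}--\ref{lem:evenhom} together with \eqref{tiltingD} only give the multiplicities $\ge 1$, not the needed value $2$ of $(\TT{-2}^{++-}:\M{-1}^{\pm\circ-})$. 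Establishing such a multiplicity-two statement requires a genuine extra argument (of the kind the paper makes in Lemma~\ref{lem:ML3} or in the \eqref{T1-kp}/\eqref{T-1-kp} analysis), or else the symmetry argument that renders the whole $\TT{-2}$ case free once \eqref{T2:kd=1} is known at $\ka=1$.
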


\begin{proof}
Formulae for $\TT{-2}^{+\pm\pm}$ are in the same pattern as for $\TT{2}^{+\pm\pm}$, and they are obtained from \eqref{T2:kd=1} by symmetry.
\end{proof}

The proof of the following formulae is standard and will be skipped.
\begin{thm}
  \label{tilting:0:kappa=1}
Assume that $\ka=1.$
We have the following Verma flags for tilting modules in the block $\Bl_1$:
\begin{align}
\TT{0}^{\circ--}  & =\M{0}^{\circ--} +\M{-1}^{-\circ-} +\M{1}^{--\circ} +\M{-2}^{---} +\M{2}^{---},    \notag
\\
\TT{0}^{\circ-+}  & =\M{0}^{\circ-+} +\M{0}^{\circ--} +\M{-1}^{-\circ+} +\M{-1}^{-\circ-}
                                +2 \M{1}^{--\circ}  +\M{-2}^{--+} +\M{-2}^{---} +\M{2}^{--+} +\M{2}^{---},    \notag
\\
\TT{0}^{\circ+-}  & =\M{0}^{\circ+-} +\M{0}^{\circ--}  +\M{1}^{-+\circ} + 2\M{-1}^{-\circ-} +\M{1}^{--\circ}  +\M{2}^{-+-}+\M{-2}^{-+-} +\M{-2}^{---}  +\M{2}^{---},    \notag 
\\
\TT{0}^{\circ++}  & =\M{0}^{\circ++} +\M{0}^{\circ+-} +\M{0}^{\circ-+} +\M{0}^{\circ--}
+ 2\M{1}^{-+\circ} + 2\M{-1}^{-\circ+} + 2\M{-1}^{-\circ-} + 2\M{1}^{--\circ}  \notag \\
    &\qquad +\M{2}^{-++} +\M{2}^{-+-}  +\M{-2}^{-++}  +\M{-2}^{-+-}  +\M{-2}^{--+} +\M{-2}^{---} +\M{2}^{--+} +\M{2}^{---}.    \notag
\end{align}
\end{thm}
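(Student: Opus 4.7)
The plan is to follow the three-step strategy employed throughout Sections~\ref{sec:irrational} and~\ref{sec:rational}: for each of the four target tilting modules $\TT{0}^{\circ\pm\pm}$, apply a suitable translation functor to a tilting module with typical highest weight whose Verma flag is already known, determine the Verma flag of the resulting module, and verify indecomposability. Specifically, let $\mathcal E$ denote the translation functor obtained by tensoring with the $17$-dimensional adjoint module and projecting to $\Bl_1$. I would apply $\mathcal E$ to the four initial tilting modules $\TT{f-(2,0,0)}$ as $f$ ranges over $\{f_{1;0}^{\circ--},\ f_{1;0}^{\circ-+},\ f_{1;0}^{\circ+-},\ f_{1;0}^{\circ++}\}$, i.e., as $f-(2,0,0)$ runs over the four typical weights $(-2,\pm 1,\pm 1)$. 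A direct check of the atypicality condition \eqref{atypical} with $\ka=1$ confirms that all four of these shifted weights are typical, and Proposition~\ref{prop:typical} supplies the Verma flags of $\TT{f-(2,0,0)}$ with lengths $1$, $2$, $2$, and $4$ respectively.

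Next, I would compute the Verma flag of $\mathcal E \TT{f-(2,0,0)}$ summand-by-summand using the standard fact that $\mathcal E \M{\mu}$ has a Verma flag indexed by those shifted weights $\mu+\nu$ lying in $\Wt_1$, as $\nu$ ranges over the weights of the adjoint module (including its three-dimensional zero-weight space). The leading Verma terms $\M{\pm 2}^{---}$ in the formula for $\TT{0}^{\circ--}$ (and the analogs in the other three) arise from odd-root shifts $\mu+(\pm 1,\pm 1,\pm 1)$, while the new middle terms $\M{\pm 1}^{-\circ\pm}$, $\M{1}^{-+\circ}$, and $\M{1}^{--\circ}$ (carrying multiplicities up to $2$) come from shifts hitting the weights $f_{1;-1}^{-\circ\pm}$, $f_{1;1}^{-+\circ}$, and $f_{1;1}^{--\circ}$, which are singular precisely because $p=d=1$. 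This is the feature that distinguishes the $\ka=1$ case from the already-established Theorem~\ref{thm:tilting:0:kd=1}.

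Indecomposability of $\mathcal E \TT{f-(2,0,0)}$ would then be established by the case-by-case argument used in Theorems~\ref{thm:tilting1}, \ref{thm:tilting2}, and \ref{thm:tilting:0-}--\ref{thm:tilting:+}: for every non-leading Verma $\M{\nu}$ occurring in the computed flag, the tilting module $\TT{\nu}$ --- known from Theorem~\ref{thm:tilting:reg}, Theorem~\ref{tilting:same}, and Theorem~\ref{tilting:Tpm2:p=d=1} --- has a Verma factor either absent from our flag or present there with insufficient multiplicity, so no tilting summand of $\mathcal E \TT{f-(2,0,0)}$ can have $\M{\nu}$ as leading term. To show that each of the terms $\M{\pm 2}^{---}$ (and its analogs) actually appears with the stated multiplicity in $\mathcal E \TT{f-(2,0,0)}$, I would invoke Soergel duality \eqref{tiltingD} together with explicit singular vectors in Verma modules produced from Lemmas~\ref{lem:oddhom} and~\ref{lem:evenhom} by composing odd and even reflections along the chain of weights joining $f$ to $\pm f_{1;\pm 2}^{---}$ in the Bruhat order.

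The hardest case will be $\TT{0}^{\circ++}$, whose Verma flag contains $16$ distinct Verma modules with total length $20$ and four terms of multiplicity $2$. The presence of the multiplicity-$2$ terms $\M{\pm 1}^{-\circ\pm}$, $\M{1}^{-+\circ}$, and $\M{1}^{--\circ}$ means that ruling out potential direct summands by comparing Bruhat-minimal Verma factors alone will not suffice; the argument must instead compare full Verma flags with multiplicities against the tilting characters established earlier in this section and in Section~\ref{sec:rational}, and it may also require Lemma~\ref{tilting:comp} (as in the proofs of Theorems~\ref{thm:tilting:0-}--\ref{thm:tilting:+}) to force the appearance of specific Verma factors in $\TT{0}^{\circ++}$ via quotient maps from tilting modules with related typical highest weights.
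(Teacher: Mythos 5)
Your plan is essentially the paper's (skipped) proof: apply the translation functor given by tensoring with the adjoint module to the typical tilting modules $\TT{f-(2,0,0)}$ for $f=f_{1;0}^{\circ\pm\pm}$ (of flag lengths $1,2,2,4$), read off the resulting Verma flags, and rule out proper tilting summands by comparing with the previously established tilting characters, using Soergel duality \eqref{tiltingD}, explicit singular vectors from Lemmas~\ref{lem:oddhom} and~\ref{lem:evenhom}, and Lemma~\ref{tilting:comp} for the delicate case $\TT{0}^{\circ++}$, exactly as in the proofs of Theorems~\ref{thm:tilting:0-}--\ref{thm:tilting:+} and~\ref{thm:tilting:0:kd=1}. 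One minor bookkeeping slip that the actual computation would correct: in $\E\TT{f-(2,0,0)}$ the outer terms such as $\M{\pm 2}^{---}$ arise from the even-root shifts $\pm 2\ep_1,\pm 2\ep_2$, while the singular middle terms $\M{\pm 1}^{-\circ\pm}$, $\M{1}^{-\pm\circ}$ (with their multiplicities $2$) arise from the odd-root shifts $\delta\pm\ep_1\pm\ep_2$ --- the reverse of what you wrote --- but this does not affect the validity of the approach.
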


The formulae for $\TT{1}^{\pm\pm\circ}$ in Theorem \ref{thm:tilting:1:kd=1} remains valid for $\ka=1$.
By symmetry we have the following formulae for $\TT{-1}^{\pm\circ\pm}$.

\begin{thm}
  \label{tilting:1:kp=kd=1}
Assume that $\ka=1.$ The formulae for $\TT{1}^{\pm\pm\circ}$ are the same as in \eqref{T1:kd=1}.
The Verma flags for the tilting modules $\TT{-1}^{\pm\circ\pm}$ are given as follows:
\begin{align*}
\TT{-1}^{-\circ-}&=  \M{-1}^{-\circ-}  + \M{-2}^{-+-} + \M{-2}^{---},       \notag
\\
\TT{-1}^{-\circ+}&= \M{-1}^{-\circ+}  + \M{-1}^{-\circ-}  + \M{-2}^{-++} + \M{-2}^{-+-} + \M{-2}^{--+} + \M{-2}^{---},       
\\
\TT{-1}^{+\circ-} &= \M{-1}^{+\circ-} + \M{0}^{\circ+-} + \M{0}^{\circ--}+ \M{-1}^{-\circ-},
\notag
\\
\TT{-1}^{+\circ+} &=  \M{-1}^{+\circ+} +
\M{-1}^{+\circ-}+ \M{0}^{\circ++} + \M{0}^{\circ+-}  +\M{0}^{\circ-+} + \M{0}^{\circ--}+ \M{-1}^{-\circ+} + \M{-1}^{-\circ-}.
\notag
\end{align*}
\end{thm}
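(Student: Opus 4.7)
The plan is to establish the $\TT{1}^{\pm\pm\circ}$ formulas by specializing the proof of Theorem~\ref{thm:tilting:1:kd=1} to $p=1$, and to obtain the $\TT{-1}^{\pm\circ\pm}$ formulas from the Dynkin diagram symmetry that is available precisely when $\ka=1$.

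For $\TT{1}^{\pm\pm\circ}$, the formulas for $\TT{1}^{--\circ}$ and $\TT{1}^{-+\circ}$ are exactly the specializations of \eqref{Tkd} at $kd=1$, and Theorem~\ref{thm:tilting:sing}'s argument (applying the adjoint translation functor to the relevant typical tilting modules) carries over verbatim at $\ka=1$. For $\TT{1}^{+-\circ}$ and $\TT{1}^{++\circ}$ I would repeat the argument from Theorem~\ref{thm:tilting:1:kd=1}: the $\g$-module $\mathbb L$ of highest weight $\delta+(p-1)\ep_1$ specializes at $p=1$ to the $6$-dimensional natural representation of $\osp(4|2)$, whose weights are $\pm\delta$ and $\pm\ep_1\pm\ep_2$. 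Tensoring with $\mathbb L$ and projecting to $\Bl_1$ yields a translation functor $\E$; applying $\E$ to $\TT{0,-2,0}=\M{0,-2,0}$ and to $\TT{0,2,0}=\M{0,2,0}+\M{0,-2,0}$ produces modules whose Verma flags match the claimed formulas. Indecomposability of these modules is secured by Lemmas~\ref{lem:oddhom}, \ref{lem:evenhom}, and~\ref{tilting:comp}, in the same fashion as in the proof of Theorem~\ref{thm:tilting:1:kd=1}.

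For $\TT{-1}^{\pm\circ\pm}$, the essential point is that at $\ka=1$ the bilinear form satisfies $(\ep_1,\ep_1)=(\ep_2,\ep_2)=1$, so the Dynkin diagram automorphism exchanging $\alpha_1=2\ep_1$ with $\alpha_2=2\ep_2$ and fixing the odd simple root $\alpha_0$ lifts to a genuine automorphism $\sigma$ of $D(2|1;1)\cong\osp(4|2)$. Pulling back by $\sigma$ gives an autoequivalence $\sigma^\ast$ of $\OO$ which preserves $\Bl_1$, sends tilting modules to tilting modules, and acts on $\rho$-shifted weights by $(x,y,z)\mapsto(x,z,y)$. A direct inspection of the piecewise definition of $f_{k;n}^{\pm\pm\pm}$ in Section~\ref{sec:rational} shows that this swap sends $f_{1;1}^{\sigma_1\sigma_2\circ}$ to $f_{1;-1}^{\sigma_1\circ\sigma_2}$ and carries the right-hand side of each of the four formulas in \eqref{T1:kd=1} term by term onto the right-hand side of the corresponding formula asserted for $\TT{-1}^{\pm\circ\pm}$. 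Applying $\sigma^\ast$ to the four identities already proved for $\TT{1}^{\pm\pm\circ}$ therefore yields the four formulas for $\TT{-1}^{\pm\circ\pm}$.

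The main obstacle is essentially bookkeeping rather than conceptual: one must verify that the sign labels $\sigma_1,\sigma_2$ in $\TT{1}^{\sigma_1\sigma_2\circ}$ and $\TT{-1}^{\sigma_1\circ\sigma_2}$ correspond correctly under $\sigma^\ast$ across all eight cases, and that the coordinate swap respects the distinct ``piecewise'' formulas defining $f_{k;n}^{\pm\pm\pm}$ for $n\ge kd$, $-kp\le n\le 0$, etc. Once the $p=1$ specialization of Theorem~\ref{thm:tilting:1:kd=1} is in hand and the Dynkin automorphism $\sigma$ is set up, the remainder of the argument is a finite case check.
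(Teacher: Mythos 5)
Your proposal matches the paper's own treatment: the paper likewise asserts that the proof of Theorem~\ref{thm:tilting:1:kd=1} (translation by the $(4p+2)$-dimensional module $\mathbb L$, which at $p=1$ is the $6$-dimensional natural $\osp(4|2)$-module) specializes to $\ka=1$ to give $\TT{1}^{\pm\pm\circ}$, and then obtains $\TT{-1}^{\pm\circ\pm}$ ``by symmetry,'' i.e.\ exactly the $\ep_1\leftrightarrow\ep_2$ Dynkin diagram automorphism available at $\ka=1$ that you spell out. Your explicit verification that the swap $(x,y,z)\mapsto(x,z,y)$ carries $f_{1;1}^{\sigma_1\sigma_2\circ}$ to $f_{1;-1}^{\sigma_1\circ\sigma_2}$ and matches the flags term by term is just a more detailed rendering of the same argument, so the proposal is correct.
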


\subsection{Verma flags for projectives in $\Bl_1$ for $\ka=1$}
  \label{subsec:proj1}

The formulae of Verma flags for tilting modules in
Theorems~\ref{tilting:Tpm2:p=d=1}, \ref{tilting:0:kappa=1},  and \ref{tilting:1:kp=kd=1}
are readily translated into formulae of Verma flags for projective modules using the identity
\eqref{tilt=proj}. We formulate the results in Propositions~\ref{proj:Tpm2:p=d=1}, \ref{proj:0:kappa=1} and \ref{proj:1:kp=kd=1} below.

Theorem~\ref{proj:reg} for regular projective modules remains valid for $\ka=1$ here.
The  irregular tilting modules in $\Bl_1$ with $\ka=1$ are
$\PP{0}^{\circ\pm\pm}$,
$\PP{-1}^{\pm\circ\pm}$,  $\PP{1}^{\pm\pm\circ}$, $\PP{-2}^{-\pm\pm}$ and $\PP{2}^{-\pm\pm}$.

\begin{prop}
  \label{proj:Tpm2:p=d=1}
Assume that $\ka=1.$ Formulae for $\PP{2}^{-\pm\pm}$ in $\Bl_1$ are the same as in \eqref{P2:kd=1}.
Formulae for $\PP{-2}^{-\pm\pm}$ are given as follows:
\begin{align*}
\PP{-2}^{-++}  &= \M{-2}^{-++} +\M{-1}^{-\circ+}  +\M{0}^{\circ++} +\M{-1}^{+\circ+}   + \M{-2}^{+++},    \notag
\\
\PP{-2}^{-+-}    &=  \M{-2}^{-+-} +\M{-2}^{-++} +\M{-1}^{-\circ-} +\M{-1}^{-\circ+}  +\M{0}^{\circ+-} +\M{0}^{\circ++}   
                             \\ &\qquad +\M{-1}^{+\circ-} +\M{-1}^{+\circ+}   + \M{-2}^{++-} + \M{-2}^{+++},    \notag
\\
\PP{-2}^{--+}  &= \M{-2}^{--+} +\M{-2}^{-++} +2\M{-1}^{-\circ+}  +\M{0}^{\circ-+} +\M{0}^{\circ++} +2\M{-1}^{+\circ+}   + \M{-2}^{+-+} + \M{-2}^{+++},    \notag
\\
\PP{-2}^{---}  &= \M{-2}^{---} +\M{-2}^{--+} +\M{-2}^{-+-} +\M{-2}^{-++}
+2\M{-1}^{-\circ-}  +2\M{-1}^{-\circ+}    +\M{0}^{\circ--} +\M{0}^{\circ-+}
\notag
\\
&+\M{0}^{\circ+-} +\M{0}^{\circ++}    + 2\M{-1}^{+\circ-} + 2\M{-1}^{+\circ+}
+ \M{-2}^{+--} + \M{-2}^{+-+} + \M{-2}^{++-} + \M{-2}^{+++}.   \notag
\end{align*}
\end{prop}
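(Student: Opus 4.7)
The plan is to derive these Verma flag formulae directly from the formulae for $\TT{-2}^{+\pm\pm}$ in Theorem~\ref{tilting:Tpm2:p=d=1} via the Soergel duality \eqref{tilt=proj}, namely $(\PP{\la}:\M{\mu})=(\TT{-\la}:\M{-\mu})$. No new module constructions, translation functors, or indecomposability arguments are required beyond what was already carried out for the tilting side.

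The key observation is that every atypical weight appearing as an index (of either a $T$ or an $M$) in Theorem~\ref{tilting:Tpm2:p=d=1} lies in the set
\begin{align*}
\{f_{1;-2}^{\pm\pm\pm}=(\pm 2,\pm 1,\pm 3)\}\cup\{f_{1;-1}^{\pm\circ\pm}=(\pm 1,0,\pm 2)\}\cup\{f_{1;0}^{\circ\pm\pm}=(0,\pm 1,\pm 1)\},
\end{align*}
on which the map $\la\mapsto -\la$ acts by flipping every $\pm$ sign while leaving each $\circ$ unchanged. In particular, for $\la=f_{1;-2}^{-\sigma\tau}$ one has $-\la=f_{1;-2}^{+(-\sigma)(-\tau)}$, so $\TT{-\la}=\TT{-2}^{+(-\sigma)(-\tau)}$; and similarly for every Verma index encountered.

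Concretely, for each sign choice $(\sigma,\tau)\in\{(+,+),(+,-),(-,+),(-,-)\}$, I will read off the Verma flag of $\TT{-2}^{+(-\sigma)(-\tau)}$ from Theorem~\ref{tilting:Tpm2:p=d=1}, apply the sign-flip to each Verma module index that appears, and invoke \eqref{tilt=proj} to obtain the formula for $\PP{-2}^{-\sigma\tau}$. The expected only obstacle is clerical: a case-by-case verification that the four resulting formulae coincide verbatim with the four formulae in the proposition, which is routine given how cleanly negation interacts with the weight parametrization at $\ka=1$. This mirrors the author's preceding remark that $\PP{2}^{-\pm\pm}$ coincide with \eqref{P2:kd=1}; in both cases the projective Verma flags are simply the Soergel duals of previously established tilting Verma flags, and the relevant atypical weights are stable under negation up to sign-flipping of the shorthand labels.
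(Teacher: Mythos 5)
Your proposal is correct and matches the paper's own argument: the paper obtains Proposition~\ref{proj:Tpm2:p=d=1} precisely by translating the tilting formulae of Theorem~\ref{tilting:Tpm2:p=d=1} through the Soergel duality \eqref{tilt=proj}, using exactly the observation that negation flips all $\pm$ labels (fixing $\circ$) in the weight parametrization at $\ka=1$. The only quibble is that your "key observation" about which weights occur should also include $f_{1;2}^{\pm\pm\pm}$ and $f_{1;1}^{\pm\pm\circ}$ for the $\TT{2}^{+\pm\pm}$/\eqref{P2:kd=1} part, but the same sign-flip argument applies there verbatim.
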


\begin{prop}
  \label{proj:0:kappa=1}
Assume that $\ka=1.$
We have the following Verma flags for the projective modules in the block $\Bl_1$:
\begin{align*}
\PP{0}^{\circ++}  & =\M{0}^{\circ++} +\M{-1}^{+\circ+} +\M{1}^{++\circ} +\M{-2}^{+++} +\M{2}^{+++},    \notag
\\
\PP{0}^{\circ+-}  & =\M{0}^{\circ+-} +\M{0}^{\circ++} +\M{-1}^{+\circ-} +\M{-1}^{+\circ+}
                                +2 \M{1}^{++\circ}  +\M{-2}^{++-} +\M{-2}^{+++} +\M{2}^{++-} +\M{2}^{+++},    \notag
\\
\PP{0}^{\circ-+}  & =\M{0}^{\circ-+} +\M{0}^{\circ++}  +\M{1}^{+-\circ} + 2\M{-1}^{+\circ+} +\M{1}^{++\circ}  +\M{2}^{+-+}+\M{-2}^{+-+} +\M{-2}^{+++}  +\M{2}^{+++},    \notag
\\
\PP{0}^{\circ--}  & =\M{0}^{\circ--} +\M{0}^{\circ-+} +\M{0}^{\circ+-} +\M{0}^{\circ++}
+ 2\M{1}^{+-\circ} + 2\M{-1}^{+\circ-} + 2\M{-1}^{+\circ+} + 2\M{1}^{++\circ}  \notag \\
    &\qquad +\M{2}^{+--} +\M{2}^{+-+}  +\M{-2}^{+--}  +\M{-2}^{+-+}  +\M{-2}^{++-} +\M{-2}^{+++} +\M{2}^{++-} +\M{2}^{+++}.    \notag
\end{align*}
\end{prop}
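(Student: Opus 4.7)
The plan is to derive these four Verma flag formulas for $P_0^{\circ\pm\pm}$ directly from the corresponding tilting module formulas in Theorem~\ref{tilting:0:kappa=1} via Soergel duality \eqref{tilt=proj}, which asserts $(P_\la : M_\mu) = (T_{-\la} : M_{-\mu})$ for all $\la,\mu \in X$. This is exactly the same bookkeeping step by which Propositions~\ref{proj:reg}, \ref{proj:0-}, \ref{proj:sing}, \ref{proj:+}, and \ref{proj:Tpm2:p=d=1} were read off from their respective tilting theorems, and I would proceed in the same spirit here.

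The one thing that genuinely needs to be verified is how the negation map $f \mapsto -f$ acts on weights expressed in the shorthand $f_{1;n}^{\epsilon_1\epsilon_2\epsilon_3}$. First I would unpack the definition of $f_{1;n}^{\pm\pm\pm}$ in the case $k=1$, $p=d=1$ to confirm that negation preserves the index $n$ and simply flips every sign coordinate, i.e.\ $-f_{1;n}^{abc} = f_{1;n}^{-a,-b,-c}$, with any $\circ$ appearing in a singular position (where a coordinate is zero) being invariant under the flip. Sample checks like $-f_{1;0}^{\circ--} = (0,1,1) = f_{1;0}^{\circ++}$, $-f_{1;-1}^{-\circ-} = (1,0,2) = f_{1;-1}^{+\circ+}$, and $-f_{1;\pm 2}^{---} = f_{1;\pm 2}^{+++}$ make this transparent, and a uniform check across all the indices $n \in \{0, \pm 1, \pm 2\}$ that appear is routine.

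With this dictionary in hand, the four desired projective flag formulas follow by term-by-term translation from Theorem~\ref{tilting:0:kappa=1}: the tilting module $T_0^{\circ a b}$ corresponds to the projective $P_0^{\circ,-a,-b}$, and each Verma summand $M_n^{abc}$ (with its multiplicity) corresponds to $M_n^{-a,-b,-c}$ (with the same multiplicity). For instance, $T_0^{\circ--} = M_0^{\circ--}+M_{-1}^{-\circ-}+M_1^{--\circ}+M_{-2}^{---}+M_2^{---}$ negates to the stated formula for $P_0^{\circ++}$, and the other three translations proceed identically. There is no substantive obstacle: all of the difficult analysis (construction of the $T_0^{\circ\pm\pm}$ and verification of their Verma flags) was already carried out in the preceding theorem, and the present proposition is a formal consequence of Soergel duality combined with the explicit sign-flip identification of $-\la$ in the shorthand notation.
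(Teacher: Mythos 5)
Your proposal is correct and is exactly the paper's own route: Section~6.2 states that Proposition~\ref{proj:0:kappa=1} is obtained by translating the Verma flags of the tilting modules in Theorem~\ref{tilting:0:kappa=1} through the duality \eqref{tilt=proj}, which is precisely your term-by-term negation argument. Your verification that $-f_{1;n}^{abc}=f_{1;n}^{-a,-b,-c}$ (with $\circ$ fixed) is the only bookkeeping needed, and your sample translations match the stated formulae.
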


\begin{prop}
  \label{proj:1:kp=kd=1}
Assume that $\ka=1.$ The formulae for $\PP{1}^{\pm\pm\circ}$ are the same as \eqref{P1:kd=1}.
The Verma flags for the tilting modules $\PP{-1}^{\pm\circ\pm}$ are given as follows:
\begin{align*}
\PP{-1}^{+\circ+}&=  \M{-1}^{+\circ+}  + \M{-2}^{+-+} + \M{-2}^{+++},       \notag
\\
\PP{-1}^{+\circ-}&= \M{-1}^{+\circ-}  + \M{-1}^{+\circ+}  + \M{-2}^{+--} + \M{-2}^{+-+} + \M{-2}^{++-} + \M{-2}^{+++},
\\
\PP{-1}^{-\circ+} &= \M{-1}^{-\circ+} + \M{0}^{\circ-+} + \M{0}^{\circ++}+ \M{-1}^{+\circ+},
\notag
\\
\PP{-1}^{-\circ-} &=  \M{-1}^{-\circ-} +
\M{-1}^{-\circ+}+ \M{0}^{\circ--} + \M{0}^{\circ-+}  +\M{0}^{\circ+-} + \M{0}^{\circ++}+ \M{-1}^{+\circ-} + \M{-1}^{+\circ+}.
\notag
\end{align*}
\end{prop}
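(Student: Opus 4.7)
The plan is to deduce all eight formulae as a direct consequence of Soergel duality \eqref{tilt=proj}, which asserts $(\PP{\la} : \M{\mu}) = (\TT{-\la} : \M{-\mu})$. Thus once the Verma flags of the relevant tilting modules are in hand from Theorem~\ref{tilting:1:kp=kd=1}, the corresponding projective flags are obtained simply by negating every highest weight appearing on the right-hand side and rewriting it under the labelling of Lemma~\ref{lem:atypical}. No further representation-theoretic input is needed.

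Concretely, for the four formulae for $\PP{1}^{\pm\pm\circ}$, I would pair each projective with the tilting module $\TT{1}^{\mp\mp\circ}$ from \eqref{T1:kd=1}, since the negation of the weight $f_{1;1}^{\pm\pm\circ}=(\pm 1,\pm 2,0)$ is $f_{1;1}^{\mp\mp\circ}$. The regular Verma modules $\M{2}^{abc}$ appearing in those tilting flags correspond to $f_{1;2}^{abc}=(\pm 2,\pm 3,\pm 1)$, whose negations are again of the same shape with all signs flipped, so $\M{2}^{abc}$ is replaced by $\M{2}^{\bar a\bar b\bar c}$. A term-by-term check recovers the formulae asserted for $\PP{1}^{\pm\pm\circ}$, which are then identical to \eqref{P1:kd=1} as required.

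For the four formulae for $\PP{-1}^{\pm\circ\pm}$, I would pair each projective with $\TT{-1}^{\mp\circ\mp}$, since the negation of $f_{1;-1}^{\pm\circ\pm}=(\pm 1,0,\pm 2)$ is $f_{1;-1}^{\mp\circ\mp}$. In the tilting flags from Theorem~\ref{tilting:1:kp=kd=1}, one encounters modules of three types: singular $\M{0}^{\circ ab}$ (whose negation is $\M{0}^{\circ\bar a\bar b}$), singular $\M{-1}^{\pm\circ\pm}$ (whose negation stays in the $\M{-1}^{\mp\circ\mp}$ family), and regular $\M{-2}^{abc}$ with $f_{1;-2}^{abc}=(\pm 2,\pm 1,\pm 3)$, whose negation is $\M{-2}^{\bar a\bar b\bar c}$. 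Substituting these identifications into the four tilting formulae produces exactly the four projective formulae in the statement.

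There is no serious obstacle: the proof is a routine bookkeeping of sign conventions, with the only mild care being to correctly identify each negated weight within the singular-weight notation $f^{\pm\circ\pm}$ and $f^{\pm\pm\circ}$ of Section~\ref{sec:rational}. Since the tilting characters used as input have already been established in Theorem~\ref{tilting:1:kp=kd=1}, and \eqref{tilt=proj} is invoked verbatim as in Sections~\ref{subsec:proj:rational} and~\ref{subsec:proj1}, the argument is complete.
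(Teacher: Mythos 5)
Your proposal is correct and is exactly the paper's own argument: Section~\ref{subsec:proj1} obtains Proposition~\ref{proj:1:kp=kd=1} by translating the tilting flags of Theorem~\ref{tilting:1:kp=kd=1} through Soergel duality \eqref{tilt=proj}, i.e.\ by negating highest weights, which for $\ka=1$ sends $f_{1;n}^{abc}$ to $f_{1;n}^{\bar a\bar b\bar c}$ as you describe. Your weight identifications $(\pm1,\pm2,0)$, $(\pm1,0,\pm2)$, $(\pm2,\pm3,\pm1)$, $(\pm2,\pm1,\pm3)$ and the resulting pairings $\PP{1}^{\pm\pm\circ}\leftrightarrow\TT{1}^{\mp\mp\circ}$, $\PP{-1}^{\pm\circ\pm}\leftrightarrow\TT{-1}^{\mp\circ\mp}$ are all accurate.
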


\subsection{Projective tilting modules in $\Bl_1$ for $\ka=1$}

As in Section \ref{subsec:prinj:kp,kd>1}, first by examining possible matches between the
Verma flags of tilting and projective modules and then using a similar argument as in Theorem \ref{T=P:irrational} we obtain the following.\

\begin{thm}
 \label{T=P:p=d=1}
 Assume $\ka=1$. We have the following isomorphisms between projective and tilting modules in $\Bl_1$:
\[
\TT{-1}^{+\circ+} \cong \PP{-1}^{-\circ-},
\quad
\TT{1}^{++\circ}\cong \PP{1}^{--\circ},
\quad
\TT{n}^{+++} \cong \PP{n}^{---} \;\; (n\in \Z \backslash \{0, \pm 1\}).
\]
Furthermore, there are no other projective tilting modules of atypical weights in $\Bl_1$.
\end{thm}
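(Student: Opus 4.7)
The plan is to follow exactly the template of Theorems~\ref{T=P:irrational}, \ref{T=P:rational}, and \ref{T=P:kd=1}. Two ingredients do all the work: (i) a tilting module is projective only if its Verma flag, as a multiset, matches that of some projective; (ii) translation functors (exact functors of the form $\mathrm{pr}\circ(-\otimes V)$ for a finite-dimensional $V$) send projective modules to projective modules, since tensoring with a finite-dimensional module preserves projectivity in $\OO$.

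First I would go through the explicit Verma flag lists in Sections~\ref{subsec:tilting1} and \ref{subsec:proj1} and sieve the candidate matches by comparing the lowest Verma term of each tilting module with the highest Verma term of each projective module. For $n \notin \{0,\pm 1\}$, every regular tilting module $\TT{n}^{\pm\pm\pm}$ listed in Theorem~\ref{thm:tilting:reg} has its lowest term of type $\M{\ast}^{---}$, whereas the regular projective modules $\PP{n}^{---}$ from Proposition~\ref{proj:reg} have highest term $\M{n}^{+++}$; the only compatible pair is $\TT{n}^{+++}\leftrightarrow\PP{n}^{---}$, whose flags agree term-by-term. For the irregular modules, one checks that $\TT{-1}^{+\circ+}$ has lowest term $\M{-1}^{-\circ-}$ and $\TT{1}^{++\circ}$ has lowest term $\M{1}^{--\circ}$, matching $\PP{-1}^{-\circ-}$ and $\PP{1}^{--\circ}$ respectively (indeed the multisets coincide). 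All other irregular tilting modules in Theorems~\ref{tilting:Tpm2:p=d=1}, \ref{tilting:0:kappa=1}, and \ref{tilting:1:kp=kd=1} have lowest terms that do not occur as the highest term of any projective in Section~\ref{subsec:proj1}, or fail to match any projective flag in length or in a specific multiplicity.

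Next I would promote each candidate match to an actual isomorphism. Each of $\TT{n}^{+++}$ (for $n\ne 0,\pm 1$), $\TT{-1}^{+\circ+}$, and $\TT{1}^{++\circ}$ was constructed in Sections~\ref{subsec:tilting2}--\ref{subsec:tilting1} by applying a suitable translation functor to a typical tilting module $\TT{f}$ of the form $\TT{f-(2,0,0)}$ or $\TT{0,\pm 2,0}$. By Proposition~\ref{prop:typical}, the typical block containing $f$ is equivalent to the principal block of a direct sum of copies of $\sll$; in such a block, the indecomposable module with the longest Verma flag is simultaneously tilting and projective (the ``big'' projective-injective), and this is precisely the initial tilting module used in each construction. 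Since translation functors preserve projectivity, the resulting tilting modules in $\Bl_1$ are projective, establishing the three claimed isomorphisms.

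The only genuine obstacle is the bookkeeping needed in the first step: verifying non-existence of further projective tilting modules requires checking every irregular tilting module against every irregular projective. I expect this to be entirely mechanical given the complete lists of Verma flags, since in each remaining case a mismatch is visible either in the lowest Verma summand or in the total flag length.
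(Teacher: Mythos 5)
Your proposal is correct and follows essentially the same route as the paper's (very terse) proof: sieve the possible matches by comparing the Verma flags of the tilting and projective modules listed in Sections~\ref{subsec:tilting1} and \ref{subsec:proj1}, then upgrade the surviving matches to isomorphisms exactly as in Theorem~\ref{T=P:irrational}, using that the initial typical tilting modules are projective by Proposition~\ref{prop:typical} and that translation functors preserve projectivity. The only cosmetic difference is that for $\ka=1$ the paper obtains $\TT{-2}^{+\pm\pm}$ and $\TT{-1}^{\pm\circ\pm}$ from $\TT{2}^{+\pm\pm}$ and $\TT{1}^{\pm\pm\circ}$ via the Dynkin diagram symmetry rather than by a direct translation-functor construction, but since that symmetry is an equivalence preserving projectives, your argument goes through unchanged.
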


\subsection{Composition factors of Verma modules in $\Bl_1$ for $\ka=1$}
  \label{subsec:comp1}

Proposition~\ref{composition1:r} on composition factors for Verma modules of regular highest weights (i.e., $\M{\pm n}^{\pm\pm\pm}$, for $n\ge 3$) remain valid here.

 We have the following irregular cases:
 $\M{0}^{\circ\pm\pm}, \M{1}^{\pm\pm\circ}, \M{-1}^{\pm\circ\pm}, \M{2}^{\pm\pm\pm}, \M{-2}^{\pm\pm\pm}$.

It turns out the composition factor formulae in $\M{1}^{\pm\pm\circ}$ and $\M{-1}^{\pm\circ\pm}$ are the same as those in Proposition~\ref{composition:kd,kp>1},
and the composition factor formulae in $\M{2}^{\pm\pm\pm}$ and $\M{-2}^{\pm\pm\pm}$ are the same as those in Propositions~\ref{composition:+1} and \ref{comp-1-kp:kp>1},
 by setting $kd=1$ and $kp=1$, respectively  (where $\LL{kd-1}^{+**}$ is understood as $\LL{0}^{\circ **}$ and  $\LL{1-kp}^{+**}$ is understood as $\LL{0}^{\circ **}$, respectively).

The composition factors in the remaining irregular Verma modules $\M{0}^{\circ\pm\pm}$ are given in Proposition~\ref{comp0:p=d=1} below.

\begin{prop}
 \label{comp0:p=d=1}
 Assume $\ka=1$.
We have the following composition factors of Verma modules in $\Bl_1$:
\begin{align*}
\M{0}^{\circ--} =& \LL{0}^{\circ--}  + \LL{1}^{--\circ} + \LL{2}^{---} + \LL{-1}^{-\circ-} + \LL{-2}^{---},
 \\
\M{0}^{\circ-+} =& \LL{0}^{\circ-+} +\LL{0}^{\circ--} + \LL{1}^{--\circ}  + \LL{-1}^{-\circ+}+ \LL{-1}^{-\circ-}
 + \LL{2}^{--+}+ \LL{2}^{---}  +\LL{-2}^{--+} + \LL{-2}^{---},
 \\
\M{0}^{\circ+-} =& \LL{0}^{\circ+-} +\LL{0}^{\circ--}
+\LL{1}^{-+\circ} +\LL{1}^{--\circ} +\LL{-1}^{-\circ-}
+\LL{2}^{-+-} + \LL{2}^{---}+\LL{-2}^{-+-} + \LL{-2}^{---},
 \\
\M{0}^{\circ++} =& \LL{0}^{\circ++} +\LL{0}^{\circ+-}  +\LL{0}^{\circ-+} +\LL{0}^{\circ--}
+ \LL{1}^{-+\circ}  + \LL{1}^{--\circ} + \LL{-1}^{-\circ+} + \LL{-1}^{-\circ-}
 \\
&\quad
+ \LL{2}^{-++} + \LL{2}^{-+-}  + \LL{2}^{--+}  + \LL{2}^{---}
+ \LL{-2}^{-++} + \LL{-2}^{-+-}  + \LL{-2}^{--+}  + \LL{-2}^{---}.
\end{align*}
\end{prop}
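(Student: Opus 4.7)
The plan is to invoke BGG reciprocity \eqref{BGG},
\[
[\M{0}^{\circ ab}: \LL{\la}] = (\PP{\la}: \M{0}^{\circ ab}),
\]
for each $a,b\in\{+,-\}$ and each weight $\la$ in $\Bl_1$, and to read the right-hand sides off from the Verma-flag formulae for projective modules in $\Bl_1$ that were already established in Propositions~\ref{proj:Tpm2:p=d=1}, \ref{proj:0:kappa=1} and \ref{proj:1:kp=kd=1}, together with the regular case in Proposition~\ref{proj:reg}. Since those propositions give the Verma flags of every projective in $\Bl_1$ explicitly, what remains is a finite bookkeeping exercise.

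Concretely, for each pair $(a,b)$ I would scan all projective covers $\PP{\la}$ in $\Bl_1$ and record those whose Verma flag contains $\M{0}^{\circ ab}$. Inspection shows that no regular-weight projective (Proposition~\ref{proj:reg}) can contain any $\M{0}^{\circ ab}$, so only the irregular projectives $\PP{0}^{\circ\bullet\bullet}$, $\PP{\pm 1}^{\bullet}$ (those with a $\circ$ in some slot) and $\PP{\pm 2}^{-\bullet\bullet}$ need be examined. The set of $\la$'s that arise in this way agrees precisely with the list of simples on the right-hand side of each formula in Proposition~\ref{comp0:p=d=1}, and each contributing multiplicity is immediately read off to be $1$. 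For instance, to verify the formula for $\M{0}^{\circ --}$ one reads $(\PP{\la}:\M{0}^{\circ --})=1$ from $\PP{0}^{\circ --}$, $\PP{1}^{--\circ}$, $\PP{-1}^{-\circ -}$, $\PP{2}^{---}$ and $\PP{-2}^{---}$, and checks no other projective contributes; the remaining three Verma modules are treated identically.

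There is no substantive obstacle: all the real work — the translation-functor constructions of the tilting modules, their indecomposability, and Soergel duality \eqref{tilt=proj} — has already been carried out. The only point requiring a little care is to keep track of the few multiplicities $2$ appearing in the projective formulae (e.g.\ the $2\M{-1}^{-\circ\pm}$ and $2\M{1}^{\pm-\circ}$ terms in $\PP{0}^{\circ--}$ and in $\PP{\pm 2}^{---}$); a direct check confirms that the specific Verma module $\M{0}^{\circ ab}$ itself always occurs with multiplicity exactly $1$ in every projective where it occurs, so every composition multiplicity $[\M{0}^{\circ ab}:\LL{\la}]$ in Proposition~\ref{comp0:p=d=1} equals $1$, as stated.
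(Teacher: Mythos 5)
Your proposal is correct and is exactly the paper's route: the paper also deduces these composition multiplicities from BGG reciprocity \eqref{BGG} applied to the explicit Verma flags of the (irregular) projectives $\PP{0}^{\circ\pm\pm}$, $\PP{\pm1}$-type and $\PP{\pm2}^{-\pm\pm}$ established in Propositions~\ref{proj:Tpm2:p=d=1}--\ref{proj:1:kp=kd=1}, noting that regular projectives contain no $\M{0}^{\circ ab}$. Your observation that each $\M{0}^{\circ ab}$ occurs with multiplicity exactly one in every projective where it appears (despite the multiplicity-$2$ terms elsewhere) is the only point needing care, and it checks out.
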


This completes our study of the character formulae in the BGG category $\OO$ of $\D$-modules of integral weights, for any parameter $\ka$.



\begin{thebibliography}{DDPW14}\frenchspacing


\bibitem[Bao17]{Bao17}
H. Bao,
{\it  Kazhdan-Lusztig theory of super type $D$ and quantum symmetric pairs},
Represent. Theory {\bf 21} (2017), 247--276. 


\bibitem[BW13]{BW13}
H. Bao and W. Wang,
{\em A new approach to Kazhdan-Lusztig theory of type $B$ via quantum symmetric pairs}, Ast\'erisque (to appear),
\href{http://arxiv.org/abs/1310.0103}{arXiv:1310.0103}v2.


\bibitem[Br03]{Br03} J.~Brundan, {\em Kazhdan-Lusztig polynomials and
character formulae for the Lie superalgebra $\gl(m|n)$},
J.~Amer.~Math.~Soc.~{\bf 16} (2003), 185--231.

\bibitem[Br04]{Br04} J. Brundan,
{\em Tilting modules for Lie superalgebras}, Commun.~Algebra {\bf
32} (2004), 2251--2268.

\bibitem[BrD17]{BrD17} J. Brundan and N. Davidson,
{\em Type C blocks of super category $\mathcal O$}, preprint,
\href{https://arxiv.org/abs/1702.05055}{arXiv:1702.05055}.

\bibitem[CFLW14]{CFLW} S. Clark, Z. Fan, Y. Li and W. Wang,
{\it  Quantum supergroups III. Twistors},
Commun. Math. Phys. {\bf 332} (2014), 415--436.

\bibitem[CKW15]{CKW15} S.-J. Cheng, J.-H.~ Kwon and W. Wang,
{\it Irreducible characters for Kac-Moody Lie superalgebras},
Proc. London Math. Soc. {\bf 110}  (2015), 108--132.

\bibitem[CLW15]{CLW15} S.-J. Cheng, N.~Lam and W.~Wang,
{\em Brundan-Kazhdan-Lusztig conjecture for general linear Lie superalgebras}, Duke J. Math. {\bf 164} (2015), 617--695.

\bibitem[CW12]{CW12} S.-J.~Cheng and W.~Wang,
{\it Dualities and Representations of Lie Superalgebras},
Graduate Studies in Mathematics {\bf 144}, Amer. Math. Soc.,   2012.

\bibitem[CW18]{CW18} S.-J.~Cheng and W.~Wang,
{\it Character formulae in category $\OO$ for exceptional Lie superalgebra $G(3)$}, Preprint, 2018.

\bibitem[FK76]{FK} P.~Freund and I. Kaplansky, {\it Simple supersymmetries}, J.~Math. Phys.~{\bf 17} (1976) 228--231.

\bibitem[Ger00]{Ger00} J.~Germoni, {\em
Indecomposable representations of ${\rm osp}(3,2)$, $D(2,1;\alpha)$
and $G(3)$}, Colloquium on Homology and Representation Theory
 (Vaquerias, 1998), Bol. Acad. Nac. Cienc. (Cordoba) {\bf
65} (2000), 147--163.



\bibitem[Gor02]{Gor02} M.~Gorelik, {\em Strongly typical representations of the basic classical Lie superalgebras}, J.~Amer.~Math.~Soc.~{\bf 15} (2002), 167--184.

\bibitem[GS10]{GS10} C.~Gruson and V.~Seganova,
{\it Cohomology of generalized supergrassmannians and character formulae for basic classical Lie superalgebras}, Proc. LMS {\bf 101} (2010), 852--892.

\bibitem[Kac77]{Kac77} V.~Kac, {\em Lie superalgebras}, Adv.~Math.~{\bf 16}
(1977), 8--96.


\bibitem[Kac78]{Kac78} V.~Kac,
{\em Representations of classical Lie superalgebras}. Differential
geometrical methods in mathematical physics, II (Proc. Conf., Univ.
Bonn, Bonn, 1977), pp. 597--626, Lecture Notes in Math.~{\bf 676},
Springer, Berlin, 1978.

\bibitem[KW95]{KW95}  V.~Kac and W.~Wang, {\em Vertex operator superalgebras and their representations}. Mathematical aspects of conformal and topological field theories and quantum groups, 
Contemp. Math. {\bf 175} (1994), 161--191, AMS, Providence, RI,
\href{https://arxiv.org/abs/hep-th/9312065}{hep-th/9312065}.

\bibitem[Ma14]{Ma14} L. Martirosyan,
{\it The representation theory of the exceptional Lie superalgebras $F(4)$ and $G(3)$}, J. Algebra {\bf 419} (2014), 167--222.

\bibitem[Mu12]{Mu12} I.~Musson,
{\it Lie Superalgebras and Enveloping Algebras},
Graduate Studies in Mathematics {\bf 131}, Amer. Math. Soc.,   2012.




\bibitem[Ser99]{Serg} A.~Sergeev, {\em The invariant Polynomials of simple Lie
    superalgebras},  Represent.~Theory {\bf 3} (1999), 250--280.



\bibitem[Soe98]{Soe98} W.~Soergel,
{\em Character formulas for tilting modules over Kac-Moody
algebras}, Represent. Theory  {\bf 2} (1998),
432--448.

\bibitem[SZ16]{SZ16} Y.~Su and R.B.~Zhang, {\em Generalised Jantzen filtration of exceptional Lie superalgebras}, Israel J.~Math.~{\bf 212} (2016), 635--676.

\bibitem[VdJ85]{VdJ85} J.~Van der Jeugt, {\em Irreducible representations of the exceptional Lie superalgebras $D(2,1;\alpha)$}, J. Math. Phys. {\bf 26} (1985), 913--924.

\bibitem[Zou94]{Zou94} Y.~Zou, {\em Finite dimensional representations of $\Gamma(\sigma_1, \sigma_2, \sigma_3)$}, J. Algebra {\bf 169} (1994), 827--846.

\end{thebibliography}
\end{document}